\def\N{{\mathbb N}}
\date{}
\newcommand{\ii }{{\rm i} }
\newcommand{\RR}{\mathbb{R}}  
\newcommand{\R}{\mathbb{R}} 
\newcommand{\ZZ}{\mathbb{Z}} 
\newcommand{\Z}{\mathbb{Z}} 
\newcommand{\T}{\mathbb{T}}
\theoremstyle{plain}
\numberwithin{equation}{section}
\newtheorem{theorem}{Theorem}[section]
\newtheorem{proposition}[theorem]{Proposition}
\newtheorem{lemma}[theorem]{Lemma}
\newtheorem*{remarks}{Remarks}
\newtheorem{coro}[theorem]{Corollary}
\title{Rigorous derivation of the leapfrogging motion for planar Euler equations}
\author{ Zineb Hassainia  \and Taoufik Hmidi \and Nader Masmoudi}
\begin{document}
 \maketitle
 \begin{abstract}
 The main goal of  this paper is to explore the leapfrogging phenomenon in the  inviscid planar flows. We show for $2$d Euler equations that under suitable constraints, four concentrated vortex patches leapfrog  for all time. When observed from a translating frame of reference, the evolution of these vortex patches can be described as a  non-rigid time periodic motion. Our proof hinges upon two key components. First, we desingularize the symmetric four  point vortex configuration, which leapfrogs in accordance with Love's result \cite{Love}, by concentrated vortex patches. Second, we borrow some tools from   KAM  theory  to effectively  tackle the small divisor problem and deal with the degeneracy in the time direction. 
  Our approach is robust and flexible and  solves a long-standing problem that has remained unresolved for many decades.
 \end{abstract}

 \tableofcontents

 \section{Introduction}
 This paper is dedicated to the exploration of the leapfrogging phenomenon within the framework of the incompressible Euler equations given in  terms of a transport nonlinear equation on the vorticity as follows
 \begin{equation}\label{Euler-Eq}
	\left\lbrace\begin{array}{ll}
		\partial_{t}{\omega}+u\cdot\nabla \omega=0, 
		\\
		u=\nabla^{\perp}(\Delta)^{-1}{\omega},\quad\hbox{with}\quad \nabla^{\perp}=(-\partial_{2},\partial_{1}).
	\end{array}\right.
\end{equation}
Within the realm of 2D vortex dynamics in ideal flows, a plethora of intriguing phenomena is intricately linked to coherent structures. One informal reason to the emergence of huge variety of  long-lived structures   lies in the fact  that the equations of motion are Hamiltonian with infinitely many equilibria.  Actually,   their phase portrait reveals a multitude of surprising and intricate structures. One particularly significant facet of this field revolves around relative equilibria, which represent equilibrium states in a rigid body frame.
 Relative equilibria stand as a central and extensively researched aspect of this subject, garnering considerable theoretical and experimental examination. For a comprehensive exploration of the bifurcation of rigid time-periodic solutions stemming  from various  equilibria, we direct the reader's attention to an exhaustive list that outlines numerous pertinent questions and findings in this domain. To illustrate, the papers \cite{B,CCG4,DHHM,HFMV,gomez2019symmetry,Hass-Mass-Wheel,HM2,HM3,HMV} have demonstrated the existence of nontrivial rotating patch solutions nearby the disc, the ellipse and the annulus. Furthermore, the exploration of the general case, encompassing nonuniform vortices near monotonic radial shapes, has been recently investigated in detail in \cite{CCG2,GHM,GHS}.
 
 Another pertinent topic pertains to the generation of highly concentrated rotating solutions through the desingularization of symmetric configurations within the point vortex system. This latter model was derived by Helmholtz as a simplified representation of the two-dimensional incompressible Euler equations, wherein vorticity is confined to a discrete set of moving points. It is  important to note that these solutions exhibit extreme singularity and do not adhere to the governing Euler equations. As an illustrative example, this approach was applied to pairs of rotating vortex points in \cite{HH2,HM}, and subsequently extended to cover  Thomson and nested Thomson polygons,  K\'arm\'an vortex  streets as described in \cite{G-Kar,Garcia,HW}. An alternative approach based on variational arguments was developed before in \cite{T}.
 
 Besides these rigid  time periodic structures, the literature has extensively documented remarkable structures within the vortex point system, particularly in the context of leapfrogging motion, dating back to their initial discovery by Helmholtz. Leapfrogging refers to the mesmerizing interaction between vortices spinning whirlpools of fluid that play a pivotal role in shaping fluid flow patterns. These vortices can exhibit an astonishing behavior as they engage in a delicate dance  and motion through one another. For instance,  an intriguing   example was reported  by Love \cite{Love} and  Gr\"obli \cite{Grobli} in the late nineteenth century. They illustrated how a system consisting of four symmetrical vortices, each with equal strength, two with positive circulation and two with negative circulation, gives rise to a remarkable series of leapfrogging orbits. Initially, the inner pair moves swiftly, overtaking the outer pair. As time progresses, the inner pair decelerates and spreads apart, while the separation between the outer pair diminishes, causing them to accelerate. At the midpoint of the motion's period, the roles of the inner and outer pairs swap, and the motion cyclically repeats up to translation. For a comprehensive review of these observations and analytical findings about the stability/instability of Leapfrogging vortex pairs, we refer to  \cite{Behring, Goodman, Aref}. It is also worth pointing out that in case of  the 3D axisymmetric Euler equations without swirl the motion of two leapfrogging vortex rings was considered by  Helmholtz in his celebrated work \cite{Helmholtz}.   
  In particular, Helmholtz qualitatively described  the motion 
 where the rings move along a common symmetry axis, alternately passing through each other. A generalized analysis of Leapfrogging vortex rings have been also discussed in  \cite{Borisov,Dyson1,Dyson2,Hicks,Lamb} and the  references therein. In addition to the leapfrogging motion, many  periodic non-rigid configuration  of vortex point system set in a general domain were also discussed in  \cite{Bartsch,Bartsch2,Gustafsson,Newton}. Very recently, time quasi-periodic solutions have been explored for Euler equations around the ellipses \cite{BHM} or around the Rankine vortices in the disc \cite{HR22}. Similar studies have been performed for different active scalar equations, see for instance  \cite{BCFS,GIP23,HHM21,HR21}.
   
In this paper, our primary objective is to provide a thorough and meticulous derivation of the Leapfrogging phenomenon for Euler  equations \eqref{Euler-Eq}.  We will specifically concentrate on constructing concentrated vortex patches that will replicate  the dynamics exhibited by their core, which is described by a  four  point vortex system  subject to leapfrogging  interaction as described in \cite{Love}. This construction remains valid for all the  time, and produces solutions with  non-rigid periodic motion when observed from a moving frame of reference. This marks the first rigorous derivation of this phenomenon within the context of classical solutions to Euler equations.  
In order to articulate our findings effectively, it is necessary to introduce some basic  concepts related to point vortex system.
We will provide here a concise overview of the dynamics governing four symmetric  point vortices. For a more comprehensive exploration of this topic, we refer to Section \ref{sec:lep-vortex-pairs}. 

We shall  consider  four point vortices  in the complex plane, where the first pair comprises  two points  $z_1$ and $z_2$ located at the upper half plane with  the same circulation $\pi$. Conversely,   the second pair is located at the complex conjugate numbers  $\overline{z_1}$ and $\overline{z_2}$ with the same negative  \mbox{circulation $-\pi$.} Under this symmetry,  the point vortex system  reduces to  two coupled differential equations,
	\begin{equation}\label{4vort-leapIn}
	\begin{aligned}
	\dot{z}_k(t)=\frac{\ii}{2}\Big[\frac{1}{\overline{z_k}-\overline{z_{3-k}}}-\frac{1}{\overline{z_k}-{z_{k}}}-\frac{1}{\overline{z_k}-{z_{3-k}}}\Big], \quad k=1,2.
	\end{aligned}
	\end{equation}
			Denote by 
		\begin{equation*}
		\begin{aligned}
		 x_0(t)&\triangleq \textnormal{Re}\{z_1(t)+z_2(t)\},\qquad y_0(t)\triangleq \textnormal{Im}\{z_1(t)+z_2(t)\},\\
		{\eta(t)}&\triangleq \textnormal{Re}\{z_1(t)-z_2(t)\}, \qquad {\xi(t)}\triangleq \textnormal{Im}\{z_1(t)-z_2(t)\}. 
		\end{aligned}
\end{equation*}
Then,  $(\eta,\xi)$ solves the Hamiltonian system
\begin{eqnarray}  
       \left\{\begin{array}{ll}
          	{\dot{\eta}}&=\partial_{\xi} H(\eta,\xi), \\
\dot{\xi}&=-\partial_{\eta} H(\eta,\xi), 
       \end{array}\right.\quad\hbox{with}\qquad H(\eta,\xi)=-\tfrac12\log\Big(\frac{1}{y_0^2-\xi^2}-\frac{1}{y_0^2+\eta^2}\Big)
\end{eqnarray}	
and $(x_0,y_0)$ satisfies 
\begin{eqnarray}   \label{A04pts0L}
       \left\{\begin{array}{ll}
	\dot{x_0}&=y_0\big(\frac{1}{\eta^2+y_0^2}+\frac{1}{y_0^2-\xi^2}\big), 
	\\
	\dot{y_0}&=0.
	 \end{array}\right.
\end{eqnarray}
The initial  observation is that  from the last equation we get $y_0(t)=y_0>0$ and thus the two first equations are decoupled from the third one. Notice that the  equation \eqref{A04pts0L} describes the speed of the center of the mass of the upper pair $
z_G(t)\triangleq \tfrac12\big(x_0(t)+\ii y_0\big)$. Actually, this point  is translating  along the horizontal axis with a nonuniform speed, according to the formula
	$$
	z_G(t)=\tfrac12\big(x_0(t)+\ii y_0\big).
	$$ 
	Without loss of generality, we can assume that at $t=0$ all the vortices are located on the vertical axis $(Oy)$.
Love proved in \cite{Love}  that the system \eqref{4vort-leapIn} generates nonrigid periodic orbits with a period $T(\xi_0)$ in the translating frame centered at $z_G$  if and only if $\frac{\xi_0}{y_0}<\frac{\sqrt{2}}{2}$, with $\xi_0$ representing the distance between the two vortices $z_1$, $z_2$ when they are aligned vertically. Hence, the motion of these points within the initial frame can be characterized as a combination of nonuniform  translation and nonuniform rotation, which ultimately results in the leapfrogging phenomenon, as previously described. 

Our  main goal  is to desingularize this dynamics by concentrated vortex patches  that accurately emulate the movement of the core vortices via a leapfrogging motion. For this purpose, we shall restrict the exploration to the  solutions to \eqref{Euler-Eq} in the patch form 
\begin{equation}\label{omegatepMs}
\omega(t)=\tfrac{1}{\varepsilon^2}{\bf{1}}_{{D}_{t,1}^\varepsilon}+\tfrac{1}{\varepsilon^2}{\bf{1}}_{{D}_{t,2}^{\varepsilon}}-\tfrac{1}{\varepsilon^2}{\bf{1}}_{\overline{{D}_{t,1}^\varepsilon}}-\tfrac{1}{\varepsilon^2}{\bf{1}}_{\overline{{D}_{t,2}^{\varepsilon}}},
\end{equation}
where $\varepsilon\in(0,1)$ is a parameter, $D_{t,k}^{\varepsilon}$ are planar domains  given by  
\begin{equation}\label{domainsM}
{D}_{t,k}^\varepsilon \triangleq\,   \varepsilon  {O}_{t,k}^\varepsilon+z_k(t), \quad k=1,2,
\end{equation}
with   ${O}_{t,k}^\varepsilon$ being simply connected domains localized around the unit disc.  The domains $\overline{{D}_{t,k}^{\varepsilon}}$ are the complex  conjugate of ${D}_{t,k}^{\varepsilon}.$
A priori the contour dynamics equations are given by a coupling of two nonlinear equations governing the two boundaries. However, we will see  in \mbox{Section \ref{Bound-motion}} that using a symmetry reduction we find out that  a particular solution is given by 
\begin{align}\label{sym-YU}
\forall t\in\R,\quad {O}_{t,2}^\varepsilon={O}_{t+\frac{T(\xi_0)}{2},1}^\varepsilon.
\end{align}
 Our main result reads as follows,
\begin{theorem}\label{TH-Main1}
Given $y_0>0$ and  $0<\xi_*<\xi^*<\frac{y_0}{\sqrt{2}}$, there exists   $\varepsilon_0>0$  small enough such that for all $ \varepsilon\in(0, \varepsilon_0)$
there exists a Cantor like set $\mathcal{C}_\varepsilon\subset [\xi_*,\xi^*]$ with 
$$
\lim_{\varepsilon\to 0}|\mathcal{C}_\varepsilon|=\xi^*-\xi_*$$ such that
for any ${\xi_0\in \mathcal{C}_\varepsilon}$, the system \eqref{Euler-Eq} with the constraints \eqref{omegatepMs}, \eqref{domainsM} and \eqref{sym-YU} admits a solution satisfying 
$$\forall t\in\R,\quad  {O}_{t+T(\xi_0),1}^\varepsilon={O}_{t,1}^\varepsilon.$$
 Here  $T(\xi_0)$ denotes  the time period of the orbit of the four points vortices  $\{z_k, k=1,..,4\}$ in the translating frame. The quantity  $|\mathcal{C}_\varepsilon|$ stands for  the Lebesgue measure of the set $\mathcal{C}_\varepsilon.$
\end{theorem}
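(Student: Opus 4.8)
The plan is to read Theorem~\ref{TH-Main1} as a combined desingularization and bifurcation statement and to solve it by a Nash--Moser scheme tailored to the small divisors created by the concentrated patches. \emph{Reduction.} Working in the translating frame centred at $z_G(t)$ and rescaling time so that the orbital period $T(\xi_0)$ becomes $2\pi$ — this introduces the single internal parameter $\omega(\xi_0):=2\pi/T(\xi_0)$ — I would parametrise $\partial {O}_{t,1}^{\varepsilon}=\big\{(1+r(t,\theta))\,e^{\ii\theta}\big\}$ with $r$ sought $2\pi$-periodic in $(t,\theta)$, and use the relation \eqref{sym-YU}, which slaves ${O}_{t,2}^{\varepsilon}$ to ${O}_{t,1}^{\varepsilon}$, to recast \eqref{Euler-Eq} under the constraints \eqref{omegatepMs}, \eqref{domainsM} as a single nonlinear functional equation $\mathcal{F}(\varepsilon,\xi_0,r)=0$ on a scale of Banach spaces of functions analytic (or of high Sobolev regularity) in $(t,\theta)$. \emph{Approximate solution.} At $r\equiv 0$ the four patches are round discs of circulation $\pi$ carried by the point vortices $z_k(t)$. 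By Newton's theorem a round disc generates outside itself exactly the velocity field of a point vortex of the same circulation, while its self-induced velocity vanishes at its centre and the companion field is harmonic on the patch; hence with $r\equiv 0$ the centres travel \emph{exactly} along the leapfrogging trajectories of \eqref{4vort-leapIn}, which by Love's theorem close up with period $T(\xi_0)$ precisely when $\xi_0<y_0/\sqrt2$. The residual $\mathcal{F}(\varepsilon,\xi_0,0)$ thus reduces to the mild elliptical deformation each disc undergoes under the $O(1)$ strain of the other three patches, and since this strain is weak against the fast self-rotation $\sim\varepsilon^{-2}$ of the patch one finds $\mathcal{F}(\varepsilon,\xi_0,0)=O(\varepsilon^{2})$.

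\emph{The linearised operator.} The heart of the proof is the inversion of $\mathcal{L}(\varepsilon,\xi_0):=D_r\mathcal{F}$ at this approximate solution, which has the structure
$$
\mathcal{L}=\omega(\xi_0)\,\partial_t+\mathrm{diag}_n\big(\ii\,\Omega_n(\varepsilon)\big)+\mathcal{B}_\varepsilon(t)+\mathcal{R}_\varepsilon(t),
$$
where the $\Omega_n(\varepsilon)$, $|\Omega_n(\varepsilon)|=\tfrac{|n|-1}{2\varepsilon^{2}}$, are the Kelvin edge-wave frequencies of a single Rankine patch of size $\varepsilon$ and circulation $\pi$ (so $\Omega_{\pm1}=0$, the rigid translations of the patch), $\mathcal{B}_\varepsilon(t)$ is a $2\pi$-periodic, order-zero perturbation of size $O(1)$ — hence $O(\varepsilon^{2})$ relative to the spectral gaps $\sim\varepsilon^{-2}$ of the diagonal part — which carries the mutual interaction of the patches and the transport by the companion vortices, and $\mathcal{R}_\varepsilon(t)$ is smoothing. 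Since $\mathcal{B}_\varepsilon$ genuinely depends on $t$, one first runs a KAM reducibility scheme: under the first Melnikov conditions $|\omega(\xi_0)\ell+\Omega_n-\Omega_m|\geq\gamma\langle\ell\rangle^{-\tau}$ for $(\ell,n-m)\neq(0,0)$, one conjugates $\mathcal{L}$ to a $t$-independent block-diagonal operator $\omega(\xi_0)\partial_t+\mathrm{diag}_n\big(\ii\,\Omega_n^{\infty}(\varepsilon,\xi_0)\big)$ with $\Omega_n^{\infty}(\varepsilon,\xi_0)=\Omega_n(\varepsilon)+O(1)$, the $O(1)$ correction carrying the genuine $\xi_0$-dependence. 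This reduced operator is then inverted via the second Melnikov conditions
$$
\big|\,\omega(\xi_0)\,\ell+\Omega_n^{\infty}(\varepsilon,\xi_0)\,\big|\ \geq\ \frac{\gamma}{\langle\ell\rangle^{\tau}}\,,\qquad \ell\neq0\ \text{ or }\ |n|\geq2,
$$
which are genuine small-divisor constraints: because $|\Omega_n|\sim|n|/\varepsilon^{2}$ is large they can fail only for $|\ell|$ of order $\varepsilon^{-2}$, and there they precisely control how close $\Omega_n^{\infty}(\varepsilon,\xi_0)/\omega(\xi_0)$ lies to $\ZZ$. The remaining indices $n\in\{0,\pm1\}$ with $\ell=0$ — the patch area and the rigid translations, together with the phase and energy directions inherited from the \emph{parabolicity} of the point-vortex orbit — are not handled by Diophantine conditions but fixed or quotiented out beforehand through the discrete symmetries built into the function space: the reflection $z\mapsto\overline z$ already present in \eqref{omegatepMs}, the half-period relation \eqref{sym-YU}, and the time-reversibility of the orbit, which is aligned on $(Oy)$ at $t=0$. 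The outcome is an approximate right inverse of $\mathcal{L}$ with tame estimates, valid for all $\xi_0$ meeting the above Melnikov inequalities.

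\emph{Nash--Moser iteration and the size of $\mathcal{C}_\varepsilon$.} Fixing $\gamma$ to be a small constant and $\tau>1$, a Nash--Moser iteration then converges — once $\varepsilon$ is small enough that the $O(\varepsilon^{2})$ residual beats the $\gamma$-dependent loss of the iteration — to an exact solution $r_\varepsilon(\xi_0)$ of $\mathcal{F}(\varepsilon,\xi_0,\cdot)=0$ on the set $\mathcal{C}_\varepsilon\subset[\xi_*,\xi^*]$ obtained by intersecting all the Cantor sets produced along the iteration; by \eqref{sym-YU} the resulting patches satisfy ${O}_{t+T(\xi_0),1}^{\varepsilon}={O}_{t,1}^{\varepsilon}$, which is the assertion of the theorem. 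The measure estimate $|\mathcal{C}_\varepsilon|\to\xi^*-\xi_*$ rests on a quantitative R\"ussmann-type non-degeneracy of the frequency map $\xi_0\mapsto\big(\omega(\xi_0),(\Omega_n^{\infty}(\varepsilon,\xi_0))_{|n|\geq2}\big)$ on $[\xi_*,\xi^*]$: to leading order $\Omega_n^{\infty}(\varepsilon,\xi_0)/\omega(\xi_0)=\tfrac{(|n|-1)T(\xi_0)}{4\pi\varepsilon^{2}}$, whose $\xi_0$-derivative is $\sim\tfrac{|n|\,T'(\xi_0)}{\varepsilon^{2}}$ — large and nonzero wherever $T'(\xi_0)\neq0$ (at the finitely many critical points of $T$ one invokes a higher-order jet). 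Since for each $n$ only $O(|n|\varepsilon^{-2})$ integers $\ell$ can produce a resonance and for those $\langle\ell\rangle\sim|n|\varepsilon^{-2}$, each Melnikov condition removes a set of measure $\lesssim\gamma\,(|n|\varepsilon^{-2})^{-\tau}$, summable in $n$ and carrying a gain $\varepsilon^{2\tau}$; propagating this through the Nash--Moser scales gives $|[\xi_*,\xi^*]\setminus\mathcal{C}_\varepsilon|\lesssim\gamma\,\varepsilon^{2\tau}\to0$ as $\varepsilon\to0$.

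\emph{Main obstacle.} The principal difficulty — what the introduction calls the ``degeneracy in the time direction'' — is that the desingularization must be performed around a \emph{parabolic} periodic orbit: the monodromy of the leapfrogging solution of \eqref{4vort-leapIn} is a nontrivial Jordan block at the eigenvalue $1$ (its nilpotent part being governed by $T'(\xi_0)$), so there are no hyperbolic or elliptic transverse modes to exploit and there is a single internal parameter $\xi_0$, whose frequency $\omega(\xi_0)=2\pi/T(\xi_0)$ may a priori have critical points on $[\xi_*,\xi^*]$. One therefore has to base the whole construction on Love's one-parameter branch together with the discrete symmetries (translating frame, reversibility and \eqref{sym-YU}) in order to absorb the translation, phase and energy directions, while the main analytic labour — the reducibility of the time-periodic operator $\mathcal{L}$ in the presence of the large, only weakly $\xi_0$-dependent edge-wave frequencies $\Omega_n\sim n/\varepsilon^{2}$, and the propagation of tame estimates and of the measure bound through the Nash--Moser iteration — is the technical core of the argument.
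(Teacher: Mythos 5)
Your proposal reproduces the broad architecture of the paper (desingularization of Love's orbit, Nash--Moser with Melnikov-type conditions, measure estimate resting on the monotonicity of $T(\xi_0)$), but it contains a genuine gap at exactly the point the paper identifies as the hardest: the degenerate spatial modes $n=\pm1$. In your spectral picture $\Omega_{\pm1}=0$, so on these two modes the diagonal part of $\mathcal{L}$ reduces to $\omega(\xi_0)\partial_t$ alone, which after the natural normalization carries the factor $\varepsilon^2$; the modes $n=\pm1$ are therefore degenerate for \emph{every} time frequency $\ell$, not only for $\ell=0$, and they cannot be removed by Diophantine conditions. You propose to ``fix or quotient out'' these directions via the reflection symmetry, the half-period relation \eqref{sym-YU} and time-reversibility, but none of these kills the $\pm1$ modes of the boundary perturbation of a single patch (they encode translations of the patch relative to the moving vortex core, which are genuine degrees of freedom of the problem); the paper explicitly notes that the usual remedies (a Lagrange multiplier on the angular velocity, or a spectral shift via an auxiliary rotating frame) are unavailable here because the period is fixed by the point-vortex system. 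The actual resolution (Section \ref{sec-monodromy}) is to retain the $O(\varepsilon^2)$ terms of the linearization --- in particular the operator $\mathcal{Q}_0$ in \eqref{Lin-Op1}, which localizes on the modes $\pm1$ and, because of \eqref{sym-YU}, involves a \emph{time delay} $\varphi\mapsto\varphi+\pi$ --- project the equation onto these modes, and invert the resulting $4\times4$ periodic ODE system through its monodromy matrix; this works only after excluding a countable set of $\xi_0$ where $\det\bigl(\mathcal{M}_0(2\pi)-\mathrm{Id}\bigr)$ vanishes, an exclusion that enters the final Cantor set and that your scheme never accounts for.

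A second, related omission: you start the iteration from $r\equiv0$ with residual $O(\varepsilon^2)$ and assert convergence ``once $\varepsilon$ is small enough''. In the paper the approximate right inverse degenerates like $\varepsilon^{-2-\delta}$, because the Cantor sets \eqref{Cantor-set1} must be penalized by $\varepsilon^{2+\delta}$ to retain almost full measure; consequently the first Newton step from the round discs diverges as $\varepsilon\to0$, and one must first construct a refined approximate solution $r_\varepsilon=\varepsilon r_0+\varepsilon^2 r_1$ by solving purely spatial elliptic equations (Section \ref{Approxim-sol1}), pushing the residual to $O(\varepsilon^4)$ before Nash--Moser can be launched. Your bookkeeping hides this difficulty because you penalize the small divisors by the time frequency $\langle\ell\rangle^{-\tau}$ with an $\varepsilon$-independent $\gamma$; since the resonant $\ell$ are of size $|n|\varepsilon^{-2}$, this is an effective threshold of order $\gamma\varepsilon^{2\tau}$, far more degenerate than the paper's $\varepsilon^{2+\delta}|j|^{-\tau}$, and the resulting loss in the inverse would have to be weighed against the residual and against the measure estimate --- a quantification your proposal does not supply.
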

Before outlining the proof, it is essential to make few remarks.
\begin{remarks}
\begin{enumerate}
\item We emphasize that the domain ${O}_{t,1}^\varepsilon$  constructed in Theorem ${\rm \ref{TH-Main1}}$ is time periodic, but not rigidly rotating. Indeed,  we prove that the corresponding polar parametrization of its boundary $\partial {O}_{t,1}^\varepsilon$ has the asymptotics 
\begin{align*}
\theta\in\T\mapsto \Big(1+\varepsilon^2 \mathtt{r}_0(\omega_0 t,\theta)+O(\varepsilon^{2+\mu})\Big)e^{\ii\theta},
\end{align*}
where $\mu\in(0,1)$,  $\omega_0=\omega_0(\xi_0)=\frac{2\pi}{T(\xi_0)}$ is the frequency of the periodic orbit of the four points vortices  $\{z_k, k=1,..4\}$ in the translating frame,
$$
\forall (\varphi,\theta)\in\T^2,\quad \mathtt{r}_0(\varphi,\theta)
=\textnormal{Re}\Big\{\tfrac{e^{\ii2(\theta-\Theta(\varphi))}}{q(\varphi)}+\tfrac{e^{\ii 2\theta}}{( \sqrt{q(\varphi)}\sin( \Theta(\varphi))+ y_0)^2}-\tfrac{e^{\ii 2\theta}}{( \sqrt{q(\varphi)}\cos( \Theta(\varphi))+ \ii y_0)^2} \Big\}
$$
and $(\Theta,q)$ are defined through the identity 
$
z_1(t)- z_2(t)=\sqrt{q(\omega_0 t)} e^{\ii \Theta(\omega_0t)}.
$
\item The  ansatz \eqref{sym-YU} introduces a new behavior related to the time delay effects on the  contour dynamics equations.  Actually, this delay    is active  at the scale   $\varepsilon^2$ through  a smoothing operator  and it  has a few impacts on the construction of periodic solutions. The major one is related to the analysis of the  degenerate modes $\pm1$ where we should analyze the companion monodromy matrix related to a $4\times4$ resolvant  matrix with variable coefficients.  
\item   In the context of three dimensional axially symmetric case, the investigation conducted in  $\textnormal{  \cite{delpino}}$ reveals  the existence of a weak form of the  leapfrogging motion with  smooth ring solutions to the Euler equations. Actually, using a gluing scheme, initiated in $\textnormal{\cite{delpino3}}$,  the authors were able to decompose locally  in time the solution into concentrated smooth vortex rings that leapfrog  up to a small error. It seems extremely  challenging to generically eliminate this residual terms and get time periodic solutions due to  some instabilities mechanisms as outlined in $\textnormal{ \cite{Al,Yamada}.}$ Very recently, the gluing method was successfully  used in $\textnormal{\cite{delpino2}}$ to provide a long time description of the motion  of two vortex pairs solutions  traveling in opposite directions for  the 2D Euler equations.      
On the other hand, similar results on the leapfrogging phenomena   were achieved before  for the Gross-Pitaevskii equation in $\textnormal{\cite{smets,smets2}}$.
 In our Theorem ${\rm \ref{TH-Main1}}$ we are able to track carefully the leapfrogging  for all time in  the two-dimensional case. Actually, the solutions in the patch-like form leapfrog globally in time without any error. The desingularization is valid for essentially  {\it almost} all the periodic  point vortex orbits when the parameter $\xi_0$ belongs to the Cantor like set $\mathcal{C}_\varepsilon$.  \end{enumerate}
\end{remarks}
To the best of our knowledge, Theorem \ref{TH-Main1} represents the first construction of classical solutions for Euler equations featuring leapfrogging motion for any time. Our approach is using Nash-Moser scheme together with KAM tools to tackle  degenerate quasi-linear transport equations forced by time-space periodic coefficients arising  from the point vortex system. This approach   is highly robust and flexible offering various perspectives to explore numerous  important ordered structures in geophysical flows. As we shall see below,  in addition to the challenges posed by the small divisor problem stemming  from the time-space resonances, our proof encounters several substantial   difficulties succinctly  outlined as follows:

\begin{itemize}
\item As $\varepsilon$ approaches zero, the time direction degenerates as illustrated in \eqref{FF-eq1}. This compels us to operate within Cantor sets on the parameter $\xi_0$ subject to Diophantine conditions that degenerate with $\varepsilon$. Actually, as we shall see in \eqref{Cantor-set1} we need to  penalize these sets by  a degenerating coefficient of size $\varepsilon^{2+\delta}, \delta>0$ in order to guarantee  an almost full Lebesgue measure after the multiple excision steps implemented  along  Nash-Moser scheme.  This rate of degeneracy in $\varepsilon$  has severe impacts in the scheme. One of them is related to the construction of  a right inverse for the linearized operator. It  exhibits a loss of regularity and entails a   divergent control of order  $\varepsilon^{-2-\delta}$.

\item The equation governing radial deformation, stated in \eqref{FF-eq1}, lacks  equilibrium states around the four point vortex system. The natural  approximation given by small discs appears inadequate to   counterbalance  the divergence rate $\varepsilon^{-2-\delta}$ of the approximate inverse mentioned before. This becomes crucial when we will implement   the first step in  Nash-Moser scheme which gives an unbounded control in $\varepsilon$ for the next iteration. Hence, we devise a careful scheme, without any Cantor sets, to generate a suitable  approximation for the nonlinear equation. 

\item The invertibility of the linearized operator defined in \eqref{Lin-Op1} is not straightforward. This operator takes the form of a transport equation with variable coefficients perturbed by an operator of order zero (Hilbert transform) together with a smoothing (in space)  compact operator.   The goal is to conjugate this operator, exhibiting different scales in $\varepsilon$, to a constant coefficient operator up to some error terms. Notice that at the leading term, this operator is a perturbation of a time degenerating Fourier  multiplier that undergoes time-space resonances. However the perturbation scales at the same rate  of the time degeneracy ($\varepsilon^2$) rendering  the first step in  KAM reduction impossible. To overcome this issue,  an initial step on a change of coordinate system (without Cantor sets) is required  to reduce the perturbation size.

\item As we can observe from the structure \eqref{Lin-Op1}, the space  modes $\pm1$  degenerate  at the leading order. This trivial resonance has been previously observed in various  situations and addressed  differently according to the context. For instance, it was discussed in the paper  \cite{HM} devoted to  the construction of translating and rotating rigid pairs. To deal with this  degeneracy,  an  external parameter was used (the angular velocity or the speed) as a Lagrange multiplier in order to remove  these modes from the phase space. However, this approach does not fit  with our ansatz here as the period   is fixed from the outset  according to the point vortex period.  Another context related to this degeneracy  is quoted in the papers  \cite{BHM,HHM21,HR21} on time quasi-periodic patches close to Rankine vortices. There,  an extra parameter modeling a rigid rotating frame was used  allowing to shift  the spectrum  and eliminate the degeneracy.  This cannot be implemented here  because it will destroy the time periodic structure. To fix this issue, we use the non rigid rotating frame associated with the point vortices and  we proceed with a careful description at the order $\varepsilon^2$ of the asymptotic structure of the linearized operator in \eqref{Lin-Op1}. Then we project  the equation on the degenerate modes giving rise due to the delay terms to a system of $4\times4$  coupled differential equations that we solve using the monodromy matrix. Its analysis requires refined arguments related to  the analyticity dependence of the four point vortex  orbits. 
\end{itemize}
%
%
Now, let us  outline the fundamental  steps  of the proof of Theorem \ref{TH-Main1} and explore more the different technical  points raised before. \\
\ding{202}{\it \; Contour dynamics equations and linearization}. We start by expressing  the solution of \eqref{A04pts0L} in terms of  action-angle coordinates, 
$$
z_1(t)- z_2(t)=\sqrt{q(\omega_0 t)} e^{\ii \Theta(\omega_0t)}.
$$
In  the symmetric case \eqref{domainsM}, the motion simplifies the dynamics to a single  equation on   the boundary of $ {O}_{t,1}^\varepsilon$. As we will explore  in Section \ref{Bound-motion}, it is a fundamental fact  to express  its evolution in the rotating frame centered at the point $z_1(t)$ and with angular velocity $\Theta(\omega_0 t)$. Actually, this adjustment  shifts the spectrum  and allows later to tackle    the degeneracy of  the monodromy matrix that will be discussed in Section \ref{sec-monodromy}.  To provide a more precise description, we will parametrize the boundary $ \partial {O}_{t,1}^\varepsilon$ as follows
\begin{align*}
\theta\in\T\mapsto e^{\ii \Theta(\omega_0 t)}\sqrt{1+2\varepsilon r( \omega_0 t,\theta)}\,e^{\ii\theta}
\end{align*}
with $r:(\varphi,\theta)\in\T^2\mapsto r(\varphi,\theta)\in\R$ being a smooth periodic  function. Then from the contour dynamics equation, see  \eqref{Edc eq rkPsi}, we find  
the nonlinear equation with a time delay (here $\widetilde{G}=\tfrac1\varepsilon G$)
\begin{align}\label{FF-eq1}
 \widetilde{G}(r) (\varphi ,\theta)  &\triangleq\varepsilon^2\omega_0\partial_\varphi r(\varphi ,\theta)-\varepsilon^2\omega_0 {\dot\Theta(\varphi )} \partial_\theta r(\varphi ,\theta)+\partial_{\theta}\big[F(\varepsilon, {q}, r)\big]=0.
		\end{align}
Note that there is no  explicit  equilibrium state to this equation and one deduces  from \eqref{lem eq EDC r} that $\widetilde{G}(0)\neq 0$, with
\begin{align}\label{Initi-est}
\widetilde{G}(0)=O(\varepsilon).
\end{align}
By linearizing around a small state $r$ we get the asymptotic structure
\begin{align*}
\partial_{r }{\widetilde{G}}(r) [ h ] &=\varepsilon^2\omega_0\partial_\varphi  h +\partial_{\theta}\Big[\Big(\tfrac12- \tfrac{\varepsilon}{2} r-\varepsilon^2\omega_0\dot\Theta (\varphi)-\tfrac{\varepsilon^2}{2}{\mathtt{g}}+{\varepsilon^2}{V}_1^\varepsilon(r)+{\varepsilon^3}{V}_2^\varepsilon(r)\Big) h\Big]
\\ &\quad-\tfrac{1}{2}{\mathcal{H}[h]}-{\varepsilon^2}\partial_\theta {\mathcal{Q}_0[h]}+{\varepsilon^2}  \partial_\theta\mathcal{R}_1^\varepsilon(r)[h]+{\varepsilon^3} \partial_\theta\mathcal{R}_2^\varepsilon(r)[h],
		\end{align*} 
		where $\mathcal{H}$ denotes the toroidal  Hilbert transform given by \eqref{H-Def}. The function $\mathtt{g}$ takes the form 
\begin{equation*}
{\mathtt{g}(\varphi,\theta)=\textnormal{Re}\Big\{ \Big(\tfrac{1}{q(\varphi)}+\tfrac{e^{\ii 2\Theta(\varphi)}}{\big( \sqrt{q(\varphi)}\sin( \Theta(\varphi))+ y_0\big)^2}-\tfrac{e^{\ii 2\Theta(\varphi)}}{\big( \sqrt{q(\varphi)}\cos( \Theta(\varphi))+\ii y_0\big)^2}\Big)e^{\ii 2\theta} \Big\}.
}\end{equation*}
We point out that the real-valued function $r\mapsto V_1^\varepsilon(r)$ is quadratic while   $r\mapsto V_2^\varepsilon(r)$ is {affine}.
As to the   operator $\mathcal{R}_1^\varepsilon(r)$, it exhibits  spatial  smoothing  effects and quadratic dependence \mbox{ in $r$.}     In contrast,  
the  operator $\mathcal{R}_2^\varepsilon(r)$ features a  spatial smoothing effect in space but it has an {affine} dependence  \mbox{in $r$.}
This properties will be important in the asymptotics when the solution will be rescaled, see Section \ref{rescaled func}.
 Finally, the operator  $\mathcal{Q}_0$ has a specific structure and  localizes  on the spatial modes $\pm 1$ according to the form  
\begin{align*}
{\mathcal{Q}_0}[h](\varphi,\theta)&= \int_{\mathbb{T}}  h (\varphi+\pi,\eta)\textnormal{Re}\Big\{\tfrac{e^{\ii(\theta+\eta)}}{q(\varphi)}-\tfrac{e^{\ii(\theta-\eta)}}{( \sqrt{q(\varphi)}\cos( \Theta(\varphi))+\ii y_0)^2}\Big\} d\eta\\ &\quad-\int_{\mathbb{T}}h(\varphi,\eta)\textnormal{Re}\Big\{ \tfrac{e^{\ii(\theta-\eta)}}{( \sqrt{q(\varphi)}\sin( \Theta(\varphi))+ y_0)^2}\Big\} d\eta.
\end{align*}
It is worth noting  that this  property holds significant  importance  when we address  the degeneracy of the modes $\pm1.$
 In  the case   $r=0$, the linearized operator simplifies to
\begin{align*}
\partial_{r }{\widetilde{G}}(r) [ h ] &=\varepsilon^2\omega_0(\xi_0)\partial_\varphi  h +\partial_{\theta}\Big[\Big(\tfrac12-\varepsilon^2\omega_0\dot\Theta (\varphi)-\tfrac{\varepsilon^2}{2}{\mathtt{g}}+O({\varepsilon^3})\Big) h\Big]
-\tfrac{1}{2}{\mathcal{H}[h]}-{\varepsilon^2} \partial_\theta{\mathcal{Q}_0[h]}+O({\varepsilon^3}).
		\end{align*} 
		This operator assumes  variable coefficients but  at the main order it can be approximated by the Fourier multiplier operator 
		$$
		\mathcal{L}_{0,0}\triangleq\varepsilon^2\omega_0\partial_\varphi   +\tfrac12\big[\partial_{\theta}-\mathcal{H}\big].
				$$
Using Fourier series, we can express it as
$$
h(\varphi,\theta)=\sum_{(\ell,j)\in\mathbb{Z}^{2 }}h_{\ell,j}\,e^{\ii(\ell\cdot\varphi+j\theta)}\Longrightarrow
 \mathcal{L}_{0,0}h(\varphi,\theta)=\ii\sum_{(\ell,j)\in\mathbb{Z}^{2 }}\big[\varepsilon^2 \ell\omega_0(\xi_0)+\tfrac12(j-\textnormal{sign}(j))]\,h_{\ell,j}\,e^{\ii(\ell\cdot\varphi+j\theta)}.
 $$
At this stage, two primary challenges become evident for inverting this operator, which will persist in the general case. The first challenge is related to the degeneracy of the spatial \mbox{modes $j=\pm1$,} and its exploration requires to  include the  terms of order $\varepsilon^2$ from $\partial_{r}{\widetilde{G}}(r)$. The second challenge pertains to a small divisor problem arising from the time-space resonance, where the time direction degenerates as $\varepsilon$ tends to zero. To tackle this latter problem, we must work with Cantor sets on the parameter $\xi_0$ that are described by the Diophantine conditions,
\begin{align}\label{Cantor-set1}
\mathcal{A}=\bigcap_{(\ell,j)\in\mathbb{Z}^{2}\atop 2\leqslant |j|\leqslant N_{n}}\Big\lbrace \xi_0\in (a,b);\;\, \big|\varepsilon^2\omega(\xi_0)  \ell+\tfrac12(j-\textnormal{sign}(j))\big|\geqslant \varepsilon^{2+\delta}{| j|^{-\tau}}\Big\rbrace\quad\hbox{with}\quad  0<\delta<1.
\end{align}
Notice that it essential to have  $\delta>0$ in order to ensure that we cover an  almost full Lebesgue measure. In fact, we establish the following result,
$$
|\mathcal{A}|\geqslant b-a-C \varepsilon^\delta.
$$
Hence by restricting to this Cantor set we get a formal right inverse satisfying
$$
\|(\Pi^\perp\mathcal{L}_{0,0} \Pi^\perp)^{-1} h\|_{s}^{\textnormal{Lip}(\lambda)}\leqslant C\varepsilon^{-2-\delta}\|h\|_{s+2\tau}^{\textnormal{Lip}(\lambda)},
$$
where $\Pi$ denotes the projector on the spatial modes $\pm1$ and $\Pi^\perp=\hbox{Id}-\Pi$.
Now, in the context of  Nash-Moser scheme, when we initialize with $r=0$, the subsequent iteration will be associated with the term $(\Pi^\perp\mathcal{L}_{0,0} \Pi^\perp)^{-1}\widetilde{G}(0)$. However, considering \eqref{Initi-est} and the aforementioned estimate, it becomes evident that this quantity will diverge for small $\varepsilon$. 
Therefore, this approach makes the scheme unfeasible in its current form and prior to embarking on the invertibility of the linearized operator and  Nash-Moser scheme, it is imperative to construct a more suitable approximation for the nonlinear equation \eqref{FF-eq1}.\\
\ding{203} {\it \; Approximate solutions to \eqref{FF-eq1} and rescaling}. The construction of a good approximation  will be carefully explored  in Section \ref{Approxim-sol1} and  in particular in Lemma  
\ref{lem: construction of appx sol} where we will show the existence of a function $r_\varepsilon:(a,b)\times \T^2\to\R$ such that
$$
 \|r_\varepsilon\|_{s}^{\textnormal{Lip}(\lambda)}\lesssim 1\quad\hbox{and}\quad  \|\widetilde{G}( \varepsilon r_\varepsilon)\|_{s}^{\textnormal{Lip}(\lambda)}\lesssim \varepsilon^4.
 $$
 Unlike the intermediate Cantor sets, such as $\mathcal{A}$, where $\varepsilon$ exhibits some adverse effects, the time degeneracy will be an  advantage in constructing this approximate solution by inverting only spatial elliptic operators. This construction can be achieved straightforwardly by expanding $\widetilde{G}$ using Taylor series, in conjunction with the ansatz $r_\varepsilon=\varepsilon r_0+\varepsilon^2 r_1$. By identifying the leading terms with respect to $\varepsilon$, we arrive at an equation in the following form,
 $$
\big[\partial_\theta -\mathcal{H}\big] r_0+ \tfrac{2}{\varepsilon} \widetilde{G}(0)=0.
 $$
According to Lemma \ref{lem eq EDC r} the term $\widetilde{G}(0)=\tfrac1\varepsilon G(0)$ does not contain   the degenerate spatial \mbox{modes $\pm1$.} Consequently, we can solve this equation with a solution of magnitude $1$, by virtue of  \eqref{Initi-est}. Regarding $r_1$, we find a similar equation to that of  $r_0$ with a forcing term that assumes a complex form. Nonetheless, it possesses the advantage of not including the degenerate space \mbox{modes $\pm1$} at the leading term.
Finally, we will introduce a rescaling of the function $\widetilde{G}$ and proceed to work with the transformed functional equation
\begin{equation}\label{Scale-func}
\mathcal{F}(\rho)\triangleq\tfrac{1}{\varepsilon^{1+\mu}} G( \varepsilon r_\varepsilon+\varepsilon^{1+\mu}\rho)=0,
\end{equation}
where  $\mu\in(0,1)$ is a free parameter that will be adjusted during  Nash-Moser scheme and the measure of the  final Cantor set. \\
\ding{204} {\it \, Construction of an approximate right inverse.}
  To implement a modified Nash-Moser scheme that fits with  our purpose, we must first establish an approximate right inverse for the linearized operator of the functional $\mathcal{F}$ introduced in \eqref{Scale-func} for any small state $\rho$. This task will involve a series of well-defined steps. First, we  elucidate the asymptotic structure of the linearized operator for small $\varepsilon$. Following this, we will outline our strategy to conjugate  the transport part into an operator with constant coefficients. Subsequently, we will focus on reducing a truncated operator at a suitable order in $\varepsilon$ and addressing the challenge posed by the degeneracy of the spatial modes $\pm1$, employing the monodromy matrix as a tool.
Ultimately, we will construct an approximate right inverse through a perturbative argument. This comprehensive approach will enable us to effectively apply the modified Nash-Moser scheme to our problem.\\
\ding{228}{\it  \; Structure of the linearized operator.} We will see in  Proposition \ref{prop-size} that the linearized operator of $\mathcal{F}$ given by \eqref{Scale-func} 
  assumes the following  form   
\begin{align}\label{Lin-Op1}
\mathcal{L}_0[h] \triangleq\partial_\rho \mathcal{F}(\rho)[h] &=\varepsilon^2\omega_0\partial_\varphi  h +\partial_{\theta}\big[\mathcal{V}^\varepsilon(\rho)h \big]-\tfrac{1}{2}\mathcal{H}[h]-{\varepsilon^2}\mathcal{Q}_0[h](\varphi,\theta)+\varepsilon^3 \partial_\theta \mathcal{R}^\varepsilon_0(\rho)[h],
\end{align}
where the function $\mathcal{V}^\varepsilon(\rho)$ is given by
\begin{align*}
\nonumber\mathcal{V}^\varepsilon(\rho)(\varphi,\theta)&\triangleq \tfrac12-{\varepsilon^2}\omega_0\dot{\Theta}-{\varepsilon^2}\mathtt{g}(\varphi,\theta) - \tfrac{\varepsilon^{2+\mu}}{2} \rho(\varphi,\theta)+\varepsilon^{3}{V}^\varepsilon(\rho)(\varphi,\theta)
\end{align*}
and $ \mathcal{R}^\varepsilon_0$ is a smoothing operator in the space direction.\\
\ding{228}{\it \; Reduction of the transport part.} In section \ref{Sec-Red-Tran}, we will investigate the conjugation to constant coefficients the transport part of $\mathcal{L}_0$, that is 
 $$
 \mathcal{T}_0\triangleq \varepsilon^2\omega_0\partial_\varphi   +\partial_{\theta}\big[\mathcal{V}^\varepsilon(\rho)\cdot \big].
 $$  
 This  task will be carried out in two distinct stages, employing different transformations. The initial step is driven by the observation that the function $\mathcal{V}^\varepsilon$ in \eqref{def calV} contains terms of order~$\varepsilon^2$, which is not compatible with the standard  KAM scheme employed for transport part reduction, as documented in \cite{Baldi-berti,Baldi-Montalto21,BFM21,FGMP19}. This discrepancy arises due to the strong degeneracy rate of the Cantor sets described by \eqref{Cantor-set1}, which deteriorates at a rate of $\varepsilon^{2+\delta}$. In Proposition~\ref{QP-change0}, we will show that a smooth symplectic  change of coordinates system $\mathfrak{B}$ allows to get  
  \begin{align}\label{trans-M1}
\mathcal{T}_1\triangleq \mathfrak{B}^{-1}\mathcal{T}_0\mathfrak{B}=\varepsilon^2\omega_0\partial_\varphi +\partial_{\theta}\big[\big(\mathtt{c}_0+{\varepsilon^{6} }{\mathcal{V}}_0^\varepsilon({\rho}) \big)\cdot\big].
\end{align}
 Here,  $\mathtt{c}_0$ is constant with respect to the variables $\varphi,\theta$ and admits the following asymptotic behavior
 \begin{equation*}
\mathtt{c}_0= \tfrac12-\varepsilon^2\omega_0 +\varepsilon^{3}\mathtt{c}_1, \quad \| \mathtt{c}_1 \|^{\textnormal{Lip}(\lambda)}\lesssim 1.\end{equation*}
Once again, in this construction we take advantage of the time degeneracy in $\mathcal{T}_0$. We generate $\mathfrak{B}$ by solving  an elliptic equation that exclusively involves   a spatial operator, allowing to  sidestep the challenges posed by the small divisors problem stemming from the interaction between time and space.\\
The second step will be carried out in Section \ref{Section-Change2}. We will show  in Proposition \ref{QP-change} that for any given integer $n$, we can construct a symplectic change of coordinates $\mathscr{B}$ such that in the Cantor set
\begin{equation*}
{\mathcal{O}_{n}^{1}(\rho)}=\bigcap_{(\ell,j)\in\mathbb{Z}^{2}\atop 1\leqslant |j|\leqslant N_{n}}\Big\lbrace \xi_0\in (a,b);\;\, \big|\varepsilon^2\omega(\xi_0)  \ell+j \mathtt{c}(\xi_0,\rho)\big|\geqslant{\lambda}{| j|^{-\tau}}\Big\rbrace
\end{equation*}
we have 
\begin{align*}
\mathscr{B}^{-1}\mathcal{T}_1\mathscr{B}=\varepsilon^2\omega_0\partial_\varphi+ \mathtt{c}(\xi_0,\rho)\partial_{\theta}+\mathtt{E}_{n}
\end{align*}
with $\xi_0\mapsto\mathtt{c}(\xi_0,\rho)$ a function independent of the time-space variables and  conforms to  the following estimate
\begin{equation*}
\| \mathtt{c}-\mathtt{c}_0 \|^{\textnormal{Lip}(\lambda)}\lesssim {\varepsilon}^{{6}}.
\end{equation*}
Whereas $\mathtt{E}_{n}$ denotes a linear  operator satisfying the frequency decay estimate,
\begin{equation*}
\|\mathtt{E}_{n}h\|_{s_0}^{\textnormal{Lip}(\lambda)}\lesssim \varepsilon^{{6}} N_{0}^{\mu_{2}}N_{n+1}^{-\mu_{2}}\|h\|_{s_{0}+2}^{\textnormal{Lip}(\lambda)}.
\end{equation*}
The proof will be conducted  in a similar way to the recent papers \cite{BFM,FGMP19,HHM21,HR21} where we employ  KAM scheme to deal with the small divisors problem. Actually, it is a consequence of a more general statement that will be formulated in  Proposition \ref{Thm transport}.\\
\ding{228}{\it \; Invertibility of $\mathcal{L}_0$}. The main statement is described by  Proposition \ref{prop-inverse}. Now, let us   sketch some key  ideas of the proof. We define first the symplectic change of coordinates  as follows
$$
\mathscr{A}\triangleq \mathfrak{B}\mathscr{B}.
$$
Then by making use of  Proposition \ref{lemma-beta1}, we find  that $\mathscr{A}$ satisfies tame estimates and in the Cantor set ${\mathcal{O}_{n}^{1}(\rho)}$, 
\begin{align*}
\mathcal{L}_1\triangleq\mathscr{A}^{-1}\mathcal{L}_0\mathscr{A}&= \varepsilon^2\omega_0\partial_\varphi +\mathtt{c}(\xi_0,\rho)\partial_{\theta}-\tfrac{1}{2}\mathcal{H}-{\varepsilon^2 } \partial_\theta\mathcal{Q}_1 +O\big({\varepsilon^{6}\lambda ^{-1}}+\varepsilon^{2+\mu}\big)+\mathtt{E}_{n}
\end{align*}
where 
\begin{align*}
\mathcal{Q}_1[h](\varphi,\theta)&\triangleq \int_{\mathbb{T}}h(\varphi+\pi,\eta)\textnormal{Re}\Big\{ \tfrac{e^{\ii(\theta+\eta+2\varphi-2\Theta(\varphi))}}{q(\varphi)}-\tfrac{e^{\ii(\theta-\eta)}}{\big( \sqrt{q(\varphi)}\cos( \Theta(\varphi))+\ii y_0\big)^2}\Big\} d\eta \nonumber
\\
&-\tfrac12\int_{\mathbb{T}} h( \varphi, \eta) \textnormal{Re}\Big\{\tfrac{e^{\ii(\theta+\eta+2\varphi- 2 {\Theta}(\varphi))}}{q(\varphi)}+\tfrac{e^{\ii (\theta+\eta+2\varphi)}+2e^{\ii(\theta-\eta)}}{\big( \sqrt{q(\varphi)}\sin( \Theta(\varphi))+ y_0\big)^2}-\tfrac{e^{\ii (\theta+\eta+2\varphi)}}{\big( \sqrt{q(\varphi)}\cos( \Theta(\varphi))+\ii y_0\big)^2}\Big\}  d\eta .
\end{align*}
Notice that the operator $\mathcal{Q}_1$ exhibits  time variable coefficients but it has the advantage to  localize in Fourier side  on the spatial modes $\pm1.$ However, it involves a time delay operator that will have some strong  consequences on  its invertibility.
By performing a perturbative argument, inverting  $\mathcal{L}_0$ boils down to invert  the truncated operator
\begin{align*}
\mathbb{L}_1\triangleq& \varepsilon^2\omega_0\partial_\varphi +\mathtt{c}(\xi_0,\rho)\partial_{\theta}-\tfrac{1}{2}\mathcal{H}-{\varepsilon^2 } \partial_\theta\mathcal{Q}_1.
\end{align*}
However, due to the degeneracy of the spatial modes $\pm1$ we need to split the phase space into two parts as described in \eqref{Xs-space}, that is,
$$
\hbox{Lip}_\lambda(\mathcal{O},H_0^{s})=X^{s}_{\circ}\oplus X^s_{\perp},
$$
where $X^{s}_{\circ}$ is the set of functions whose Fourier decomposition  involves only the spatial \mbox{modes $\pm1.$ } As we shall investigate in Section \ref{sec-Norm76}, the restriction of $\mathbb{L}_1$ on the normal direction $X^s_{\perp}$ gives rise to an invertible operator with a loss of regularity.
The analysis of its restriction to the subspace $X^{s}_{\circ}$ will be carried out meticulously in Section \ref{sec-monodromy}. This analysis is intricately connected to certain properties of a companion monodromy matrix. We emphasize  that the analysis of the time delay  concerns only the spatial modes $\pm1$.  Its effect  on the remaining modes  scales at the size $\varepsilon^3$, which can be simply  included in the error terms  during the implementation of   the perturbative argument for inverting the linearized operator. The main  observation  driving  the perturbative argument   revolves around  the structure of the  Cantor sets which are penalized by a spatial frequency decay that can be managed with the spatial smoothing effects of  the remainders. This contrasts with  the quasi-periodic setting  where we penalize with time frequency decay. The advantage here is that we do not need to reduce the remainder, unlike the quasi-periodic case. 
\\
\ding{205} \; {\it Nash-Moser scheme and measure of the final Cantor set.}
The  construction of   solutions to  the nonlinear transport equation \eqref{Scale-func} will be carried out in Section \ref{N-M-S1}.  We basically  follow   a modified Nash-Moser scheme as in the papers \cite{Baldi-berti,BCP,BertiMontalto},  albeit with some slight simplifications in our framework. As proved  in Corollary \ref{prop-construction}, the solutions are built when the internal parameter $\xi_0$ belongs to a carefully chosen  Cantor set.\\
 We will establish a lower bound for the Lebesgue measure of the final Cantor  set, see \mbox{Lemma \ref{Lem-Cantor-measu},} where the main ingredient is the non degeneracy of the point vortex frequency $\xi_0\mapsto \omega_0(\xi_0)$ established in Lemma \ref{lem-period}. \\

  Let us  precise some notational conventions that will be frequently used in this paper.
 \paragraph{Notations.}
\ding{227} The sets of numbers that will be frequently used are denoted as follows  
		$$
		\mathbb{N}= \{0,1,2,\ldots\},\quad\mathbb{N}^*= \mathbb{N}\setminus\{0\},\quad\mathbb{Z}=\mathbb{N}\cup(-\mathbb{N}),\qquad\mathbb{Z}^*= \mathbb{Z}\setminus\{0\}, \qquad \mathbb{T}= \mathbb{R}/2\pi\mathbb{Z}.
		$$
\ding{227} The set of all $n\times n$ square  matrices with complex number entries is denoted $M_n(\mathbb{C})$.\\		
	\ding{227} We consider a list of real numbers with the constraints
	\begin{equation}\label{cond1}
 \lambda,\mu \in(0,1),\quad \tau>1, \quad S\geqslant s\geqslant s_0>3.
\end{equation}
\ding{227} We fix
	\begin{equation}\label{cond-interval}
y_0>0, \qquad 0<\xi_*<\xi^*<\tfrac{y_0}{\sqrt{2}}, \qquad \mathcal{O}= [\xi_*,\xi^*].
\end{equation}
\ding{227} For any function $h:\T\to\R$ we define its average by
		$$
		\langle h\rangle =\int_{\mathbb{T}}h(\eta)d\eta=\frac{1}{2\pi }\int_0^{2\pi}h(\eta)d\eta.
		$$ 
		\ding{227}  For any functional  $r\mapsto F(r)$  ($r$ belongs to a some set of functions), we define the difference
		$$
		\Delta_{12}F=F(r_1)-F(r_2).
		$$

	\section{Leafrogging quartets of  points vortices }\label{sec:lep-vortex-pairs}
	In this section, we aim to analyze the dynamics of four vortices with equal absolute circulation,  where the system has the real axis as a symmetry axis. While certain aspects of their dynamics have been previously explored several decades ago in \cite{Love}, our focus here will be on the conditions necessary to achieve leapfrogging. In this framework,  leapfrogging corresponds to a non-rigid time periodic  motion in a relative frame that translates. We will also delve into quantitative estimates of the period and its analytical dependence on the initial configuration that will be needed later.

We consider  four point vortices  in the complex plane, where the first pair comprises  two points  $z_1$ and $z_2$, initially  located at the upper half plane with  the same circulation $\pi$. The second pair is located at the complex conjugate numbers  $\overline{z_1}$ and $\overline{z_2}$ with the same negative  \mbox{circulation $-\pi$.}
The equations of motion of our point vortex system are
	\begin{equation}\label{Vort-leap}
	\dot{z}_k(t)=\frac{\ii}{2}\Big[\frac{1}{\overline{z_k}-\overline{z_{3-k}}}-\frac{1}{\overline{z_k}-{z_{k}}}-\frac{1}{\overline{z_k}-{z_{3-k}}}\Big], \quad k=1,2.
	\end{equation}
	Denote by 
		\begin{equation}\label{notation4pts}
		\begin{aligned}
		x_0(t)\triangleq \textnormal{Re}\{z_1(t)+z_2(t)\},\qquad y_0(t)\triangleq \textnormal{Im}\{z_1(t)+z_2(t)\},\\
		\eta(t)\triangleq \textnormal{Re}\{z_1(t)-z_2(t)\}, \qquad \xi(t)\triangleq \textnormal{Im}\{z_1(t)-z_2(t)\}. 
		\end{aligned}
		\end{equation}
Then, the system \eqref{Vort-leap} is equivalent to	
\begin{eqnarray}   \label{A04pts0}
       \left\{\begin{array}{ll}
	\dot{\eta}&=-\frac{\xi(y_0^2+\eta^2)}{(\xi^2+\eta^2)(y_0^2-\xi^2)},
	\\
	\dot{\xi}&=\frac{\eta(y_0^2-\xi^2)}{(\xi^2+\eta^2)(y_0^2+\eta^2)},
	\\
	\dot{x_0}&=y_0\big(\frac{1}{y_0^2+\eta^2}+\frac{1}{y_0^2-\xi^2}\big), 
	\\
	\dot{y_0}&=0.
	 \end{array}\right.
\end{eqnarray}
This can be expressed in terms of the Hamiltonian as
\begin{eqnarray}   \label{A04pts}
       \left\{\begin{array}{ll}
          	{\dot{\eta}}&=\partial_{\xi} H(\eta,\xi), \\
\dot{\xi}&=-\partial_{\eta} H(\eta,\xi), 
       \end{array}\right.\qquad H(\eta,\xi)=-\tfrac12\log\Big(\frac{1}{y_0^2-\xi^2}-\frac{1}{y_0^2+\eta^2}\Big).
\end{eqnarray}	
Now, assume that
$$
x_0(0)=0,\quad y_0(0)=y_0>0,\quad \eta(0)=0 \quad \textnormal{and}\quad\xi(0)=\xi_0>0.$$
 Then, from the identity  
$$
H(\eta,\xi)=H(0,\xi_0),
$$
we obtain
\begin{equation}\label{identity:Hamiltonian0}
\frac{(y_0^2-\xi^2)(y_0^2+\eta^2)}{\xi^2+\eta^2}=e^{2H(0,\xi_0)}=y_0^2\Big(\frac{y_0^2}{\xi_0^2}-1\Big)\triangleq h_0,
\end{equation}
which can also be written as
\begin{equation}\label{identity:Hamiltonian}
(\eta^2+y_0^2+h_0)(\xi^2-y_0^2+h_0)=h_0^2.
\end{equation}  
Therefore,  the orbits are contained in the set  
	\begin{equation}\label{Alg-var}
	\Big\{(\eta,\xi)\in\R^2,\quad \big(\eta^2+\tfrac{y_0^4}{\xi_0^2}\big)\big(\xi^2+\tfrac{y_0^4}{\xi_0^2}-2y_0^2\big)=y_0^4\big(\tfrac{y_0^2}{\xi_0^2}-1\big)^2\Big\},
	\end{equation}
which has a two fold structure.	
The motion is periodic provided  that the equation \eqref{identity:Hamiltonian0} admits a positive solution  $\eta$ when $\xi=0$, that is
\begin{equation}\label{eta-equation0}
\eta^2=\tfrac{\xi_0^2y_0^2}{y_0^2-2\xi_0^2}
\end{equation}
admits a positive solution. This is true if  
$$
0<\tfrac{\xi_0}{y_0}<\tfrac{\sqrt{2}}{2}.
$$
If $T=T(\xi_0)>0$ denotes the period, then at $T/4$ the point $z_1(t)-z_2(t)$ is located at the horizontal axis, that is 
	\begin{equation}\label{bound xi 0t4}
	\xi(T/4)=0\quad\hbox{and}\quad 0<\xi(t)\leqslant \xi_0, \quad\forall t\in[0,T/4).
	\end{equation}
Moreover, denoting
\begin{equation}\label{varia1}
 \alpha_0\triangleq \tfrac{\xi_0^2}{y_0^2}\quad\textnormal{and}\quad s\triangleq -\tfrac{\sqrt{1-2 \alpha_0}}{\xi_0}\eta,
\end{equation}
we obtain, from \eqref{identity:Hamiltonian0}, 
$$
\begin{aligned}
\xi^2
 &=\xi_0^2\frac{1-s^2}{1+\frac{ \alpha_0^2}{1-2 \alpha_0} s^2}\cdot
\end{aligned}
$$
It follows from the first equation of the system \eqref{A04pts0} that
\begin{align*}
\dot{s}=\frac{ (1-2 \alpha_0)}{\xi_0^2(1- \alpha_0)}\frac{\sqrt{(1-s^2)({1-2 \alpha_0}+{ \alpha_0^2}  s^2)^{3}}}{( {1-2 \alpha_0}+{ \alpha_0} s^2)^2},\qquad s(0)=0, \qquad s(T/4)=1.
\end{align*}
Integrating this equation leads to
\begin{align}\label{Period}
T( \xi_0)&=\frac{4\xi_0^2(1- \alpha_0)}{ (1-2 \alpha_0)}\bigintss_0^1\frac{( 1-2 \alpha_0+{ \alpha_0}  s^2)^2} {\sqrt{(1-s^2)(1-2 \alpha_0 +\alpha_0^2 s^2)^{3}}}ds.
\end{align}
In view if of the identity 
$$
\big( \tfrac{1-2 \alpha_0}{ \alpha_0}+  s^2\big)^2=\big(\tfrac{1-2 \alpha_0}{ \alpha_0^2}\big)^2( { \alpha_0}-1)^2+\big( \tfrac{1-2 \alpha_0}{ \alpha_0^2}+  s^2\big)^2+2\big(\tfrac{1-2 \alpha_0}{ \alpha_0^2}\big)( { \alpha_0}-1)\big(\tfrac{1-2 \alpha_0}{ \alpha_0^2}+  s^2\big),
$$
 the expression of $T( \xi_0)$ also writes 
 \begin{align*}
T( \xi_0)&=\frac{4\xi_0^2(1- \alpha_0)}{ \alpha_0(1-2 \alpha_0)}\Bigg[\bigintss_0^1\frac{\sqrt{ \frac{1-2 \alpha_0}{ \alpha_0^2}+  s^2}}{\sqrt{1-s^2}}ds
+(1-\alpha_0)^2\big(\tfrac{1-2 \alpha_0}{ \alpha_0^2}\big)^2\bigintss_0^1\frac{ ds}{\sqrt{(1-s^2)(\frac{1-2 \alpha_0}{ \alpha_0^2}+  s^2)^{3}}}\\
&\qquad\qquad\qquad -2(1-\alpha_0)\big(\tfrac{1-2 \alpha_0}{ \alpha_0^2}\big) \bigintss_0^1\frac{ ds} {\sqrt{(1-s^2)(\frac{1-2 \alpha_0}{ \alpha_0^2}+  s^2)}}\Bigg].
\end{align*}
Thus, by \cite[Formula 3 page 310]{Z2014}, \cite[Formula 5 page 288]{Z2014}, \cite[Formula 3 page 282]{Z2014} we obtain
\begin{align*}
T( \xi_0)
&=\tfrac{8\xi_0^2(1- \alpha_0)}{(1-2 \alpha_0)}\Big[\tfrac{(1- \alpha_0)^2}{ \alpha_0^2}\mathtt{E}\big(\tfrac{\alpha_0}{1-\alpha_0}\big)
-\tfrac{1-2 \alpha_0}{\alpha_0^2}\mathtt{K}\big(\tfrac{\alpha_0}{1-\alpha_0}\big) \Big],
\end{align*}
where $\alpha_0=\frac{\xi_0}{y_0}$, $\mathtt{E}$ and $\mathtt{K}$ are the complete elliptic integrals of the first
and second kind, respectively.  We point out that we break  here with prior convention and use the initial distance $\xi_0$ separating  $z_1(0)$ and $z_2(0)$  to parameterize the family of of periodic orbits, rather than using the ratio of the breadths of the vortex pairs, $\alpha=\frac{y_0-\xi_0}{y_0+\xi_0}$, as was done in \cite{Love,Aref} or the value $h_0$ of the Hamiltonian $H$ in \eqref{identity:Hamiltonian0} as was done in \cite{Behring}.
We shall denote by $\omega_0$ the frequency of the periodic orbit, that is, 
\begin{align}\label{Frequence}
	\omega_0\triangleq\,\omega(\xi_0)=\frac{2\pi}{T(\xi_0)}\cdot
		\end{align}
Next, we introduce the angle-action coordinates
	$$
	\eta(t)+\ii\xi(t)=\sqrt{\mathtt{I}(t)}e^{\ii \phi(t)}.
	$$
	In view of \eqref{A04pts0}, we get  the system
	\begin{eqnarray}   \label{A}
        \left\{\begin{array}{ll}
          	{\dot{\mathtt{I}}}+\Big[\frac{1}{y_0^2-\mathtt{I}\sin^2(\phi)}+\frac{1}{y_0^2+\mathtt{I}\cos^2(\phi)}\Big]\sin(2\phi)\mathtt{I}&=0, \\
\dot\phi-\frac{1}{\mathtt{I}}-\frac{\sin^2(\phi)}{y_0^2-\mathtt{I}\sin^2(\phi)}+\frac{\cos^2(\phi)}{y_0^2+\mathtt{I}\cos^2(\phi)}&=0,\\
\mathtt{I}(0)=\mathtt{I}_0>0,\qquad\qquad\qquad \phi(0)&=\frac\pi2.
       \end{array}\right.
\end{eqnarray}	
This system can be recast in terms of  a Hamiltonian one  as 
\begin{eqnarray}   \label{A0}
       \left\{\begin{array}{ll}
          	{\dot{\mathtt{I}}}&=\partial_{\phi} H_0(\mathtt{I},\phi), \\
\dot\phi&=-\partial_{\mathtt{I}} H_0(\mathtt{I},\phi), 
       \end{array}\right.\;\; H_0(\mathtt{I},\phi)=-\ln\big( \mathtt{I}\big)+\ln\big( y_0^2-\mathtt{I}\sin^2(\phi)\big)+\ln\big( y_0^2+\mathtt{I}\cos^2(\phi)\big).
\end{eqnarray}	
From this structure we easily derive the following  constant of motion
$$
H_0(\mathtt{I}(t),\phi(t))=H_0(\mathtt{I}_0,\tfrac\pi2).
$$
Notice that the periodicity for \eqref{A} can be read in the form
		\begin{align}\label{B-11}
		\forall t\in\R,\quad \mathtt{I}(t+T(\xi_0))=\mathtt{I}(t)\quad\hbox{and}\quad \phi(t+T(\xi_0))=\phi(t)+2\pi.
		\end{align}

The next result deals with some useful estimates.
\begin{lemma}\label{lem-period}
	Let $y_0>0$, then the following assertions hold true.
	\begin{enumerate} 
\item The two mappings $\xi_0\in(0,\frac{y_0}{\sqrt{2}})\mapsto T(\xi_0),\omega_0(\xi_0)$ are analytic. 	
\item	 The mapping $\xi_0\in(0,\frac{y_0}{\sqrt{2}})\mapsto \,\omega(\xi_0) $ is strictly decreasing. In addition, for any compact set $\mathfrak{C}\subset (0,\frac{y_0}{\sqrt{2}})$ 
$$
{\inf_{\xi_0\in \mathfrak{C}}|\omega^\prime(\xi_0)|>0.}
	$$
\item For any  $0\leqslant \alpha_0=\frac{\xi_0^2}{y_0^2}<\frac12$, we have
\begin{align*}
	2\pi \xi_0^2\leqslant T(\xi_0)
	&\leqslant 2\pi \xi_0^2 \tfrac{1}{1-2\alpha_0}\cdot
	\end{align*}
	
		\item The function $t\in\R\mapsto \phi(t)-\omega_0 t$ is $T(\xi_0)$-periodic. In addition, there exists $C>0$ such that 	$$
\tfrac{\xi_0^2}{y_0^2}<\tfrac12\Longrightarrow \sup_{t\in\R}\big|\tfrac{1}{\mathtt{I}(t)}-\tfrac{1}{\mathtt{I}_0}\big|+\sup_{t\in\R}|\dot\phi(t)-\omega_0 |\leqslant \tfrac{6 \pi}{y_0^2(1-2\alpha_0)}e^{ \tfrac{6\pi \alpha_0}{1-2\alpha_0}}.
$$
\item The functions  $\mathtt{I},\phi:(0,\frac{y_0}{\sqrt{2}})\times \R\mapsto \mathtt{I}(\xi_0,t),\phi(\xi_0,t)$ are real analytic.
	\end{enumerate}
	\end{lemma}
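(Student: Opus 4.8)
The plan is to extract everything from the two closed-form representations already derived: the elliptic-integral formula
$$
T(\xi_0)=\tfrac{8\xi_0^2(1-\alpha_0)}{(1-2\alpha_0)}\Big[\tfrac{(1-\alpha_0)^2}{\alpha_0^2}\mathtt{E}\big(\tfrac{\alpha_0}{1-\alpha_0}\big)-\tfrac{1-2\alpha_0}{\alpha_0^2}\mathtt{K}\big(\tfrac{\alpha_0}{1-\alpha_0}\big)\Big],
$$
and the integral representation
$$
T(\xi_0)=\tfrac{4\xi_0^2(1-\alpha_0)}{(1-2\alpha_0)}\bigintss_0^1\frac{(1-2\alpha_0+\alpha_0 s^2)^2}{\sqrt{(1-s^2)(1-2\alpha_0+\alpha_0^2 s^2)^3}}\,ds,
$$
with $\alpha_0=\xi_0^2/y_0^2\in(0,\tfrac12)$. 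For \emph{(1)}, I would note that $\alpha_0\mapsto\mathtt{K}(\alpha_0/(1-\alpha_0))$ and $\mathtt{E}(\alpha_0/(1-\alpha_0))$ are real analytic on $(0,\tfrac12)$ since the modulus $k=\alpha_0/(1-\alpha_0)$ stays in $(0,1)$ there and the complete elliptic integrals are analytic in the modulus on $[0,1)$; the prefactors are rational in $\alpha_0$ with no poles on $(0,\tfrac12)$, and $\xi_0\mapsto\alpha_0$ is analytic, so $T$ is analytic and nonvanishing (positivity is clear from the integral form), hence $\omega_0=2\pi/T$ is analytic too. For \emph{(5)} the same reasoning applies jointly in $(\xi_0,t)$: the vector field in \eqref{A04pts0} is real analytic in $(\eta,\xi)$ away from the singular locus, the orbit stays in a compact region bounded away from the singularities on any compact $\xi_0$-interval (by \eqref{identity:Hamiltonian0} and \eqref{bound xi 0t4}), so analytic dependence on initial data gives joint analyticity of $\mathtt{I},\phi$.

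For \emph{(3)} I would work from the integral representation. Since $\alpha_0 s^2\le\alpha_0$ and $\alpha_0^2 s^2\ge0$, the numerator $(1-2\alpha_0+\alpha_0 s^2)^2\le(1-\alpha_0)^2$ and the factor $(1-2\alpha_0+\alpha_0^2 s^2)^3\ge(1-2\alpha_0)^3$, giving
$$
T(\xi_0)\le\tfrac{4\xi_0^2(1-\alpha_0)}{(1-2\alpha_0)}\cdot\tfrac{(1-\alpha_0)^2}{(1-2\alpha_0)^{3/2}}\int_0^1\tfrac{ds}{\sqrt{1-s^2}}=\tfrac{4\xi_0^2(1-\alpha_0)^3}{(1-2\alpha_0)^{5/2}}\cdot\tfrac\pi2;
$$
this is not quite the stated bound, so I would instead estimate more carefully, using $1-2\alpha_0+\alpha_0 s^2\le(1-2\alpha_0+\alpha_0^2 s^2)\cdot\frac{1}{\text{something}}$ — more precisely, since for $s\in[0,1]$ one has $\frac{(1-2\alpha_0+\alpha_0 s^2)^2}{(1-2\alpha_0+\alpha_0^2 s^2)^{3}}\le\frac{1}{1-2\alpha_0}$ when $\alpha_0<\tfrac12$ (check: at $s=0$ it equals $(1-2\alpha_0)^{-1}$; the ratio is decreasing in $s$ because the denominator grows faster), we get $T(\xi_0)\le\frac{4\xi_0^2(1-\alpha_0)}{(1-2\alpha_0)^2}\cdot\frac\pi2=\frac{2\pi\xi_0^2(1-\alpha_0)}{(1-2\alpha_0)^2}\le\frac{2\pi\xi_0^2}{1-2\alpha_0}$; for the lower bound, $\frac{(1-2\alpha_0+\alpha_0 s^2)^2}{(1-2\alpha_0+\alpha_0^2 s^2)^3}\ge\frac{(1-2\alpha_0)^2}{(1-\alpha_0)^2\cdot(1-2\alpha_0+\alpha_0^2)}$ — I would take whichever crude bound yields exactly $\int_0^1\frac{ds}{\sqrt{1-s^2}}\cdot(1-2\alpha_0)^{-1}\cdot\frac{(1-2\alpha_0)}{(1-\alpha_0)}\ge\ldots$; the cleanest route to $T(\xi_0)\ge2\pi\xi_0^2$ is to observe $(1-2\alpha_0+\alpha_0 s^2)^2\ge(1-2\alpha_0+\alpha_0^2 s^2)\cdot(1-2\alpha_0)$ (since $(1-2\alpha_0+\alpha_0 s^2)\ge(1-2\alpha_0)$ and $(1-2\alpha_0+\alpha_0 s^2)\ge(1-2\alpha_0+\alpha_0^2 s^2)$ as $\alpha_0\ge\alpha_0^2$), yielding integrand $\ge\frac{1-2\alpha_0}{\sqrt{(1-s^2)(1-2\alpha_0+\alpha_0^2 s^2)}}\ge\frac{1-2\alpha_0}{\sqrt{1-s^2}\cdot(1-\alpha_0)}$, hence $T\ge\frac{4\xi_0^2(1-\alpha_0)}{1-2\alpha_0}\cdot\frac{1-2\alpha_0}{1-\alpha_0}\cdot\frac\pi2=2\pi\xi_0^2$.

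For \emph{(2)}, strict monotonicity of $\omega_0$ is equivalent to strict monotonicity of $T$; I would substitute $T(\xi_0)=y_0^2\,\tau(\alpha_0)$ with $\tau(\alpha_0)=\frac{4\alpha_0(1-\alpha_0)}{1-2\alpha_0}\int_0^1\frac{(1-2\alpha_0+\alpha_0 s^2)^2}{\sqrt{(1-s^2)(1-2\alpha_0+\alpha_0^2 s^2)^3}}ds$ and show $\tau'(\alpha_0)>0$ on $(0,\tfrac12)$ by differentiating under the integral sign and checking the integrand's $\alpha_0$-derivative is positive pointwise (each factor $\alpha_0(1-\alpha_0)/(1-2\alpha_0)$ and the $s$-integral are increasing in $\alpha_0$: the prefactor visibly so, and inside the integral the numerator increases while the denominator's bracket $1-2\alpha_0+\alpha_0^2 s^2$ decreases for small $\alpha_0$, both pushing the integrand up). Then $\omega_0'(\xi_0)=-\frac{2\pi}{T^2}T'(\xi_0)<0$, and on a compact $\mathfrak{C}\subset(0,\tfrac{y_0}{\sqrt2})$ the continuous function $|\omega_0'|$ attains a positive minimum. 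For \emph{(4)}, $T$-periodicity of $t\mapsto\phi(t)-\omega_0 t$ is immediate from \eqref{B-11}. For the quantitative bound I would integrate the second equation of \eqref{A}: $\dot\phi-\omega_0=\frac1{\mathtt I}-\frac1{\mathtt I_0}+\big(\frac1{\mathtt I_0}-\omega_0\big)+\frac{\sin^2\phi}{y_0^2-\mathtt I\sin^2\phi}-\frac{\cos^2\phi}{y_0^2+\mathtt I\cos^2\phi}$; since $\mathtt I(t)=\xi^2+\eta^2$ with $\xi^2\le\xi_0^2$ (and $\eta^2\le\frac{\xi_0^2 y_0^2}{y_0^2-2\xi_0^2}$ by \eqref{eta-equation0}) one controls $\mathtt I$ from above and below on the orbit, bounds $y_0^2-\mathtt I\sin^2\phi\ge y_0^2-\xi_0^2\ge y_0^2(1-2\alpha_0)/(1-\alpha_0)$ away from zero, estimates the $\frac1{\mathtt I}$ fluctuation by a Grönwall-type argument on the first equation of \eqref{A} over one quarter-period, and assembles the constants so the exponential $e^{6\pi\alpha_0/(1-2\alpha_0)}$ and prefactor $\frac{6\pi}{y_0^2(1-2\alpha_0)}$ come out; I expect this last assembly of explicit constants to be the most delicate bookkeeping, but it is routine once the a priori bounds on $\mathtt I$ and on the distances to the singular set are in hand. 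The main genuine obstacle is \emph{(2)}: proving the sign of $T'$ (equivalently $\tau'$) cleanly without a messy derivative-under-the-integral computation — the safest approach is to differentiate the integral representation rather than the $\mathtt{E},\mathtt{K}$ form, since there the $\alpha_0$-monotonicity of each factor is transparent.
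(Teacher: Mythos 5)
Your plan for parts (1), (4), (5) and for the lower bound in (3) is sound and close in spirit to the paper's proof (the paper gets $T\geqslant 2\pi\xi_0^2$ by showing the normalized integrand $K(\alpha_0,s)=\frac{(1-\alpha_0)(1-2\alpha_0+\alpha_0 s)^2}{(1-2\alpha_0)(1-2\alpha_0+\alpha_0^2 s)^{3/2}}$ satisfies $K\geqslant K(0,s)=1$ via $\partial_{\alpha_0}K>0$, while your chain $N^2\geqslant (1-2\alpha_0)D$, $\sqrt{D}\leqslant 1-\alpha_0$ is an equally valid alternative). However, there are two genuine gaps. First, your upper bound in (3) rests on the inequality $\frac{(1-2\alpha_0+\alpha_0 s^2)^2}{(1-2\alpha_0+\alpha_0^2 s^2)^{3}}\leqslant\frac{1}{1-2\alpha_0}$, which is false: the ratio is \emph{increasing} in $s$ (its logarithmic derivative in $u=s^2$ is $\frac{2\alpha_0}{N}-\frac{3\alpha_0^2}{D}>0$ for $\alpha_0$ away from $\tfrac12$), and at $s=1$ it equals $(1-\alpha_0)^{-4}$, which exceeds $(1-2\alpha_0)^{-1}$ already at $\alpha_0=0.4$. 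Moreover your final reduction $\frac{2\pi\xi_0^2(1-\alpha_0)}{(1-2\alpha_0)^2}\leqslant\frac{2\pi\xi_0^2}{1-2\alpha_0}$ would require $\frac{1-\alpha_0}{1-2\alpha_0}\leqslant 1$, which fails for every $\alpha_0>0$. The correct route, which is what the paper does, is to keep the prefactor $\frac{1-\alpha_0}{1-2\alpha_0}$ \emph{inside} the integrand and show that $s\mapsto K(\alpha_0,s)$ is increasing, so that $K(\alpha_0,s^2)\leqslant K(\alpha_0,1)=\frac{1}{1-2\alpha_0}$ exactly.

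Second, your argument for (2) does not close. You claim that in the factorization $\tau(\alpha_0)=\frac{4\alpha_0(1-\alpha_0)}{1-2\alpha_0}\,J(\alpha_0)$ both factors increase, with the integrand of $J$ increasing because "the numerator increases while the denominator's bracket decreases." But $\partial_{\alpha_0}(1-2\alpha_0+\alpha_0 s^2)=-2+s^2\leqslant -1<0$, so the numerator \emph{decreases}; and at $s=0$ the integrand of $J$ equals $\sqrt{1-2\alpha_0}/\sqrt{1-s^2}$, which is manifestly decreasing in $\alpha_0$, so pointwise monotonicity of $J$'s integrand fails outright. You correctly identify (2) as the main obstacle, but the shortcut you propose does not avoid it: the sign of $T'$ genuinely requires computing $\partial_{\alpha_0}K$ for the full integrand (prefactor included) and rearranging it into a sum of manifestly positive terms in powers of $(1-2\alpha_0)$, which is precisely the "messy" computation the paper carries out; there is no factor-wise monotonicity argument available.
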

	\begin{proof}
		{\bf{1)}}
		Coming back to \eqref{Period}, we write  
			\begin{align}\label{periodformL1}
	T(\xi_0)		&=4 \xi_0^2 \bigintss_0^{1}\frac{K(\alpha_0,s^2)}{\sqrt{1-s^2}}ds,\quad K(\alpha_0,s)\triangleq \frac{(1- \alpha_0)( 1-2 \alpha_0+{ \alpha_0}  s)^2}{(1-2 \alpha_0)(1-2 \alpha_0 +\alpha_0^2 s)^{\frac32}}\cdot
	\end{align}
	
	Hence for any $\delta_0\in(0,1)$ small enough
\begin{align*}
\forall s\in[0,1],\quad\forall \delta_0< \alpha_0<\tfrac{1}{2},\quad  K(\alpha_0,s^2)&\geqslant \tfrac{8\delta_0}{25}.
\end{align*}
This allows  to get the analyticity of $\alpha_0\mapsto T(\xi_0)$. Invoking a composition law we deduce  that  $\xi_0\in(0,\frac{y_0}{\sqrt{2}})\mapsto T(\xi_0)$ is analytic.\\
	{\bf{2)}} From direct computations we infer
\begin{align*}
	&\partial_{ \alpha_0}  K({ \alpha_0},s^2)
	=\tfrac{( 1-2 \alpha_0+{ \alpha_0}  s^2)}{(1-2 \alpha_0)^2(1-2 \alpha_0 +\alpha_0^2 s^2)^{\frac52}}\bigg[( 1-2 \alpha_0+{ \alpha_0}  s^2)(1-2 \alpha_0 +\alpha_0^2 s^2)
	\\ &+(1-2 \alpha_0)(1- \alpha_0)\Big({2(s^2-2){(1-2 \alpha_0 +\alpha_0^2 s^2)+3(1-\alpha_0}s^2)( 1-2 \alpha_0+{ \alpha_0}  s^2)}\Big)\bigg].
	\end{align*}
That is
\begin{align*}
	\partial_{ \alpha_0}  K({ \alpha_0},s^2)
	&=\tfrac{( 1-2 \alpha_0+{ \alpha_0}  s^2)}{(1-2 \alpha_0)^2(1-2 \alpha_0 +\alpha_0^2 s^2)^{\frac52}}\bigg[2s^2( 1-2 \alpha_0)^3+( 1-2 \alpha_0)^2\Big((3\alpha_0-1)\alpha_0s^2+4\alpha_0\Big)
	\\ &+(1-2 \alpha_0)\alpha_0  s^2\Big(4(1-2\alpha_0)+\alpha_0(2-s^2)+\alpha_0^2s^2+4 \alpha_0^2 \Big)+\alpha_0^3 s^4\bigg].
	\end{align*}
Thus, we conclude that
	\begin{align}\label{mima-r1}
	\forall s\in(0,1), \quad \forall \alpha_0\in(0,\tfrac12),\quad \partial_{\alpha_0} K(\alpha_0,s^2)>0.
	\end{align}
	It follows,  from \eqref{periodformL1}, 
 the chain rule and \eqref{varia1}, that
	\begin{align}\label{diff-per2}
	\forall \xi_0\in\big(0,\tfrac{y_0}{\sqrt{2}}\big),\quad \partial_{\xi_0} T>0.
	\end{align}
	Hence for any compact $\mathfrak{C}\subset  (0,\frac{y_0}{\sqrt{2}})$, we have
$$
{\inf_{\xi_0\in \mathfrak{C}}\partial_{\xi_0} T(\xi_0)>0.}
	$$
Since $T(\xi_0)>0$, then
$$
{\inf_{\xi_0\in K}|\omega^\prime(\xi_0)|>0.}
	$$
{\bf{3)}}	From \eqref{mima-r1}, one has 
\begin{equation}\label{inqk low}
\forall s\in[0,1],\quad \forall \alpha_0\in(0,\tfrac{1}{2}), \quad 1\leqslant K(\alpha_0,s^2)
\end{equation}
On the other hand, differentiating  $K(\alpha_0,s)$ with respect to $s$ gives
\begin{align*}
\partial_{s} K(\alpha_0,s)&=\frac{\alpha_0(1- \alpha_0)( 1-2 \alpha_0+{ \alpha_0}  s)}{(1-2 \alpha_0)(1-2 \alpha_0 +\alpha_0^2 s)^{\frac32}}\Big[2-\tfrac32\alpha_0( 1-2 \alpha_0+{ \alpha_0}  s)(1-2 \alpha_0 +\alpha_0^2 s)\Big]\\ 
&\geqslant \frac{\alpha_0(1- \alpha_0)( 1-2 \alpha_0+{ \alpha_0}  s)}{(1-2 \alpha_0)(1-2 \alpha_0 +\alpha_0^2 s)^{\frac32}}\Big[2-\tfrac{45}{32}\Big]>0.
\end{align*}
Thus, for all  $\alpha_0\in(0,\tfrac{1}{2})$, the function   $s\in[0,1]\mapsto K(\alpha_0,s)$ is increasing  and one has
\begin{equation}\label{inqk high}
\forall s\in[0,1],\quad \forall \alpha_0\in(0,\tfrac{1}{2}), \quad K(\alpha_0,s^2)\leqslant \frac{1}{1-2 \alpha_0}\cdot
\end{equation}
In view of \eqref{inqk low}, \eqref{inqk high} and  \eqref{periodformL1} we find that
\begin{align}\label{est T}
\forall \alpha_0\in(0,\tfrac{1}{2}), \quad	2\pi \xi_0^2\leqslant T(\xi_0)\leqslant \frac{2\pi \xi_0^2}{1-2 \alpha_0}
	,
		\end{align}
having used the classical identity
$$
\int_0^1\frac{ds}{\sqrt{1-s^2}}=\frac\pi2.
$$	
	Finally, we can check from \eqref{est T}   that
	$$
	\lim_{\xi_0\mapsto0}\frac{T(\xi_0)}{\xi_0^2}=2\pi.
	$$
		{\bf{4)}} Integrating the second equation of \eqref{A} on one period we infer 
	$$         2\pi= 	\int_0^{T(\xi_0)}\dot\phi(t) dt=\int_0^{T(\xi_0)}\Big(\tfrac{1}{\mathtt{I}(t)}+\tfrac{\sin^2(\phi)}{y_0^2-\mathtt{I}\sin^2(\phi)}-\tfrac{\cos^2(\phi)}{y_0^2+\mathtt{I}\cos^2(\phi)}\Big)dt.
       $$
       Therefore,
       $$
       \omega_0(\xi_0)=\frac{1}{T(\xi_0)}\int_0^{T(\xi_0)}\Big(\tfrac{1}{\mathtt{I}(t)}+\tfrac{\sin^2(\phi)}{y_0^2-\mathtt{I}\sin^2(\phi)}-\tfrac{\cos^2(\phi)}{y_0^2+\mathtt{I}\cos^2(\phi)}\Big)dt
       $$
       and
       \begin{align}\label{diff-1}
      \dot\phi(t)-\omega_0=\tfrac{1}{\mathtt{I}}-\langle \tfrac{1}{\mathtt{I}}\rangle+\tfrac{\sin^2(\phi)}{y_0^2-\mathtt{I}\sin^2(\phi)}-\tfrac{\cos^2(\phi)}{y_0^2+\mathtt{I}\cos^2(\phi)}-\Big\langle \tfrac{\sin^2(\phi)}{y_0^2-\mathtt{I}\sin^2(\phi)}-\tfrac{\cos^2(\phi)}{y_0^2+\mathtt{I}\cos^2(\phi)}\Big\rangle,
     \end{align}
       where $\langle f\rangle $ denotes the average of $f$, that is,
       $$
       \langle f\rangle =\frac{1}{T(\xi_0)}\int_0^{T(\xi_0)}f(t)dt.
       $$
       Hence combining \eqref{diff-1} with \eqref{B-11} we deduce that $\dot\phi(t)-\omega_0$ is $T(\xi_0)-$periodic.
       Integrating the first equation of \eqref{A} yields
       \begin{align}\label{II-12}
          	\mathtt{I}(t)=\mathtt{I}_0e^{-\int_0^tg(s)\sin(2\phi(s))ds}, \quad g(s)\triangleq \Big[\tfrac{1}{y_0^2-\mathtt{I}(s)\sin^2(\phi(s))}+\tfrac{1}{y_0^2+\mathtt{I}(s)\cos^2(\phi(s))}\Big] .
	\end{align}	
	Therefore,
	$$
	\tfrac{1}{\mathtt{I}(t)}-\tfrac{1}{\mathtt{I}_0}=\tfrac{1}{\mathtt{I}_0}\Big(e^{\int_0^tg(s)\sin(2\phi(s))ds} -1\Big).
	$$
	From \eqref{bound xi 0t4}  one easily deduces,  by a symmetry argument, that 
	\begin{equation}\label{bound xi}
	\forall t\in\R,\quad | \xi(t)|\leqslant \xi_0
	\end{equation} 
	for all $\xi_0\in(0,\frac{y_0}{\sqrt{2}})$.
	It follows that
	\begin{equation}\label{bound yxi}
	\forall t\in\R,\quad \tfrac{y_0^2}{2}\leqslant y_0^2-\xi^2(t)=y_0^2-\mathtt{I}(t)\sin^2(\phi(s))\leqslant y_0^2.
	\end{equation}
Consequently,
$$
\forall t\in\R,\quad 0\leqslant g(s)\leqslant \tfrac{3}{y_0^2}.
$$
	Since $ \xi_0^2<\tfrac{y_0^2}{2}$, then using the estimate \eqref{est T}
      \begin{align}\label{Def-nn}
   \int_0^{T(\xi_0)}g(s)|\sin(2\phi(s))|ds&\leqslant \tfrac{3}{y_0^2} T(\xi_0)\leqslant  \tfrac{6\pi \xi_0^2 }{y_0^2(1-2\alpha_0)}.
      \end{align}
	It follows that
	  \begin{align}\label{est-radius}
	\sup_{t\in\R}\big|\tfrac{1}{\mathtt{I}(t)}-\tfrac{1}{\mathtt{I}_0}\big|
	&\leqslant   \tfrac{6 \pi}{y_0^2(1-2\alpha_0)}e^{ \tfrac{6\pi \alpha_0}{1-2\alpha_0}}.
	\end{align}
       Plugging this estimate into \eqref{diff-1} and using \eqref{bound yxi} yield
       \begin{align}\label{diff-2}
       \sup_{t\in\R}\big|\dot\phi(t)-\omega_0\big|\leqslant \tfrac{6 \pi}{y_0^2(1-2\alpha_0)}e^{ \tfrac{6\pi \alpha_0}{1-2\alpha_0}}.
     \end{align}
       {\bf{5)}} To  get the analyticity of the solutions to the ODE \eqref{A} it suffices to apply Cauchy-Kowaleski theorem on the analyticity of the solutions to the ODE, whose coefficients  are analytic.
	This achieves the proof of the lemma.
	\end{proof}
	\section{Contour dynamics equation and approximate solutions}\label{SEc-Lin-Ap}
We plan in this section to explore  some specific aspects concerning the dynamics of quartets of patches within the Euler equations \eqref{Euler-Eq}. Initially,  we will derive  the equations governing the leapfrogging with symmetric concentrated patches. Subsequently, we will examine some algebraic and analytical properties of the linearized operator associated with these equations. Finally, we will construct a good  approximate  solution to the leapfrogging, which  will serve as the  initial state for   Nash-Moser scheme.
 \subsection{Boundary equation}\label{Bound-motion}
 In this section, our aim is to formulate the contour dynamics equations that govern the motion of quartets of  concentrated vortex patches.  To achieve this, let's introduce $z_j(t)$, where $j=1,2$, as the core positions of two vortices, which satisfy the point vortex system given by equation \eqref{Vort-leap} and assume that $\overline{z_j(t)}$, $j=1,2$ are the core positions of the other two vortices. We will restrict the  discussion to the case where the orbits of the point vortices are time periodic as was previously analyzed  in Section \ref{sec:lep-vortex-pairs}.\\
For the sake of simplicity and without loss of generality, we can assume that the initial configuration is vertically aligned,  
as follows, 
$$
\textnormal{Re}\{z_1(0)\}=\textnormal{Re}\{z_2(0)\}=0.
$$
 According to Section \ref{sec:lep-vortex-pairs}, the dynamics of the point vortices is described through
\begin{equation}\label{pt vx zj}
z_k(t)=\tfrac{(-1)^{k+1}}{2}\big(\eta(t)+\ii \xi(t)\big)+\tfrac{1}{2}\big(x_0(t)+\ii y_0\big),\,\, k=1,2,
\end{equation}
where $y_0\triangleq\textnormal{Im}\{z_1(0)+z_2(0)\}$, $x_0(t)=\textnormal{Re}\{z_1(t)+z_2(t)\}$ and the coordinates $(\eta(t),\xi(t))$ solve the \mbox{system \eqref{A04pts0}.} 
Given $\varepsilon\in(0,1)$, we shall look for solutions in the form
\begin{equation}\label{omegateps}
\omega_{\varepsilon}(t)=\tfrac{1}{\varepsilon^2}{\bf{1}}_{{D}_{t,1}^\varepsilon}+\tfrac{1}{\varepsilon^2}{\bf{1}}_{{D}_{t,2}^{\varepsilon}}-\tfrac{1}{\varepsilon^2}{\bf{1}}_{{{D}_{t,3}^\varepsilon}}-\tfrac{1}{\varepsilon^2}{\bf{1}}_{{{D}_{t,4}^{\varepsilon}}}
\end{equation}
where $D_{t,1}^{\varepsilon}$, $D_{t,2}^{\varepsilon}$, $D_{t,3}^{\varepsilon}$ and $D_{t,4}^{\varepsilon}$ are given by  
\begin{equation}\label{domains0}
\begin{aligned}
{D}_{t,k}^\varepsilon &\triangleq\,   \varepsilon  {O}_{t,k}^\varepsilon+z_k(t), \\
{D}_{t,2+k}^\varepsilon &\triangleq\,   \varepsilon  {O}_{t,2+k}^\varepsilon+\overline{z_k(t)},
\end{aligned}
\qquad k=1,2,
\end{equation}
and  the domains  ${O}_{t,1}^\varepsilon$, ${O}_{t,2}^\varepsilon$, ${O}_{t,3}^\varepsilon$ and ${O}_{t,4}^\varepsilon$  represent  simply connected regions localized around the unit disc {\it with the same volume}. In this particular case the total  stream function takes the form
	\begin{equation}\label{def stream}
	\begin{split}
		\psi(t,z)&=\frac{1}{2\pi\varepsilon^2}\sum_{k=1}^2\bigg(\int_{D_{t,k}^{\varepsilon}}\log(|z-\zeta|)dA(\zeta)
		-\frac{1}{2\pi\varepsilon^2}\int_{{{D}_{t,2+k}^\varepsilon}}\log(|z-\zeta|)dA(\zeta)\bigg).
	\end{split}	
	\end{equation}
For each $k\in\{1,2,3,4 \}$, we consider a positively oriented parametrization $\gamma_k(t): \mathbb{T} \mapsto \partial {O}_{t,k}^\varepsilon$ of the boundary of the domain ${O}_{t,k}^\varepsilon$. We then define
\begin{equation}\label{param Dk}
w_k(t)\triangleq\varepsilon  \gamma_k(t)+z_k(t)
\end{equation}	
the corresponding  parametrization of the boundary $\partial {D}_{t,k}^\varepsilon$. It is  worth noting that in the contour dynamics reformulation, as discussed, for instance, in \cite[p. 174]{HMV}, the vortex patch equation can be expressed in the form,	
	\begin{equation}\label{vortex patches equation0}
		\forall k\in\{1,2\},\quad\partial_{t}w_k(t,\theta)\cdot \mathbf{n}_k(t,w_k(t,\theta))=v(t,w_k(t,\theta))\cdot \mathbf{n}_k(t,w_k(t,\theta)),
		\end{equation}	
		where  $\mathbf{n}_k(t,\cdot)$ refers to a normal vector to the boundary $\partial D_{t,k}$ of the patch. By identifying $\mathbb{R}^{2}$ to the  complex plane $\mathbb{C}$,  making the choice  $\mathbf{n}_k(t,w_k(t,\theta))=-\ii\partial_{\theta}w_k(t,\theta)$ and using the identity $v=2\ii \partial_{\overline{z}}\psi$, we get
		\begin{align*}
		 v(t,w_k(t,\theta))\cdot \mathbf{n}_k(t,w_k(t,\theta))&=\textnormal{Re}\big\{v(t,w_k(t,\theta))\ii\partial_{\theta}\overline{w_k(t,\theta)}\big\}\\&=-\textnormal{Re}\big\{2(\partial_{\overline{z}}\psi)(t,w_k(t,\theta))\partial_{\theta}\overline{w_k(t,\theta)}\big\}
		  \\&=-\partial_\theta\big[\psi(t,w_k(t,\theta))\big].
		\end{align*}
		Moreover, one has 
			\begin{align*}
		\partial_{t}w_k(t,\theta)\cdot \mathbf{n}_k(t,w_k(t,\theta))&=\textnormal{Im}\big\{\partial_{t}\overline{w_k(t,\theta)}\partial_{\theta}w_k(t,\theta)\big\}.
		\end{align*}
Plugging the last two identities into \eqref{vortex patches equation0} yield 
	\begin{equation}\label{vortex patches equation00}
\forall k\in\{1,2,3,4\},\quad G_k(t,\theta)\triangleq\textnormal{Im}\big\{ \partial_{t}\overline{w_k(t,\theta)}\partial_{\theta}  w_k(t,\theta)\big\}+\partial_{\theta}\big[\psi(t,w_k(t,\theta))\big]=0,
\end{equation}	
where, in view of \eqref{def stream},
		\begin{align*}
			\partial_{\theta}\big[\psi(t,w_k(t,\theta))\big]
			&=\frac{1}{2\pi \varepsilon^2}\partial_{\theta}\bigg[\int_{D_{t,1}^{\varepsilon}}\log(|w_k(t,\theta)-\zeta|)dA(\zeta)+\int_{D_{t,2}^{\varepsilon}}\log(|w_k(t,\theta)-\zeta|)dA(\zeta)\bigg]\\ &\quad -\frac{1}{2\pi \varepsilon^2}\partial_{\theta}\bigg[\int_{{D_{t,3}^{\varepsilon}}}\log(|w_k(t,\theta)-\zeta|)dA(\zeta)+\int_{{D_{t,4}^{\varepsilon}}}\log(|w_k(t,\theta)-\zeta|)dA(\zeta)\bigg].
							\end{align*}
Observe that by assuming that
$$
O_{t,2+k}^\varepsilon=\overline{O_{t,k}^\varepsilon},\quad  k=1,2, 
$$
that is $D_{t,2+k}^\varepsilon=\overline{D_{t,k}^\varepsilon}$,  and 
$$
w_{k+2}(t,\theta)=\overline{w_k(t,\theta)},\quad  k=1,2,
$$
we conclude, by  change of variables, that
		\begin{align*}
			\partial_{\theta}\big[\psi(t,w_{3}(t,\theta))\big]
			&=\frac{1}{2\pi \varepsilon^2}\partial_{\theta}\bigg[\int_{D_{t,1}^{\varepsilon}}\log(|\overline{w_1(t,\theta)}-\zeta|)dA(\zeta)+\int_{D_{t,2}^{\varepsilon}}\log(|\overline{w_1(t,\theta)}-\zeta|)dA(\zeta)\bigg]\\ &\quad -\frac{1}{2\pi \varepsilon^2}\partial_{\theta}\bigg[\int_{\overline{D_{t,1}^{\varepsilon}}}\log(|\overline{w_1(t,\theta)}-\zeta|)dA(\zeta)+\int_{\overline{D_{t,2}^{\varepsilon}}}\log(|\overline{w_1(t,\theta)}-\zeta|)dA(\zeta)\bigg]\\
			&=\frac{1}{2\pi \varepsilon^2}\partial_{\theta}\bigg[\int_{\overline{D_{t,1}^{\varepsilon}}}\log(|{w_1(t,\theta)}-\zeta|)dA(\zeta)+\int_{\overline{D_{t,2}^{\varepsilon}}}\log(|{w_1(t,\theta)}-\zeta|)dA(\zeta)\bigg]\\ &\quad -\frac{1}{2\pi \varepsilon^2}\partial_{\theta}\bigg[\int_{{D_{t,1}^{\varepsilon}}}\log(|{w_1(t,\theta)}-\zeta|)dA(\zeta)+\int_{{D_{t,2}^{\varepsilon}}}\log(|{w_1(t,\theta)}-\zeta|)dA(\zeta)\bigg]\\ &=-\partial_{\theta}\big[\psi(t,w_{1}(t,\theta))\big].
							\end{align*}
Similarly, we get 		
		\begin{align*}
			\partial_{\theta}\big[\psi(t,w_{4}(t,\theta))\big]=-\partial_{\theta}\big[\psi(t,w_{2}(t,\theta))\big].
							\end{align*}
Moreover,		one has
$$
\textnormal{Im}\Big\{ \partial_{t}\overline{w_{k+2}(t,\theta)}\partial_{\theta}  w_{k+2}(t,\theta)\Big\}=-\textnormal{Im}\Big\{ \partial_{t}\overline{w_k(t,\theta)}\partial_{\theta}  w_k(t,\theta)\Big\}, \quad k=1,2
$$					
Thus, by \eqref{vortex patches equation}, we find
$$
G_{k+2}(t,\theta)=-G_{k}(t,\theta), \quad k=1,2
$$	
and 	the problem might be reduced to the system of two scalar equations 
	\begin{equation}\label{vortex patches equation}
\forall k\in\{1,2\},\qquad G_k(t,\theta)=0.
\end{equation}						
By  \eqref{pt vx zj}, suitable change of variables, we infer 
		\begin{align*}
			\partial_{\theta}\big[\psi(t,w_k(t,\theta))\big]&=\frac{1}{2\pi }\partial_{\theta}\bigg[\int_{O_{t,k}^{\varepsilon}}\log(|  \gamma_k(t)-  \zeta|)dA(\zeta)\\ &\quad +\int_{O_{t,3-k}^{\varepsilon}}\log(|\varepsilon  \gamma_k(t)+z_k(t)-z_{3-k}(t)-\varepsilon \zeta|)dA(\zeta)
			\\ &\quad-\int_{\overline{{O}_{t,k}^\varepsilon}}\log(|\varepsilon  \gamma_k(t)+z_k(t)-\overline{z_k(t)}-\varepsilon  \zeta|)dA(\zeta)\\ &\quad-\int_{\overline{{O}_{t,3-k}^\varepsilon}}\log(|\varepsilon  \gamma_k(t)+z_k(t)-\overline{z_{3-k}(t)}-\varepsilon  \zeta|)dA(\zeta)\bigg].\nonumber
					\end{align*}
This can also be written as 
		\begin{align*}
			\partial_{\theta}\big[\psi(t,w_k(t,\theta))\big]&=\frac{1}{2\pi}\partial_{\theta}\bigg[\int_{O_{t,k}^{\varepsilon}}\log(|  \gamma_k(t)-  \zeta|)dA(\zeta)+\int_{O_{t,3-k}^{\varepsilon}}\log(|1+\varepsilon  \tfrac{\gamma_k(t)-\zeta}{z_k(t)-z_{3-k}(t)})dA(\zeta)
			\\ &\quad-\int_{\overline{{O}_{t,k}^\varepsilon}}\log(|1+\varepsilon  \tfrac{\gamma_k(t)-\zeta}{z_k(t)-\overline{z_k(t)}}|)dA(\zeta)-\int_{\overline{{O}_{t,3-k}^\varepsilon}}\log(|1+\varepsilon  \tfrac{\gamma_k(t)-\zeta}{z_k(t)-\overline{z_{3-k}(t)}}|)dA(\zeta)\bigg].\nonumber
					\end{align*}
Observe that
\begin{align*}
\partial_\theta\bigg[\frac{1}{2\pi} \int_{O_{t,3-k}^{\varepsilon}}\textnormal{Re}\Big\{\tfrac{\gamma_k(t,\theta)-\zeta}{z_k(t)-z_{3-k}(t)}\Big\}dA(\zeta)\bigg]&=\partial_\theta\bigg[ \textnormal{Re}\Big\{\tfrac{\gamma_k(t,\theta)}{z_k(t)-z_{3-k}(t)}\Big\}\frac{1}{2\pi}\int_{O_{t,3-k}^{\varepsilon}}dA(\zeta)\bigg]\\
 &= -{\tfrac12}\textnormal{Re}\Big\{\tfrac{{\partial_\theta}\gamma_k(t,\theta)}{z_k(t)-z_{3-k}(t)}\Big\}.
\end{align*}
where we have used the following identity, since the area of $O_{t,3-k}^{\varepsilon}$ is normalized  to $\pi$
\begin{align}\label{Volune-const}
\int_{O_{t,3-k}^{\varepsilon}}dA(\zeta)= | O_{t,3-k}^{\varepsilon}|=\pi. 
\end{align}
Similarly, we have
\begin{align*}
&\partial_\theta\bigg[\frac{1}{2\pi}  \int_{\overline{O_{t,k}^{\varepsilon}}}\textnormal{Re}\Big\{\tfrac{\gamma_k(t)-\zeta}{z_k(t)-\overline{z_{3-k}(t)}}\Big\}dA(\zeta)\bigg]= -{\tfrac12}\textnormal{Re}\Big\{\tfrac{{\partial_\theta}\gamma_k(t,\theta)}{z_k(t)-\overline{z_{k}(t)}}\Big\},\\
&\partial_\theta\bigg[\frac{1}{2\pi}  \int_{\overline{O_{t,3-k}^{\varepsilon}}}\textnormal{Re}\Big\{\tfrac{\gamma_k(t)-\zeta}{z_k(t)-\overline{z_{3-k}(t)}}\Big\}dA(\zeta)\bigg]= -{\tfrac12}\textnormal{Re}\Big\{\tfrac{{\partial_\theta}\gamma_k(t,\theta)}{z_k(t)-\overline{z_{3-k}(t)}}\Big\}.
\end{align*}
Therefore, from the previous computations, we conclude that
		\begin{align*}
		\partial_{\theta}\big[\psi(t,w_k(t,\theta))\big]
			&= \frac\varepsilon2 \textnormal{Re}\Big\{\Big[\tfrac{1}{{z_{k}(t)}-{z_{3-k}(t)}}-\tfrac{1}{{z_{k}(t)}-\overline{z_{3-k}(t)}}-\tfrac{1}{{z_{k}(t)}-\overline{z_{k}(t)}}\Big]\partial_{\theta}  \gamma_k(t,\theta)\Big\}
			\\ &\quad+\frac{1}{2\pi }\partial_{\theta}\bigg[\int_{O_{t,k}^{\varepsilon}}\log(|  \gamma_k(t)-  \zeta|)dA(\zeta)\\ &\quad +\int_{O_{t,3-k}^{\varepsilon}}\Big(\log\big(\big|1+\varepsilon \tfrac{  \gamma_k(t,\theta)-\zeta}{z_k(t)-z_{3-k}(t)}\big|\big)-\varepsilon \textnormal{Re}\big\{\tfrac{  \gamma_k(t,\theta)-\zeta}{z_k(t)-z_{3-k}(t)}\big\}\Big)dA(\zeta) \\ &\quad-\int_{\overline{O_{t,k}^{\varepsilon}}}\Big(\log\big(\big|1+\varepsilon \tfrac{  \gamma_k(t,\theta)-\zeta}{z_k(t)-\overline{z_k(t)}}\big|\big)-\varepsilon \textnormal{Re}\big\{\tfrac{  \gamma_k(t,\theta)-\zeta}{z_k(t)-\overline{z_k(t)}}\big\}\Big)dA(\zeta)
\\ &\quad -\int_{\overline{O_{t,3-k}^{\varepsilon}}}\Big(\log\big(\big|1+\varepsilon \tfrac{  \gamma_k(t,\theta)-\zeta}{z_k(t)-\overline{z_{3-k}(t)}}\big|\big)-\varepsilon \textnormal{Re}\big\{\tfrac{  \gamma_k(t,\theta)-\zeta}{z_k(t)-\overline{z_{3-k}(t)}}\big\}\Big)dA(\zeta)\bigg].					\end{align*}
On the other hand, from \eqref{param Dk} and \eqref{Vort-leap} one has 
$$
\begin{aligned}
\textnormal{Im}\Big\{ \partial_{t}\overline{w_k(t,\theta)}\partial_{\theta}  w_k(t,\theta)\Big\}
&=\varepsilon\textnormal{Im}\Big\{ \partial_{t}\overline{\dot{z}_k(t)}\partial_{\theta}  \gamma_k(t,\theta)\Big\}+\varepsilon^2\textnormal{Im}\Big\{ \partial_{t}\overline{\gamma_k(t,\theta)}\partial_{\theta}  \gamma_k(t,\theta)\Big\}\\
&=-\frac\varepsilon2 \textnormal{Re}\Big\{\Big[\tfrac{1}{{z_{k}(t)}-{z_{3-k}(t)}}-\tfrac{1}{{z_{k}(t)}-\overline{z_{3-k}(t)}}-\tfrac{1}{{z_{k}(t)}-\overline{z_{k}(t)}}\Big]\partial_{\theta}  \gamma_k(t,\theta)\Big\}\\ &\quad+\varepsilon^2\textnormal{Im}\Big\{ \partial_{t}\overline{\gamma_k(t,\theta)}\partial_{\theta}  \gamma_k(t,\theta)\Big\}.
\end{aligned}
$$
Inserting the two last identities into \eqref{vortex patches equation}  gives, for all $k\in\{1,2\}$, 
		\begin{align}\label{new vortex patches system}
&G_k(t,\theta)= \textnormal{Im}\Big\{ \varepsilon^2\partial_{t}\overline{\gamma_k(t,\theta)}\partial_{\theta}  \gamma_k(t,\theta)\Big\}+\frac{1}{2\pi }\partial_{\theta}\bigg[\int_{O_{t,k}^{\varepsilon}}\log(|  \gamma_k(t)-  \zeta|)dA(\zeta)
\\
 &+\int_{O_{t,3-k}^{\varepsilon}}\Big(\log\big(\big|1+(-1)^{k+1}\varepsilon \tfrac{  \gamma_k(t,\theta)-\zeta}{\eta(t)+\ii \xi(t)}\big|\big)-(-1)^{k+1}\varepsilon \textnormal{Re}\Big\{\tfrac{  \gamma_k(t,\theta)-\zeta}{\eta(t)+\ii \xi(t)}\Big\}\Big)dA(\zeta)\nonumber
\\ 
& -\int_{\overline{O_{t,k}^{\varepsilon}}}\Big(\log\big(\big|1+\varepsilon \tfrac{  \gamma_k(t,\theta)-\zeta}{(-1)^{k+1}\ii \xi(t)+\ii y_0}\big|\big)-\varepsilon \textnormal{Re}\Big\{\tfrac{  \gamma_k(t,\theta)-\zeta}{(-1)^{k+1}\ii \xi(t)+\ii y_0}\Big\}\Big)dA(\zeta)\nonumber
\\ & -\int_{\overline{O_{t,3-k}^{\varepsilon}}}\Big(\log\big(\big|1+\varepsilon \tfrac{  \gamma_k(t,\theta)-\zeta}{(-1)^{k+1}\eta(t)+\ii y_0 }\big|\big)-\varepsilon \textnormal{Re}\Big\{\tfrac{  \gamma_k(t,\theta)-\zeta}{(-1)^{k+1}\eta(t)+\ii y_0}\Big\}\Big)dA(\zeta)\bigg]=0,\nonumber
		\end{align}
where we have used the identities		
$$
\begin{aligned}
z_k(t)-z_{3-k}(t)&=(-1)^{k+1}\big(\eta(t)+\ii \xi(t)\big),\\ \quad z_k(t)-\overline{z_{k}(t)}&=(-1)^{k+1}\ii \xi(t)+\ii y_0,\\ \quad z_k(t)-\overline{z_{3-k}(t)}&=(-1)^{k+1}\eta(t)+\ii y_0, 
\end{aligned}
$$
that immediately follow from \eqref{pt vx zj}.

As a reminder from Section \ref{sec:lep-vortex-pairs},  it is  important to note that under the constraint $\frac{\xi_0^2}{y_0^2}<\frac{1}{2}$ the motion of the quartet of point vortices  in the translating frame is periodic with the period $T(\xi_0)$ given by \eqref{Period} and frequency $\omega_0=\frac{2\pi}{T(\xi_0)}$.		
From now on, we  fixe $y_0>0$, assume that $\xi_0\in(0,\frac{y_0}{\sqrt{2}})$ and  we shall focus on the periodic symmetric case where  $O_{t,2}^{\varepsilon}=O_{t+\frac{T(\xi_0)}{2},1}^{\varepsilon}$, $O_{t+T(\xi_0),1}^{\varepsilon}=O_{t,1}^{\varepsilon}$  and  
$$\gamma_2(t)=\gamma_1\big(t+\tfrac{T(\xi_0)}{2}\big)\quad\textnormal{and}\quad\gamma_1(t+T(\xi_0))=\gamma_1(t).$$
 Under this constraint, and using the fact that 
   $$\eta\big(t+\tfrac{T(\xi_0)}{2}\big)=-\eta(t), \quad  \xi\big(t+\tfrac{T(\xi_0)}{2}\big)=-\xi(t), \quad \eta\big(t+{T(\xi_0)}\big)=\eta(t), \quad  \xi\big(t+{T(\xi_0)}\big)=\xi(t),$$
which follows from the fact that the orbits in  \eqref{Alg-var} are two fold and the associated vector field is autonomous, one can easily check that 
 		\begin{align*}
&G_2\big(t+\tfrac{T(\xi_0)}{2},\theta\big)=\textnormal{Im}\Big\{ \varepsilon^2\partial_{t}\overline{\gamma_2\big(t+\tfrac{T(\xi_0)}{2},\theta\big)}\partial_{\theta}  \gamma_2\big(t+\tfrac{T(\xi_0)}{2},\theta\big)\Big\}\\ &+\frac{1}{2\pi }\partial_{\theta}\bigg[\int_{O_{t+\frac{T(\xi_0)}{2},2}^{\varepsilon}}\log(|  \gamma_2\big(t+\tfrac{T(\xi_0)}{2},\theta\big)-  \zeta|)dA(\zeta)
\\
 &+\int_{O_{t+\frac{T(\xi_0)}{2},1}^{\varepsilon}}\Big(\log\big(\big|1-\varepsilon \tfrac{  \gamma_2\big(t+\frac{T(\xi_0)}{2},\theta\big)-\zeta}{\eta\big(t+\frac{T(\xi_0)}{2}\big)+\ii \xi\big(t+\tfrac{T(\xi_0)}{2}\big)}\big|\big)+\varepsilon \textnormal{Re}\Big\{\tfrac{  \gamma_2\big(t+\frac{T(\xi_0)}{2},\theta\big)-\zeta}{\eta\big(t+\frac{T(\xi_0)}{2}\big)+\ii \xi\big(t+\frac{T(\xi_0)}{2}\big)}\Big\}\Big)dA(\zeta)\nonumber
\\ 
& -\int_{\overline{O_{t+\frac{T(\xi_0)}{2},2}^{\varepsilon}}}\Big(\log\big(\big|1+\varepsilon \tfrac{  \gamma_2\big(t+\frac{T(\xi_0)}{2},\theta\big)-\zeta}{-\ii \xi\big(t+\frac{T(\xi_0)}{2}\big)+\ii y_0}\big|\big)-\varepsilon \textnormal{Re}\Big\{\tfrac{  \gamma_2\big(t+\frac{T(\xi_0)}{2},\theta\big)-\zeta}{-\ii \xi\big(t+\frac{T(\xi_0)}{2}\big)+\ii y_0}\Big\}\Big)dA(\zeta)\nonumber
\\ & -\int_{\overline{O_{t+\frac{T(\xi_0)}{2},1}^{\varepsilon}}}\Big(\log\big(\big|1+\varepsilon \tfrac{  \gamma_2\big(t+\frac{T(\xi_0)}{2},\theta\big)-\zeta}{-\eta\big(t+\frac{T(\xi_0)}{2}\big)+\ii y_0 }\big|\big)-\varepsilon \textnormal{Re}\Big\{\tfrac{  \gamma_2\big(t+\frac{T(\xi_0)}{2},\theta\big)-\zeta}{-\eta\big(t+\frac{T(\xi_0)}{2}\big)+\ii y_0}\Big\}\Big)dA(\zeta)\bigg].\nonumber
		\end{align*}
Thus,
		\begin{align*}
G_2\big(t+\tfrac{T(\xi_0)}{2},\theta\big)&= \textnormal{Im}\Big\{ \varepsilon^2\partial_{t}\overline{\gamma_1(t,\theta)}\partial_{\theta}  \gamma_1(t,\theta)\Big\}+\frac{1}{2\pi }\partial_{\theta}\bigg[\int_{O_{t,1}^{\varepsilon}}\log(|  \gamma_1(t,\theta)-  \zeta|)dA(\zeta)
\\
 &\quad+\int_{O_{t,2}^{\varepsilon}}\Big(\log\big(\big|1+\varepsilon \tfrac{ \gamma_1(t,\theta)-\zeta}{\eta(t)+\ii \xi(t)}\big|\big)-\varepsilon \textnormal{Re}\Big\{\tfrac{ \gamma_1(t,\theta)-\zeta}{\eta(t)+\ii \xi(t)}\Big\}\Big)dA(\zeta)\nonumber
\\ 
& \quad -\int_{\overline{O_{t,1}^{\varepsilon}}}\Big(\log\big(\big|1+\varepsilon \tfrac{ \gamma_1(t,\theta)-\zeta}{\ii \xi(t)+\ii y_0}\big|\big)-\varepsilon \textnormal{Re}\Big\{\tfrac{ \gamma_1(t,\theta)-\zeta}{\ii \xi(t)+\ii y_0}\Big\}\Big)dA(\zeta)\nonumber
\\ & \quad -\int_{\overline{O_{t,2}^{\varepsilon}}}\Big(\log\big(\big|1+\varepsilon \tfrac{ \gamma_1(t,\theta)-\zeta}{\eta(t)+\ii y_0 }\big|\big)-\varepsilon \textnormal{Re}\Big\{\tfrac{ \gamma_1(t,\theta)-\zeta}{\eta(t)+\ii y_0}\Big\}\Big)dA(\zeta)\bigg]=G_1(t,\theta).\nonumber
		\end{align*}
Therefore the  system \eqref{new vortex patches system} might be reduced to only one scalar  equation
\begin{equation}\label{new vortex patches equation}
		\begin{aligned}
G_1(t,\theta)& =0.
 \end{aligned}
\end{equation}					
 We intend now to formulate the equations on the periodic setting and for this aim we set 
\begin{align}\label{lien-12}
\eta(t)+\ii \xi(t)=\sqrt{q(\omega_0 t)}e^{\ii\Theta(\omega_0 t)},
 \end{align}
where the pair  $\varphi\in\mathbb{T}\mapsto \big(q(\varphi),\Theta(\varphi)\big)$ satisfy, in view of \eqref{A},
\begin{equation}\label{asymp pt vortex}
\left\{ \begin{aligned}
&\omega_0\,{\dot{q}}+\Big[\frac{1}{y_0^2-\mathtt{I}\sin^2(\Theta)}+\frac{1}{y_0^2+q\cos^2(\Theta)}\Big]\sin(2\Theta)q=0, \\
&\omega_0\,\dot\Theta-\tfrac{ 1}{q}-\frac{\sin^2(\Theta)}{y_0^2-q\sin^2(\Theta)}+\frac{\cos^2(\Theta)}{y_0^2+q\cos^2(\Theta)}=0, \\
&\Theta(0)=\tfrac{\pi}{2}, \ \ q(0)=\xi_0^2.
\end{aligned}\right.
\end{equation}
Now, we restrict our search to the solutions  \eqref{new vortex patches equation} that share the  same period as the point vortex system, which is  $T(\xi_0)$. Therefore, the domain $O_{t,1}^\varepsilon$ can be parametrized as follows
\begin{align}\label{lien-123}
\gamma_1(t,\theta)=e^{\ii\Theta(\omega_0 t)} z(\omega_0 t,\theta)
\end{align}
with 
\begin{align}\label{def zk0}
		\nonumber	z (\varphi,\theta)&\triangleq\,R(\varphi,\theta)\,e^{\ii\theta}\\
			&\,=\sqrt{1+2\varepsilon r( \varphi,\theta)}\,e^{\ii\theta}.
		\end{align}
With this new variable, the domain  $O_{t,1}^\varepsilon$ will be simply denoted  $ e^{\ii\Theta(\omega_0 t)} O_{\varphi}^\varepsilon$. Thus,  by suitable change of variables in  \eqref{new vortex patches equation} and using the identity $\Theta(\varphi+\pi)=\Theta(\varphi)+\pi$,  we conclude that $z$ satisfies \begin{equation*}
		\begin{aligned}
&\varepsilon^2 \textnormal{Im}\Big\{ \omega_0\big[\partial_{\varphi}\overline{z(\varphi,\theta)}-\ii\, \dot{\Theta}(\varphi)\overline{z(\varphi,\theta)}\big]\partial_{\theta}  z(\varphi,\theta)\Big\}+\frac{1}{2\pi }\partial_{\theta}\bigg[\int_{O_{\varphi}^\varepsilon}\log(|  z(\varphi)-  \zeta|)dA(\zeta)
\\
 &+\int_{O_{\varphi+\pi}^\varepsilon}\Big(\log\big(\big|1+\varepsilon \tfrac{  z(\varphi,\theta)+\zeta}{\sqrt{q(\varphi)}}\big|\big)-\varepsilon \textnormal{Re}\Big\{\tfrac{  z(\varphi,\theta)+\zeta}{\sqrt{q(\varphi)}}\Big\}\Big)dA(\zeta)
\\ 
& -\int_{\overline{O_{\varphi}^\varepsilon}}\Big(\log\big(\big|1+\varepsilon \tfrac{ e^{\ii \Theta(\varphi)} z(\varphi,\theta)-e^{-\ii \Theta(\varphi)}\zeta}{\ii \sqrt{q(\varphi)}\sin( \Theta(\varphi))+\ii y_0}\big|\big)-\varepsilon \textnormal{Re}\Big\{\tfrac{ e^{\ii \Theta(\varphi)} z(\varphi,\theta)-e^{-\ii \Theta(\varphi)}\zeta}{\ii \sqrt{q(\varphi)}\sin( \Theta(\varphi))+\ii y_0}\Big\}\Big)dA(\zeta)
\\ & -\int_{\overline{O_{\varphi+\pi}^\varepsilon}}\Big(\log\big(\big|1+\varepsilon \tfrac{ e^{\ii \Theta(\varphi)} z(\varphi,\theta)+e^{-\ii \Theta(\varphi)}\zeta}{\sqrt{q(\varphi)}\cos( \Theta(\varphi))+\ii y_0 }\big|\big)-\varepsilon \textnormal{Re}\Big\{\tfrac{  e^{\ii \Theta(\varphi)} z(\varphi,\theta)+e^{-\ii \Theta(\varphi)}\zeta}{\sqrt{q(\varphi)}\cos( \Theta(\varphi))+\ii y_0 }\Big\}\Big)dA(\zeta)\bigg]=0. 
		\end{aligned}
\end{equation*}
Straightforward computations, using \eqref{def zk0}, lead to 
\begin{equation*}
\textnormal{Im}\Big\{ \omega_0\big[\partial_{\varphi}\overline{z(\varphi,\theta)}-\ii \dot{\Theta}(\varphi)\overline{z(\varphi,\theta)}\big]\partial_{\theta}  z(\varphi,\theta)\Big\} = \varepsilon\omega_0\partial_\varphi r(\varphi ,\theta)-\varepsilon\omega_0 \dot\Theta(\varphi ) \partial_\theta r(\varphi ,\theta)
\end{equation*}
	Thus,  $r$ defined through \eqref{def zk0} satisfies the following nonlinear equation: for all  $(\varphi,\theta)\in\mathbb{T}^2$, 
				\begin{align}
G(r) (\varphi ,\theta)  &\triangleq\varepsilon^3\omega_0\partial_\varphi r(\varphi ,\theta)-\varepsilon^3\omega_0 \dot\Theta(\varphi ) \partial_\theta r(\varphi ,\theta)+\sum_{n=1}^4\partial_{\theta} \Psi_n(\varepsilon,r)(\varphi ,\theta)=0,\label{Edc eq rkPsi}
		\end{align}
		where
		\begin{equation}\label{def psi1234}
	\begin{aligned}
		\Psi_1(\varepsilon, r)&\triangleq\int_{\mathbb{T}}\int_{0}^{ R(\varphi,\eta)}\log\big(\big| R(\varphi,\theta)e^{\ii\theta}- l e^{\ii\eta}\big|\big)l d l d\eta,\\
		\Psi_2(\varepsilon, r)&\triangleq\int_{\mathbb{T}}\int_{0}^{ R(\varphi+\pi,\eta)}\Big[\log\big(\big| 1+\varepsilon \tfrac{R(\varphi,\theta)e^{\ii\theta}+ l e^{\ii\eta}}{\sqrt{q(\varphi)}}\big|\big)-\varepsilon\textnormal{Re}\big\{\tfrac{R(\varphi,\theta)e^{\ii\theta}+ l e^{\ii\eta}}{\sqrt{q(\varphi)}}\big\}\Big]l d l d\eta,
		\\
				\Psi_{3+k}(\varepsilon, r)&\triangleq -\int_{\mathbb{T}}\int_{0}^{ R(\varphi+k\pi,\eta)}\Big[\log\Big(\Big| 1+ \varepsilon\tfrac{R(\varphi,\theta)e^{\ii(\theta+\Theta(\varphi))}+(-1)^{k+1}  l e^{-\ii(\eta+\Theta(\varphi))}}{\mathtt{w}_{3+k}(\varphi)}\Big|\Big)\\ &\qquad \qquad\qquad\qquad\qquad  -\varepsilon\textnormal{Re}\Big\{ \tfrac{R(\varphi,\theta)e^{\ii(\theta+ \Theta(\varphi))}+(-1)^{k+1} l e^{-\ii(\eta+\Theta(\varphi))}}{\mathtt{w}_{3+k}(\varphi)}\Big\}\Big]l d l d\eta		
		\end{aligned}
	\end{equation}	
with $k=0,1$ and
\begin{equation}\label{def ws}
\begin{aligned}
\mathtt{w}_3(\varphi)\triangleq \ii \sqrt{q(\varphi)}\sin( \Theta(\varphi))+\ii y_0,\qquad \mathtt{w}_4(\varphi)\triangleq \sqrt{q(\varphi)}\cos( \Theta(\varphi))+\ii y_0
\end{aligned}
\end{equation}		
		It's worthy to point out that the area  of the each single patch is conserved during the motion and satisfies 
		$$
		|O_\varphi|=\frac12\int_0^{2\pi}(1+2\varepsilon r(\varphi,\theta))d\theta=\pi
		$$
		where we impose the spatial  average to be zero, that is, $\int_{\T}r(\varphi,\theta)d\theta=0$. This latter property is preserved in time according to \eqref{Edc eq rkPsi}, which is compatible with the assumption \eqref{Volune-const}.
		
				 \subsection{Function spaces}\label{sec: func spa}
	
  We will introduce the complex Sobolev space within the periodic setting, denoted as $H^{s}(\mathbb{T}^{2},\mathbb{C})$, where the regularity index $s\in\R$. This space is defined as the collection of all the complex periodic functions $h:\mathbb{T}^{2}\to \mathbb{C}$ with the Fourier expansion
	$$
	{h=\sum_{(l,j)\in\mathbb{Z}^{2 }}h_{\ell,j}\,\mathbf{e}_{\ell,j}},\qquad \mathbf{e}_{\ell,j}(\varphi,\theta)\triangleq e^{\ii(l\cdot\varphi+j\theta)},\qquad h_{\ell,j}\triangleq\big\langle h,\mathbf{e}_{\ell,j}\big\rangle_{L^{2}(\mathbb{T}^{2})}
	$$
	equipped with the scalar product
	$$
	\big\langle h,\widetilde{h}\big\rangle_{H^{s}}\triangleq \sum_{(l,j)\in\mathbb{Z}^{2}}\langle \ell,j\rangle^{2s} h_{\ell,j}\overline{\widetilde h_{\ell,j}},\qquad\textnormal{with}\qquad \langle \ell,j\rangle^2\triangleq 1+|\ell|^2+|j|^2.
	$$
	 For $s=0$ this space coincides with the standard  $ L^{2}(\mathbb{T}^{2},\mathbb{C})$ space equipped with the scalar product
	$$
			\big\langle h,\widetilde{h}\big\rangle_{L^{2}(\mathbb{T}^{2})}\triangleq\bigintssss_{\mathbb{T}^{2}}h(\varphi,\theta)\overline{\widetilde{h}(\varphi,\theta)}d\varphi d\theta.
	$$
Another subspace that will be used frequently throughout this paper is $H^{s}_0(\mathbb{T}^{2},\mathbb{C})$	given by
$$
H^{s}_0(\mathbb{T}^{2},\mathbb{C})=\left\{ h\in H^{s}(\mathbb{T}^{2},\mathbb{C}), \hbox{s.t.}\quad\forall \varphi\in\T,\,  \int_{\T}h(\varphi,\theta) d\theta=0\right\}.
$$
	The anisotropic Sobolev space $H^{s_1,s_2}(\mathbb{T}^{2},\mathbb{C})$ is  the set of   functions $h:\mathbb{T}^{2}\to\mathbb{C}$ such that
$$
 h=\sum_{(\ell,j)\in \mathbb{Z}^{2}}h_{\ell,j}{\bf{e}}_{\ell,j}\quad\hbox{and}\quad \|h\|_{H^{s_1,s_2}}^2=\sum_{(\ell,j)\in \mathbb{Z}^{2}}\langle \ell,j\rangle^{2s_1}\langle j\rangle^{2s_2}|h_{\ell,j}|^2<\infty.
$$
		We shall also make use of the following mixed weighted Sobolev spaces with respect to a given  parameter  $\lambda\in(0,1)$. 
	{
	Let $\mathcal{O}$ be a nonempty subset of $\mathbb{R}$ and $\lambda\in(0,1]$, we  define the Banach spaces    }
	\begin{align*}
		\hbox{Lip}_\lambda(\mathcal{O},H^{s})&\triangleq\Big\lbrace h:\mathcal{O}\rightarrow H^{s}\quad\textnormal{s.t.}\quad\|h\|_{s}^{{\textnormal{Lip}(\lambda)}}<\infty\Big\rbrace,\\
		\hbox{Lip}_\lambda(\mathcal{O},\mathbb{C})&\triangleq\Big\lbrace h:\mathcal{O}\rightarrow\mathbb{C}\quad\textnormal{s.t.}\quad\|h\|^{{\textnormal{Lip}(\lambda)}}<\infty\Big\rbrace,
	\end{align*}
	with
	$$
	\|h\|_{s}^{\textnormal{Lip}(\lambda)}\triangleq\sup_{\xi_0\in{\mathcal{O}}}\|h(\xi_0,\cdot)\|_{H^{s}}+\lambda\sup_{\xi_{1}\neq\xi_{2}\in{\mathcal{O}}}\frac{\|h(\xi_1,\cdot)-h(\xi_2,\cdot)\|_{H^{s-1}}}{|\xi_1-\xi_2|},
	$$
	and
	$$
	\|h\|^{{\textnormal{Lip}(\lambda)}}\triangleq\sup_{\xi_0\in{\mathcal{O}}}|h(\xi_0)|+\lambda\sup_{\xi_1\neq\xi_2\in{\mathcal{O}}}\frac{|h(\xi_1)-h(\xi_2)|}{|\xi_1-\xi_2|}\cdot$$
	We emphasize that in Section \ref{N-M-S1} related to Nash-Moser scheme  we find it convenient to use the notation
	\begin{align}\label{Norm-not}
	\|h\|_{s,\mathcal{O}}^{\textnormal{Lip}(\lambda)}=\|h\|_{s}^{\textnormal{Lip}(\lambda)}.
	\end{align}
	We shall also need the anisotropic norm
	$$
	\|h\|_{s_1,s_2}^{\textnormal{Lip}(\lambda)}\triangleq\sup_{\xi_0\in{\mathcal{O}}}\|h(\xi_0,\cdot)\|_{H^{s_1,s_2}}+\lambda\sup_{\xi_1\neq\xi_2\in{\mathcal{O}}}\frac{\|h(\xi_1,\cdot)-h(\xi_2,\cdot)\|_{H^{s_1-1,s_2}}}{|\xi_1-\xi_2|}\cdot
	$$
To conclude this section, we will briefly recall the Hilbert transform on $\mathbb{T}$ . 
Let $h:\T\to\R$ be a continuous function with a zero average, we define its Hilbert transform by 
\begin{align}\label{H-Def}
\mathcal{H} h(\theta)=\int_{\T} h(\eta) \cot\left(\tfrac{\eta-\theta}{2}\right) d\eta.
\end{align}
It is well-known  that $\mathcal{H}$ is a Fourier multiplier with  
$$
\forall j\in\Z^*,\quad \mathcal{H} \mathbf{e}_j(\theta)=\ii \;\hbox{sign}(j)  \mathbf{e}_j(\theta).
$$

\subsection{Linearization}
Our next target is to perform a linearization of the functional ${G}(r)$ as defined in \eqref{Edc eq rkPsi}, for any small state $r$. Specifically, we aim to compute its Gateaux derivative, which can be demonstrated through standard arguments to be equivalent to the Fr\'echet derivative. More precisely, we will   prove the following result.	
	\begin{lemma}\label{prop:linearized} 
	There exists $\varepsilon_0\in(0,1)$ such that if $ r$ is smooth with zero average in space and  
		\begin{equation*}
		\varepsilon\leqslant\varepsilon_0\quad\textnormal{and}\quad\|r\|_{s_0+2}^{\textnormal{Lip}(\lambda)}\leqslant 1,
		\end{equation*}
		then the linearized operator of the map  ${G}(r)$, defined in  \eqref{Edc eq rkPsi},  at the  state $r$   in the direction  of a smooth function $h$ with zero average, $\displaystyle{\int_{\mathbb{T}}h(\varphi,\theta) d\theta=0}$,   is given by
\begin{align}
\partial_{r }{G}(r) [ h ] (\varphi,\theta)&=\varepsilon^3\omega_0\partial_\varphi  h (\varphi,\theta)+\varepsilon\partial_{\theta}\Big[\Big(\tfrac12- \tfrac{\varepsilon}{2} r(\varphi,\theta)-\varepsilon^2 \omega_0\dot\Theta (\varphi)-\tfrac{\varepsilon^2}{2}\mathtt{g}(\varphi,\theta)+\varepsilon^2{V}_1^\varepsilon(r)(\varphi,\theta)\nonumber\\ &\quad+\varepsilon^3{V}_2^\varepsilon(r)(\varphi,\theta)\Big) h (\varphi,\theta)\Big]-\tfrac{\varepsilon}{2}\mathcal{H}[h](\varphi,\theta)-\varepsilon^3  \partial_\theta\mathcal{Q}_0[h](\varphi,\theta)\nonumber\\ &\quad+\varepsilon^3 \partial_\theta\mathcal{R}_1^\varepsilon(r)[h](\varphi,\theta)+\varepsilon^4 \partial_\theta\mathcal{R}_2^\varepsilon(r)[h](\varphi,\theta),
\label{Edc eq rkPsi-dif}
		\end{align}		
where the real-valued function $\mathtt{g}$ takes the form 
\begin{align}
\mathtt{g}(\varphi,\theta)&\triangleq \textnormal{Re}\big\{ \mathtt{a}_2(\varphi)e^{\ii 2\theta} \big\}, \qquad \mathtt{a}_2(\varphi)\triangleq \tfrac{1}{q(\varphi)}-\tfrac{e^{\ii 2\Theta(\varphi)}}{\mathtt{w}_3^2(\varphi)}-\tfrac{e^{\ii 2\Theta(\varphi)}}{\mathtt{w}_4^2(\varphi)} \label{def g0}
\end{align}
the functions $\mathtt{w}_{n}(\varphi)$, $n\in\{3,4\}$, are given by \eqref{def ws} and the real functions ${V}_k^\varepsilon(r)$, $k\in\{1,2\}$ satisfy the estimates 
\begin{equation}\label{first est0}
\begin{aligned}
&\|{V}_1^\varepsilon(r)\|_{s}^{\textnormal{Lip}(\lambda)}\lesssim \|r\|_{s+1}^{\textnormal{Lip}(\lambda)}\|r\|_{s_0+1}^{\textnormal{Lip}(\lambda)};\quad 
\|{V}_2^\varepsilon(r)\|_{s}^{\textnormal{Lip}(\lambda)}\lesssim 1+\|r\|_{s}^{\textnormal{Lip}(\lambda)},
\\				
&\|\Delta_{12}{V}_1^\varepsilon(r)\|_{s}^{\textnormal{Lip}(\lambda)}\lesssim\|\Delta_{12}r\|_{s+1}^{\textnormal{Lip}(\lambda)}+\|\Delta_{12}r\|_{s_0+1}^{\textnormal{Lip}(\lambda)}\max_{\ell\in\{1,2\}}\|r_{\ell}\|_{s+1}^{\textnormal{Lip}(\lambda)},
\\				
&\|\Delta_{12}{V}_2^\varepsilon(r)\|_{s}^{\textnormal{Lip}(\lambda)}\lesssim\|\Delta_{12}r\|_{s}^{\textnormal{Lip}(\lambda)}+\|\Delta_{12}r\|_{s_0}^{\textnormal{Lip}(\lambda)}\max_{\ell\in\{1,2\}}\|r_{\ell}\|_{s}^{\textnormal{Lip}(\lambda)}.
\end{aligned}
\end{equation}
 The linear operators $\mathcal{Q}_0$ and $\mathcal{R}_k^\varepsilon(r)$ can be represented as follows
 \begin{equation}\label{def D0}
\begin{aligned}
\mathcal{Q}_0[h](\varphi,\theta)&\triangleq \int_{\mathbb{T}}  h (\varphi+\pi,\eta)\textnormal{Re}\Big\{\tfrac{e^{\ii(\theta+\eta)}}{q(\varphi)}-\tfrac{e^{\ii(\theta-\eta)}}{\mathtt{w}_{4}^2(\varphi)}\Big\} d\eta+\int_{\mathbb{T}}h(\varphi,\eta)\textnormal{Re}\Big\{ \tfrac{e^{\ii(\theta-\eta)}}{\mathtt{w}_{3}^2(\varphi)}\Big\} d\eta,\\
\mathcal{R}_1^\varepsilon(r)[h](\varphi,\theta)	&\triangleq \int_{\mathbb{T}} h (\varphi,\eta)K_1^\varepsilon(r)(\varphi,\theta,\eta)	d\eta,
\\
 \mathcal{R}_2^\varepsilon(r)[h](\varphi,\theta)	&\triangleq \sum_{k=0}^1\int_{\mathbb{T}} h (\varphi+k\pi,\eta)K_{2+k}^\varepsilon(r)(\varphi,\theta,\eta)	d\eta 
\end{aligned}
\end{equation}
with the estimates
\begin{equation}\label{first est}
\begin{aligned}
&\|K_1^\varepsilon(r)\|_{s}^{\textnormal{Lip}(\lambda)}\lesssim \|r\|_{s+1}^{\textnormal{Lip}(\lambda)}\|r\|_{s_0+1}^{\textnormal{Lip}(\lambda)},\\
&\|K_2^\varepsilon(r)\|_{s}^{\textnormal{Lip}(\lambda)}+\|K_3^\varepsilon(r)\|_{s}^{\textnormal{Lip}(\lambda)}\lesssim 1+\|r\|_{s}^{\textnormal{Lip}(\lambda)}.
\end{aligned}
\end{equation}
Moreover, one has 
\begin{equation}\label{hess G}
\begin{aligned} 
\partial_{r}^2{G}(r) [ h_1,h_2 ] (\varphi,\theta)&=- \tfrac{ \varepsilon^2}{2} \partial_{\theta}\Big[h_1(\varphi,\theta) h_2 (\varphi,\theta)\Big]+\varepsilon^3\mathcal{E}_1^\varepsilon(r)[h_1,h_2](\varphi,\theta),\\
\partial_{r}^3{G}(r) [ h_1,h_2,h_3 ] (\varphi,\theta)&=\varepsilon^3\mathcal{E}_2^\varepsilon(r)[h_1,h_2,h_3](\varphi,\theta),
		\end{aligned}
\end{equation}		
where
\begin{align*}
			\| \mathcal{E}_1^\varepsilon(r)[h_1,h_2]\|_{s}^{\textnormal{Lip}(\lambda)}&\lesssim\|h_{1}\|_{s_{0}+2}^{\textnormal{Lip}(\lambda)}\|h_{2}\|_{s+2}^{\textnormal{Lip}(\lambda)}+\Big(\|h_{1}\|_{s+2}^{\textnormal{Lip}(\lambda)}+\| r\|_{s+2}^{\textnormal{Lip}(\lambda)}\|h_{1}\|_{s_{0}+2}^{\textnormal{Lip}(\lambda)}\Big)\|h_{2}\|_{s_{0}+2}^{\textnormal{Lip}(\lambda)},\\
			\| \mathcal{E}_2^\varepsilon(r)[h,h,h]\|_{s}^{\textnormal{Lip}(\lambda)}&\lesssim \big(\|h\|_{s_{0}+2}^{\textnormal{Lip}(\lambda)}\big)^2\Big(\|h\|_{s+2}^{\textnormal{Lip}(\lambda)}+\|r\|_{s+2}^{\textnormal{Lip}(\lambda)}\|h\|_{s_0+2}^{\textnormal{Lip}(\lambda)}\Big).
			\end{align*}		
\end{lemma}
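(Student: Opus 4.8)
The lemma is obtained by a direct Gateaux differentiation of the functional $G$ in \eqref{Edc eq rkPsi}, followed by an expansion in powers of $\varepsilon$; standard arguments then upgrade the Gateaux derivative to the Fr\'echet one. First, the transport part $\varepsilon^3\omega_0\partial_\varphi r-\varepsilon^3\omega_0\dot\Theta(\varphi)\partial_\theta r$ is linear in $r$, so it contributes the term $\varepsilon^3\omega_0\partial_\varphi h$ and, since $\dot\Theta$ does not depend on $\theta$, the coefficient $-\varepsilon^2\omega_0\dot\Theta(\varphi)$ inside the bracket $\varepsilon\partial_\theta[(\cdots)h]$. For the potential terms $\Psi_n(\varepsilon,r)$ of \eqref{def psi1234} one uses $R=\sqrt{1+2\varepsilon r}$, so that $(\partial_r R)(r)[h]=\varepsilon h/R$, which after shrinking $\varepsilon_0$ is well defined and tame on $\{\|r\|_{s_0+2}^{\textnormal{Lip}(\lambda)}\le 1\}$. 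Each $\Psi_n$ is then differentiated term by term, keeping track of the fact that the inner radial integrals $\int_0^{R(\varphi+k\pi,\eta)}(\cdots)\,l\,dl$ have an $r$-dependent upper endpoint: this yields, besides the interior contribution obtained by differentiating under the integral sign, a boundary term equal to the integrand evaluated at $l=R(\varphi+k\pi,\eta)$ times $\varepsilon h(\varphi+k\pi,\eta)/R(\varphi+k\pi,\eta)$.

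\textbf{The self-interaction term $\Psi_1$.} This is the classical desingularization-near-the-disc computation. After differentiating and applying $\partial_\theta$, the leading part is identified using the explicit Newtonian potential of the unit disc together with the Fourier symbol of $\mathcal{H}$ recalled in \eqref{H-Def}; it equals $\tfrac\varepsilon2(\partial_\theta h-\mathcal{H}h)$, which accounts for the coefficient $\tfrac12$ in the bracket and for the term $-\tfrac\varepsilon2\mathcal{H}h$. Expanding $R$ and the logarithmic kernel around the disc produces the next-order multiplicative coefficient $-\tfrac\varepsilon2 r$, the quadratic function $V_1^\varepsilon(r)$ and the kernel operator $\mathcal{R}_1^\varepsilon(r)$ with kernel $K_1^\varepsilon(r)$; the one-derivative gain in the first line of \eqref{first est} is exactly the statement that, once the singular part of $\Psi_1$ has been removed, all remaining kernels are smooth on $\T^2$.

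\textbf{The mutual-interaction terms $\Psi_2,\Psi_3,\Psi_4$.} Each of these carries the regularized integrand $\log|1+\varepsilon X|-\varepsilon\,\textnormal{Re}\,X=-\tfrac{\varepsilon^2}{2}\textnormal{Re}(X^2)+O(\varepsilon^3 X^3)$, with $X$ a smooth ratio involving $\sqrt{q(\varphi)}$, $\mathtt{w}_3(\varphi)$ or $\mathtt{w}_4(\varphi)$ from \eqref{def ws}; hence the whole block starts at order $\varepsilon^2$ and all its kernels are smooth. Expanding $X^2$ splits the $O(\varepsilon^2)$ part into three pieces. The diagonal piece, proportional to $R(\varphi,\theta)^2 e^{\ii 2\theta}$ divided respectively by $q(\varphi)$, $\mathtt{w}_3^2(\varphi)$, $\mathtt{w}_4^2(\varphi)$ (the last two carrying the rotation factors $e^{\ii 2\Theta(\varphi)}$ inherited from the $e^{\pm\ii\Theta}$ in $\Psi_{3},\Psi_{4}$, which cancel in the cross piece but not here), gives after linearization and $\partial_\theta$ precisely the multiplication by $\mathtt{g}(\varphi,\theta)=\textnormal{Re}\{\mathtt{a}_2(\varphi)e^{\ii 2\theta}\}$ with $\mathtt{a}_2$ as in \eqref{def g0}. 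The cross piece, proportional to $R(\varphi,\theta)\,l\,e^{\ii(\theta\pm\eta)}$, after integration against $l\,dl$ on $[0,R(\varphi+k\pi,\eta)]$, linearization and $\partial_\theta$, produces the operator $\mathcal{Q}_0$ of \eqref{def D0}, which is non-local in $\theta$ but supported on the spatial modes $\pm1$; the shift $\varphi\mapsto\varphi+\pi$ in its first integral is inherited from the half-period symmetry $O_{t,2}^\varepsilon=O_{t+T(\xi_0)/2,1}^\varepsilon$. The remaining $\eta$-only pieces vanish under $\partial_\theta$ or are absorbed; together with the boundary contributions coming from the $r$-dependent radial endpoints and the $O(\varepsilon^3 X^3)$ term they form the affine objects $V_2^\varepsilon(r)$ and $\mathcal{R}_2^\varepsilon(r)$.

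\textbf{Estimates and higher derivatives.} The bounds \eqref{first est0}--\eqref{first est} follow from the tame product and composition laws in the weighted Sobolev spaces of Section \ref{sec: func spa}, applied to $R=\sqrt{1+2\varepsilon r}$ and to the smooth kernels above, all of whose $(\varphi,\theta,\eta)$-derivatives are controlled by powers of $\|r\|^{\textnormal{Lip}(\lambda)}$, which also gives the claimed quadratic (for $V_1^\varepsilon,\mathcal{R}_1^\varepsilon$) versus affine (for $V_2^\varepsilon,\mathcal{R}_2^\varepsilon$) dependence in $r$. For \eqref{hess G} one differentiates once more: the only piece of $\partial_r G(r)[h]$ that is at the same time non-smoothing and genuinely $r$-dependent is $-\tfrac{\varepsilon^2}{2}\partial_\theta[r\,h]$, whose derivative in the direction $h_2$ is the leading Hessian term $-\tfrac{\varepsilon^2}{2}\partial_\theta[h_1 h_2]$; every other contribution is $O(\varepsilon^3)$ and, being tame and/or smoothing, is collected into $\mathcal{E}_1^\varepsilon(r)$, and a third differentiation removes the surviving quadratic term, leaving the trilinear $O(\varepsilon^3)$ remainder $\mathcal{E}_2^\varepsilon(r)$. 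The main obstacle is the bookkeeping at orders $\varepsilon^2$ through $\varepsilon^4$ in the mutual-interaction block: cleanly separating the non-local but spatially finite-rank operator $\mathcal{Q}_0$ from the smoothing remainders, pinning down the exact closed form of $\mathtt{a}_2$, and, in $\Psi_1$, isolating the Hilbert-transform part from the smooth remainder while keeping every estimate tame.
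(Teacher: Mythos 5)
Your proposal is correct and follows essentially the same route as the paper's proof: Gateaux differentiation of each $\Psi_n$ with separate bookkeeping of the interior and endpoint contributions, extraction of $\tfrac12(\partial_\theta-\mathcal{H})$ and the coefficient $-\tfrac\varepsilon2 r$ from the self-interaction term, the Taylor expansion $\log|1+\varepsilon X|-\varepsilon\operatorname{Re}X=-\tfrac{\varepsilon^2}{2}\operatorname{Re}(X^2)+O(\varepsilon^3)$ for the mutual-interaction terms yielding $\mathtt{g}$ and $\mathcal{Q}_0$, and tame composition laws for the remainders. The paper organizes the $\Psi_1$ computation via $\partial_{\bar z}\Psi$ and Gauss--Green rather than invoking the disc potential directly, but this is the same classical calculation.
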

\begin{proof}
Differentiating \eqref{Edc eq rkPsi} with respect to $r$ in the direction $ h $ gives after straightforward computations 
	\begin{align}\label{linearized st0}
\partial_r {G}(r)[h](\varphi,\theta)&=\varepsilon^3\omega_0\partial_\varphi h(\varphi ,\theta)-\varepsilon^3\omega_0 \dot\Theta(\varphi ) \partial_\theta h(\varphi ,\theta)+\sum_{n=1}^4\partial_{\theta}\Big[\partial_r\Psi_n(\varepsilon,r)[h](\varphi ,\theta)\Big].
		\end{align}
		We write $\Psi_1$ in the form
\begin{equation*}
\Psi_1(\varepsilon,r)(\varphi,\theta)=\Psi\big(r, R(\varphi,\theta)e^{\ii\theta}\big)
\end{equation*}
with
	\begin{equation*}
	\begin{aligned}
		\Psi(r,z)(\varphi)&\triangleq\frac{1}{2\pi}\int_{0}^{2\pi}\int_{0}^{ R(\varphi,\eta)}\log\big(\big|z- l e^{\ii\eta}\big|\big)l dl d\eta,\\
		&= \frac{1}{2\pi}\int_{O_\varphi}\log\big(\big|z-\varepsilon \xi\big|\big)dA(\xi), \quad R(\varphi,\eta)=\left(1+2\varepsilon r( \varphi,\theta)\right)^{\frac{1}{2}}. 
	\end{aligned}
	\end{equation*}
Using the chain rule we get
 \begin{equation}\label{psi-psi1}
 \begin{aligned}
\partial_r\Psi_1(\varepsilon,r)(\varphi,\theta)&=(\partial_{r}\Psi)\big(r, R(\varphi,\theta)e^{\ii\theta}\big)[ h ](\varphi,\theta)
\\ &\quad +\textnormal{Re}\big\{2(\partial_{\overline{z}}\Psi)\big(r, R(\varphi,\theta)e^{\ii\theta}\big)\overline{\partial_{r}\big(R(\varphi,\theta)e^{\ii\theta}\big)[ h ]( \varphi,\theta) }\big\}.
\end{aligned}
\end{equation}	
Differentiating $\Psi(r,z)$ with respect to $r$ leads to
	\begin{align}\label{d z psi}
	(\partial_{r}\Psi)\big(r,z\big)[ h ](\varphi,\varphi)&=\varepsilon\int_{\mathbb{T}} h ( \varphi,\eta)\log\big(| z-\varepsilon R( \varphi,\eta)e^{\ii\eta}|\big)d\eta.
	\end{align}
	On the other hand, from the identity $\partial_{\overline{z}}\log(|z-\varepsilon\xi|)=-\tfrac1\varepsilon\partial_{\overline{\xi}}\log(|z-\varepsilon\xi|)$ one may write
		\begin{align*}
	2\partial_{\overline z} \Psi(r,z)(\varphi)&=-\frac{1}{\pi\varepsilon }\int_{O_\varphi}\partial_{\overline\xi} \log\big(\big|z-\varepsilon \xi\big|\big)  dA(\xi).
		\end{align*}
Applying Gauss-Green theorem yields
	\begin{align}\label{lkn part2}
		2\partial_{\overline z} \Psi(r,z)(\varphi)&= \frac{\ii}{2\pi \varepsilon} \int_{0}^{2\pi}\log\big(\big|z-\varepsilon R( \varphi,\eta) e^{\ii \eta}\big|\big)\partial_\eta\big(R( \varphi,\eta)e^{\ii \eta}\big)d\eta.
		\end{align}
	In view of \eqref{psi-psi1}, \eqref{d z psi} and \eqref{lkn part2}  we obtain
			\begin{align}\label{linearized psi st1}
& \partial_r\Psi_1(\varepsilon,r) [ h ]  (\varphi,\theta)=\varepsilon U_1^\varepsilon(r)(\varphi,\theta)h(\varphi,\theta)+\varepsilon I_1^\varepsilon(r)[h](\varphi,\theta)
		\end{align}
		with
	\begin{equation*}
	\begin{aligned}
I_1^\varepsilon(r)[h](\varphi,\theta)&= \int_{\mathbb{T}} h (\varphi,\eta)\log\big(\big| R(\varphi,\theta)e^{\ii\theta}- R(\varphi,\eta)e^{\ii\eta}\big|\big)d\eta,
	\\ 
	U_1^\varepsilon(r)(\varphi,\theta)&=-\frac{  1}{R(\varphi,\theta)}\int_{\mathbb{T}}\log\big(\big| R(\varphi,\theta)e^{\ii\theta}- R(\varphi,\eta)e^{\ii\eta}\big|\big)\partial_{\eta}\textnormal{Im}\big\{R(\varphi,\eta)e^{\ii(\eta-\theta)}\big\}d\eta.
\end{aligned}
\end{equation*}
For the sake of simplicity,   we shall omit the dependence of our quantities with respect to the variable $\varphi$. We write $I_1^\varepsilon(r)[ h ]$ as 
	\begin{equation*}
	\begin{aligned}
I_1^\varepsilon(r)[ h ](\theta)	 &=
	\int_{\mathbb{T}} h (\eta)\log\big(| e^{\ii\theta}- e^{\ii\eta}|\big)d\eta+\varepsilon\int_{\mathbb{T}}  h (\eta)\textnormal{Re}\big\{f_1(\theta,\eta)\big\}d\eta\\ &\quad + \int_{\mathbb{T}} h (\eta)\big[\log\big(\big| 1+\varepsilon f_1(\theta,\eta)\big|\big)-\varepsilon\textnormal{Re}\big\{f_1(\theta,\eta)\big\}\big]d\eta,
	\end{aligned}
	\end{equation*}
		where
	\begin{align}\label{def f1}
	f_1(\theta,\eta)&\triangleq \tfrac{(R(\theta)-1)e^{\ii\theta}- (R(\eta)-1)e^{\ii\eta}}{\varepsilon(e^{\ii\theta}- e^{\ii\eta})}.
	\end{align}
From the identity 
$
\textnormal{Re}\big\{\tfrac{ e^{\ii\eta}}{e^{\ii\theta}- e^{\ii\eta}}\big\}=-\tfrac12,
$
we conclude that
\begin{align}\label{re f1}
\textnormal{Re}\big\{f_1(\theta,\eta)\big\}&=\tfrac{1}{2\varepsilon}\big(R(\theta)-1\big)+\tfrac{1}{2\varepsilon}\big(R(\eta)-1\big).
\end{align}
Using the fact that $h$ has zero average, we obtain
	\begin{equation*}
	\begin{aligned}
I_1^\varepsilon(r)[ h ]	 &=
	\int_{\mathbb{T}} h (\eta)\log\big(| e^{\ii\theta}- e^{\ii\eta}|\big)d\eta+\tfrac12\int_{\mathbb{T}}  h (\eta)\big(R(\eta)-1\big)d\eta\\ &\quad + \int_{\mathbb{T}} h (\eta)\Big[\log\big(\big| 1+\varepsilon f_1(\theta,\eta)\big|\big)-\varepsilon\textnormal{Re}\big\{f_1(\theta,\eta)\big\}\Big]d\eta.
	\end{aligned}
	\end{equation*}  
Differentiating the last expression  with respect to  $\theta$ gives 
	\begin{equation}\label{new l1}
	\begin{aligned}
\partial_\theta\Big[I_1^\varepsilon(r)[ h ](\theta)\Big]	&=-\tfrac12
	\mathcal{H}[h](\theta)+\varepsilon^2\partial_\theta\mathcal{R}^\varepsilon_1(r)[h](\theta),
	\end{aligned}
	\end{equation}
where $\mathcal{H}$ is the Hilbert transform, defined by \eqref{H-Def}, and  $ \mathcal{R}^\varepsilon_1(r)[h]$ is given by \eqref{def D0} with
$$
K_1^\varepsilon(r)\triangleq \tfrac{1}{\varepsilon^2}\Big[\log\big(\big| \varepsilon f_1(\theta,\eta)+1\big|\big)-\varepsilon\textnormal{Re}\big\{f_1(\theta,\eta)\big\}\Big].
$$
In view of \eqref{def f1}, an application of \cite[Lem. 4.2]{HR22} gives 
\begin{equation}\label{estimate f1}
\|f_1\|_s^{\textnormal{Lip}(\lambda)}\lesssim \|r\|_{s+1}^{\textnormal{Lip}(\lambda)}.
\end{equation} 
By the composition laws in Lemma \ref{lem funct prop} we conclude  the estimate on $K_1^\varepsilon(r)$, stated in \eqref{first est}. 	
Next, in view of \eqref{def f1},   we write 
		\begin{align*}
U_1^\varepsilon(r)(\theta)&=-\tfrac{1}{R(\theta)}\int_{\mathbb{T}}\log\big(| e^{\ii\theta}-e^{\ii\eta}|\big)\cos(\eta-\theta)d\eta-\tfrac{1}{R(\theta)}\int_{\mathbb{T}}\log\big(\big| 1+\varepsilon f_1(\theta,\eta)\big|\big)\cos(\eta-\theta)d\eta\\ &\quad-\tfrac{1}{R(\theta)}\int_{\mathbb{T}}\log\big(| e^{\ii\theta}- e^{\ii\eta}|\big)\partial_\eta\big[\big(R(\eta)-1\big)\sin(\eta-\theta)\big]d\eta
\\ &\quad-\tfrac{1}{R(\theta)}\int_{\mathbb{T}}\log\big(\big| 1+\varepsilon f_1(\theta,\eta)\big|\big)\partial_\eta\big[\big(R(\eta)-1\big)\sin(\eta-\theta)\big]d\eta.
		\end{align*}
		Recall the following formula, which can be found in \cite[Lem. A.3]{CCG4},		
$$
\int_{\mathbb{T}}\log\big(| e^{\ii\theta}-e^{\ii\eta}|\big)\cos(\eta-\theta)d\eta=-\tfrac12.
$$
On the other hand, integrating by parts gives
	\begin{align*}
\int_{\mathbb{T}}\log\big(| e^{\ii\theta}- e^{\ii\eta}|\big)\partial_\eta\big[\big(R(\eta)-1\big)\sin(\eta-\theta)\big]d\eta
&=-\tfrac12\int_{\mathbb{T}}\big(1+\cos (\eta-\theta)\big)\big(R(\eta)-1\big)d\eta.
		\end{align*}
Moreover,  using the identity \eqref{re f1} we get
		$$
\int_{\mathbb{T}} \varepsilon\textnormal{Re}\big\{f_1(\theta,\eta)\big\}\cos(\eta-\theta)d\eta=\tfrac{1}{2}\int_{\mathbb{T}} \cos(\eta-\theta)\big(R(\eta)-1\big)d\eta.		
		$$		
Combining the four last identities, we get 
		\begin{align*}
U_1^\varepsilon(r)(\theta)&=\tfrac{1}{2R(\theta)}+\tfrac{1}{2R(\theta)}\int_{\mathbb{T}}\big(R(\eta)-1\big)d\eta\\ &\quad -\tfrac{1}{R(\theta)}\int_{\mathbb{T}}\Big[\log\big(\big| 1+\varepsilon f_1(\theta,\eta)\big|\big)-\varepsilon\textnormal{Re}\big\{f_1(\theta,\eta)\big\}\Big]\cos(\eta-\theta)d\eta
\\ &\quad-\tfrac{1}{R(\theta)}\int_{\mathbb{T}}\log\big(\big| 1+\varepsilon f_1(\theta,\eta)\big|\big)\partial_\eta\big[\big(R(\eta)-1\big)\sin(\eta-\theta)\big]d\eta.
		\end{align*}
Using the fact that $r$ has zero average in space   we conclude that
	\begin{align}\label{new V1}
U_1^\varepsilon(r)(\theta)&=\tfrac12-\tfrac{\varepsilon}{2} r(\theta)+\varepsilon^2 {V}_1^\varepsilon(r)(\theta),
		\end{align}
		where
		\begin{align}\label{new V1 tilde}
\nonumber {V}_1^\varepsilon(r)(\theta)&\triangleq\tfrac{1}{2\varepsilon^2}\Big[\tfrac{1}{\sqrt{1+2\varepsilon r(\theta)}}-1+{\varepsilon}r(\theta)\Big]+\tfrac{1}{2\varepsilon^2 R(\theta)}\int_{\mathbb{T}}\big(\sqrt{1+2\varepsilon r(\eta)}-1-\varepsilon r(\eta)\big)d\eta\\ \nonumber &\quad-\tfrac{1}{\varepsilon^2 R(\theta)}\int_{\mathbb{T}}\Big[\log\big(\big| 1+\varepsilon f_1(\theta,\eta)\big|\big)-\varepsilon\textnormal{Re}\big\{f_1(\theta,\eta)\big\}\Big]\cos(\eta-\theta)d\eta
\\ &\quad-\tfrac{1}{\varepsilon^2R(\theta)}\int_{\mathbb{T}}\log\big(\big| 1+\varepsilon f_1(\theta,\eta)\big|\big)\partial_\eta\big[\big(\sqrt{1+2\varepsilon r(\eta)}-1\big)\sin(\eta-\theta)\big]d\eta.
		\end{align}
As a consequence, composition laws in Lemma \ref{lem funct prop} together with \eqref{estimate f1} and  the smallness condition of $r$ imply the estimate on ${V}_1^\varepsilon(r)$, given by \eqref{first est0}.	Putting together \eqref{linearized psi st1}, \eqref{new l1}  and \eqref{new V1} we find 
			\begin{align}\label{linearized psi st12}
\nonumber\partial_\theta\Big[ \partial_r\Psi_1(\varepsilon,r) [ h ]  (\varphi,\theta)\Big]&=\varepsilon\partial_\theta\Big[  \Big(\tfrac12-\tfrac{\varepsilon}{2} r(\varphi,\theta)+\varepsilon^2 {V}_1^\varepsilon(r)(\varphi,\theta)\Big)h(\varphi,\theta)\Big]\\ &\quad-\tfrac{\varepsilon}{2} 
	\mathcal{H}[h](\varphi,\theta)+\varepsilon^3\partial_\theta\mathcal{R}^\varepsilon_1(r)[h](\varphi,\theta).
		\end{align}
Next, differentiating $\Psi_2(\varepsilon,r)$, defined trough \eqref{def psi1234}, with respect to $r$ gives 		
\begin{align}\label{linearized psi2 st1}
&\partial_r\Psi_2(\varepsilon,r) [ h ]  (\varphi,\theta)
 =\varepsilon^2 U_2^\varepsilon(r)(\varphi,\theta)h(\varphi,\theta)+\varepsilon I_2^\varepsilon(r)[h](\varphi,\theta),
		\end{align}		
where
\begin{equation*}
	\begin{aligned}
	U_2^\varepsilon(r)(\varphi,\theta)&\triangleq \tfrac{1}{R(\varphi,\theta)}\int_{\mathbb{T}}\int_{0}^{ R(\varphi+\pi,\eta)}\bigg[\textnormal{Re}\Big\{\tfrac{e^{\ii\theta}}{\sqrt{q(\varphi)}+\varepsilon (R(\varphi,\theta)e^{\ii\theta}+  l e^{\ii\eta})}\Big\}-\textnormal{Re}\Big\{\tfrac{e^{\ii\theta}}{\sqrt{q(\varphi)}}\Big\}\bigg]l d l d\eta,\\
	I_2^\varepsilon(r)[h](\varphi,\theta)&\triangleq\int_{\mathbb{T}} h(\varphi+\pi,\eta)\Big[\log\big(\big| 1+\varepsilon \tfrac{R(\varphi,\theta)e^{\ii\theta}+   R(\varphi+\pi,\eta) e^{\ii\eta}}{\sqrt{q(\varphi)}}\big|\big)-\varepsilon\textnormal{Re}\big\{\tfrac{R(\varphi,\theta)e^{\ii\theta}+ R(\varphi+\pi,\eta) e^{\ii\eta}}{\sqrt{q(\varphi)}}\big\}\Big] d\eta.
	\end{aligned}
	\end{equation*}
Notice that ${I}_{2}^\varepsilon(r)[ h ],$ can be expressed as 
	\begin{align*}
&I_2^\varepsilon(r)[ h ](\varphi,\theta)
 =-\frac{\varepsilon^{2}}{2 }\int_{\mathbb{T}}  h (\varphi+\pi,\eta)\textnormal{Re}\Big\{\tfrac{f_2(\varphi,\theta,\eta)^2}{q(\varphi)}\Big\}d\eta\\ &\quad+\int_{\mathbb{T}} h (\varphi+\pi,\eta)\partial_\theta\Big[\log\big(\big|\tfrac{\varepsilon}{\sqrt{q(\varphi)}} f_2(\varphi,\theta,\eta)+1\big|\big)-\varepsilon\textnormal{Re}\Big\{\tfrac{f_2(\varphi,\theta,\eta)}{\sqrt{q(\varphi)}}\Big\}+\tfrac{\varepsilon^2}{2}\textnormal{Re}\Big\{\tfrac{f_2(\varphi,\theta,\eta)^2}{q(\varphi)}\Big\}\Big]d\eta,
 \end{align*}
 	where
\begin{align}\label{def f2}
	f_2(\varphi,\theta,\eta)&\triangleq R(\varphi,\theta)e^{\ii\theta}+R(\varphi,\eta) e^{\ii\eta}.
	\end{align}
Since $h$ has zero average, in the space variable, we get
 \begin{align*}
 \int_{\mathbb{T}}  h (\varphi+\pi,\eta)\textnormal{Re}\Big\{\tfrac{f_2(\varphi,\theta,\eta)^2}{q(\varphi)}\Big\}d\eta&=2\int_{\mathbb{T}}  h (\varphi+\pi,\eta)R(\varphi,\theta)R(\varphi,\eta)\textnormal{Re}\Big\{\tfrac{e^{\ii(\theta+\eta)}}{q(\varphi)}\Big\}d\eta\\ &\quad+\tfrac{1}{q(\varphi)}\int_{\mathbb{T}}  h (\varphi+\pi,\eta)R(\varphi,\eta)^2\cos(2\eta)d\eta.
 \end{align*}
This implies that
\begin{align}\label{new l2}
\nonumber \partial_\theta\big[I_2^\varepsilon(r)[ h ](\varphi,\theta)\big]&=\varepsilon^2\partial_\theta\bigg[-\int_{\mathbb{T}}  h (\varphi+\pi,\eta)\textnormal{Re}\Big\{\tfrac{e^{\ii(\theta+\eta)}}{q(\varphi)}\Big\}d\eta\\ &\qquad\qquad +\varepsilon   \int_{\mathbb{T}} h (\varphi+\pi,\eta)K_{2,2}^\varepsilon(r)(\varphi,\theta,\eta)	d\eta\bigg],
 \end{align}
 where 
   \begin{align*}
\nonumber K_{2,2}^\varepsilon(r)(\varphi,\theta,\eta)	&\triangleq\tfrac{1}{\varepsilon^3}\Big[\log\big(\big|\tfrac{\varepsilon}{\sqrt{q(\varphi)}} f_2(\varphi,\theta,\eta)+1\big|\big)-\varepsilon\textnormal{Re}\Big\{\tfrac{f_2(\varphi,\theta,\eta)}{\sqrt{q(\varphi)}}\Big\}+\tfrac{\varepsilon^2}{2}\textnormal{Re}\Big\{\tfrac{f_2(\varphi,\theta,\eta)^2}{q(\varphi)}\Big\}\Big]\\  &\qquad-\tfrac{1}{\varepsilon }\Big[\big(R(\varphi,\theta)R(\varphi,\eta)-1\big)\textnormal{Re}\Big\{\tfrac{e^{\ii(\theta+\eta)}}{q(\varphi)}\Big\}\Big].
	\end{align*}
Let us now move to the function $U_2^\varepsilon(r)$  which writes, in view of \eqref{def f2},
\begin{align}\label{exp v2 new}
U_2^\varepsilon(r)(\theta)&= -\frac{\varepsilon}{R(\varphi,\theta)}\int_{\mathbb{T}}\int_{0}^{ R(\varphi+\pi,\eta)}\textnormal{Re}\Big\{\tfrac{(R(\varphi,\theta)e^{\ii\theta}+ l e^{\ii\eta})e^{\ii\theta}}{{q(\varphi)}+\varepsilon \sqrt{q(\varphi)} (R(\varphi,\theta)e^{\ii\theta}+  l e^{\ii\eta})}\Big\}l d l d\eta\nonumber\\
 &= -\varepsilon\int_{\mathbb{T}}\int_{0}^{1}\textnormal{Re}\Big\{\tfrac{(e^{\ii\theta}+  l e^{\ii\eta})e^{\ii\theta}}{{q(\varphi)}}\Big\}l d l d\eta +\varepsilon^2 V_{2,2}^\varepsilon(r)(\varphi,\theta),
 \end{align}
 where
  \begin{align*}
V_{2,2}^\varepsilon(r)(\varphi,\theta)
& \triangleq \frac1\varepsilon\bigg[\int_{\mathbb{T}}\int_{0}^{1}\textnormal{Re}\Big\{\tfrac{(e^{\ii\theta}+  l e^{\ii\eta})e^{\ii\theta}}{{q(\varphi)}}\Big\}l d l d\eta \nonumber \\ &\qquad- \frac{1}{R(\varphi,\theta)}\int_{\mathbb{T}}\int_{0}^{ R(\varphi+\pi,\eta)}\textnormal{Re}\Big\{\tfrac{(R(\varphi,\theta)e^{\ii\theta}+  l e^{\ii\eta})e^{\ii\theta}}{{q(\varphi)}+\varepsilon \sqrt{q(\varphi)}(R(\varphi,\theta)e^{\ii\theta}+  l e^{\ii\eta})}\Big\}l d l d\eta\bigg].
 \end{align*}
Thus, straightforward computations lead to
\begin{align}\label{new V2}
&U_2^\varepsilon(r)(\varphi,\theta) =-\tfrac\varepsilon2\textnormal{Re}\Big\{\tfrac{e^{\ii 2\theta}}{q(\varphi)}\Big\}+\varepsilon^2 {V}_{2,2}^\varepsilon(r)(\varphi,\theta), 
 \end{align}
 Gathering \eqref{linearized psi2 st1}, \eqref{new l2} and  \eqref{new V2} gives
 			\begin{align}\label{linearized psi 2 st12}
\nonumber &\partial_\theta\Big[ \partial_r\Psi_2(\varepsilon,r) [ h ]  (\varphi,\theta)\Big]=\varepsilon^3\partial_\theta\Big[ - \textnormal{Re}\Big\{\tfrac{e^{\ii 2\theta}}{q(\varphi)}\Big\}+\varepsilon {V}_{2,2}^\varepsilon(r)(\varphi,\theta)\Big)h(\theta)\Big]\\ &\quad+\varepsilon^3\partial_\theta\bigg[-\int_{\mathbb{T}}  h (\varphi+\pi,\eta)\textnormal{Re}\Big\{\tfrac{e^{\ii(\theta+\eta)}}{q(\varphi)}\Big\}d\eta +\varepsilon\int_{\mathbb{T}} h (\varphi+\pi,\eta)K_{2,2}^\varepsilon(r)(\varphi,\theta,\eta)	d\eta\bigg]
		\end{align}
As for the terms $\Psi_{3+k}(\varepsilon,r)$, $k=0,1$, given by \eqref{def psi1234}, we have
$$
\partial_r\Psi_{3+k}(\varepsilon,r)[h](\varphi ,\theta)=\varepsilon^2 U_{3+k}^\varepsilon(r)(\varphi,\theta)h(\varphi,\theta)+\varepsilon I_{3+k}^\varepsilon(r)[h](\varphi,\theta),
$$
where
		\begin{equation*}
	\begin{aligned}
	U_{3+k}^\varepsilon(r)(\varphi,\theta)&\triangleq \tfrac{-1}{R(\varphi,\theta)}\int_{\mathbb{T}}\int_{0}^{ R(\varphi+k\pi,\eta)} \bigg[\textnormal{Re}\Big\{ \tfrac{e^{\ii(\theta+\Theta(\varphi))}}{\mathtt{w}_{3+k}(\varphi)+\varepsilon \big(R(\varphi,\theta)e^{\ii(\theta+\Theta(\varphi))}+(-1)^{k+1}  l e^{-\ii(\eta+\Theta(\varphi))}\big)}\Big\}\\ &\qquad \qquad \qquad \qquad \qquad\qquad \qquad \qquad \qquad  -\textnormal{Re}\Big\{ \tfrac{e^{\ii(\theta+\Theta(\varphi))}}{\mathtt{w}_{3+k}(\varphi)}\Big\}\bigg]l d l d\eta,	
\\	 
I_{3+k}^\varepsilon(r)[h](\varphi,\theta)&\triangleq-\int_{\mathbb{T}}h(\varphi+k\pi,\eta)\Big[\log\Big(\Big| 1+\varepsilon \tfrac{R(\varphi,\theta)e^{\ii(\theta+\Theta(\varphi))}+(-1)^{k+1}  R(\varphi+k\pi,\eta) e^{-\ii(\eta+\Theta(\varphi))}}{\mathtt{w}_{3+k}(\varphi)}\Big|\Big)\\ &\qquad \qquad \qquad \qquad \qquad -\varepsilon \textnormal{Re}\Big\{ \tfrac{R(\varphi,\theta)e^{\ii(\theta+\Theta(\varphi))}+(-1)^{k+1}  R(\varphi+k\pi,\eta) e^{-\ii(\eta+\Theta(\varphi))}}{\mathtt{w}_{3+k}(\varphi)}\Big\}\Big] d\eta.		\end{aligned}
	\end{equation*}
	The function $U_{3+k}^\varepsilon(r)(\varphi,\theta)$ writes
		\begin{equation*}
	\begin{aligned}
	U_{3+k}^\varepsilon(r)(\varphi,\theta)&=\tfrac{\varepsilon}{R(\varphi,\theta)}\int_{\mathbb{T}}\int_{0}^{ R(\varphi+k\pi,\eta)}\textnormal{Re}\Big\{ \tfrac{e^{\ii(\theta+\Theta(\varphi))}\big(R(\varphi,\theta)e^{\ii(\theta+\Theta(\varphi))}+(-1)^{k+1}  l e^{-\ii(\eta+\Theta(\varphi))}\big)}{\mathtt{w}_{3+k}^2(\varphi)+\varepsilon \mathtt{w}_{3+k}(\varphi)\big(R(\varphi,\theta)e^{\ii(\theta+\Theta(\varphi))}+(-1)^{k+1}  l e^{-\ii(\eta+\Theta(\varphi))}\big)}\Big\}l d l d\eta\\ &=\varepsilon \int_{\mathbb{T}}\int_{0}^{1}\textnormal{Re}\Big\{ \tfrac{e^{\ii 2(\theta+\Theta(\varphi))} +(-1)^{k+1} l e^{\ii(\theta-\eta)}}{\mathtt{w}_{3+k}^2(\varphi)}\Big\}l d l d\eta + \varepsilon^2 {V}_{2,k}^\varepsilon(r)(\varphi,\theta),
		\end{aligned}
	\end{equation*}	
		where
		\begin{equation}\label{def v2k}
	\begin{aligned}	
				& {V}_{2,k}^\varepsilon(r)(\varphi,\theta)\triangleq \frac1\varepsilon\bigg[-\int_{\mathbb{T}}\int_{0}^{1}\textnormal{Re}\Big\{ \tfrac{e^{\ii 2(\theta+\Theta(\varphi))}+(-1)^{k+1}  l e^{\ii(\theta-\eta)}}{\mathtt{w}_{3+k}^2(\varphi)}\Big\}l d l d\eta\\ &\quad +\tfrac{1}{ R(\varphi,\theta)}\int_{\mathbb{T}}\int_{0}^{ R(\varphi+\pi,\eta)}\textnormal{Re}\Big\{ \tfrac{e^{\ii(\theta+\Theta(\varphi))}\big(R(\varphi,\theta)e^{\ii(\theta+\Theta(\varphi))}+(-1)^{k+1}  l e^{-\ii(\eta+\Theta(\varphi))}\big)}{\mathtt{w}_{3+k}^2(\varphi)+\varepsilon \mathtt{w}_{3+k}(\varphi)\big(R(\varphi,\theta)e^{\ii(\theta+\Theta(\varphi))}+(-1)^{k+1}  l e^{-\ii(\eta+\Theta(\varphi))}\big)}\Big\}l d l d\eta\bigg].
\end{aligned}
	\end{equation}
	Hence, from the identity 
	$$
	\int_{\mathbb{T}}\int_{0}^{1}\textnormal{Re}\Big\{ \tfrac{e^{\ii 2(\theta+\Theta(\varphi))} +(-1)^{k+1} l e^{\ii(\theta-\eta)}}{\mathtt{w}_{3+k}^2(\varphi)}\Big\}l d l d\eta=\tfrac{1}{2}\textnormal{Re}\Big\{ \tfrac{e^{\ii 2(\theta+\Theta(\varphi))}}{\mathtt{w}_{3+k}^2(\varphi)}\Big\}
	$$
	we conclude that
		\begin{equation*}
	\begin{aligned}
	U_{3+k}^\varepsilon(r)(\varphi,\theta)&= \tfrac{\varepsilon}{2}\textnormal{Re}\Big\{ \tfrac{e^{\ii 2(\theta+\Theta(\varphi))}}{\mathtt{w}_{3+k}^2(\varphi)}\Big\} + \varepsilon^2 {V}_{2,k}^\varepsilon(r)(\varphi,\theta).
		\end{aligned}
	\end{equation*}	
On the other hand, one has
		\begin{equation*}
	\begin{aligned}
	I_{3+k}^\varepsilon(r)[h](\varphi,\theta)&=
\frac{\varepsilon^2}{2}\int_{\mathbb{T}}h(\varphi+k\pi,\eta)\textnormal{Re}\Big\{ \Big(\tfrac{e^{\ii(\theta+\Theta(\varphi))} +(-1)^{k+1}   e^{-\ii(\eta+\Theta(\varphi))}}{\mathtt{w}_{3+k}(\varphi)}\Big)^2\Big\} d\eta\\ &\quad +\varepsilon^3\int_{\mathbb{T}} h (\varphi+k\pi,\eta)K_{2,k}^\varepsilon(r)(\varphi,\theta,\eta)	d\eta,
		\end{aligned}
	\end{equation*}
	where
		\begin{equation}\label{def f2k}
	\begin{aligned}	
	K_{2,k}^\varepsilon(r)(\varphi,\theta,\eta) &\triangleq \frac{1}{2\varepsilon}\textnormal{Re}\Big\{ \Big(\tfrac{f_{3+k}(\varphi,\theta,\eta)}{\mathtt{w}_{3+k}(\varphi)}\Big)^2-\Big(\tfrac{e^{\ii(\theta+\Theta(\varphi))}+(-1)^{k+1}  e^{-\ii(\eta+\Theta(\varphi))}}{\mathtt{w}_{3+k}(\varphi)}\Big)^2\Big\}, \\
	&-\frac{1}{\varepsilon^3}\Big[\log\Big(\Big| 1+ \tfrac{\varepsilon {f}_{3+k}(\varphi,\theta,\eta)}{\mathtt{w}_3(\varphi)}\Big|\Big) -\textnormal{Re}\Big\{ \tfrac{\varepsilon {f}_{3+k}(\varphi,\theta,\eta)}{\mathtt{w}_3(\varphi)}\Big\}+\tfrac{\varepsilon^2}{2}\textnormal{Re}\Big\{ \Big(\tfrac{ {f}_3(\varphi,\theta,\eta)}{\mathtt{w}_{3+k}(\varphi)}\Big)^2\Big\}\Big],\\
	f_{3+k}(\varphi,\theta,\eta)&\triangleq R(\varphi,\theta)e^{\ii(\theta+\Theta(\varphi))}+(-1)^{k+1}  R(\varphi+k\pi ,\eta) e^{-\ii(\eta+\Theta(\varphi))}.
		\end{aligned}
	\end{equation}
Since $h$ has zero average in the space variable, we get
		\begin{equation*}
	\begin{aligned}
	\partial_{\theta}\big[I_{3+k}^\varepsilon(r)[h](\varphi,\theta)\big]&=\partial_\theta\bigg[(-1)^{k+1}\varepsilon^2\int_{\mathbb{T}}h(\varphi+k\pi,\eta)\textnormal{Re}\Big\{ \Big(\tfrac{e^{\ii(\theta-\eta)}}{\mathtt{w}_{3+k}^2(\varphi)}\Big)\Big\} d\eta\\ &\qquad +\varepsilon^3\int_{\mathbb{T}} h (\varphi+k\pi,\eta)K_{2,k}^\varepsilon(r)(\varphi,\theta,\eta)	d\eta\bigg].
		\end{aligned}
	\end{equation*} 
Consequently,	
\begin{align}\label{dif-psi-3}
	&\partial_{\theta}\big[\partial_r\Psi_{3+k}(\varepsilon,r)[h](\varphi ,\theta)\big]=\varepsilon^3\partial_\theta\Big[\Big(\tfrac{1}{2}\textnormal{Re}\Big\{ \tfrac{e^{\ii 2(\theta+\Theta(\varphi))}}{\mathtt{w}_{3+k}^2(\varphi)}\Big\}+\varepsilon {V}_{2,k}^\varepsilon(r)(\varphi,\theta)\Big)h(\varphi,\theta)\Big]\\
	&+
	\varepsilon^3\partial_\theta\bigg[(-1)^{k+1}\int_{\mathbb{T}}h(\varphi+k\pi,\eta)\textnormal{Re}\Big\{ \Big(\tfrac{e^{\ii(\theta-\eta)}}{\mathtt{w}_{3+k}^2(\varphi)}\Big)\Big\} d\eta+\varepsilon\int_{\mathbb{T}} h (\varphi+k\pi,\eta)K_{2,k}^\varepsilon(r)(\varphi,\theta,\eta)	d\eta\bigg].\nonumber
		\end{align}
By inserting  \eqref{linearized psi  st12}, \eqref{linearized psi 2 st12} and   \eqref{dif-psi-3} into  \eqref{linearized st0}  we obtain \eqref{Edc eq rkPsi-dif} with
		\begin{equation*}
	\begin{aligned}
 {V}_2^\varepsilon(r)(\varphi,\theta)&\triangleq {V}_{2,0}^\varepsilon(r)(\varphi,\theta)+ {V}_{2,1}^\varepsilon(r)(\varphi,\theta)+{V}_{2,2}^\varepsilon(r)(\varphi,\theta),\\
K_{2}^\varepsilon(r)(\varphi,\theta,\eta)&\triangleq  K_{2,0}^\varepsilon(r)(\varphi,\theta,\eta),\\
K_{3}^\varepsilon(r)(\varphi,\theta,\eta) &\triangleq  K_{2,1}^\varepsilon(r)(\varphi,\theta,\eta)+K_{2,2}^\varepsilon(r)(\varphi,\theta,\eta).
		\end{aligned}
	\end{equation*}
From \eqref{def f2} and \eqref{def f2k} we immediately get
\begin{equation}\label{estimate f2}
\|f_2\|_s^{\textnormal{Lip}(\lambda)}+\|f_3\|_s^{\textnormal{Lip}(\lambda)}+\|f_4\|_s^{\textnormal{Lip}(\lambda)}\lesssim 1+\|r\|_{s}^{\textnormal{Lip}(\lambda)}.
\end{equation}	
Hence, the estimate of $K_2^\varepsilon(r)$ and $K_3^\varepsilon(r)$ claimed in \eqref{first est} follow  by the composition laws in Lemma \ref{lem funct prop}.	
Similarly, from \eqref{estimate f2} and the composition laws in Lemma \ref{lem funct prop} we get the estimates on ${V}_2^\varepsilon(r)$ in \eqref{first est0}. 
Finally,  \eqref{hess G}  follows by differentiating \eqref{Edc eq rkPsi-dif}. The corresponding estimates  follow in a straightforward manner. 
This ends the proof of Lemma~\ref{prop:linearized}.
\end{proof}

\subsection{Construction of an approximate solution}\label{Approxim-sol1}

In this section, we shall construct an approximate  solution to the nonlinear equation  $G(r)=0$, given by \eqref{Edc eq rkPsi}, up to an error  of size $O(\varepsilon^5)$. This will be  needed later during the initialization step along  Nash-Moser scheme in order to deal with the growth in $\varepsilon$ of the approximate right inverse of the linearized operator. As we shall see here, the construction is basically implemented in two steps by inverting  the main part of the linearized operator that involves only the spatial variable. However this part is degenerating at the mode one and some  specific cancellation structures are discovered  allowing  to handle this issue. We emphasize that by proceeding in that way one can delay the use   of Cantor sets, which will be used in the next step during the reduction to a Fourier multiplier the full transport operator. There, we shall require that the non constant coefficients parts enjoy suitable smallness in $\varepsilon.$\\
 Before stating our main result on the approximate solutions, we shall discuss an intermediate useful one about  the natural approximation by the radial case. This approximation, on its own, does not yield a sufficiently small error in terms of $\varepsilon.$ To achieve the right approximation, we must introduce appropriate perturbations around this radial state.
	\begin{lemma}\label{lem eq EDC r}
	Let $G$ as in \eqref{Edc eq rkPsi}. Then the following assertion holds true.
\begin{align*}
\forall \, (\varphi,\theta)\in\mathbb{T}^2,\quad {G}(0)(\varphi,\theta)&= \sum_{k\geqslant 2}\tfrac{(-\varepsilon)^{k}}{2}\textnormal{Im}\big\{\mathtt{a}_k(\varphi)e^{\ii k\theta} \big\}, \quad \mathtt{a}_k(\varphi)\triangleq \tfrac{1}{\sqrt{q(\varphi)}^k}-\tfrac{e^{\ii k\Theta(\varphi)}}{\mathtt{w}_3^k(\varphi)}-\tfrac{e^{\ii k\Theta(\varphi)}}{\mathtt{w}_4^k(\varphi)}\cdot
\end{align*}

	\end{lemma}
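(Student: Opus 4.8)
The plan is to evaluate $G(0)$ directly from the definition \eqref{Edc eq rkPsi} by setting $r=0$, which forces $R\equiv 1$, hence $z(\varphi,\theta)=e^{\ii\theta}$ and all the $\varphi$-derivative and $\theta$-derivative transport terms in front vanish identically (they are linear in $r$). Thus $G(0)=\sum_{n=1}^4\partial_\theta\Psi_n(\varepsilon,0)$, and the task reduces to computing each $\Psi_n(\varepsilon,0)$ explicitly.

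First I would observe that $\Psi_1(\varepsilon,0)$ is, up to constants, the logarithmic potential of the unit disc evaluated on its own boundary, which is radially symmetric; hence it is independent of $\theta$ and contributes nothing after $\partial_\theta$. For $n=2,3,4$, the key is the power-series expansion
$$
\log\big|1+\varepsilon w\big|-\varepsilon\,\textnormal{Re}\{w\}=\textnormal{Re}\Big\{\log(1+\varepsilon w)-\varepsilon w\Big\}=\textnormal{Re}\Big\{\sum_{k\geqslant 2}\tfrac{(-1)^{k+1}}{k}\varepsilon^k w^k\Big\},
$$
applied with $w$ equal to the relevant ratio appearing in \eqref{def psi1234}. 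Writing $w=\tfrac{e^{\ii\theta}+l\,\omega(\eta)}{c(\varphi)}$ (schematically, where $\omega(\eta)$ is $e^{\ii\eta}$ or $e^{-\ii\eta}$ up to the $\Theta$-rotations and signs, and $c(\varphi)\in\{\sqrt{q},\mathtt{w}_3,\mathtt{w}_4\}$), one expands $(e^{\ii\theta}+l\omega(\eta))^k$ by the binomial theorem and integrates in $(l,\eta)\in[0,1]\times\mathbb{T}$ against $l\,dl\,d\eta$. The crucial cancellation is that $\int_{\mathbb{T}}e^{\ii m\eta}d\eta=0$ for $m\neq 0$, so every mixed term $\binom{k}{p}e^{\ii(k-p)\theta}l^p\omega(\eta)^p$ with $p\geqslant 1$ integrates to zero in $\eta$ (since $\omega(\eta)^p=e^{\pm\ii p\eta}$), leaving only the $p=0$ term $e^{\ii k\theta}\cdot\tfrac12$ from the $l$-integration. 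This collapses $\Psi_n(\varepsilon,0)$ to $\sum_{k\geqslant 2}\tfrac{(-1)^{k+1}}{2k}\varepsilon^k\,\textnormal{Re}\{c(\varphi)^{-k}e^{\ii k\theta}\}$ (with the $\Theta$-phases and the overall minus sign for $n=3,4$ accounted for), and the rotation factors $e^{\ii k\Theta(\varphi)}$ emerge precisely as in the definition of $\mathtt{a}_k$.

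Next I would assemble the three contributions: the $\sqrt{q}$-term from $\Psi_2$ gives $\tfrac{1}{\sqrt{q}^k}$, while the $\mathtt{w}_3,\mathtt{w}_4$-terms from $\Psi_{3},\Psi_4$ give $-\tfrac{e^{\ii k\Theta}}{\mathtt{w}_3^k}-\tfrac{e^{\ii k\Theta}}{\mathtt{w}_4^k}$, so that $\sum_{n=2}^4\Psi_n(\varepsilon,0)=\sum_{k\geqslant2}\tfrac{(-1)^{k+1}}{2k}\varepsilon^k\,\textnormal{Re}\{\mathtt{a}_k(\varphi)e^{\ii k\theta}\}$. Finally applying $\partial_\theta$ turns $\textnormal{Re}\{\mathtt{a}_k e^{\ii k\theta}\}$ into $-k\,\textnormal{Im}\{\mathtt{a}_k e^{\ii k\theta}\}$, cancelling the $1/k$ and producing $G(0)=\sum_{k\geqslant2}\tfrac{(-1)^{k+1}}{2}(-1)\varepsilon^k\,\textnormal{Im}\{\mathtt{a}_k e^{\ii k\theta}\}=\sum_{k\geqslant2}\tfrac{(-\varepsilon)^k}{2}\,\textnormal{Im}\{\mathtt{a}_k e^{\ii k\theta}\}$, which is the claimed formula. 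I should also check convergence of the series: since $q(\varphi)=|\eta+\ii\xi|^2$ stays bounded below (the vortices never collide for $\xi_0<y_0/\sqrt2$) and $|\mathtt{w}_3|,|\mathtt{w}_4|\geqslant y_0>0$, the radii of the ratios are $O(1)$ and for $\varepsilon$ small the geometric-type series converges absolutely and uniformly in $(\varphi,\theta)$.

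The main obstacle is purely bookkeeping rather than conceptual: one must track carefully the signs $(-1)^{k+1}$ coming from the $\Psi_{3+k}$ definition ($k=0,1$ there is an index clash with the summation index, so I would rename), the overall minus sign in front of $\Psi_3,\Psi_4$, the phase rotations $e^{\ii\Theta(\varphi)}$ multiplying $z$ versus $e^{-\ii\Theta(\varphi)}$ multiplying $\zeta$ inside $\mathtt{w}_3,\mathtt{w}_4$, and the fact that the cross term $l\,e^{-\ii(\eta+\Theta)}$ still integrates to zero because its $\eta$-dependence is a nonzero Fourier mode. Once the $\eta$-averaging is invoked, everything that survives is automatically a pure $e^{\ii k\theta}$ mode with $\varphi$-dependent coefficient, and matching it against the definition of $\mathtt{a}_k(\varphi)$ in \eqref{def g0}–type notation is immediate; the $k=1$ term would have been problematic but it simply does not appear since the sum starts at $k=2$ (the $p=0$, $k=1$ contribution to the expansion is absent because the expansion of $\log(1+\varepsilon w)-\varepsilon w$ has no linear term).
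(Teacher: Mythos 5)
Your proposal is correct and follows essentially the same route as the paper's proof: set $r=0$ so only $\sum_n\partial_\theta\Psi_n(\varepsilon,0)$ survives, discard $\Psi_1$ by radial symmetry, expand $\log|1+\varepsilon w|-\varepsilon\,\textnormal{Re}\{w\}$ as $\sum_{k\geqslant2}\tfrac{(-1)^{k+1}}{k}\varepsilon^k\textnormal{Re}\{w^k\}$, and use the $\eta$-averaging to kill all mixed binomial terms so that only the $e^{\ii k\theta}$ mode with coefficient $\tfrac12 c(\varphi)^{-k}$ remains. The sign bookkeeping and the final $\partial_\theta$ step converting $\textnormal{Re}$ into $-k\,\textnormal{Im}$ match the paper exactly; your added remark on uniform convergence of the series is a harmless (and welcome) extra.
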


	\begin{proof}
 Substituting $r=0$ into  \eqref{Edc eq rkPsi} gives
				\begin{align*}
G(0) (\varphi ,\theta)  &=\sum_{n=1}^4\partial_{\theta}\big[ \Psi_n(\varepsilon,0)(\varphi ,\theta)\big].
		\end{align*}
In view of \eqref{def psi1234} we have
\begin{equation*}
		\partial_\theta\big[\Psi_1(\varepsilon,  0)(\varphi ,\theta)\big]=\frac{1}{2\pi }\partial_\theta\bigg[\int_{0}^{2\pi}\int_{0}^{1}\log\big(\big| \varepsilon -\varepsilon l  e^{\ii(\eta-\theta)}\big|\big)l\,  dl  d\eta\bigg]=0
	\end{equation*}
and	
		\begin{equation*}
	\begin{aligned}
		\Psi_2(\varepsilon, 0)(\varphi ,\theta)&=\int_{\mathbb{T}}\int_{0}^{1}\Big[\log\big(\big| 1+\varepsilon \tfrac{e^{\ii\theta}+  l e^{\ii\eta}}{\sqrt{q(\varphi)}}\big|\big)-\varepsilon\textnormal{Re}\big\{\tfrac{e^{\ii\theta}+  l e^{\ii\eta}}{\sqrt{q(\varphi)}}\big\}\Big]l d l d\eta,
		\\
		\Psi_{3+j}(\varepsilon, 0)(\varphi ,\theta)&=-\int_{\mathbb{T}}\int_{0}^{ 1}\Big[\log\Big(\Big| 1+ \tfrac{\varepsilon\big(e^{\ii(\theta+\Theta(\varphi))}+(-1)^{j+1}  l e^{-\ii(\eta+\Theta(\varphi))}\big)}{\mathtt{w}_{3+j}(\varphi)}\Big|\Big)\\ &\qquad \qquad\qquad\qquad -\textnormal{Re}\Big\{ \tfrac{\varepsilon\big(e^{\ii(\theta+ \Theta(\varphi))}+(-1)^{j+1} l e^{-\ii(\eta+\Theta(\varphi))}\big)}{\mathtt{w}_{3+j}(\varphi)}\Big\}\Big]l d l d\eta, \quad k=0,1		
		\end{aligned}
	\end{equation*}	
Using the expansion 
$$
\forall |z|<1,\quad \log|z+1|=\sum_{k=1}^{\infty}\tfrac{(-1)^{k+1}}{k}\textnormal{Re}\big\{z^k\big\}
$$
we get by direct computations,
	\begin{equation*}
	\begin{aligned}
		\partial_\theta\big[\Psi_2(\varepsilon, 0)(\varphi ,\theta)\big]&=\partial_\theta\bigg[\sum_{k=2}^{\infty}\tfrac{(-1)^{k+1}}{  k}\varepsilon^k\int_{\mathbb{T}}\int_{0}^{1}\textnormal{Re}\Big\{\big(\tfrac{e^{\ii\theta}+  l e^{\ii\eta}}{\mathtt{w}_2(\varphi)}\big)^k\Big\}l dl  d\eta\bigg]
		\\ &=\partial_\theta\bigg[\sum_{k=2}^{\infty}\tfrac{(-1)^{k+1}}{2k}\varepsilon^k \textnormal{Re}\Big\{\tfrac{e^{\ii k\theta}}{\sqrt{q(\varphi)}^k}\Big\}\bigg].
	\end{aligned}
	\end{equation*}
	Similarly, we get
				\begin{equation*}
	\begin{aligned}
		\partial_\theta\big[\Psi_{3+j}(\varepsilon,0)(\varphi,\theta)\big]&=\sum_{k=2}^{\infty}\tfrac{(-1)^{k}}{k}\varepsilon^k\partial_\theta\int_{\mathbb{T}}\int_{0}^{ 1}\textnormal{Re}\Big\{ \big(\tfrac{e^{\ii(\theta+\Theta(\varphi))}+(-1)^{j+1}  l e^{-\ii(\eta+\Theta(\varphi))}}{\mathtt{w}_{3+j}(\varphi)}\big)^k\Big\}l d l d\eta\\
		&=\sum_{k=2}^{\infty}\tfrac{(-1)^{k}}{2k}\varepsilon^k\partial_\theta\textnormal{Re}\Big\{ \tfrac{e^{\ii k(\theta+\Theta(\varphi))}}{\mathtt{w}_{3+j}^k(\varphi)}\Big\}.
		\end{aligned}
	\end{equation*}
This completes the proof of Lemma \ref{lem eq EDC r}.		
	\end{proof}
Now, we shall discuss the construction of a good approximation to the solution of \eqref{Edc eq rkPsi}.
\begin{lemma}\label{lem: construction of appx sol}
 Given the conditions \eqref{cond1}-\eqref{cond-interval}. There exists $\varepsilon_0>0$ small enough such that the following occurs. For any  $\varepsilon \in(0,\varepsilon_0)$, there exists   $r_\varepsilon\in \textnormal{Lip}_\lambda\big(\mathcal{O}, C^\infty(\T^2)\big)$ such that, for any  $s>0$
 $$
 \|r_\varepsilon- \mathtt{g}\|_{s}^{\textnormal{Lip}(\lambda)}\lesssim \varepsilon\quad\hbox{and}\quad  \|G( \varepsilon r_\varepsilon)\|_{s}^{\textnormal{Lip}(\lambda)}\lesssim \varepsilon^5,
 $$
 where the function $\mathtt{g}$ is given by \eqref{def g0}.
\end{lemma}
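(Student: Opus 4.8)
The plan is to seek the approximate solution in the form of the ansatz $r_\varepsilon = \varepsilon r_0 + \varepsilon^2 r_1$, where $r_0$ and $r_1$ are smooth, $\lambda$-Lipschitz families of functions on $\T^2$ with zero spatial average, to be determined by matching powers of $\varepsilon$ in the expansion of $G(\varepsilon r_\varepsilon)$. First I would Taylor-expand $G$ around $r=0$ using the first- and second-order structure recorded in Lemma \ref{prop:linearized}, namely
\begin{equation*}
G(\varepsilon r_\varepsilon) = G(0) + \partial_r G(0)[\varepsilon r_\varepsilon] + \tfrac12 \partial_r^2 G(0)[\varepsilon r_\varepsilon, \varepsilon r_\varepsilon] + O(\varepsilon^3 \|\varepsilon r_\varepsilon\|^3),
\end{equation*}
and collect the contributions order by order. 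By Lemma \ref{lem eq EDC r} we have $G(0) = -\tfrac{\varepsilon^2}{2}\,\textnormal{Im}\{\mathtt a_2(\varphi)e^{\ii 2\theta}\} + O(\varepsilon^3)$, which is of size $\varepsilon^2$ and, crucially, contains \emph{no} spatial modes $\pm1$ at leading order; from the structure of $\partial_r G(0)$ the leading operator acting on $r_0$ is $\varepsilon^2 \partial_\theta\big[\tfrac12 r_0\big] - \tfrac{\varepsilon}{2}\mathcal H[r_0]$, i.e. after extracting the common factor, the spatial elliptic operator $\tfrac12(\partial_\theta - \mathcal H)$ applied to $r_0$ (the transport term $\varepsilon^3\omega_0\partial_\varphi r_0$ and $\varepsilon^3\omega_0\dot\Theta\,\partial_\theta r_0$ being of higher order in $\varepsilon$ and hence pushed into the remainder).

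The order-$\varepsilon^2$ balance then reads $\partial_\theta\big[\tfrac12(\partial_\theta - \mathcal H) r_0\big] + \partial_\theta\big[-\tfrac12 G(0)/\varepsilon^2 \cdot (\text{appropriate sign})\big]=0$ — more precisely one obtains an equation of the schematic form $(\partial_\theta - \mathcal H)r_0 = \textnormal{Re}\{\mathtt a_2 e^{\ii 2\theta}\} = \mathtt g$ up to the sign conventions in \eqref{Edc eq rkPsi}. Since $\mathtt g$ involves only the spatial frequencies $\pm2$, and the Fourier symbol of $\partial_\theta-\mathcal H$ is $\ii(j-\textnormal{sign}(j)) = \ii(j \mp 1)$, which is nonzero precisely away from $j=\pm1$ and $j=0$, this equation is solvable with a zero-average solution $r_0$ of size $1$; in fact $r_0 = \mathtt g$ plus (possibly) corrections, consistent with the claimed estimate $\|r_\varepsilon - \mathtt g\|_s^{\textnormal{Lip}(\lambda)}\lesssim \varepsilon$. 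One then plugs $\varepsilon r_0$ back in, computes the residual $G(\varepsilon r_0)$, which is now $O(\varepsilon^3)$, identifies the $O(\varepsilon^3)$ term as a forcing $f_1(\varphi,\theta)$, and solves a second elliptic equation $(\partial_\theta - \mathcal H)r_1 = f_1$ of the same type for $r_1$. The key point to verify — and this is where Lemma \ref{lem eq EDC r} and the explicit forms of $\mathtt a_2$, $\mathcal Q_0$, $\partial_r^2 G$ in Lemma \ref{prop:linearized} are used — is that the forcing $f_1$ does not contain the spatial modes $\pm1$ at leading order, so that the elliptic inversion goes through; the contributions that could produce modes $\pm1$ come from $\mathcal Q_0$ and from products $r_0 \cdot r_0$, and one must check these cancel or are of higher order. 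After inverting for $r_1$ and using the tame estimates of Lemma \ref{prop:linearized} together with the composition laws (Lemma \ref{lem funct prop}) to bound everything in $\textnormal{Lip}_\lambda(\mathcal O, H^s)$ for all $s$, the residual $G(\varepsilon r_\varepsilon) = G(\varepsilon r_0 + \varepsilon^2 r_1)$ becomes $O(\varepsilon^4)$ — wait, one must be careful here: the statement claims $O(\varepsilon^5)$, so either the ansatz must be pushed to $\varepsilon r_0 + \varepsilon^2 r_1 + \varepsilon^3 r_2$ with a third elliptic solve, or a parity/symmetry argument shows the $\varepsilon^4$ term vanishes identically. I would expect the former: add $r_2$ solving $(\partial_\theta-\mathcal H)r_2 = f_2$ with $f_2$ the $O(\varepsilon^4)$ residual, again checking absence of modes $\pm1$, and then $G(\varepsilon r_\varepsilon) = O(\varepsilon^5)$ as claimed (note the prefactor $\varepsilon$ in front of the bracket in \eqref{Edc eq rkPsi-dif} shifts the bookkeeping by one power).

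The main obstacle — and the reason this lemma is nontrivial rather than a routine Taylor expansion — is the degeneracy of the spatial operator $\partial_\theta - \mathcal H$ at the modes $j = \pm1$: one cannot simply invert it, and one must prove at each order that the forcing term has vanishing $\pm1$ Fourier coefficients. This requires exploiting the precise algebraic structure of the nonlinearity: the fact (Lemma \ref{lem eq EDC r}) that $G(0)$ has only modes $|k|\geq 2$, the specific form of the operator $\mathcal Q_0$ which itself \emph{localizes} on modes $\pm1$ (so its contribution to the forcing must be tracked and shown to be cancelled or absorbed at higher order), and the structure of $\partial_r^2 G(0)[h,h] = -\tfrac{\varepsilon^2}{2}\partial_\theta[h^2] + \varepsilon^3(\dots)$, whose quadratic term in $r_0$ can a priori generate mode-$\pm1$ content from the product of mode-$\pm2$ pieces — which it does not, since $e^{\pm 2\ii\theta}\cdot e^{\pm 2\ii\theta}$ and $e^{\pm2\ii\theta}\cdot e^{\mp 2\ii\theta}$ produce modes $0, \pm4$ only, never $\pm1$. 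Making this cancellation transparent at every order is the crux; the elliptic estimates, the composition laws, and the Lipschitz-in-$\xi_0$ bounds are then routine consequences of Lemma \ref{prop:linearized} and the regularity of $(\mathtt I, \phi, q, \Theta)$ established in Lemma \ref{lem-period}.
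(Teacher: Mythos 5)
Your proposal follows essentially the same route as the paper: the ansatz $r_\varepsilon=\varepsilon r_0+\varepsilon^2 r_1$, Taylor expansion of $G$, successive inversions of the spatial operator $\partial_\theta-\mathcal{H}$, and the identification of exactly the right cancellation mechanisms at the degenerate modes $\pm1$ (absence of modes $\pm1$ in $G(0)$, the localization of $\mathcal{Q}_0$ on modes $\pm1$ so that $\mathcal{Q}_0[r_0]=0$, and the fact that $\partial_\theta(\mathtt{g}^2)$ carries only even modes). The one place where your bookkeeping is off — and which causes your hedge between a third corrector $r_2$ and a parity miracle — is the claim that the residual after the $r_0$-solve is $O(\varepsilon^3)$: it is in fact $O(\varepsilon^4)$, because the equation $\tfrac{\varepsilon^2}{2}(\partial_\theta-\mathcal{H})r_0+G(0)=0$ cancels $G(0)$ to \emph{all} orders (not just its $\varepsilon^2$ part), while every other term of $\varepsilon\,\partial_rG(0)[r_0]$ carries an explicit prefactor $\varepsilon^4$ or higher (note $V_1^\varepsilon(0)=0$ and $\mathcal{R}_1^\varepsilon(0)=0$ since both are quadratic in $r$), and $\tfrac{\varepsilon^2}{2}\partial_r^2G(0)[r_0,r_0]=O(\varepsilon^4)$ by \eqref{hess G}. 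Consequently the $r_1$-equation is $\tfrac12(\partial_\theta-\mathcal{H})r_1+\varepsilon A_1=0$ with forcing of size $\varepsilon$, so $\|r_1\|_s\lesssim\varepsilon$ and the correction $\varepsilon^2r_1$ is really of size $\varepsilon^3$; the $\varepsilon^4$ terms then cancel exactly against $\varepsilon^2\partial_rG(0)[r_1]$ and the residual is $O(\varepsilon^5)$ after only two solves — no third elliptic inversion is needed, and your fallback option of adding $\varepsilon^3 r_2$ is just a relabeling of what the paper's $\varepsilon^2 r_1$ already is.
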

 \begin{proof}
 According to Taylor formula, one gets
 \begin{align*}
G(r)&= G(0)+\partial_rG(0)[r]+\tfrac{1}{2} \partial_r^2G(0)[r,r]+\tfrac{1}{2}\int_0^1(1-\tau)^2\partial_r^3G( \tau r)[r,r,r]d\tau.
 \end{align*}
Then, decomposing $r=\varepsilon r_0+\varepsilon^2 r_1$ leads to
\begin{equation}\label{taylor g}
 \begin{aligned}
&G(\varepsilon r_0+\varepsilon^2 r_1)= G(0)+\varepsilon \partial_rG(0)[r_0]+\tfrac{\varepsilon^2}{2} \partial_r^2G(0)[r_0,r_0]+\varepsilon^2\partial_rG(0)[r_1]\\
 &\quad+\varepsilon^3 \partial_r^2G(0)[r_0,r_1]+\tfrac{\varepsilon^4}{2} \partial_r^2G(0)[r_1,r_1]+\tfrac{1}{2}\int_0^1(1-\tau)^2\partial_r^3G( \tau r)[r,r,r]d\tau.
 \end{aligned}
 \end{equation}
 Applying  Lemma  \ref{prop:linearized} we find
 \begin{align*}
\nonumber & G(0)+\varepsilon\partial_rG(0)[r_0]+\tfrac{\varepsilon^2}{2} \partial_r^2G(0)[r_0,r_0]=G(0)+\tfrac{\varepsilon^2}{2}\big[\partial_\theta -\mathcal{H}\big] r_0+\varepsilon^4\omega_0\partial_\varphi  r_0\\ & -\varepsilon^4\partial_{\theta}\big[\big(\omega_0\Theta+\tfrac12\mathtt{g}-\varepsilon{V}_2^\varepsilon(0)\big) r_0 \big]-\tfrac{\varepsilon^4}{q(\varphi)} \mathcal{Q}_0[r_0]-\tfrac14\varepsilon^4\partial_{\theta}\big(r_0^2\big)+\varepsilon^5 \mathcal{R}_2^\varepsilon(0)[r_0]+\tfrac12\varepsilon^5 \mathcal{E}^\varepsilon_1(0)[r_0,r_0].
 \end{align*}		
At this level, we shall  impose the equation 
 \begin{equation*}
		\tfrac{\varepsilon^2}{2}\big[\partial_\theta -\mathcal{H}\big] r_0+ G(0)=0. 		
\end{equation*}	
where $G(0)$ is described by Lemma \ref{lem eq EDC r}. Notice that this equations is uniquely solvable since $G(0)$ does not contain the mode $1$. In addition, a direct computation gives
	\begin{equation}\label{exp r0}
r_0=\mathtt{g}+\varepsilon B_0,\qquad\textnormal{where}  \qquad 
			B_0(\varphi,\theta)\triangleq \sum_{k=3}^\infty\tfrac{(-1)^{k}}{k-1}\varepsilon^{k-3} \textnormal{Re}\big\{\mathtt{a}_k(\varphi)e^{\ii k\theta} \big\},
\end{equation}	
where $\mathtt{g}$ is defined by \eqref{def g0}.
According to \eqref{def D0}, the operator $\mathcal{Q}_0$ excites only the modes $\pm1$. These modes are entirely absent in the  expression of $r_0$, thus yielding the following result,
$$
\mathcal{Q}_0[r_0]=0.
$$
Combining the last four identities  we obtain
\begin{align}\label{part r0}
&G(0)+\varepsilon\partial_{r }{G}(0) [ r_0 ]+\tfrac{\varepsilon^2}{2}\partial_r^2G(0)[r_0,r_0] =\varepsilon^4 A_1+\varepsilon^5 B_1.
\end{align}
where
\begin{align*}
&  A_1\triangleq \omega_0\partial_\varphi  \mathtt{g}-\tfrac34\partial_{\theta}\big(\mathtt{g}^2\big),\\
&
 B_1\triangleq \omega_0\partial_\varphi B_0-\partial_{\theta}\Big[{\tfrac{1}{4}}(r_0+4\omega_0\dot\Theta+3\mathtt{g})B_0-{V}_2^\varepsilon(0) r_0\Big]+ \mathcal{R}_2^\varepsilon(0)[r_0]+ {\tfrac12}\mathcal{E}^\varepsilon_1(0)[r_0,r_0].
		\end{align*}
Notice that in view of \eqref{exp r0}, the function $ A_1$ has only even modes in space. Thus, one has
$$
\int_0^{2\pi}  A_1(\varphi,\theta)\cos(\theta)d\theta=0\qquad\textnormal{and}\qquad
\int_0^{2\pi}  A_1(\varphi,\theta)\sin(\theta)d\theta=0.
$$				
This latter fact allows to show that the equation 
 		\begin{equation*}
		\tfrac{1}{2}\big[\partial_\theta-\mathcal{H}\big] r_1+\varepsilon A_1=0		
		\end{equation*}
		admits a solution, enjoying the estimate
		\begin{align}\label{r1-estimate}
		\forall s\in\R,\quad \|r_1\|_{s}^{\textnormal{Lip}(\lambda)}\lesssim \varepsilon,
		\end{align}
		and satisfies 
		$$
\mathcal{Q}_0[r_1]=0.
$$
Thus, direct computations, using \eqref{Edc eq rkPsi-dif}, give
 \begin{align}\label{part r1}
\varepsilon^4  A_1+\varepsilon^2 \partial_rG(0)[r_1]&=\varepsilon^5\omega_0\partial_\varphi  r_1 -\varepsilon^5\partial_{\theta}\big[\big(\omega_0\dot{\Theta}+\tfrac12\mathtt{g}-\varepsilon{V}_2^\varepsilon(0)\big) r_1 \big]+\varepsilon^6 \mathcal{R}_2^\varepsilon(0)[r_1].
 \end{align}
Define,
\begin{equation*}
 r_\varepsilon \triangleq r_0+\varepsilon r_1,
\end{equation*}				
then, inserting \eqref{part r0} and \eqref{part r1} into \eqref{taylor g} we conclude that		
		 \begin{align*}
G(\varepsilon r_\varepsilon)&= \varepsilon^5 B_1+\varepsilon^5\omega_0\partial_\varphi  r_1 -\varepsilon^5\partial_{\theta}\big[\big(\omega_0\dot{\Theta}+\tfrac12\mathtt{g}-\varepsilon{V}_2^\varepsilon(0)\big) r_1 \big]+\varepsilon^6 \mathcal{R}_2^\varepsilon(0)[r_1]+\varepsilon^3 \partial_r^2G(0)[r_0,r_1]\\
 &+\tfrac{\varepsilon^4}{2} \partial_r^2G(0)[r_1,r_1]+\tfrac{\varepsilon^3}{2}\int_0^1(1-\tau)^2\partial_r^3G( \tau r_\varepsilon)[r_\varepsilon,r_\varepsilon,r_\varepsilon]d\tau.
 \end{align*}
From \eqref{exp r0} and \eqref{r1-estimate} we immediately get
\begin{equation*}
\|r_0-\mathtt{g}\|_{s}^{\textnormal{Lip}(\lambda)}\lesssim \varepsilon,\quad \|r_1\|_{s}^{\textnormal{Lip}(\lambda)}\lesssim \varepsilon\quad\textnormal{and}\quad \|r_\varepsilon-\mathtt{g}\|_{s}^{\textnormal{Lip}(\lambda)}\lesssim \varepsilon.
\end{equation*}
 The estimate on $G(\varepsilon r_\varepsilon)$ follows  from the preceding estimates together with  \eqref{hess G}. This completes the proof of Lemma \ref{lem: construction of appx sol}.
\end{proof}
\subsection{Rescaled functional and linearization}\label{rescaled func}
In this section we shall work with a new rescaled functional. Then, we will  proceed with its linearization and perform suitable tame estimates. Let $\mu\in(0,1)$ an arbitrary parmeter, then the equation \eqref{Edc eq rkPsi} is equivalent  to
\begin{equation}\label{def nonlinear functional fN}
\mathcal{F}(\rho)\triangleq\tfrac{1}{\varepsilon^{2+\mu}} G( \varepsilon r_\varepsilon+\varepsilon^{1+\mu}\rho)=0.
\end{equation}
The first result deals with  the linearized operator of $\mathcal{F}$ around a small state. 
\begin{proposition}\label{prop-size}
Given the conditions \eqref{cond1}-\eqref{cond-interval}. There exists $\varepsilon_0\in(0,1)$ such that if 
		\begin{equation*}
		\varepsilon\leqslant\varepsilon_0\quad\textnormal{and}\quad\|\rho\|_{s_0+2}^{\textnormal{Lip}(\lambda)}\leqslant 1,
		\end{equation*}
		then
the linearized operator of the map  $\mathcal{F}$, defined by \eqref{def nonlinear functional fN},  at the state $\rho$   in the direction $h$ $($with zero space average$)$ takes the form
\begin{align}
\nonumber\mathcal{L}_0 h(\varphi,\theta) \triangleq\partial_\rho \mathcal{F}(\rho)[h](\varphi,\theta) &=\varepsilon^2\omega_0\partial_\varphi  h (\varphi,\theta)+\partial_{\theta}\big[\mathcal{V}^\varepsilon(\rho)(\varphi,\theta) h (\varphi,\theta)\big]-\tfrac{1}{2}\mathcal{H}[h](\varphi,\theta)\nonumber\\ &\quad-{\varepsilon^2}\partial_\theta\mathcal{Q}_0[h](\varphi,\theta)+\varepsilon^3 \partial_\theta \mathcal{R}^\varepsilon_0(\rho)[h](\varphi,\theta),\label{linearized f}
\end{align}
where the function $\mathcal{V}^\varepsilon(\rho)$ is given by
\begin{equation}\label{def calV}
\begin{aligned}
\mathcal{V}^\varepsilon(\rho)(\varphi,\theta)&\triangleq \tfrac12-{\varepsilon^2}\big(\omega_0 \dot{\Theta} (\varphi)+\mathtt{g}(\varphi,\theta)\big)- \tfrac{\varepsilon^{2+\mu}}{2} \rho(\varphi,\theta)+\varepsilon^{3}{V}^\varepsilon(\rho)(\varphi,\theta),
\end{aligned}
\end{equation}
the function $\mathtt{g}$ is given by \eqref{def g0} and ${V}^\varepsilon(\rho)$ satisfies the estimates: for any $s\geqslant s_0$
\begin{equation}\label{est Veps}
\begin{aligned}
\|{V}^\varepsilon(\rho)\|_{s}^{\textnormal{Lip}(\lambda)}&\lesssim 1+ \varepsilon^\mu\|\rho\|_{s+1}^{\textnormal{Lip}(\lambda)},
\\				
\|\Delta_{12}{V}^\varepsilon(\rho)\|_{s}^{\textnormal{Lip}(\lambda)}&\lesssim \varepsilon^\mu \|\Delta_{12}\rho\|_{s+1}^{\textnormal{Lip}(\lambda)}+ \varepsilon^\mu\|\Delta_{12}\rho\|_{s_0+1}^{\textnormal{Lip}(\lambda)}\max_{\ell\in\{1,2\}}\|\rho_{\ell}\|_{s+1}^{\textnormal{Lip}(\lambda)}.
\end{aligned}
\end{equation}
The operator $\mathcal{Q}_0$ is given by \eqref{def D0} and the operator $\mathcal{R}_0^\varepsilon(\rho)$ is expressed as 
\begin{align*}
\mathcal{R}_0^\varepsilon(\rho)[h](\varphi,\theta)	&\triangleq \sum_{k=0}^1\int_{\mathbb{T}} h (\varphi+k\pi,\eta)K_{0,k}^\varepsilon(\rho)(\varphi,\theta,\eta)	d\eta,
\end{align*}
where the kernel $K_0^\varepsilon(\rho)$ satisfies: for any $s\geqslant s_0$
\begin{align}\label{est K0 R0}
\|K_{0,0}^\varepsilon(\rho)\|_{s}^{\textnormal{Lip}(\lambda)}+\|K_{0,1}^\varepsilon(\rho)\|_{s}^{\textnormal{Lip}(\lambda)}&\lesssim 1+  \varepsilon^\mu\|\rho\|_{s+1}^{\textnormal{Lip}(\lambda)}.
\end{align}

 \end{proposition}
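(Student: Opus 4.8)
The plan is to derive \eqref{linearized f} directly from the chain rule applied to \eqref{def nonlinear functional fN} and the linearization formula \eqref{Edc eq rkPsi-dif} already established in Lemma \ref{prop:linearized}. Concretely, writing $r=\varepsilon r_\varepsilon+\varepsilon^{1+\mu}\rho$, one has $\partial_\rho\mathcal{F}(\rho)[h]=\tfrac{1}{\varepsilon^{2+\mu}}\,\partial_r G(r)\big[\varepsilon^{1+\mu}h\big]=\tfrac{1}{\varepsilon}\,\partial_r G(r)[h]$, so the factor $\varepsilon$ in front of every term of \eqref{Edc eq rkPsi-dif} is cancelled, producing exactly the powers $\varepsilon^2\omega_0\partial_\varphi$, $-\tfrac12\mathcal H$, $-\varepsilon^2\partial_\theta\mathcal Q_0$ and $\varepsilon^3\partial_\theta(\cdots)$ displayed in \eqref{linearized f}. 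The transport coefficient then reads
\begin{align*}
\mathcal V^\varepsilon(\rho)&=\tfrac12-\tfrac{\varepsilon}{2}\big(\varepsilon r_\varepsilon+\varepsilon^{1+\mu}\rho\big)-\varepsilon^2\omega_0\dot\Theta-\tfrac{\varepsilon^2}{2}\mathtt g+\varepsilon^2 V_1^\varepsilon(r)+\varepsilon^3 V_2^\varepsilon(r),
\end{align*}
and one must reorganize this into the form \eqref{def calV}. The term $-\tfrac{\varepsilon^2}{2}r_\varepsilon$ is absorbed, via Lemma \ref{lem: construction of appx sol}, into the $\varepsilon^2\mathtt g$ piece plus an $O(\varepsilon^3)$ correction (since $\|r_\varepsilon-\mathtt g\|_s^{\textnormal{Lip}(\lambda)}\lesssim\varepsilon$): more precisely $-\tfrac{\varepsilon^2}{2}r_\varepsilon=-\tfrac{\varepsilon^2}{2}\mathtt g-\tfrac{\varepsilon^2}{2}(r_\varepsilon-\mathtt g)$, so the two $-\tfrac{\varepsilon^2}{2}\mathtt g$ contributions combine and leave a remainder of size $\varepsilon^3$ to be packed into $\varepsilon^3 V^\varepsilon(\rho)$. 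The term $-\tfrac{\varepsilon^{2+\mu}}{2}\rho$ is kept explicitly, and the pieces $\varepsilon^2 V_1^\varepsilon(r)$, $\varepsilon^3 V_2^\varepsilon(r)$ are likewise absorbed into $\varepsilon^3 V^\varepsilon(\rho)$ after noting from \eqref{first est0} that $V_1^\varepsilon$ is \emph{quadratic} in $r$, hence $\|V_1^\varepsilon(r)\|_s^{\textnormal{Lip}(\lambda)}\lesssim\|r\|_{s+1}^{\textnormal{Lip}(\lambda)}\|r\|_{s_0+1}^{\textnormal{Lip}(\lambda)}\lesssim\varepsilon^2(1+\varepsilon^\mu\|\rho\|_{s+1}^{\textnormal{Lip}(\lambda)})$ — producing an extra $\varepsilon^2$ gain that makes $\varepsilon^2 V_1^\varepsilon(r)$ of effective size $\varepsilon^4$, comfortably inside $\varepsilon^3 V^\varepsilon$.

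Thus I would set
\begin{align*}
V^\varepsilon(\rho)&\triangleq-\tfrac{1}{2\varepsilon}\big(r_\varepsilon-\mathtt g\big)-\tfrac{\varepsilon^{\mu-1}}{2}\cdot 0+\tfrac{1}{\varepsilon}V_1^\varepsilon(r)+V_2^\varepsilon(r),
\end{align*}
i.e. collect into $V^\varepsilon(\rho)$ the quantity $\big(\mathcal V^\varepsilon(\rho)-\tfrac12+\varepsilon^2\omega_0\dot\Theta+\varepsilon^2\mathtt g+\tfrac{\varepsilon^{2+\mu}}{2}\rho\big)/\varepsilon^3$, and then verify the two estimates in \eqref{est Veps}. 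The first follows from the bound $\|r_\varepsilon-\mathtt g\|_{s+1}^{\textnormal{Lip}(\lambda)}\lesssim\varepsilon$, from the quadratic estimate on $V_1^\varepsilon$ (which, with $\|r\|_{s_0+1}^{\textnormal{Lip}(\lambda)}\lesssim\varepsilon$, gains the needed power), and from the affine estimate $\|V_2^\varepsilon(r)\|_s^{\textnormal{Lip}(\lambda)}\lesssim 1+\|r\|_s^{\textnormal{Lip}(\lambda)}\lesssim 1+\varepsilon^\mu\|\rho\|_s^{\textnormal{Lip}(\lambda)}$, so altogether $\|V^\varepsilon(\rho)\|_s^{\textnormal{Lip}(\lambda)}\lesssim 1+\varepsilon^\mu\|\rho\|_{s+1}^{\textnormal{Lip}(\lambda)}$. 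The Lipschitz-in-$\rho$ (that is, $\Delta_{12}$) estimate is obtained the same way from the corresponding $\Delta_{12}$ bounds in \eqref{first est0}, using that $\Delta_{12}r=\varepsilon^{1+\mu}\Delta_{12}\rho$ to extract the factor $\varepsilon^\mu$ (one power of $\varepsilon$ being consumed against the $\varepsilon^{-3}$ normalization, partially, together with the quadratic/affine gains). For the smoothing remainder, $\mathcal R_0^\varepsilon(\rho)$ is assembled from $\mathcal R_1^\varepsilon(r)$ and $\mathcal R_2^\varepsilon(r)$ of Lemma \ref{prop:linearized} (after the $1/\varepsilon$ rescaling the $\varepsilon^3\partial_\theta\mathcal R_1^\varepsilon+\varepsilon^4\partial_\theta\mathcal R_2^\varepsilon$ becomes $\varepsilon^2\partial_\theta\mathcal R_1^\varepsilon+\varepsilon^3\partial_\theta\mathcal R_2^\varepsilon$; since $\mathcal R_1^\varepsilon$ is quadratic in $r$ with $\|r\|\lesssim\varepsilon$ it actually contributes at order $\varepsilon^3$ or smaller via its kernel bound $\|K_1^\varepsilon(r)\|_s^{\textnormal{Lip}(\lambda)}\lesssim\|r\|_{s+1}^{\textnormal{Lip}(\lambda)}\|r\|_{s_0+1}^{\textnormal{Lip}(\lambda)}\lesssim\varepsilon^2(1+\varepsilon^\mu\|\rho\|_{s+1})$), so that one defines $K_{0,0}^\varepsilon(\rho)$, $K_{0,1}^\varepsilon(\rho)$ as the appropriate combinations of $K_1^\varepsilon,K_2^\varepsilon,K_3^\varepsilon$ and reads off \eqref{est K0 R0} from \eqref{first est} with $r=\varepsilon r_\varepsilon+\varepsilon^{1+\mu}\rho$, $\|r\|_s^{\textnormal{Lip}(\lambda)}\lesssim\varepsilon(1+\varepsilon^\mu\|\rho\|_s^{\textnormal{Lip}(\lambda)})$.

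The computation is essentially bookkeeping, so there is no single "hard" conceptual step; the part requiring care is the power-counting in $\varepsilon$ that justifies lumping $\varepsilon^2 V_1^\varepsilon$, $\tfrac{\varepsilon}{2}(r_\varepsilon-\mathtt g)$ and $\varepsilon^3 V_2^\varepsilon$ all into a single $\varepsilon^3 V^\varepsilon(\rho)$ with the stated tame estimates — in particular tracking how the quadratic structure of $V_1^\varepsilon$ and $\mathcal R_1^\varepsilon$ combines with the smallness $\|\varepsilon r_\varepsilon+\varepsilon^{1+\mu}\rho\|_{s_0+1}^{\textnormal{Lip}(\lambda)}\lesssim\varepsilon$ to recover the losses coming from the $\varepsilon^{-(2+\mu)}$ prefactor, and checking that the hypothesis $\|r\|_{s_0+2}^{\textnormal{Lip}(\lambda)}\leqslant 1$ of Lemma \ref{prop:linearized} is met once $\varepsilon_0$ is small (which holds because $\|r\|_{s_0+2}^{\textnormal{Lip}(\lambda)}\lesssim\varepsilon+\varepsilon^{1+\mu}\|\rho\|_{s_0+2}^{\textnormal{Lip}(\lambda)}\lesssim\varepsilon$ under the assumption $\|\rho\|_{s_0+2}^{\textnormal{Lip}(\lambda)}\leqslant 1$). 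One also uses that all the operations preserve the zero-space-average constraint and the reality of the coefficients, so that $\mathtt g$, $V^\varepsilon(\rho)$ are real-valued and $\mathcal Q_0$ retains its mode-$\pm1$ localization; these are inherited verbatim from Lemma \ref{prop:linearized}. This completes the proof of Proposition \ref{prop-size}.
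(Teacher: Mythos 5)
Your proposal is correct and follows essentially the same route as the paper: apply the chain rule to reduce to Lemma \ref{prop:linearized}, recombine $-\tfrac{\varepsilon^2}{2}r_\varepsilon-\tfrac{\varepsilon^2}{2}\mathtt g$ into $-\varepsilon^2\mathtt g$ plus an $O(\varepsilon^3)$ remainder via $\|r_\varepsilon-\mathtt g\|_s^{\textnormal{Lip}(\lambda)}\lesssim\varepsilon$ (the paper's identity $r_\varepsilon+\mathtt g=2\mathtt g+2\varepsilon r_2$), and define $V^\varepsilon(\rho)=-\tfrac{1}{2\varepsilon}(r_\varepsilon-\mathtt g)+\tfrac1\varepsilon V_1^\varepsilon(r)+V_2^\varepsilon(r)$ and $\mathcal R_0^\varepsilon=\tfrac1\varepsilon\mathcal R_1^\varepsilon+\mathcal R_2^\varepsilon$, with the estimates read off from \eqref{first est0} and \eqref{first est} using the quadratic gain and $\|r\|_{s_0+1}^{\textnormal{Lip}(\lambda)}\lesssim\varepsilon$. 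The power-counting you flag as the delicate point is exactly the content of the paper's argument, so no gap remains.
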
		

\begin{proof}
Linearizing the equation  \eqref{def nonlinear functional fN} together with  Lemma \ref{prop:linearized} we infer
\begin{align*}
\partial_\rho \mathcal{F}(\rho)[h](\varphi,\theta)&=\tfrac{1}{\varepsilon} (\partial_rG)(\varepsilon r_\varepsilon+\varepsilon^{1+\mu}\rho)[h](\varphi,\theta)\\ \nonumber&=\varepsilon^2\omega_0\partial_\varphi  h (\varphi,\theta)+\partial_{\theta}\Big[\Big(\tfrac12- \tfrac{\varepsilon^2}{2} r_\varepsilon(\varphi,\theta)- \tfrac{\varepsilon^{2+\mu}}{2} \rho(\varphi,\theta)-\varepsilon^2\omega_0\dot{\Theta}-\tfrac{\varepsilon^2}{2}\mathtt{g}(\varphi,\theta)\\ 
\nonumber&+\varepsilon^2{V}_1^\varepsilon(\varepsilon r_\varepsilon+\varepsilon^{1+\mu}\rho)(\varphi,\theta)+\varepsilon^3{V}_2^\varepsilon(\varepsilon r_\varepsilon+\varepsilon^{1+\mu}\rho)(\varphi,\theta)\Big) h (\varphi,\theta)\Big]-\tfrac{1}{2}\mathcal{H}[h](\varphi,\theta)\nonumber\\ 
\nonumber&-\varepsilon^2\mathcal{Q}_0[h](\varphi,\theta)+\varepsilon^2 \partial_\theta\mathcal{R}_1^\varepsilon(\varepsilon r_\varepsilon+\varepsilon^{1+\mu}\rho)[h](\varphi,\theta)+\varepsilon^3 \partial_\theta\mathcal{R}_2^\varepsilon(\varepsilon r_\varepsilon+\varepsilon^{1+\mu}\rho)[h](\varphi,\theta).
\end{align*}
In view of Lemma \ref{lem: construction of appx sol} and 
from the expression of  $\mathtt{g}(\varphi,\theta)$ described by \eqref{def g0}, we find
\begin{equation}\label{est r2}
 r_\varepsilon(\varphi,\theta)+\mathtt{g}(\varphi,\theta)=2\mathtt{g}(\varphi,\theta)+2\varepsilon r_2(\varphi,\theta) \end{equation}
with 
$$
\|r_2\|_{s}^{\textnormal{Lip}(\lambda)}\lesssim 1.
$$	
Therefore, by setting 
\begin{align*}
{V}^\varepsilon(\rho)&\triangleq {-} r_2+\tfrac1\varepsilon {V}_1^\varepsilon(\varepsilon r_\varepsilon+\varepsilon^{1+\mu}\rho)+{V}_2^\varepsilon(\varepsilon r_\varepsilon+\varepsilon^{1+\mu}\rho),\\
\mathcal{R}_0^\varepsilon(\rho)[h]&\triangleq \tfrac1\varepsilon\mathcal{R}_1^\varepsilon(\varepsilon r_\varepsilon+\varepsilon^{1+\mu}\rho)[h]+\mathcal{R}_2^\varepsilon(\varepsilon r_\varepsilon+\varepsilon^{1+\mu}\rho)[h],
\end{align*}
we deduce \eqref{linearized f} and \eqref{def calV}. The estimates \eqref{est Veps} follow from \eqref{first est0} and \eqref{est r2}. Concerning the opereator $\mathcal{R}_0^\varepsilon(\rho)$, we write
\begin{align*}
\mathcal{R}_0^\varepsilon(\rho)[h](\varphi,\theta)	&\triangleq \sum_{k=0}^1\int_{\mathbb{T}} h (\varphi+k\pi,\eta)K_{0,k}^\varepsilon(\rho)(\varphi,\theta,\eta)	d\eta,
\end{align*}
with
$$
\begin{aligned}
K_{0,0}^\varepsilon(\rho)\triangleq\tfrac1\varepsilon K_1^\varepsilon(\varepsilon r_\varepsilon+\varepsilon^{1+\mu}\rho)+K_2^\varepsilon(\varepsilon r_\varepsilon+\varepsilon^{1+\mu}\rho),
\\
K_{0,1}^\varepsilon(\rho)\triangleq\tfrac1\varepsilon K_3^\varepsilon(\varepsilon r_\varepsilon+\varepsilon^{1+\mu}\rho).
\end{aligned}
$$
Thus, the estimates of $K_{0,0}^\varepsilon(\rho)$ and $K_{0,1}^\varepsilon(\rho)$ can be obtained through the composition laws from \eqref{first est}. This ends the proof of Proposition~\ref{prop-size}.
\end{proof}
The estimates below on the linearized operator follow easily  from Proposition \ref{prop-size} and \eqref{hess G}.
	\begin{lemma}\label{Tame-estimates-F}
		Given the conditions \eqref{cond1}-\eqref{cond-interval}. There exists $\varepsilon_0\in(0,1)$ such that if 
		$$\varepsilon\leqslant\varepsilon_0\quad\textnormal{and}\quad\|\rho\|_{s_0+2}^{\textnormal{Lip}(\lambda)}\leqslant 1,$$ 
		then   the following  estimates hold true: for any $s\geqslant s_0$,
		\begin{enumerate}
			\item $\big\| \partial_{\rho}\mathcal{F}(\rho)[h]\big\|_{s}^{\textnormal{Lip}(\lambda)}\lesssim \|\,h\,\|_{s+2}^{\textnormal{Lip}(\lambda)}+\|\rho\|_{s+2}^{\textnormal{Lip}(\lambda)}\| h\,\|_{s_0+2}^{\textnormal{Lip}(\lambda)}.$
			\item $\big\| \partial_{\rho}^{2}\mathcal{F}(\rho)[\,h, h\,]\big\|_{s}^{\textnormal{Lip}(\lambda)}\lesssim \varepsilon^2 \|\,h\,\|_{s+2}^{\textnormal{Lip}(\lambda)}\|\,h\,\|_{s_0+2}^{\textnormal{Lip}(\lambda)}+\varepsilon^2\|\,\rho\,\|_{s+2}^{\textnormal{Lip}(\lambda)}\big(\|\,h\,\|_{s_0+2}^{\textnormal{Lip}(\lambda)}\big)^2.$
	\end{enumerate}
\end{lemma}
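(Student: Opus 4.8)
The plan is to differentiate $\mathcal{F}$ explicitly and bound every resulting term with the tame product and composition estimates of Lemma~\ref{lem funct prop}; both items are quantitative consequences of Proposition~\ref{prop-size} and \eqref{hess G}.

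\textbf{Item (1).} Insert the closed formula \eqref{linearized f} for $\partial_\rho\mathcal{F}(\rho)[h]=\mathcal{L}_0h$ and estimate term by term. The transport term $\varepsilon^2\omega_0\partial_\varphi h$ is bounded by $\|h\|_{s+1}^{\textnormal{Lip}(\lambda)}$ since $\xi_0\mapsto\omega_0(\xi_0)$ is bounded with bounded Lipschitz seminorm by Lemma~\ref{lem-period}. For $\partial_\theta[\mathcal{V}^\varepsilon(\rho)h]$ I would use the tame product inequality $\|\mathcal{V}^\varepsilon(\rho)h\|_{s+1}^{\textnormal{Lip}(\lambda)}\lesssim\|\mathcal{V}^\varepsilon(\rho)\|_{s+1}^{\textnormal{Lip}(\lambda)}\|h\|_{s_0+1}^{\textnormal{Lip}(\lambda)}+\|\mathcal{V}^\varepsilon(\rho)\|_{s_0+1}^{\textnormal{Lip}(\lambda)}\|h\|_{s+1}^{\textnormal{Lip}(\lambda)}$ together with \eqref{def calV}, \eqref{est Veps} and the smallness $\|\rho\|_{s_0+2}^{\textnormal{Lip}(\lambda)}\le1$, which give $\|\mathcal{V}^\varepsilon(\rho)\|_{s_0+1}^{\textnormal{Lip}(\lambda)}\lesssim1$ and $\|\mathcal{V}^\varepsilon(\rho)\|_{s+1}^{\textnormal{Lip}(\lambda)}\lesssim1+\|\rho\|_{s+2}^{\textnormal{Lip}(\lambda)}$; this term alone produces the $\|\rho\|_{s+2}^{\textnormal{Lip}(\lambda)}\|h\|_{s_0+2}^{\textnormal{Lip}(\lambda)}$ contribution. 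The Hilbert transform $\mathcal{H}$ is a Fourier multiplier of order zero, hence bounded on each $H^s$. Finally, by \eqref{def D0} the operator $\mathcal{Q}_0$ is finite rank onto the spatial modes $\pm1$ with smooth, bounded coefficients built from $q,\Theta,\mathtt{w}_3,\mathtt{w}_4$ (with $q$ bounded away from zero), and $\mathcal{R}_0^\varepsilon(\rho)$ is an integral operator whose kernels obey \eqref{est K0 R0}; thus $\varepsilon^2\partial_\theta\mathcal{Q}_0[h]$ and $\varepsilon^3\partial_\theta\mathcal{R}_0^\varepsilon(\rho)[h]$ are controlled by the $\varepsilon$--powers times $\|h\|_{s_0}^{\textnormal{Lip}(\lambda)}$ and the harmless factor $1+\varepsilon^\mu\|\rho\|_{s+2}^{\textnormal{Lip}(\lambda)}$, the shift $\varphi\mapsto\varphi+\pi$ being an $H^s$--isometry. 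Each piece costs at most one $\varphi$-- or $\theta$--derivative, so collecting everything gives the claimed bound (we are being generous by one derivative to match the statement).

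\textbf{Item (2).} By the chain rule and the scaling $\mathcal{F}(\rho)=\tfrac1{\varepsilon^{2+\mu}}G(\varepsilon r_\varepsilon+\varepsilon^{1+\mu}\rho)$ one has $\partial_\rho^2\mathcal{F}(\rho)[h,h]=\varepsilon^{\mu}\,\partial_r^2G(\varepsilon r_\varepsilon+\varepsilon^{1+\mu}\rho)[h,h]$. Plugging in \eqref{hess G}, the leading term equals $-\tfrac{\varepsilon^{2+\mu}}{2}\partial_\theta(h^2)$, estimated by the tame algebra property $\|\partial_\theta(h^2)\|_s^{\textnormal{Lip}(\lambda)}\lesssim\|h\|_{s+1}^{\textnormal{Lip}(\lambda)}\|h\|_{s_0+1}^{\textnormal{Lip}(\lambda)}$, while the remainder $\varepsilon^{3+\mu}\mathcal{E}_1^\varepsilon(\varepsilon r_\varepsilon+\varepsilon^{1+\mu}\rho)[h,h]$ is handled by the bound for $\mathcal{E}_1^\varepsilon$ recorded after \eqref{hess G}, combined with $\|\varepsilon r_\varepsilon+\varepsilon^{1+\mu}\rho\|_{s+2}^{\textnormal{Lip}(\lambda)}\lesssim\varepsilon\big(1+\|\rho\|_{s+2}^{\textnormal{Lip}(\lambda)}\big)$, which follows from $\|r_\varepsilon\|_{s+2}^{\textnormal{Lip}(\lambda)}\lesssim1$ (Lemma~\ref{lem: construction of appx sol}). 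Using $\varepsilon^{2+\mu},\varepsilon^{3+\mu},\varepsilon^{4+\mu}\le\varepsilon^2$ and $\|h\|_{s_0+2}^{\textnormal{Lip}(\lambda)}\le\|h\|_{s+2}^{\textnormal{Lip}(\lambda)}$ (since $s\ge s_0$) to absorb the pure remainder $\varepsilon^{4+\mu}(\|h\|_{s_0+2}^{\textnormal{Lip}(\lambda)})^2$ into $\varepsilon^2\|h\|_{s+2}^{\textnormal{Lip}(\lambda)}\|h\|_{s_0+2}^{\textnormal{Lip}(\lambda)}$ yields the asserted inequality.

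\textbf{Main difficulty.} There is essentially no conceptual obstacle, only careful bookkeeping: the two points needing attention are keeping the powers of $\varepsilon$ consistent (so that every error term is indeed $\lesssim\varepsilon^2$ and fits the $s+2$ regularity budget) and applying the composition/tame--product estimates of Lemma~\ref{lem funct prop} with the correct low--regularity index $s_0$, so that the quadratic-in-$\rho$ contributions never appear at top regularity.
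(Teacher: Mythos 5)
Your argument is correct and is exactly the route the paper intends: the text gives no detailed proof, stating only that the estimates ``follow easily from Proposition \ref{prop-size} and \eqref{hess G}'', and your term-by-term bookkeeping (tame products for $\partial_\theta[\mathcal{V}^\varepsilon(\rho)h]$ via \eqref{est Veps}, kernel estimates for $\mathcal{Q}_0$ and $\mathcal{R}_0^\varepsilon$, and the chain-rule identity $\partial_\rho^2\mathcal{F}(\rho)[h,h]=\varepsilon^{\mu}\partial_r^2G(\varepsilon r_\varepsilon+\varepsilon^{1+\mu}\rho)[h,h]$ combined with \eqref{hess G}) is precisely the omitted verification. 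The powers of $\varepsilon$ and the placement of the low-regularity index $s_0$ all check out.
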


 \section{Reduction of the transport part}\label{Sec-Red-Tran}
 This section is devoted to the straightening of the transport part  of the operator $\mathcal{L}_0$ defined in   \eqref{linearized f}. This will be performed  through some steps using various transformations.   
 \subsection{Change of coordinates of the first kind}\label{Sec-Changeof-coordinates1}
We plan to reduce to  a Fourier multiplier  the transport part of the operator $\mathcal{L}_0$  introduced in Proposition \ref{prop-size}. Currently, implementing the KAM scheme, as done in \cite{FGMP19,HHM21,HR21}, is not feasible due to the presence of multiple degeneracies in both the operator and the Cantor sets that will be utilized. One of the crucial factor  is that some coefficients in $\mathcal{L}_0$   scales exactly at the rate $\varepsilon^2$ which is incompatible with the requirements of   KAM scheme. The strategy involves first   intermediate partial change of coordinates to avoid spatial time resonance and  prepare the new operator for subsequent application of the KAM scheme. This will be the goal of this  section and our first main result reads as follows.
 \begin{proposition}\label{QP-change0}
 Let $\mathcal{V}^\varepsilon(\rho) $ be as in \eqref{def calV} and assume that the conditions \eqref{cond1}, \eqref{cond-interval} hold.  There exists ${\varepsilon}_0>0$ 
such that if 
		$$\varepsilon\leqslant\varepsilon_0\quad\textnormal{and}\quad\|\rho\|_{s_0+2}^{\textnormal{Lip}(\lambda)}\leqslant 1,$$ 
then  there exist  $\mathfrak{b}$ in the form
$$
\mathfrak{b}(\varphi,\theta)={\Theta}(\varphi)-\varphi+\varepsilon\widehat{\Theta}_1(\varphi)+\varepsilon^2\textnormal{Im}\big\{\mathtt{a}_2(\varphi)e^{\ii 2\theta} \big\}+{\varepsilon^{2+\mu}}\mathfrak{r}(\varphi,\theta),$$
where $\mathtt{a}_2(\varphi)$ is defined by \eqref{def g0},
 and 
\begin{equation}\label{change of variables0}
\begin{aligned}
\mathfrak{B} h(\varphi,\theta) & \triangleq \big(1+\partial_{\theta}\mathfrak{b}(\varphi,\theta)\big) h\big(\varphi,\theta+\mathfrak{b}(\varphi,\theta)\big) 
\end{aligned}
\end{equation}
such that 
\begin{align}\label{transformation KAM step transport2}
\mathfrak{B}^{-1}\Big(\varepsilon^2\omega_0\partial_\varphi&+\partial_\theta\big[\mathcal{V}^\varepsilon(\rho) \cdot \big]\Big)\mathfrak{B}=\varepsilon^2\omega_0\partial_\varphi +\partial_{\theta}\big[\big(\mathtt{c}_0+{\varepsilon^{6} }{\mathcal{V}}_0^\varepsilon({\rho}) \big)\cdot\big].
\end{align}
Furthermore, we have
\begin{enumerate}
\item The constant $\mathtt{c}_0$ is given by
\begin{equation}\label{def c0}
\mathtt{c}_0\triangleq \tfrac12-\varepsilon^2\omega_0+\varepsilon^{3}\mathtt{c}_1, \quad \| \mathtt{c}_1 \|^{\textnormal{Lip}(\lambda)}\lesssim 1.\end{equation}
\item  The functions $\mathfrak{b}$, $\mathfrak{r}$, $\widehat{\Theta}_1$ and  ${\mathcal{V}}_0^\varepsilon({\rho})$  satisfy the following estimates for all $s\in[s_{0},S],$ 
\begin{equation}\label{est b1 b2 b b-theta}
\begin{aligned}
\|\mathfrak{b}\|_{s}^{\textnormal{Lip}(\lambda)} +\|\widehat{\Theta}_1\|_{s}^{\textnormal{Lip}(\lambda)}+\|\mathfrak{r}\|_{s}^{\textnormal{Lip}(\lambda)} \lesssim 1+\|\rho\|_{s+{2}}^{\textnormal{Lip}(\lambda)}\\
\|{\mathcal{V}}_0^\varepsilon({\rho})\|_{s}^{\textnormal{Lip}(\lambda)} \lesssim 1+\|\rho\|_{s+{3}}^{\textnormal{Lip}(\lambda)}
\end{aligned}
\end{equation}
\item The transformations $\mathfrak{B}^{\pm 1}$  satisfy the following estimates for all $s\in[s_{0},S],$
\begin{equation}\label{est B1B2B}
\begin{aligned}
\|\mathfrak{B}^{\pm 1} h\|_{s}^{\textnormal{Lip}(\lambda)}
 \lesssim \|h\|_{s}^{\textnormal{Lip}(\lambda)}+\varepsilon\| \rho\|_{s+{3}}^{\textnormal{Lip}(\lambda)}\|h\|_{s_{0}}^{\textnormal{Lip}(\lambda)}.
\end{aligned}
\end{equation} 
	\item Given two small states $\rho_{1}$ and $\rho_{2}$, then 
				\begin{align}\label{diff Vpm 00}
				\|\Delta_{12}\mathtt{c}_0\|^{\textnormal{Lip}(\lambda)}&\lesssim\varepsilon^{3}\| \Delta_{12}\rho\|_{s_0+2}^{\textnormal{Lip}(\lambda)},
				\\ 
				\label{ain-z10}
				\|\Delta_{12}\mathcal{V}_0^{\varepsilon }\|_{s}^{\textnormal{Lip}(\lambda)}
&\lesssim\Big(\|\Delta_{12} \rho\|_{s+3}^{\textnormal{Lip}(\lambda)}+{ \|\Delta_{12} \rho\|_{s_0+3}^{\textnormal{Lip}(\lambda)}
\max_{\ell=1,2} \|\rho_\ell\|_{s+3}^{\textnormal{Lip}(\lambda)}}
\Big).
			\end{align}
\end{enumerate}

\end{proposition}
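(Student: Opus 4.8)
\emph{The plan is as follows.} First I would record the conjugation rule for transport operators in divergence form: with $\Phi(\varphi,\theta)\triangleq(\varphi,\theta+\mathfrak{b}(\varphi,\theta))$ and $\mathfrak{B}$ as in \eqref{change of variables0}, a direct computation shows that whenever $1+\partial_\theta\mathfrak{b}>0$,
\[
\mathfrak{B}^{-1}\Big(\varepsilon^2\omega_0\partial_\varphi+\partial_\theta\big[\mathcal{V}^\varepsilon(\rho)\,\cdot\,\big]\Big)\mathfrak{B}=\varepsilon^2\omega_0\partial_\varphi+\partial_\theta\big[a_+\,\cdot\,\big],\qquad a_+\circ\Phi=\varepsilon^2\omega_0\,\partial_\varphi\mathfrak{b}+\mathcal{V}^\varepsilon(\rho)\big(1+\partial_\theta\mathfrak{b}\big).
\]
Thus \eqref{transformation KAM step transport2} reduces to producing $\mathfrak{b}$ of the announced form, a constant $\mathtt{c}_0$ and a remainder $\widetilde{\mathcal{V}}_0^\varepsilon$ with $\varepsilon^2\omega_0\partial_\varphi\mathfrak{b}+\mathcal{V}^\varepsilon(\rho)(1+\partial_\theta\mathfrak{b})=\mathtt{c}_0+\varepsilon^6\widetilde{\mathcal{V}}_0^\varepsilon$; one then sets $\mathcal{V}_0^\varepsilon(\rho)=\widetilde{\mathcal{V}}_0^\varepsilon\circ\Phi^{-1}$, the composition with the inverse torus diffeomorphism being tame and Lipschitz in $\xi_0$. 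The whole point — and the reason no small divisors appear here — is that I will never invert the resonant operator $\varepsilon^2\omega_0\partial_\varphi+\tfrac12\partial_\theta$: since both $\varepsilon^2\omega_0\partial_\varphi$ and $\mathcal{V}^\varepsilon(\rho)-\tfrac12$ are $O(\varepsilon^2)$, the homological equation can be solved to accuracy $\varepsilon^6$ by inverting \emph{separately} the elliptic operators $\partial_\theta$ (on $\theta$-mean-free functions, $\varphi$ a parameter) and $\partial_\varphi$ (on $\varphi$-mean-free functions), each bounded with no loss.

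\emph{Construction of $\mathfrak{b}$ and $\mathtt{c}_0$.} I would split every function into its $\theta$-average and its $\theta$-mean-free part. As $\mathtt{g}=\textnormal{Re}\{\mathtt{a}_2 e^{\ii2\theta}\}$ and $\rho$ have zero $\theta$-average, $\langle\mathcal{V}^\varepsilon(\rho)\rangle_\theta=\tfrac12-\varepsilon^2\omega_0\dot\Theta(\varphi)+\varepsilon^3\langle V^\varepsilon(\rho)\rangle_\theta$. The $O(1)$ piece $\Theta(\varphi)-\varphi$ of $\mathfrak{b}$ — which is $2\pi$-periodic because $\Theta(\varphi+\pi)=\Theta(\varphi)+\pi$, and $\varepsilon$-independent, bounded, analytic in $\xi_0$ by Lemma \ref{lem-period} — obeys $\varepsilon^2\omega_0\partial_\varphi(\Theta-\varphi)=\varepsilon^2\omega_0\dot\Theta-\varepsilon^2\omega_0$, hence it kills the $-\varepsilon^2\omega_0\dot\Theta$ term while contributing exactly the constant $-\varepsilon^2\omega_0$ to $\mathtt{c}_0$, which is the structure \eqref{def c0}. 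The residual $\theta$-average $\varepsilon^3\langle V^\varepsilon(\rho)\rangle_\theta$ is cancelled, up to its $\varphi$-average (absorbed into $\varepsilon^3\mathtt{c}_1$), by the piece $\varepsilon\widehat\Theta_1$, with $\widehat\Theta_1\triangleq-\tfrac1{\omega_0}\partial_\varphi^{-1}\big(\langle V^\varepsilon(\rho)\rangle_\theta-\langle V^\varepsilon(\rho)\rangle_{\varphi,\theta}\big)$ — legitimate since $\inf_{\xi_0}\omega_0>0$ (Lemma \ref{lem-period}). For the $\theta$-mean-free part, $-\varepsilon^2\mathtt{g}$ is cancelled by $\tfrac12\partial_\theta(\varepsilon^2\textnormal{Im}\{\mathtt{a}_2 e^{\ii2\theta}\})$ because $\partial_\theta\textnormal{Im}\{\mathtt{a}_2 e^{\ii2\theta}\}=2\textnormal{Re}\{\mathtt{a}_2 e^{\ii2\theta}\}=2\mathtt{g}$ (this is the $\varepsilon^2$-order piece of $\mathfrak{b}$); $-\tfrac{\varepsilon^{2+\mu}}2\rho$ is cancelled by $\tfrac12\partial_\theta(\varepsilon^{2+\mu}\partial_\theta^{-1}\rho)$, contributing $\partial_\theta^{-1}\rho$ to $\mathfrak{r}$; and the $\theta$-mean-free part of $\varepsilon^3V^\varepsilon(\rho)$ is cancelled the same way. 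After these choices the residual is $O(\varepsilon^4)$ (resp. $O(\varepsilon^{4+\mu})$ for its $\rho$-dependent part), stemming from $\varepsilon^2\omega_0\partial_\varphi$ applied to the already-chosen pieces of $\mathfrak{b}$ and from the cross terms in $\mathcal{V}^\varepsilon(\rho)\partial_\theta\mathfrak{b}$; I then iterate a \emph{finite} number of times, cancelling at each step the mean-free part of the residual via $\tfrac12\partial_\theta^{-1}$ and its $\theta$-average (after removing the $\varphi$-mean) via $\tfrac1{\varepsilon^2\omega_0}\partial_\varphi^{-1}$. Each step gains a factor $\varepsilon^2$ (the $\varepsilon^2\omega_0\partial_\varphi$ prefactor hitting the newly added $O(\varepsilon^2)$-small piece of $\mathfrak{b}$) and loses only finitely many derivatives, so after at most two steps the residual is $O(\varepsilon^6)$. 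Summing the mean-free corrections defines $\mathfrak{r}$, the $\theta$-independent corrections complete $\widehat\Theta_1$, the leftover constants complete $\varepsilon^3\mathtt{c}_1$, and the last residual composed with $\Phi^{-1}$ gives $\varepsilon^6\mathcal{V}_0^\varepsilon(\rho)$; no fixed-point or Nash–Moser argument is needed because the iteration is finite.

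\emph{Estimates.} The explicit pieces $\Theta-\varphi$ and $\textnormal{Im}\{\mathtt{a}_2 e^{\ii2\theta}\}$ are bounded in every $H^s$ uniformly in $\xi_0\in[\xi_*,\xi^*]$ via Lemma \ref{lem-period} (giving $q,\Theta,\dot\Theta$ away from the degeneracy $q=0$) and \eqref{def g0}. Using $\|V^\varepsilon(\rho)\|_s^{\textnormal{Lip}(\lambda)}\lesssim 1+\varepsilon^\mu\|\rho\|_{s+1}^{\textnormal{Lip}(\lambda)}$ from Proposition \ref{prop-size}, the boundedness of $\partial_\theta^{-1}$, the one-derivative gain of $\partial_\varphi^{-1}$, and the fact that each of the finitely many iteration steps introduces exactly one $\partial_\varphi$, an elementary induction yields $\|\mathfrak{b}\|_s^{\textnormal{Lip}(\lambda)}+\|\widehat\Theta_1\|_s^{\textnormal{Lip}(\lambda)}+\|\mathfrak{r}\|_s^{\textnormal{Lip}(\lambda)}\lesssim 1+\|\rho\|_{s+2}^{\textnormal{Lip}(\lambda)}$, $\|\mathtt{c}_1\|^{\textnormal{Lip}(\lambda)}\lesssim 1$, and — after the single extra $\partial_\varphi$ producing the final residual — $\|\mathcal{V}_0^\varepsilon(\rho)\|_s^{\textnormal{Lip}(\lambda)}\lesssim 1+\|\rho\|_{s+3}^{\textnormal{Lip}(\lambda)}$, i.e. \eqref{def c0} and \eqref{est b1 b2 b b-theta}. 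Since $\partial_\theta\mathfrak{b}=2\varepsilon^2\mathtt{g}+\varepsilon^{2+\mu}\partial_\theta\mathfrak{r}+\cdots=O(\varepsilon^2)$, one has $1+\partial_\theta\mathfrak{b}>0$ for $\varepsilon\leqslant\varepsilon_0$, so $\mathfrak{B}^{\pm1}$ are well defined; and since $\Phi$ is a bounded perturbation — $O(\varepsilon)$-small in its non-explicit part — of the fixed $\varepsilon$-independent diffeomorphism $\theta\mapsto\theta+\Theta(\varphi)-\varphi$, the standard tame estimates for composition and inversion of torus diffeomorphisms (as in \cite{HHM21,HR22}) give \eqref{est B1B2B}. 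Finally \eqref{diff Vpm 00}–\eqref{ain-z10} follow by differencing the explicit formulas: $\omega_0,\Theta,q,\mathtt{a}_2$ are analytic in $\xi_0$ (Lemma \ref{lem-period}), $V^\varepsilon(\rho)$ obeys the difference bound in \eqref{est Veps}, and $\Delta_{12}$ of products, of $\partial_\theta^{-1},\partial_\varphi^{-1}$ and of compositions with $\Phi^{\pm1}$ is controlled by Leibniz-type inequalities; the factor $\varepsilon^3$ in $\Delta_{12}\mathtt{c}_0$ reflects $\mathtt{c}_0-\tfrac12+\varepsilon^2\omega_0=\varepsilon^3\mathtt{c}_1$ with $\mathtt{c}_1$ depending on $\rho$ only through $V^\varepsilon(\rho)$.

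\emph{Main difficulty.} There is no genuine analytic obstacle: this "first-kind" change of coordinates is designed precisely to avoid the small-divisor problem, which is postponed to Proposition \ref{QP-change}. The delicate part is the bookkeeping — checking that $\mathtt{c}_0$ has \emph{exactly} the form $\tfrac12-\varepsilon^2\omega_0+\varepsilon^3\mathtt{c}_1$, so that its $O(\varepsilon^2)$ part is the clean constant $-\varepsilon^2\omega_0$ with no residual $\xi_0$- or $\rho$-dependence, which rests on $\langle\mathtt{g}\rangle_\theta=0$ and on the identity $\partial_\varphi(\Theta-\varphi)=\dot\Theta-1$ with $\Theta-\varphi$ periodic — together with tracking the bounded derivative losses through the iteration so that the norms $\|\rho\|_{s+2}$ and $\|\rho\|_{s+3}$ in \eqref{est b1 b2 b b-theta}–\eqref{ain-z10} come out right.
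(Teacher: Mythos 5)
Your proposal is correct and follows essentially the same route as the paper: the paper also solves only the elliptic equations in $\partial_\theta$ (on $\theta$-mean-free data) and $\partial_\varphi$ (on $\varphi$-mean-free data) via a two-step triangular system (its decomposition $\mathfrak{b}=\varepsilon^2\mathfrak{b}_{1,1}+\varepsilon^4\mathfrak{b}_{1,2}+\mathfrak{b}_{2,1}+\varepsilon^2\mathfrak{b}_{2,2}$ with the homological equations \eqref{equation satisfied by g-21}--\eqref{equation satisfied by g-22} is exactly your finite two-step iteration), identifies $\mathtt{c}_0=\tfrac12-\varepsilon^2\omega_0+\varepsilon^3\mathtt{c}_1$ from $\langle\dot\Theta\rangle_\varphi=1$ and $\langle\mathtt{g}\rangle_\theta=\langle\rho\rangle_\theta=0$, and sets $\mathcal{V}_0^\varepsilon={\rm B}^{-1}\mathcal{V}_2^\varepsilon$ with the same $\varepsilon^6$ bookkeeping and the same composition-law estimates.
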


\begin{proof} 
Consider the symplectic periodic change of coordinates close to the identity in the form
\begin{equation}\label{change of variables}
\begin{array}{rcl}
\mathfrak{B} h(\varphi,\theta) &=& \big(1+\partial_{\theta}\mathfrak{b}(\varphi,\theta)\big){\rm {B}} h(\varphi,\theta)\\
& \triangleq& \big(1+\partial_{\theta}\mathfrak{b}(\varphi,\theta)\big) h\big(\varphi,\theta+\mathfrak{b}(\varphi,\theta)\big).
\end{array}
\end{equation}
The goal is to construct this suitable change of coordinates to reduce the size of the coefficients. It will be  a smooth diffeomorphism of the space $X^s$ introduced in \eqref{Xs-space} and  its symplectic structure is  useful to  ensure the persistence  of the spatial zero average. Actually, one has
$$
\int_{\mathbb{T}}h(\varphi,\theta) d\theta=0\Longrightarrow \int_{\mathbb{T}}\mathfrak{B} h(\varphi,\theta) d\theta=0.
$$
According to Lemma \ref{algeb1}, the conjugation of the transport operator by $\mathfrak{B}$  writes
$$
\mathfrak{B}^{-1}\Big(\varepsilon^2\omega_0\partial_\varphi+\partial_\theta\big[\mathcal{V}^\varepsilon(\rho) \cdot \big]\Big)\mathfrak{B}=\varepsilon^2\omega_0\partial_\varphi+\partial_y \Big[{\rm B}^{-1}\big(\mathcal{V}_1^\varepsilon(\rho)\big)\cdot\Big],
$$
where, by  \eqref{def calV}, 
\begin{align*}
\mathcal{V}_1^\varepsilon(\rho)&\triangleq \varepsilon^2\omega_0\partial_{\varphi} \mathfrak{b}+\mathcal{V}^\varepsilon(\rho)\big(1+\partial_\theta \mathfrak{b}\big)
\\
&=\varepsilon^2\omega_0\partial_\varphi \mathfrak{b}+\Big(\tfrac12-\varepsilon^2\omega_0 \dot{\Theta} (\varphi)-{\varepsilon^2}\mathtt{g}(\varphi,\theta)-\tfrac{\varepsilon^{2+\mu}}{2}\rho+\varepsilon^{3}{V}^\varepsilon(\rho)\Big)\big(1+\partial_\theta \mathfrak{b} \big).
\end{align*}
Let us decompose $\mathfrak{b}$ in the form
\begin{equation}\label{def bfrak}
 \mathfrak{b}(\varphi,\theta)=\varepsilon^{2} \mathfrak{b}_{1,1}(\varphi,\theta)+\varepsilon^{4} \mathfrak{b}_{1,2}(\varphi,\theta)+\mathfrak{b}_{2,1}(\varphi)+\varepsilon^{2}\mathfrak{b}_{2,2}(\varphi)
\end{equation}
and we shall   impose   the following constraints 
\begin{eqnarray}  \label{equation satisfied by g-21}
       \left\{\begin{array}{ll}
          	&\tfrac12\,\partial_{\theta}\mathfrak{b}_{1,1}=\underbrace{\omega_0 \dot{\Theta}+\mathtt{g}+ \tfrac{1}{2}\varepsilon^\mu \rho-\varepsilon {V}^\varepsilon(\rho)}_{\triangleq F_1}-\langle F_1\rangle_{\theta}, \\
&\omega_0\partial_\varphi \mathfrak{b}_{2,1}=\langle F_1\rangle_{\theta}-\langle F_1\rangle_{\theta,\varphi}
       \end{array}\right.
\end{eqnarray}	
and
\begin{eqnarray}  \label{equation satisfied by g-22}
       \left\{\begin{array}{ll}
          	&\tfrac12\,\partial_{\theta}\mathfrak{b}_{1,2}=\underbrace{{-}\omega_0\partial_\varphi\mathfrak{b}_{1,1}+ (\omega_0 \dot{\Theta}+\mathtt{g})\partial_{\theta}\mathfrak{b}_{1,1}+ \tfrac{1}{2}\varepsilon^\mu \rho\partial_{\theta}\mathfrak{b}_{1,1}-\varepsilon {V}^\varepsilon(\rho)\partial_{\theta}\mathfrak{b}_{1,1}}_{\triangleq F_2}-\langle  F_2\rangle_{\theta}, \\
&\omega_0\partial_\varphi \mathfrak{b}_{2,2}=\langle  F_2\rangle_{\theta}-\langle  F_2\rangle_{\theta,\varphi}.
       \end{array}\right.
\end{eqnarray}	
Thus, straightforward computations yield
$$
\mathcal{V}_1^\varepsilon(\rho)=\tfrac12-\varepsilon^2\langle F_1\rangle_{\theta,\varphi}-\varepsilon^4\langle F_2\rangle_{\theta,\varphi} +\varepsilon^6\mathcal{V}_2^\varepsilon(\rho)
$$ 
with
\begin{align}\label{dim-11}
\mathcal{V}_2^\varepsilon(\rho)=\omega_0\partial_\varphi  \mathfrak{b}_{1,2} -\big(\omega_0 \dot{\Theta}+\mathtt{g}+\tfrac12\varepsilon^\mu \rho +\varepsilon V^\varepsilon(\rho)\big)\partial_\theta \mathfrak{b}_{1,2}.
\end{align}
Using  the facts $\displaystyle{\langle\dot\Theta\rangle_{\varphi} =1, \langle \rho\rangle_{\theta}=0}$ and \eqref{def calV}, we infer
$$
\mathcal{V}_1^\varepsilon(\rho)=\tfrac12-\varepsilon^2\omega_0+\varepsilon^{3} \langle {V}^\varepsilon(\rho)\rangle_{\theta,\varphi}-\varepsilon^4\langle F_2\rangle_{\theta,\varphi} +\varepsilon^6 \mathcal{V}_2^\varepsilon(\rho).
$$
 By defining
\begin{equation}\label{def f0}
\mathcal{V}_0^\varepsilon({\rho})\triangleq {\rm B}^{-1}\mathcal{V}_2^\varepsilon(\rho)\quad\textnormal{and}\quad \mathtt{c}_0\triangleq \tfrac12-\varepsilon^2\omega_0+\varepsilon^{3} \underbrace{\big(\langle {V}^\varepsilon(\rho)\rangle_{\theta,\varphi}-\varepsilon\langle F_2\rangle_{\theta,\varphi} \big)}_{\triangleq \mathtt{c}_1}\end{equation}
 we get  the identity \eqref{transformation KAM step transport2}.
To solve  the first equation in \eqref{equation satisfied by g-21}, we write in view of \eqref{def calV}
$$
\tfrac12\,\partial_{\theta}\mathfrak{b}_{1,1}(\varphi,\theta)= \mathtt{g}+ \tfrac{1}{2} {\varepsilon^\mu}\rho-\varepsilon {V}^\varepsilon(\rho)+\varepsilon \langle  {V}^\varepsilon(\rho)\rangle_{\theta},
$$
where $\mathtt{g}$ is given by \eqref{def g0}.
Notice that $\rho\in X^s$ is  with  zero spatial average. Thus  there is only one solution to this equation with zero  spatial average, that can be for instance  solved using Fourier expansion,  and taking the form
\begin{align*}
\mathfrak{b}_{1,1}(\varphi,\theta)=\textnormal{Im}\big\{\mathtt{a}_2(\varphi)e^{\ii 2\theta} \big\}+{\varepsilon^\mu}\mathfrak{r}_{1}(\varphi,\theta),
\end{align*}
where the last term can be estimated as follows according to \eqref{est Veps},
\begin{align*}
\|\mathfrak{r}_{1}\|_{s}^{\textnormal{Lip}(\lambda)}\lesssim  1+ \varepsilon^\mu\|\rho\|_{s+1}^{\textnormal{Lip}(\lambda)}.
\end{align*}
As to the second equation in \eqref{equation satisfied by g-21}, we first write  from \eqref{def calV},
$$
\langle \mathtt{g}\rangle_{\theta}=0, \quad\hbox{and}\quad \langle \Theta\rangle_{\theta,\varphi}=\omega_0.
$$
Thus we deduce
$$
\omega_0\partial_\varphi \mathfrak{b}_{2,1}=\omega_0\big(\dot\Theta(\varphi)-1\big) +\varepsilon \big[\langle  {V}^\varepsilon(\rho)\rangle_{\theta,\varphi}-\langle  {V}^\varepsilon(\rho)\rangle_{\theta}\big].
$$
It follows that
\begin{equation*}
\mathfrak{b}_{2,1}(\varphi)={\Theta}(\varphi)-\varphi+\varepsilon\,\mathfrak{r}_{2}(\varphi)
\end{equation*}
with the following estimate that can derived from \eqref{est Veps},
$$
\|\mathfrak{r}_{2}\|_{s}^{\textnormal{Lip}(\lambda)}\lesssim  1+ \varepsilon^\mu\|\rho\|_{s}^{\textnormal{Lip}(\lambda)}.
$$
As a byproduct of the preceding discussion, we infer
\begin{align}\label{est b1 b2}
\|\mathfrak{b}_{1,1}\|_{s}^{\textnormal{Lip}(\lambda)}\lesssim 1+\|\rho\|_{{s+1}}^{\textnormal{Lip}(\lambda)}, \quad \|\mathfrak{b}_{2,1}\|_{s}^{\textnormal{Lip}(\lambda)}\lesssim \varepsilon\big(1+\|\rho\|_{{s}}^{\textnormal{Lip}(\lambda)}\big).
\end{align}
Similar arguments can be applied to the system \eqref{equation satisfied by g-22} and one has
\begin{align*}
\|\mathfrak{b}_{1,2}\|_{s}^{\textnormal{Lip}(\lambda)}\lesssim 1+\|\rho\|_{{s+2}}^{\textnormal{Lip}(\lambda)},\quad \|\mathfrak{b}_{2,2}\|_{s}^{\textnormal{Lip}(\lambda)}\lesssim 1+\|\rho\|_{{s+1}}^{\textnormal{Lip}(\lambda)}.
\end{align*}
Combining the preceding estimates and \eqref{def bfrak} we find
$$
\|\mathfrak{b}\|_{s}^{\textnormal{Lip}(\lambda)}\lesssim 1+\|\rho\|_{{s+2}}^{\textnormal{Lip}(\lambda)}.
$$
Thus, applying  Lemma  \ref{Compos1-lemm}-$1$ leads to \eqref{est B1B2B}. 
Now, from \eqref{equation satisfied by g-22}, \eqref{def f0}, \eqref{est b1 b2} and \eqref{est Veps} we find
$$
\| \mathtt{c}_1 \|^{\textnormal{Lip}(\lambda)}\lesssim 1.
$$
Moreover, by \eqref{dim-11}, \eqref{est Veps}, \eqref{est b1 b2}, and the product laws in Lemma \ref{lem funct prop}  we get
\begin{align*}
  \|\mathcal{V}_2^\varepsilon(\rho) \|_{s}^{\textnormal{Lip}(\lambda)}&\lesssim 
 1+\|{\rho} \|_{s+{3}}^{\textnormal{Lip}(\lambda)}.
\end{align*}
Applying  once again Lemma  \ref{Compos1-lemm}-$1$ and using the last estimate and the smallness condition \mbox{on $\rho$,} we obtain  
\begin{align*}
\|\mathcal{V}_0^\varepsilon({\rho})\|_{s}^{\textnormal{Lip}(\lambda)}&=\|{\rm B}^{-1}\mathcal{V}_2^\varepsilon(\rho) \|_{s}^{\textnormal{Lip}(\lambda)}\\
& \leqslant  \|\mathcal{V}_2^\varepsilon(\rho) \|_{s}^{\textnormal{Lip}(\lambda)}\big(1+C\|\mathfrak{b}\|_{s_0}^{\textnormal{Lip}(\lambda)}\big)+C\|\mathfrak{b}\|_{s}^{\textnormal{Lip}(\lambda)}\|\mathcal{V}_2^\varepsilon(\rho) \|_{s_0}^{\textnormal{Lip}(\lambda)}\Big)\nonumber\\
&\lesssim 1+\|\rho\|_{s+{3}}^{\textnormal{Lip}(\lambda)}.
\end{align*}
Similarly,  from Lemma \ref{Compos1-lemm}-$3$, the second estimate in \eqref{est Veps}, the smallness condition on $\rho$ we conclude  \eqref{diff Vpm 00}. Finally, by \eqref{def c0} and  \eqref{est Veps} we find  \eqref{ain-z10}.
One also gets
\begin{equation}\label{b1-expand}
\begin{aligned}
\mathfrak{b}(\varphi,\theta)&=\underbrace{{\Theta}(\varphi)-\varphi}_{\triangleq \widehat{\Theta}(\varphi)}+ \varepsilon\,\underbrace{\big(\mathfrak{r}_{2}(\varphi)+\varepsilon\mathfrak{b}_{2,2}(\varphi)\big)}_{\triangleq \widehat{\Theta}_1(\varphi)}+\varepsilon^2\underbrace{\textnormal{Im}\big\{\mathtt{a}_2(\varphi)e^{\ii 2\theta} \big\}}_{\triangleq \mathfrak{b}_{0}(\varphi,\theta)}\\ &\quad +{\varepsilon^{2+\mu}}\underbrace{\big(\mathfrak{r}_{1}(\varphi,\theta)+\varepsilon^{2-\mu}\mathfrak{b}_{1,2}(\varphi,\theta)\big)}_{\triangleq \mathfrak{r}(\varphi,\theta)}.
\end{aligned}
\end{equation}
Notice that
\begin{equation}\label{mathscrB1}
 \mathfrak{B}^{-1}h(\varphi,y)=\Big(1+\varepsilon^2\partial_y\widehat{\mathfrak{b}}(\varphi,y)\Big)h\big(\varphi,y+\widehat{\mathfrak{b}}(\varphi,y)\big)
\end{equation}
with 
\begin{align}\label{IInv-77}
 y=\theta+\mathfrak{b}(\varphi,\theta)\Longleftrightarrow \theta=y+\widehat{\mathfrak{b}}(\varphi,y).
\end{align} 
Now, we intend  to describe the asymptotics of $\widehat{\mathfrak{b}}$ that will be used later in Proposition \ref{lemma-beta1}. 
Using \mbox{Lemma \ref{beta-inv-asym}} allows to get
\begin{align}\label{inverse-thetaL}
\widehat{\mathfrak{b}}(\varphi,\theta)=-\underbrace{\big(\widehat{\Theta}(\varphi)+ \varepsilon\,\widehat{\Theta}_1(\varphi)\big)}_{\triangleq \Theta_1(\varphi)}-\varepsilon^2\mathfrak{b}_{0}\big(\varphi,\theta- \widehat{\Theta}(\varphi)\big)+\varepsilon^{2+\mu}\mathfrak{b}_{1}\big(\varphi,\theta\big)
\end{align}
with the estimate
\begin{align}\label{inverse-thetaML}
\|\mathfrak{b}_1\|_{s}^{\textnormal{Lip}(\lambda)}\lesssim 1+\|\rho\|_{{s+3}}^{\textnormal{Lip}(\lambda)}.
\end{align}
This achieves the proof of Proposition~\ref{QP-change0}.

\end{proof}
\subsection{Change of coordinates of the second kind}\label{Section-Change2}
In section \ref{Sec-Changeof-coordinates1}, we successfully  implemented a first change of variables  in order to reduce the size of the variable coefficients of the transport part in \eqref{transformation KAM step transport2} to the order $\varepsilon^6$. Our current objective is to achieve a complete reduction of this new operator  by employing a KAM scheme, following an approach  akin to the one detailed in \cite{FGMP19,HHM21,HR21}, with some adaptations due to the degeneracy in the time direction. Before stating our main result, let us first introduce the following sequence needed later in the cut-off projectors,
\begin{equation}\label{definition of Nm}
N_{-1}\triangleq 1,\quad \forall n\in\mathbb{N},\quad N_{n}\triangleq N_{0}^{\left(\frac{3}{2}\right)^{n}},\quad N_0\geqslant2.
 \end{equation}
 The  following result follows immediately  from the general statement of \mbox{Proposition \ref{Thm transport}.}
\begin{proposition}\label{QP-change}
Given the conditions \eqref{cond1}-\eqref{cond-interval} and let $\mu_2$ a real number  satisfying 
\begin{equation}\label{Conv-Trans0}
\mu_{2}\geqslant 4\tau+3. 
\end{equation}
There exists ${\varepsilon}_0>0$ such that if    
\begin{align}
\label{small-C2-0}\|\rho\|_{\frac{3}{2}\mu_{2}+2s_{0}+2\tau+{4}}^{\textnormal{Lip}(\lambda)}\leqslant1, \quad \lambda \varepsilon^{{-}2}\leqslant 1\quad\textnormal{and}\quad N_{0}^{\mu_{2}}\varepsilon^{{6}} {\lambda^{-1}}\leqslant{\varepsilon}_0,
\end{align}
then, we can construct $\mathtt{c}\triangleq \mathtt{c}(\xi_0,\rho)\in \textnormal{Lip}_\lambda(\mathcal{O},\mathbb{R})$ and $\beta\in \bigcap_{{s\in[s_{0},S]}} {\textnormal{Lip}_\lambda(\mathcal{O},H^{s})}$
such that with $\mathscr{B}$ as in \eqref{definition symplectic change of variables} one gets the following results.
\begin{enumerate}
\item  The function $\xi_0\mapsto\mathtt{c}(\xi_0,\rho)$ $($which is constant with respect to the time-space variables$)$  satisfies the following estimate, 
\begin{equation*}
\| \mathtt{c}-\mathtt{c}_0 \|^{\textnormal{Lip}(\lambda)}\lesssim {\varepsilon}^{{6}},
\end{equation*}
where $\mathtt{c}_0$ is defined in \eqref{def c0}.
\item The transformations $\mathscr{B}^{\pm 1}, 
 {\beta}$ and the functions  $\widehat{\beta}$ satisfy, for all $s\in[s_{0},S],$ 
\begin{equation*}
\|\mathscr{B}^{\pm 1}h\|_{s}^{\textnormal{Lip}(\lambda)}
 \lesssim\|h\|_{s}^{\textnormal{Lip}(\lambda)}+{\varepsilon^{{6}}\lambda ^{-1}}\| \rho\|_{s+2\tau+{5}}^{\textnormal{Lip}(\lambda)}\|h\|_{s_{0}}^{\textnormal{Lip}(\lambda)}
\end{equation*}
and 
\begin{equation}\label{est-beta-r0}
\|\widehat{\beta}\|_{s}^{\textnormal{Lip}(\lambda)}\lesssim\|\beta\|_{s}^{\textnormal{Lip}(\lambda)}\lesssim \varepsilon^{{6}}\lambda^{-1}\left(1+\| \rho\|_{s+2\tau+{4}}^{\textnormal{Lip}(\lambda)}\right).
\end{equation}
\item Let $n\in\N$, then on  the Cantor set
\begin{equation}\label{Cantor set0}
{\mathcal{O}_{n}^{1}(\rho)}=\bigcap_{(\ell,j)\in\mathbb{Z}^{2}\atop 1\leqslant |j|\leqslant N_{n}}\Big\lbrace \xi_0\in \mathcal{O};\;\, \big|\varepsilon^2\omega(\xi_0)  \ell+j \mathtt{c}(\xi_0,\rho)\big|\geqslant{\lambda}{| j|^{-\tau}}\Big\rbrace
\end{equation}
we have 
\begin{align*}
\mathscr{B}^{-1}\Big(&\varepsilon^2\omega_0\partial_\varphi +\partial_{\theta}\big[\big(\mathtt{c}_0+\varepsilon^{6} {\mathcal{V}}^\varepsilon({\rho}) \big)\cdot\big]\Big)\mathscr{B}=\varepsilon^2\omega_0\partial_\varphi+ \mathtt{c}(\xi_0,\rho)\partial_{\theta}+\mathtt{E}_{n}
\end{align*}
with $\mathtt{E}_{n}$ a linear operator satisfying
\begin{equation}\label{estimate En0}
\|\mathtt{E}_{n}h\|_{s_0}^{\textnormal{Lip}(\lambda)}\lesssim \varepsilon^{{6}} N_{0}^{\mu_{2}}N_{n+1}^{-\mu_{2}}\|h\|_{s_{0}+2}^{\textnormal{Lip}(\lambda)}.
\end{equation}
	\item Given two functions $\rho_{1}$ and $\rho_{2}$ both satisfying \eqref{small-C2-0}, we have 
			\begin{align}\label{diff Vpm}
				\|\Delta_{12}\mathtt{c}\|^{\textnormal{Lip}(\lambda)}&\lesssim\varepsilon^{3}\| \Delta_{12}\rho\|_{2{s}_{0}+2\tau+3}^{\textnormal{Lip}(\lambda)}.
			\end{align}
\end{enumerate}
\end{proposition}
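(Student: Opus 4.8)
The plan is to deduce this result from a general KAM reduction statement for transport operators of the form $\varepsilon^2\omega_0\partial_\varphi+\partial_\theta[(\mathtt{c}_0+\varepsilon^6\mathcal V_0^\varepsilon(\rho))\cdot]$, which appears as Proposition \ref{Thm transport} later in the paper. So the first step is simply to verify that the operator produced by Proposition \ref{QP-change0} fits the hypotheses of that general statement. By \eqref{transformation KAM step transport2} the operator has constant leading coefficient $\mathtt{c}_0$ (with the asymptotics \eqref{def c0}) plus a variable perturbation of size $\varepsilon^6$: precisely the regime where a KAM scheme can be run, the point being that the perturbation $\varepsilon^6\mathcal V_0^\varepsilon$ is now \emph{smaller} than the degeneracy scale $\varepsilon^2$ of the time direction, which is exactly the obstruction that the first change of variables $\mathfrak B$ was designed to remove. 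The smallness conditions \eqref{small-C2-0} — in particular $N_0^{\mu_2}\varepsilon^6\lambda^{-1}\leqslant\varepsilon_0$ and $\lambda\varepsilon^{-2}\leqslant1$ — are precisely what is needed so that each KAM step contracts, given the loss $\lambda^{-1}$ from inverting the small divisors and the frequency $\varepsilon^2\omega_0$ multiplying $\partial_\varphi$.

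Next I would set up the iteration itself (or rather quote it from Proposition \ref{Thm transport}). At each step $n$ one looks for $\beta_n(\varphi,\theta)$ solving the homological equation $\varepsilon^2\omega_0\partial_\varphi\beta_n+\mathtt c_0\partial_\theta\beta_n=\Pi_{N_n}(\text{current variable part})-\langle\cdot\rangle$, which is solvable on the Cantor set where the small divisors $|\varepsilon^2\omega(\xi_0)\ell+j\mathtt c_n(\xi_0,\rho)|\geqslant\lambda|j|^{-\tau}$ hold; conjugating by the time-quasi-periodic diffeomorphism $y=\theta+\beta_n$ lowers the size of the variable part quadratically while updating the constant $\mathtt c_n\to\mathtt c_{n+1}$ by a quantity of order (current perturbation size). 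The cut-off at $N_n$ with the superexponential scaling \eqref{definition of Nm} produces the telescoping remainder $\mathtt E_n$ with the frequency-decay estimate \eqref{estimate En0}. Summability of the corrections gives $\mathtt c=\lim\mathtt c_n$ with $\|\mathtt c-\mathtt c_0\|^{\textnormal{Lip}(\lambda)}\lesssim\varepsilon^6$, and $\mathscr B=\lim\mathscr B_n$ (composition of the elementary diffeomorphisms) with the tame bounds on $\mathscr B^{\pm1}$ and on $\beta,\widehat\beta$ of \eqref{est-beta-r0}, where the $\varepsilon^6\lambda^{-1}$ prefactor tracks the size of the first homological solution. The Lipschitz-in-$\xi_0$ (and in $\rho$) dependence, including \eqref{diff Vpm}, is obtained by the standard device of differentiating/differencing the homological equations along the iteration and using \eqref{ain-z10} together with the smallness assumptions; the $\varepsilon^3$ rate in \eqref{diff Vpm} is inherited from \eqref{diff Vpm 00} for $\Delta_{12}\mathtt c_0$, since the KAM corrections contribute only at order $\varepsilon^6$.

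The main obstacle — and the reason this is not a black-box application of existing transport-reduction results such as \cite{FGMP19,HHM21,HR21} — is the degeneracy in the time direction: the frequency vector is $(\varepsilon^2\omega_0,\mathtt c_0)$ rather than $(\omega_0,\mathtt c_0)$, so the small divisors $\varepsilon^2\omega(\xi_0)\ell+j\mathtt c(\xi_0,\rho)$ are genuinely small even for the worst $(\ell,j)$, and the Diophantine loss $\tau$ combines with the $\varepsilon^2$ in front of $\partial_\varphi$ to erode the contraction factor at each step. One must therefore check carefully that the quadratic gain in the iteration still beats the linear loss $N_{n+1}^{2\tau+\cdots}\lambda^{-1}$ coming from inverting the homological operator and from the smoothing estimates on $\beta_n$; this is exactly what the constraint \eqref{Conv-Trans0}, $\mu_2\geqslant4\tau+3$, and the three conditions in \eqref{small-C2-0} are calibrated to guarantee. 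A secondary technical point is to keep all estimates \emph{uniform} as $\varepsilon\to0$ after factoring out the explicit powers of $\varepsilon$, so that the final $\varepsilon_0$ can be chosen independently of $n$; this is handled by carrying the $\varepsilon$-weighted norms throughout, exactly as in the statement of Proposition \ref{Thm transport} to which I would ultimately defer for the detailed bookkeeping.
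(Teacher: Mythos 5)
Your proposal matches the paper's own route: the paper states that Proposition \ref{QP-change} "follows immediately from the general statement of Proposition \ref{Thm transport}", applied with $\epsilon_1=\varepsilon^2$, $\epsilon_2=\varepsilon^{6}$, $f_0=\mathcal{V}_0^\varepsilon(\rho)$ and $\sigma_0=3$ (so that \eqref{small-C2} becomes \eqref{small-C2-0}), and the KAM iteration you sketch (homological equations on the Cantor sets, quadratic contraction calibrated by $\mu_2\geqslant 4\tau+3$, telescoping remainder $\mathtt{E}_n$) is exactly the content of the Appendix proof of that general statement. Your derivation of \eqref{diff Vpm} by combining item 4 of Proposition \ref{Thm transport} with \eqref{diff Vpm 00} is also the intended one.
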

The next goal is to describe the asymptotic structure of the operator $\mathcal{L}_0$  in \eqref{linearized f} after successive conjugation by the transformations of first and second kind.
\begin{proposition}\label{lemma-beta1} 
Given the conditions \eqref{cond1}, \eqref{cond-interval}, \eqref{Conv-Trans0} and \eqref{small-C2-0}.  Let $\mathfrak{B}$ as in \eqref{change of variables0} and $\mathscr{B}$ as in Proposition \ref{QP-change}. Then, the following claims  hold true.
\begin{enumerate}
\item The operator 
\begin{equation*}
\mathscr{A}\triangleq \mathfrak{B}\mathscr{B}
\end{equation*}
writes 
\begin{align*}
\mathscr{A} h(\varphi,\theta)&= \big(1+\partial_{\theta}\mathfrak{a}(\varphi,\theta)\big)  h\big(\varphi,\theta+\mathfrak{a}(\varphi,\theta)\big),
\end{align*}
with the estimate
\begin{equation*}
\| \mathfrak{a}\|_{s}^{\textnormal{Lip}(\lambda)}\lesssim  
1+\| \rho\|_{s+2\tau+{4}}^{\textnormal{Lip}(\lambda)}.
\end{equation*}
Moreover, $\mathscr{A}$ is invertible and satisfies the estimate
\begin{align*}
\|\mathscr{A}^{\pm 1} h\|_{s}^{\textnormal{Lip}(\lambda)}& \lesssim   \|h\|_{s}^{\textnormal{Lip}(\lambda)}+ \| \rho\|_{s+2\tau+{5}}^{\textnormal{Lip}(\lambda)}\|h\|_{s_0}^{\textnormal{Lip}(\lambda)}.
\end{align*}
\item For any $\xi_0$ in the Cantor set ${\mathcal{O}_{n}^{1}(\rho)}$, defined in \eqref{Cantor set0},
one has 
\begin{align*}
\mathcal{L}_1\triangleq\mathscr{A}^{-1}\mathcal{L}_0\mathscr{A}&= \varepsilon^2\omega_0\partial_\varphi +\mathtt{c}(\xi_0,\rho)\partial_{\theta}-\tfrac{1}{2}\mathcal{H}-{\varepsilon^2 } \partial_\theta\mathcal{Q}_1 +{\partial_\theta}\mathcal{R}_{1}+\mathtt{E}_{n}
\end{align*}
where the linear operator $\mathtt{E}_{n}$ is the same as in Proposition \ref{QP-change} and 
\begin{align*}
\mathcal{Q}_1[h](\varphi,\theta)&\triangleq  \int_{\mathbb{T}} h( \varphi, \eta) \textnormal{Re}\Big\{\tfrac{e^{\ii(\theta-\eta)}}{\mathtt{w}_{3}^2(\varphi)}-\tfrac12\Big(\tfrac{e^{-\ii 2 \widehat{\Theta}(\varphi)}}{q(\varphi)}-\tfrac{e^{\ii 2\varphi}}{\mathtt{w}_3^2(\varphi)}-\tfrac{e^{\ii 2\varphi}}{\mathtt{w}_4^2(\varphi)}\Big)e^{\ii (\theta+\eta)} \Big\}  \,d\eta\nonumber
\\
&\quad+\int_{\mathbb{T}}h(\varphi+\pi,\eta)\textnormal{Re}\Big\{ \tfrac{e^{\ii(\theta+\eta-2\widehat\Theta(\varphi))}}{q(\varphi)}-\tfrac{e^{\ii(\theta-\eta)}}{\mathtt{w}_{4}^2(\varphi)}\Big\} d\eta.
\end{align*}
In addition, the operator $\partial_\theta \mathcal{R}_{1}$ satisfies  the estimates: for any $s\in[s_0,S],\,N\geqslant0$, 
\begin{equation*}
\begin{aligned}
\| \partial_\theta \mathcal{R}_{1} h\|_{s,N}^{\textnormal{Lip}(\lambda)}&\lesssim \big(\varepsilon^{2+{\mu}} +\varepsilon^{6} \lambda^{-1}\big) \Big(\|h\|_{{s}}^{\textnormal{Lip}(\lambda)}\big(1+ \|\rho\|_{s_0+2\tau+{5}+N}^{\textnormal{Lip}(\lambda)}\big) + \|h\|_{{s_0}}^{\textnormal{Lip}(\lambda)}\|\rho\|_{s+2\tau+{5}+N}^{\textnormal{Lip}(\lambda)} \Big).
\end{aligned}
\end{equation*}

\end{enumerate}

\end{proposition}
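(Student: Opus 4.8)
\textbf{Proof plan for Proposition \ref{lemma-beta1}.}

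The plan is to compose the two symplectic transformations constructed in Propositions \ref{QP-change0} and \ref{QP-change} and track the effect of $\mathscr{A}^{-1}\mathcal{L}_0\mathscr{A}$ on each of the four pieces of $\mathcal{L}_0$ displayed in \eqref{linearized f}, namely the transport part $\varepsilon^2\omega_0\partial_\varphi+\partial_\theta[\mathcal{V}^\varepsilon(\rho)\cdot]$, the Hilbert transform $-\tfrac12\mathcal{H}$, the degenerate operator $-\varepsilon^2\partial_\theta\mathcal{Q}_0$, and the smoothing remainder $\varepsilon^3\partial_\theta\mathcal{R}_0^\varepsilon(\rho)$.

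\emph{Step 1: the composed change of variables.} First I would observe that since $\mathfrak{B}$ and $\mathscr{B}$ are both of the form $h\mapsto(1+\partial_\theta\mathfrak{b})h(\varphi,\theta+\mathfrak{b})$ (see \eqref{change of variables0} and \eqref{definition symplectic change of variables}), their composition $\mathscr{A}=\mathfrak{B}\mathscr{B}$ is again of this form, with $\mathfrak{a}$ determined by the functional composition of the two shifts; the estimate $\|\mathfrak{a}\|_s^{\textnormal{Lip}(\lambda)}\lesssim 1+\|\rho\|_{s+2\tau+4}^{\textnormal{Lip}(\lambda)}$ follows from the bounds \eqref{est b1 b2 b b-theta} on $\mathfrak{b}$, the bound \eqref{est-beta-r0} on $\beta$, the smallness $\varepsilon^6\lambda^{-1}\ll1$, and the composition lemmas (Lemma \ref{Compos1-lemm}, Lemma \ref{lem funct prop}) for products and compositions of functions close to the identity. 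The tame bound on $\mathscr{A}^{\pm1}$ then follows from the corresponding bounds \eqref{est B1B2B}, the bound on $\mathscr{B}^{\pm1}$ in Proposition \ref{QP-change}, and the composition estimate for the operators.

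\emph{Step 2: conjugation of the transport part.} This is immediate from the chain: Proposition \ref{QP-change0} gives $\mathfrak{B}^{-1}(\varepsilon^2\omega_0\partial_\varphi+\partial_\theta[\mathcal{V}^\varepsilon(\rho)\cdot])\mathfrak{B}=\varepsilon^2\omega_0\partial_\varphi+\partial_\theta[(\mathtt{c}_0+\varepsilon^6\mathcal{V}_0^\varepsilon(\rho))\cdot]$, and then Proposition \ref{QP-change} gives, on the Cantor set $\mathcal{O}_n^1(\rho)$, $\mathscr{B}^{-1}(\varepsilon^2\omega_0\partial_\varphi+\partial_\theta[(\mathtt{c}_0+\varepsilon^6\mathcal{V}^\varepsilon(\rho))\cdot])\mathscr{B}=\varepsilon^2\omega_0\partial_\varphi+\mathtt{c}(\xi_0,\rho)\partial_\theta+\mathtt{E}_n$. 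The items 1 and 4 of the statement are then just restatements of the corresponding items of Proposition \ref{QP-change}.

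\emph{Step 3: conjugation of the remaining three operators.} The Hilbert transform is a Fourier multiplier of order zero; conjugating by a change of variables $\mathscr{A}$ that is $\varepsilon$-close to the identity produces $-\tfrac12\mathcal{H}$ plus a commutator-type remainder $[\mathscr{A}^{-1},\mathcal{H}]\mathscr{A}$ that is a smoothing operator in space (Egorov-type / pseudodifferential calculus as in \cite{FGMP19,HR21,HHM21}), whose size is governed by $\|\mathfrak{a}-\widehat\Theta\|$, i.e.\ by $\varepsilon^{2+\mu}+\varepsilon^6\lambda^{-1}$ since the leading order part $\widehat\Theta(\varphi)$ of the shift is $\theta$-independent and hence commutes with $\mathcal{H}$ (this is exactly why the frame centered at $z_1(t)$ with rotation $\Theta(\omega_0 t)$ was chosen). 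For the operator $-\varepsilon^2\partial_\theta\mathcal{Q}_0$: since $\mathcal{Q}_0$ carries the prefactor $\varepsilon^2$ and since only the leading-order $\theta$-independent rotation $\widehat\Theta(\varphi)$ needs to be tracked to leading order, conjugating produces $-\varepsilon^2\partial_\theta\mathcal{Q}_1$, where $\mathcal{Q}_1$ is obtained from $\mathcal{Q}_0$ by replacing $h(\varphi+\pi,\eta),h(\varphi,\eta)$ by their pulled-back versions and performing the substitutions $\theta\mapsto\theta-\widehat\Theta(\varphi)$, $\eta\mapsto\eta-\widehat\Theta(\varphi+\pi)=\eta-\widehat\Theta(\varphi)-\pi$ (using $\Theta(\varphi+\pi)=\Theta(\varphi)+\pi$); this is precisely the explicit kernel displayed in the statement, where the factor $e^{2\ii\varphi}$ appears because $e^{2\ii\Theta(\varphi)}e^{-2\ii\widehat\Theta(\varphi)}=e^{2\ii\varphi}$, together with the cross-term coming from the $\mathtt{g}$-contribution in $\mathfrak{b}_{1,1}$ (which acts at order $\varepsilon^2$ and therefore contributes at the same order $\varepsilon^2$ as $\mathcal{Q}_0$). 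The higher-order corrections from conjugating $\mathcal{Q}_0$ go into $\partial_\theta\mathcal{R}_1$ with size $\varepsilon^{2+\mu}+\varepsilon^6\lambda^{-1}$ relative to $\varepsilon^2$. Finally $\varepsilon^3\partial_\theta\mathcal{R}_0^\varepsilon(\rho)$ conjugates into another smoothing operator of the same order, absorbed into $\partial_\theta\mathcal{R}_1$; all the resulting kernel bounds follow from \eqref{est K0 R0}, the kernel bounds on $\mathcal{Q}_0,\mathcal{Q}_1$, and the composition/smoothing estimates with the tame bounds on $\mathfrak{a}$. Assembling Steps 2--3 gives the claimed form of $\mathcal{L}_1$ with the stated estimate on $\partial_\theta\mathcal{R}_1$.

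\emph{Main obstacle.} The delicate point is \textbf{Step 3}: bookkeeping exactly which terms land at order $\varepsilon^2$ (and hence must appear explicitly in $\mathcal{Q}_1$) versus which are genuinely smaller (order $\varepsilon^{2+\mu}$ or $\varepsilon^6\lambda^{-1}$, and hence absorbed into $\partial_\theta\mathcal{R}_1$). Because the transport shift $\mathfrak{b}$ itself has an $\varepsilon^2$-size $\theta$-dependent component $\mathfrak{b}_{1,1}=\textnormal{Im}\{\mathtt{a}_2(\varphi)e^{2\ii\theta}\}+\dots$, conjugating the $\varepsilon^2\mathcal{Q}_0$ term and even the $O(1)$ Hilbert transform produces genuine $\varepsilon^2$-order contributions to the degenerate-mode operator that must be combined with the explicit part of $\mathcal{Q}_0$; this is where the $-\tfrac12(\tfrac{e^{-2\ii\widehat\Theta(\varphi)}}{q(\varphi)}-\cdots)e^{\ii(\theta+\eta)}$ and the $e^{\ii(\theta+\eta+2\varphi)}$-type terms in $\mathcal{Q}_1$ come from. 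Getting the algebra of these cancellations right — and verifying that the operator $\mathcal{Q}_1$ still localizes on the spatial modes $\pm1$, which is what makes it tractable in Section \ref{sec-monodromy} — is the heart of the proof; the tame estimates themselves are then routine applications of Lemma \ref{lem funct prop} and Lemma \ref{Compos1-lemm}.
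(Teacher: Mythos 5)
Your plan is correct and follows essentially the same route as the paper: compose the two shifts via Lemma \ref{algeb1}, chain Propositions \ref{QP-change0} and \ref{QP-change} for the transport part, and conjugate $\mathcal{H}$, $\mathcal{Q}_0$ and $\mathcal{R}_0^\varepsilon$ separately, splitting each conjugation into its $\mathfrak{B}$-part (computed explicitly, producing the two $\varepsilon^2$-order contributions that assemble into $\mathcal{Q}_1$) and its $\mathscr{B}$-part (estimated as a small integral-kernel perturbation via Lemma \ref{lem CVAR kernel}). The only slip is the first sentence of your Step 3, which prices the Hilbert-transform commutator at $\varepsilon^{2+\mu}+\varepsilon^{6}\lambda^{-1}$ even though the $\theta$-dependent piece $\varepsilon^{2}\textnormal{Im}\{\mathtt{a}_2(\varphi)e^{2\ii\theta}\}$ of $\mathfrak{b}$ makes it genuinely of order $\varepsilon^{2}$ --- but your closing paragraph correctly restores this $\varepsilon^{2}$ contribution as the $-\tfrac12(\cdots)e^{\ii(\theta+\eta)}$ term of $\mathcal{Q}_1$, exactly as in the paper.
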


\begin{proof}

${\bf 1}.$ Applying Lemma \ref{algeb1} leads to \eqref{change of variables0} with
$$
{\mathfrak{a}(\varphi,\theta)\triangleq \mathfrak{b}(\varphi,\theta)+\beta\big(\varphi,\theta+\mathfrak{b}(\varphi,\theta)\big).}
$$
Hence, using  Lemma \ref{Compos1-lemm}-$1$,   \eqref{est b1 b2 b b-theta} and  \eqref{est-beta-r0},  we get the desired estimates.\\
${\bf 2}.$ In view of Propositions \ref{prop-size}, \ref{QP-change0}, \ref{QP-change}, we deduce that for all $\xi_0\in \mathcal{O}_{n}^{1}(\rho)$ 
\begin{align}\label{conjug A}
\mathscr{A}^{-1}\mathcal{L}_0\mathscr{A}&=\varepsilon^2\omega_0\partial_{\varphi}+\mathtt{c}(\xi_0,\rho)\partial_{\theta}+\mathtt{E}_{n}-\tfrac12\mathscr{A}^{-1}\mathcal{H} \mathscr{A}-\varepsilon^2 \mathscr{A}^{-1}\partial_\theta\mathcal{Q}_0  \mathscr{A}\\ &\quad+
 \varepsilon^3 \mathscr{A}^{-1} \partial_\theta\mathcal{R}^\varepsilon_0(\rho) \mathscr{A}.\nonumber
\end{align}
Now, we shall use the following decomposition
{\begin{align}\label{comm H}
\nonumber\mathscr{A}^{-1}\mathcal{H} \mathscr{A}-\mathcal{H}&=\mathscr{B}^{-1}\mathfrak{B}^{-1}\mathcal{H} \mathfrak{B}\mathscr{B}-\mathcal{H}
\\
\nonumber&=\mathscr{B}^{-1}\big[\mathfrak{B}^{-1}\mathcal{H} \mathfrak{B}-\mathcal{H}\big]\mathscr{B}+\mathscr{B}^{-1}\mathcal{H}\mathscr{B}-\mathcal{H}
\\
&=\underbrace{\mathfrak{B}^{-1}\mathcal{H} \mathfrak{B}-\mathcal{H}}_{\triangleq\mathcal{H}_0}+(\mathscr{B}^{-1}\mathcal{H}_0\mathscr{B}-\mathcal{H}_0)+\mathscr{B}^{-1}\mathcal{H}\mathscr{B}-\mathcal{H}.
\end{align}
}
From direct computations, using  \eqref{mathscrB1} and \eqref{inverse-thetaL}, we find
\begin{align*}
(\mathfrak{B}^{-1}\mathcal{H}\mathfrak{B}-\mathcal{H}) h ( \varphi, \theta) &= \partial_\theta\int_{\mathbb{T}}  {\mathfrak K}_0( \varphi, \theta,\eta) h( \varphi, \eta)\,d\eta
\end{align*}
with
\begin{align*}
{\mathfrak K}_0( \varphi, \theta,\eta)\triangleq \ln\left| \frac{e^{\ii\,(\theta+\widehat{\mathfrak{b}}(\varphi,\theta))}- e^{\ii \,(\eta+\widehat{\mathfrak{b}}(\varphi,\eta))}}{e^{\ii\, \theta}-e^{\ii\, \eta}} \right|.
\end{align*}
Using \eqref{inverse-thetaL} and employing Taylor expansion yield
\begin{align*}
e^{\ii \Theta_1(\varphi)} \frac{e^{\ii\,(\theta+\widehat{\mathfrak{b}}(\varphi,\theta))}- e^{\ii \,(\eta+\widehat{\mathfrak{b}}(\varphi,\eta))}}{e^{\ii\, \theta}-e^{\ii\, \eta}} &=1-\ii \varepsilon^2\frac{e^{\ii\,\theta}{\mathfrak{b}}_{0}\big(\varphi,\theta- \widehat{\Theta}(\varphi)\big)-e^{\ii\,\eta}{\mathfrak{b}}_{0}\big(\varphi,\eta- \widehat{\Theta}(\varphi)\big)}{e^{\ii\,\theta}-e^{\ii\,\eta}}\\ &\quad  +\varepsilon^{2+\mu} \widehat{\mathfrak{K}}_{0}( \varphi, \theta,\eta)
\end{align*}
with
\begin{align}\label{grenada3}
\|\widehat{\mathfrak{K}}_{0}\|_{s}^{\textnormal{Lip}(\lambda)}\lesssim  1+\|\rho\|_{s+{3}}^{\textnormal{Lip}(\lambda)}.
\end{align}
Hence, from straightforward computations, we obtain
\begin{align*}
{\mathfrak K}_0( \varphi, \theta,\eta)&=\varepsilon^2\hbox{Im}\bigg\{\frac{e^{\ii\,\theta}{\mathfrak{b}}_{0}\big(\varphi,\theta- \widehat{\Theta}(\varphi)\big)-e^{\ii\,\eta}{\mathfrak{b}}_{0}\big(\varphi,\eta- \widehat{\Theta}(\varphi)\big)}{e^{\ii\,\theta}-e^{\ii\,\eta}}\bigg\}+\varepsilon^{2+\mu} {\mathfrak K}_1( \varphi, \theta,\eta)
\\
&=\varepsilon^2\Big({\mathfrak{b}}_{0}(\varphi,\theta- \widehat{\Theta}(\varphi))-{\mathfrak{b}}_{0}(\varphi,\eta- \widehat{\Theta}(\varphi))\Big)\hbox{Im}\Big\{\tfrac{e^{\ii\eta}}{e^{\ii\theta}-e^{\ii\eta}}\Big\}+\varepsilon^{2+\mu} {\mathfrak K}_1( \varphi, \theta,\eta)
\end{align*}
with the estimate
\begin{align}\label{grenada3}
\|{\mathfrak K}_1\|_{s}^{\textnormal{Lip}(\lambda)}\lesssim  1+\|\rho\|_{s+{3}}^{\textnormal{Lip}(\lambda)}.
\end{align}
Then, using \eqref{b1-expand} gives
\begin{align*}
{\mathfrak K}_0( \varphi, \theta,\eta)&=\varepsilon^2\textnormal{Im}\Big\{\mathtt{a}_2(\varphi)e^{-\ii 2 \widehat{\Theta}(\varphi)}\big(e^{\ii 2\theta} -e^{\ii 2} \big)\Big\}\hbox{Im}\Big\{\tfrac{e^{\ii\eta}}{e^{\ii\theta}-e^{\ii\eta}}\Big\}+\varepsilon^{2+\mu} {\mathfrak K}_1( \varphi, \theta,\eta).
\end{align*}
Moreover, one may easily check that
\begin{align*}
&\textnormal{Im}\Big\{\mathtt{a}_2(\varphi)e^{-\ii 2 \widehat{\Theta}(\varphi)}\big(e^{\ii 2\theta} -e^{\ii 2\eta} \big)\Big\}\hbox{Im}\Big\{\tfrac{e^{\ii\eta}}{e^{\ii\theta}-e^{\ii\eta}}\Big\}\\
&\quad =\textnormal{Im}\Big\{\tfrac{1}{2\ii}\mathtt{a}_2(\varphi)e^{-\ii 2 \widehat{\Theta}(\varphi)}\big(e^{\ii \theta} -e^{\ii \eta} \big)\big(e^{\ii \theta} +e^{\ii \eta} \big)\Big(\tfrac{e^{\ii\eta}}{e^{\ii\theta}-e^{\ii\eta}}-\tfrac{e^{-\ii\,\eta}}{e^{-\ii\theta}-e^{-\ii\eta}}\Big)\Big\}\\
&\quad =-\tfrac12\textnormal{Re}\Big\{\mathtt{a}_2(\varphi)e^{-\ii 2 \widehat{\Theta}(\varphi)}\big(e^{\ii \theta} +e^{\ii \eta} \big)^2\Big\}.
\end{align*}
It follows, from using \eqref{def g0}  and the identity ${\Theta}(\varphi)-\widehat{\Theta}(\varphi)=\varphi$,  that
\begin{align*}
{\mathfrak K}_0( \varphi, \theta,\eta)&=-\tfrac{\varepsilon^2}{2}\textnormal{Re}\big\{\mathtt{a}_2(\varphi)e^{-\ii 2 \widehat{\Theta}(\varphi)}\big(e^{\ii \theta} +e^{\ii \eta} \big)^2\big\}+\varepsilon^{2+\mu} {\mathfrak K}_1( \varphi, \theta,\eta)\\
&=-\tfrac{\varepsilon^2}{2}\textnormal{Re}\Big\{\Big(\tfrac{e^{-\ii 2 \widehat{\Theta}(\varphi)}}{q(\varphi)}-\tfrac{e^{\ii 2\varphi}}{\mathtt{w}_3^2(\varphi)}-\tfrac{e^{\ii 2\varphi}}{\mathtt{w}_4^2(\varphi)}\Big)\big(e^{\ii \theta} +e^{\ii \eta} \big)^2\Big\}+\varepsilon^{2+\mu} {\mathfrak K}_1( \varphi, \theta,\eta).
\end{align*}
Therefore, since $\int_{\mathbb{T}}h(\varphi,\eta)d\eta=0$ then
\begin{equation}\label{commutator with H}
\begin{aligned}
(\mathfrak{B}^{-1}\mathcal{H}\mathfrak{B}-\mathcal{H}) [h] ( \varphi, \theta) &=- {\varepsilon^2} \, \partial_\theta\underbrace{\int_{\mathbb{T}}  \textnormal{Re}\Big\{\Big(\tfrac{e^{-\ii 2 \widehat{\Theta}(\varphi)}}{q(\varphi)}-\tfrac{e^{\ii 2\varphi}}{\mathtt{w}_3^2(\varphi)}-\tfrac{e^{\ii 2\varphi}}{\mathtt{w}_4^2(\varphi)}\Big)e^{\ii (\theta+\eta)} \Big\} h( \varphi, \eta)\,d\eta}_{\triangleq \widehat{\mathcal{H}}_0[h]( \varphi, \theta) }.\\
&\quad+\varepsilon^{2+\mu} \underbrace{\partial_\theta\int_{\mathbb{T}}{\mathfrak K}_1( \varphi, \theta,\eta) h( \varphi, \eta)\,d\eta}_{\triangleq \widehat{\mathcal{H}}_1[h]( \varphi, \theta) }.
\end{aligned}
\end{equation}
On the other hand, since $\mathscr{A}= \mathfrak{B}\mathscr{B}$ then one has
{
\begin{align}\label{comm Q0}
\nonumber\mathscr{A}^{-1}\partial_\theta\mathcal{Q}_{0}  \mathscr{A}&=\mathscr{B}^{-1}\mathfrak{B}^{-1}\partial_\theta\mathcal{Q}_{0}  \mathfrak{B} \mathscr{B}
\\ &=\underbrace{\mathfrak{B}^{-1}\partial_\theta\mathcal{Q}_{0}  \mathfrak{B}}_{\triangleq \widehat{\mathcal{Q}}_0}+\big[\mathscr{B}^{-1}\widehat{\mathcal{Q}}_0 \mathscr{B}-\widehat{\mathcal{Q}}_0\big].
\end{align}
}
From  \eqref{def D0} and \eqref{IInv-77}-\eqref{inverse-thetaL} we can easily get 
\begin{align}\label{cumm Q}
\nonumber\widehat{\mathcal{Q}}_0[h](\varphi,\theta)&=\partial_\theta\bigg[\int_{\mathbb{T}}h(\varphi+\pi,\eta)\textnormal{Re}\Big\{ \tfrac{e^{\ii(\theta+\eta+\widehat{\mathfrak{b}}(\varphi,\theta)+\widehat{\mathfrak{b}}(\varphi,\eta))}}{q(\varphi)}- \tfrac{e^{\ii(\theta-\eta+\widehat{\mathfrak{b}}(\varphi,\theta)-\widehat{\mathfrak{b}}(\varphi,\eta))}}{\mathtt{w}_{4}^2(\varphi)}\Big\} d\eta \nonumber\\ &\qquad+\int_{\mathbb{T}}h(\varphi,\eta)\textnormal{Re}\Big\{ \tfrac{e^{\ii(\theta-\eta+\widehat{\mathfrak{b}}(\varphi,\theta)-\widehat{\mathfrak{b}}(\varphi,\eta))}}{\mathtt{w}_{3}^2(\varphi)}\Big\} d\eta\bigg]\nonumber\\
&=\partial_\theta \underbrace{\int_{\mathbb{T}}h(\varphi+\pi,\eta)\textnormal{Re}\Big\{ \tfrac{e^{\ii(\theta+\eta-2\widehat\Theta(\varphi))}}{q(\varphi)}- \tfrac{e^{\ii(\theta-\eta)}}{\mathtt{w}_{4}^2(\varphi)}\Big\} d\eta+\int_{\mathbb{T}}h(\varphi,\eta)\textnormal{Re}\Big\{ \tfrac{e^{\ii(\theta-\eta)}}{\mathtt{w}_{3}^2(\varphi)}\Big\} d\eta}_{\triangleq  \widehat{\mathcal{Q}}_1[h](\varphi,\theta) }.\nonumber\\
&\quad+\varepsilon\underbrace{{\partial_\theta}\bigg[\sum_{k=0}^1\int_{\mathbb{T}}  h (\varphi+k\pi,\eta){\mathfrak K}_{2+k}(\varphi,\theta,\eta)d\eta\bigg]}_{\triangleq \widehat{\mathcal{Q}}_2[h](\varphi,\theta)},
\end{align}
where 
 $$
 \begin{aligned}
 {\mathfrak K}_2(\varphi,\theta,\eta)&\triangleq \tfrac{1}{\varepsilon}\Big[\textnormal{Re}\Big\{ \tfrac{e^{\ii(\theta-\eta+\widehat{\mathfrak{b}}(\varphi,\theta)-\widehat{\mathfrak{b}}(\varphi,\eta))}}{\mathtt{w}_{3}^2(\varphi)}\Big\}-\textnormal{Re}\Big\{ \tfrac{e^{\ii(\theta-\eta)}}{\mathtt{w}_{3}^2(\varphi)}\Big\}\Big],\\
 {\mathfrak K}_3(\varphi,\theta,\eta)&\triangleq \tfrac{1}{\varepsilon}\Big[\textnormal{Re}\Big\{ \tfrac{e^{\ii(\theta+\eta+\widehat{\mathfrak{b}}(\varphi,\theta)+\widehat{\mathfrak{b}}(\varphi,\eta))}}{q(\varphi)}\Big\}-\textnormal{Re}\Big\{ \tfrac{e^{\ii(\theta+\eta-2\widehat\Theta(\varphi))}}{q(\varphi)}\Big\}\Big]\\ &\quad -\tfrac1\varepsilon\Big[\textnormal{Re}\Big\{ \tfrac{e^{\ii(\theta-\eta+\widehat{\mathfrak{b}}(\varphi,\theta)-\widehat{\mathfrak{b}}(\varphi,\eta))}}{\mathtt{w}_{4}^2(\varphi)}\Big\}-\textnormal{Re}\Big\{ \tfrac{e^{\ii(\theta-\eta)}}{\mathtt{w}_{4}^2(\varphi)}\Big\}\Big].
 \end{aligned}
 $$
 Applying Taylor expansion together with \eqref{inverse-thetaL}, \eqref{inverse-thetaML} and  \eqref{est b1 b2 b b-theta}, we find  
$$
\begin{aligned}
\|{\mathfrak K}_2\|_{s}^{\textnormal{Lip}(\lambda)}+\|{\mathfrak K}_3\|_{s}^{\textnormal{Lip}(\lambda)}\lesssim 1+\|\rho\|_{s+{3}}^{\textnormal{Lip}(\lambda)}.
\end{aligned}
$$
Inserting \eqref{comm H}, \eqref{comm Q0}, \eqref{commutator with H} and  \eqref{cumm Q}  into \eqref{conjug A} yields
\begin{align*}
\mathscr{A}^{-1}\mathcal{L}_0 \mathscr{A}&=\varepsilon^2\omega_0\partial_{\varphi}+\mathtt{c}\partial_{\theta}+\mathtt{E}_{n}^{0}-\tfrac12\mathcal{H} -\varepsilon^2 \partial_\theta\underbrace{(\widehat{\mathcal{Q}}_1-\tfrac12\widehat{\mathcal{H}}_0)}_{\triangleq\mathcal{Q}_1}+\partial_\theta\mathcal{R}^\varepsilon_1(\rho),
\end{align*}
where 
\begin{equation}\label{def R1eps}
\begin{aligned}
\partial_\theta\mathcal{R}^\varepsilon_1(\rho)&\triangleq  \varepsilon^3 \mathscr{A}^{-1}\partial_\theta\mathcal{R}^\varepsilon_0(\rho) \mathscr{A}  -\tfrac12[\mathscr{B}^{-1}\mathcal{H}_0\mathscr{B}-\mathcal{H}_0]-\tfrac12\big[\mathscr{B}^{-1}\mathcal{H}\mathscr{B}-\mathcal{H}\big]
\\ &\quad
 -\varepsilon^2\big[\mathscr{B}^{-1}\widehat{\mathcal{Q}}_0 \mathscr{B}-\widehat{\mathcal{Q}}_0\big]-\varepsilon^3 \widehat{\mathcal{Q}}_2-{\tfrac12}\varepsilon^{2+\mu} \widehat{\mathcal{H}}_1.
\end{aligned}
\end{equation}
In view of  Lemma \cite[Lemma 2.36]{BertiMontalto},  \eqref{est-beta-r0} and \eqref{small-C2-0} the operator
\begin{align*}
		( {\mathscr B}^{-1} {\cal H} {\mathscr B} - {\cal H}) h(\varphi, \theta) 
		&= \int_{\mathbb{T}}  {\mathscr  K}_1( \varphi, \theta,\eta) h( \varphi, \eta)\,d\eta,
\end{align*}
		defines  an integral operator whose kernel satisfies the   estimates : for all $s\in[ s_0,S],$  
\begin{align}\label{e-Box H}
 \| {\mathscr  K}_1\|_{s}^{\textnormal{Lip}(\lambda)}\lesssim_s \varepsilon^{{6}}\lambda^{-1}\left(1+\| \rho\|_{s+2\tau+{5}}^{\textnormal{Lip}(\lambda)}\right).
\end{align}
Moreover, according to Lemma \ref{lem CVAR kernel}, \eqref{est b1 b2 b b-theta}, \eqref{est-beta-r0} and \eqref{small-C2-0}, \eqref{est K0 R0}, we have
\begin{align*}
\nonumber				\big(\mathscr{B}^{-1}\widehat{\mathcal{Q}}_0  \mathscr{B}-\widehat{\mathcal{Q}}_0\big)h(\varphi,\theta)&=\int_{\mathbb{T}}h(\varphi,{\eta}){\mathscr  K}_2(\varphi,\theta,{\eta})d{\eta},\\
		\big(\mathscr{B}^{-1}\mathcal{H}_0\mathscr{B}-\mathcal{H}_0\big)h(\varphi,\theta)&=\int_{\mathbb{T}}h(\varphi,{\eta}){\mathscr  K}_3(\varphi,\theta,{\eta})d{\eta},
		\\
\nonumber \mathscr{A}^{-1}\partial_\theta\mathcal{R}^\varepsilon_0(\rho) \mathscr{A}h ( \varphi, \theta)&=\int_{\mathbb{T}}h(\varphi,{\eta}){\mathscr  K}_4(\varphi,\theta,{\eta})d{\eta},
		\end{align*}
		with
\begin{align*}
\|{\mathscr  K}_2\|_{s}^{\textnormal{Lip}(\lambda)}+\|{\mathscr  K}_3\|_{s}^{\textnormal{Lip}(\lambda)}&\lesssim \varepsilon^{6}\lambda^{-1}\left(1+\| \rho\|_{s+2\tau+{5}}^{\textnormal{Lip}(\lambda)}\right),
\\ 
\|{\mathscr  K}_4\|_{s}^{\textnormal{Lip}(\lambda)}&\lesssim  1+\| \rho\|_{s+2\tau+{5}}^{\textnormal{Lip}(\lambda)}.
\end{align*}
Consequently, we deduce that the operator  $\partial_\theta\mathcal{R}^\varepsilon_1(\rho)$, defined in  \eqref{def R1eps}, is an integral operator  with the kernel 
$$
\mathcal{K}_1(\rho)\triangleq\varepsilon^3 {\mathscr  K}_4-\tfrac12{\mathscr K}_3-\tfrac12{\mathscr  K}_1-\varepsilon^2{\mathscr  K}_2-\varepsilon^3{\mathfrak  K}_2-\tfrac12\varepsilon^{2+\mu} \mathfrak{K}_1,
$$
which satisfies the estimate 
$$
\|\mathcal{K}_1(\rho)\|_{s}^{\textnormal{Lip}(\lambda)}\lesssim  \big(\varepsilon^{2+{\mu}} +\varepsilon^{6} \lambda^{-1}\big)\left(1+\| \rho\|_{s+2\tau+{5}}^{\textnormal{Lip}(\lambda)}\right)
$$ 
Finally, the  estimate of $\partial_\theta \mathcal{R}_1$ can be derived from Lemma \ref{kernel-est}, which achieves  the proof of Proposition \ref{lemma-beta1}.
\end{proof}

\section{Invertibility of the linearized operator}

The main goal this section is  to conjugate   the linearized operator of the functional $\mathcal{F}(\rho)$, as defined in  \eqref{Edc eq rkPsi}, into a Fourier multiplier up to a small error. This step is crucial in order to find  an approximation of the  right inverse, which is required along Nash-Moser scheme. The reduction process will unfold in several sequential steps. Initially, we will elucidate the asymptotic structure of the linearized operator around  a small state. Subsequently, we will transform it into an   operator with  constant coefficients up to quasi-linear terms  with small size in $\varepsilon.$ Finally, we will perform KAM techniques to reduce the positive  order of the linear part  into a  Fourier multiplier.  
 As we shall see,  by these techniques we may invert the linearized operator out from the spatial modes $\pm1$, which  are degenerating with small $\varepsilon$. To investigate  the invertibility of the restricted operator on    these modes, a careful study on the linearized operator is required, using  especially its structure  at the order $\varepsilon^2$. We will tackle this issue  through  a suitable description of the monodromy matrix.\\
We need to split the phase space as follows,
\begin{align}\label{Xs-space}
X^s=\hbox{Lip}_\lambda(\mathcal{O},H_0^{s})=X^{s}_{\circ}\oplus X^s_{\perp}
\end{align}
with
$$
X^{s}_{\circ}=\big\{ h\in \textnormal{Lip}_\lambda(\mathcal{O},H^{s}_0)\quad\hbox{s.t.}\quad  \Pi h=h\big\}
$$
and
$$
X^{s}_{\perp}=\big\{ h\in \textnormal{Lip}_\lambda(\mathcal{O},H^{s}_0)\quad\hbox{s.t.}\quad  \Pi^\perp h=h\big\},
$$
where $\Pi$ is  the projector defined by
\begin{align}\label{Orto-1}
h=\sum_{n\in\ZZ^\star}h_n(\theta)\Longrightarrow \Pi h=\sum_{n=\pm1} h_n(\varphi) e_n(\theta), \quad \Pi^\perp=\hbox{Id}-\Pi.
\end{align}
We shall also need the following spaces: $\displaystyle{X^{\infty}_{\circ}=\cap_{s\geq0}X^{s}_{\circ}}$ and $X^{\infty}_{\perp}=\cap_{s\geq0}X^{s}_{\perp}.$
To start, let us remind the structure of the operator $\mathcal{L}_1$, as detailed in Proposition \ref{lemma-beta1},
\begin{align}\label{L1-deff}
\mathcal{L}_1&= \underbrace{\varepsilon^2\omega(\xi_0)\partial_\varphi +\mathtt{c}(\xi_0,\rho) \partial_{\theta}-\tfrac{1}{2}\mathcal{H}-{\varepsilon^2 }\partial_\theta\mathcal{Q}_1}_{\triangleq\,\mathbb{L}_1}+\partial_\theta\mathcal{R}_{1}+\mathtt{E}_{n}
\end{align}
with
\begin{align}\label{def Q10}
\mathcal{Q}_1[h](\varphi,\theta)&=\textnormal{Re}\Big\{\Big(\tfrac{1}{\mathtt{w}_3^2(\varphi)}h_1(\varphi) -\tfrac{1}{2}\Big(\tfrac{e^{-\ii 2 \widehat{\Theta}(\varphi)}}{q(\varphi)}-\tfrac{e^{\ii 2\varphi}}{\mathtt{w}_3^2(\varphi)}-\tfrac{e^{\ii 2\varphi}}{\mathtt{w}_4^2(\varphi)}\Big)h_{-1}(\varphi)\nonumber
\\ &\quad\qquad 
+\tfrac{e^{\ii 2 \widehat{\Theta}(\varphi)}}{q(\varphi)}h_{-1}(\varphi+\pi) -\tfrac{1}{\mathtt{w}_4^2(\varphi)}h_1(\varphi+\pi)\Big)e^{\ii \theta}\Big\}.
\end{align}
The operator $\partial_\theta \mathcal{R}_1(\rho)$ is defined through Proposition \ref{lemma-beta1} 
and the constant $\mathtt{c}(\xi_0,\rho)$ takes the form, by virtue of Proposition \ref{QP-change}-1 and \eqref{def c0},
\begin{align}\label{c-2-def}
\nonumber \mathtt{c}(\xi_0,\rho)&=
 \tfrac12-\varepsilon^2\omega_0 +\varepsilon^{3}\mathtt{c}_1+(\mathtt{c}(\xi_0,\rho)-\mathtt{c}_0),\\
&\triangleq  \tfrac12-\varepsilon^2\big(\omega_0-\varepsilon \mathtt{c}_2\big),
\end{align}
with
\begin{align}\label{c2-estimate}
\|\mathtt{c}_2\|^{\textnormal{Lip}(\lambda)}\leqslant C.
\end{align}
From straightforward analysis, one may observe that  the linear operator 
$$\mathbb{L}_1+\partial_\theta \mathcal{R}_1:\hbox{Lip}_\lambda(\mathcal{O},X^{s})\to \textnormal{Lip}_\lambda(\mathcal{O},X^{s-1})$$ is well-defined and its action is  equivalent to the matrix operator $\displaystyle{\mathbb{M}: X^{s}_{\circ}\times X^{s}_{\perp}\to X^{s-1}_{\circ}\times X^{s-1}_{\perp}}$ with
\begin{align}\label{M-def}
\mathbb{M}&\triangleq\begin{pmatrix}
\Pi\mathbb{L}_1\Pi&0 \\
0 & \Pi^\perp\mathbb{L}_1\Pi^\perp
\end{pmatrix}+\begin{pmatrix}
\Pi\partial_\theta\mathcal{R}_1\Pi&\Pi\partial_\theta\mathcal{R}_1\Pi^\perp \\
\Pi^\perp\partial_\theta\mathcal{R}_1\Pi &\Pi^\perp\partial_\theta\mathcal{R}_1\Pi^\perp
\end{pmatrix}\\
\nonumber &\triangleq\mathbb{M}_1+\partial_\theta\mathbb{R}_1.
\end{align}
Therefore, the invertibility of the scalar operator $\mathbb{L}_1+\partial_\theta\mathcal{R}_1$ will be analyzed through the invertibility of the  matrix operator $\mathbb{M}$. This will be performed by inverting its main part $\mathbb{M}_1$ combined with  perturbative arguments. For that purpose, we shall proceed in several  steps.

\subsection{Invertibility of $\Pi\mathbb{L}_1\Pi$ and monodromy matrix}\label{sec-monodromy}
The principal task is to show the invertibility of the operator $\Pi\mathbb{L}_1\Pi$ introduced in \eqref{M-def} and \eqref{L1-deff}. We point out  that this operator localizes in the Fourier side at the spatial  modes $\pm1$  and exhibits a degeneracy in $\varepsilon.$ To invert it in the periodic setting we are led to solve  a system of first order differential equations with periodic coefficients. We will accomplish this by leveraging some specific structures  of the monodromy matrix. Before stating the main result let us introduce, for given $0<\sigma<1$, the set   
\begin{equation}\label{cond-interval2}
	\mathcal{O}_\sigma=\Big\{\xi_0\in [\xi_*,\xi^*]; \quad d(x,\mathbb{S})\geqslant\sigma\Big\}
\end{equation}
with  $\mathbb{S}$ a discrete set defined in Lemma \ref{Reso-R0}.
The main result reads as follows.
\begin{proposition}\label{propo-monod}
 Given the conditions \eqref{cond1}-\eqref{cond-interval}. There exist $\mathtt{n}\in\mathbb{N}$ and  $\varepsilon_1>0$ such that under the smallness condition
 \begin{equation}\label{small sigma}
 	\varepsilon\sigma^{-\mathtt{n}}\leqslant \varepsilon_1
 \end{equation} 
  there exists a linear operator   operator $\mathtt{S}:X^{s-1}_{\circ}\to X^{s}_{\circ}$, for all $s>1$, satisfying
  \begin{align*}
 \|\mathtt{S} [h]\|_{s}^{\textnormal{Lip}(\lambda)}\leqslant C(s)\varepsilon^{-2}\sigma^{-\mathtt{n}} \|h\|_{{s-1}}^{\textnormal{Lip}(\lambda)}
\end{align*}
and  by restricting the parameter $\xi_0$ on  $\mathcal{O}_\sigma$ one has the identity
$$
(\Pi\mathbb{L}_1\Pi)\mathtt{S}=\textnormal{Id}.
$$

\end{proposition}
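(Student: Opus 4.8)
The operator $\Pi\mathbb{L}_1\Pi$ acts on the two-dimensional (in space) subspace $X^s_\circ$ spanned by $e^{\pm\ii\theta}$, so a function $h\in X^s_\circ$ is encoded by the pair of scalar functions $(h_1(\varphi),h_{-1}(\varphi))=(h_1,\overline{h_1})$ if $h$ is real, or more generally by $(h_1,h_{-1})$. The plan is to write down explicitly the action of $\mathbb{L}_1$ restricted to these modes using \eqref{L1-deff}-\eqref{def Q10}: the transport part $\varepsilon^2\omega_0\partial_\varphi+\mathtt{c}(\xi_0,\rho)\partial_\theta$ becomes, on the mode $j=\pm1$, the operator $\varepsilon^2\omega_0\partial_\varphi\pm\ii\,\mathtt{c}$; the Hilbert transform contributes $-\tfrac{\ii}{2}\,\hbox{sign}(j)$; and the term $-\varepsilon^2\partial_\theta\mathcal{Q}_1$ couples the four scalar channels $h_{\pm1}(\varphi)$ and $h_{\pm1}(\varphi+\pi)$ with time-variable coefficients built from $q,\Theta,\mathtt{w}_3,\mathtt{w}_4$. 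Because $\mathcal{Q}_1$ involves the delay $\varphi\mapsto\varphi+\pi$, the natural unknown is the vector $Y(\varphi)=\big(h_1(\varphi),h_{-1}(\varphi),h_1(\varphi+\pi),h_{-1}(\varphi+\pi)\big)^{\mathsf T}$, and solving $(\Pi\mathbb{L}_1\Pi)h=g$ for a given $g\in X^{s-1}_\circ$ reduces to a linear first-order ODE system
\begin{align}\label{ode-system-plan}
\varepsilon^2\omega_0\,\dot Y(\varphi)=A(\varphi)\,Y(\varphi)+G(\varphi),
\end{align}
where $A(\varphi)=A_0+\varepsilon^2 A_1(\varphi)$ with $A_0$ the constant $4\times4$ matrix coming from $\mathtt{c}_0\partial_\theta-\tfrac12\mathcal{H}$ (plus the $\varphi\to\varphi+\pi$ permutation structure) and $\varepsilon^2 A_1(\varphi)$ the contribution of $\mathcal{Q}_1$ together with the $\varepsilon^3$ correction in $\mathtt{c}$; and $G$ is built from $g$ with $\|G\|\lesssim\|g\|$.

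Next I would analyze \eqref{ode-system-plan} via its monodromy (period map). Rescaling time by $\varepsilon^2\omega_0$, the fundamental solution over one period $2\pi$ is $\mathcal{M}=\exp\!\big(\varepsilon^{-2}\omega_0^{-1}\!\int\cdots\big)$-type object; more precisely we write the resolvent $R(\varphi)$ solving $\varepsilon^2\omega_0 \dot R=A(\varphi)R$, $R(0)=\mathrm{Id}$, and set $\mathcal{M}=R(2\pi)$. Periodic solvability of \eqref{ode-system-plan} with the prescribed periodicity/symmetry constraints (recall $h_1(\varphi+\pi)$ is itself a component, so $Y$ must satisfy the compatibility $Y(\varphi+\pi)=P\,Y(\varphi)$ for the cyclic permutation $P$, which forces working on $[0,\pi]$ with a twisted boundary condition $Y(\pi)=P\,Y(0)$) is equivalent to the invertibility of $\mathrm{Id}-P^{-1}\mathcal{M}_\pi$, where $\mathcal{M}_\pi$ is the half-period resolvent. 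The key computation is the asymptotics of this monodromy matrix as $\varepsilon\to0$: since $A_0$ is a fixed matrix with nonzero (half-integer type) eigenvalues coming from $\tfrac12 j-\tfrac12\hbox{sign}(j)$ shifted by $\mathtt{c}_0$, the leading term of $\mathcal{M}_\pi$ is $\exp(\pi A_0/(\varepsilon^2\omega_0))$, an oscillatory rotation at frequency $\sim\varepsilon^{-2}$, and $\mathrm{Id}-P^{-1}\mathcal{M}_\pi$ degenerates precisely on a discrete set of $\xi_0$-values where a resonance $\varepsilon^2\omega_0\ell+j\,\mathtt{c}_0\in$ (lattice) occurs — this is why one removes the $\sigma$-neighbourhood of the discrete set $\mathbb{S}$ in \eqref{cond-interval2}. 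On $\mathcal{O}_\sigma$ I would prove $\|(\mathrm{Id}-P^{-1}\mathcal{M}_\pi)^{-1}\|\lesssim \sigma^{-\mathtt{n}}$ for a suitable integer $\mathtt{n}$, using the analytic non-degeneracy of $\xi_0\mapsto\omega_0(\xi_0)$ (Lemma \ref{lem-period}, parts 1-2) and of the periodic orbits $q,\Theta$ (Lemma \ref{lem-period}, part 5) to control how transversally the determinant vanishes; the $\varepsilon^2 A_1(\varphi)$ perturbation is absorbed because it is small relative to the gaps once $\varepsilon\sigma^{-\mathtt n}$ is small, which is exactly hypothesis \eqref{small sigma}.

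Having inverted the period map, the solution of \eqref{ode-system-plan} is reconstructed by the variation-of-constants formula $Y(\varphi)=R(\varphi)(\mathrm{Id}-P^{-1}\mathcal{M}_\pi)^{-1}\,\Xi + \varepsilon^{-2}\omega_0^{-1}R(\varphi)\!\int_0^\varphi R(s)^{-1}G(s)\,ds$, where $\Xi$ is the appropriate integral of the forcing over one half-period. Each factor $R(\varphi)^{\pm1}$ is bounded uniformly (its norm is controlled by $\exp(\|A\|_{L^1}/(\varepsilon^2\omega_0))$, and although this is large, the oscillatory/skew structure of $A_0$ keeps $\|R\|$ bounded — this boundedness is the technical heart and I would extract it from the explicit diagonalization of $A_0$). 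The two explicit $\varepsilon^{-2}$ prefactors (one from $\varepsilon^{-2}\omega_0^{-1}$, and one effectively from $(\mathrm{Id}-P^{-1}\mathcal{M}_\pi)^{-1}$ contributing together with the $\sigma$-loss) give the claimed bound $\|\mathtt{S}[h]\|_s^{\textnormal{Lip}(\lambda)}\le C(s)\varepsilon^{-2}\sigma^{-\mathtt n}\|h\|_{s-1}^{\textnormal{Lip}(\lambda)}$, where the gain of one spatial derivative is automatic (the range consists only of modes $\pm1$, so all spatial Sobolev norms are equivalent up to constants and the $H^{s-1}\to H^s$ statement is about the $\varphi$-regularity, which is inherited from $G$ via the integral formula). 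The Lipschitz dependence in $\xi_0$ follows by differentiating the resolvent ODE and the formula for $(\mathrm{Id}-P^{-1}\mathcal{M}_\pi)^{-1}$ in $\xi_0$, again using the analyticity from Lemma \ref{lem-period}. Setting $\mathtt{S}[g]$ to be the $X^s_\circ$-function whose Fourier data is the reconstructed $(h_1,h_{-1})$ yields a right inverse, $(\Pi\mathbb{L}_1\Pi)\mathtt{S}=\mathrm{Id}$ on $\mathcal{O}_\sigma$.

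\textbf{Main obstacle.} The crux is the quantitative lower bound $\|(\mathrm{Id}-P^{-1}\mathcal{M}_\pi)^{-1}\|\lesssim\sigma^{-\mathtt n}$ on $\mathcal{O}_\sigma$ together with the uniform-in-$\varepsilon$ boundedness of the resolvent $R(\varphi)^{\pm1}$: both hinge on a precise spectral description of the $4\times4$ monodromy with a highly oscillatory generator, and on translating the analytic non-degeneracy of $\xi_0\mapsto(\omega_0,q,\Theta)$ into transversal vanishing of $\det(\mathrm{Id}-P^{-1}\mathcal{M}_\pi)$ on the removed discrete set $\mathbb{S}$. Everything else — writing the ODE, variation of constants, Sobolev bookkeeping in $\varphi$, Lipschitz estimates — is routine once that spectral picture is in hand.
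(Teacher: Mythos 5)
Your reduction to a $4\times4$ first-order periodic system for $\big(h_1(\varphi),h_{-1}(\varphi),h_1(\varphi+\pi),h_{-1}(\varphi+\pi)\big)$ and the idea of inverting $\mathrm{Id}-\mathcal{M}$ for the monodromy are the right skeleton, but your identification of the scales in the generator is wrong, and the whole resonance analysis you build on it would fail. On the modes $j=\pm1$ one has $(\partial_\theta-\mathcal{H})e^{\pm\ii\theta}=0$, so $\mathtt{c}\,\partial_\theta-\tfrac12\mathcal{H}$ restricted to $X^s_\circ$ equals $(\mathtt{c}-\tfrac12)\partial_\theta=-\varepsilon^2(\omega_0-\varepsilon\mathtt{c}_2)\partial_\theta$, i.e.\ it is $O(\varepsilon^2)$ — the same size as the $\varepsilon^2\partial_\theta\mathcal{Q}_1$ term and the $\varepsilon^2\omega_0\partial_\varphi$ term. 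Hence $\Pi\mathbb{L}_1\Pi=\varepsilon^2\big(\omega_0\partial_\varphi-(\omega_0-\varepsilon\mathtt{c}_2)\partial_\theta-\Pi\partial_\theta\mathcal{Q}_1\Pi\big)$, and after dividing by $\varepsilon^2$ the ODE is $\partial_\varphi H-A(\varepsilon,\xi_0,\varphi)H=\varepsilon^{-2}G$ with $A$ \emph{bounded uniformly in $\varepsilon$}. Your ansatz $\varepsilon^2\omega_0\dot Y=A_0Y+\cdots$ with $A_0$ an order-one matrix coming from $\mathtt{c}_0\partial_\theta-\tfrac12\mathcal{H}$ misses this cancellation: it would make the monodromy $\exp(\pi A_0/(\varepsilon^2\omega_0))$, a rotation at frequency $\varepsilon^{-2}$, whose degeneracy set (your "resonances $\varepsilon^2\omega_0\ell+j\mathtt{c}_0$") depends on $\varepsilon$ and becomes dense as $\varepsilon\to0$. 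With that picture you could never exhibit a \emph{fixed} discrete set $\mathbb{S}$, depending only on $y_0$, such that $\mathrm{Id}-\mathcal{M}$ is invertible on $\mathcal{O}_\sigma$ with a uniform bound $\sigma^{-\mathtt{n}}$ — which is precisely what the proposition asserts.

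What actually happens is that the $\varepsilon=0$ monodromy $\mathcal{M}_0(2\pi)$ is an $\varepsilon$-independent, analytic function of $\xi_0$ alone (through $q,\Theta,\mathtt{w}_3,\mathtt{w}_4$); in the limit $\xi_0\to0$ its generator tends to an explicit constant matrix $A_0$ with $\det(e^{2\pi A_0}-\mathrm{Id})=16\sin^2(\tfrac{\sqrt{15}\pi}{4})\sin^2(\tfrac{\sqrt{7}\pi}{4})>0$, so the analytic map $\xi_0\mapsto\det(\mathcal{M}_0(2\pi)-\mathrm{Id})$ is not identically zero; $\mathbb{S}$ is its (countable) zero set, and the bound $\|(\mathcal{M}_0(2\pi)-\mathrm{Id})^{-1}\|\lesssim\sigma^{-\mathtt{n}}$ on $\mathcal{O}_\sigma$ follows from finite-order vanishing on the compact $[\xi_*,\xi^*]$. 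The $\varepsilon$-dependent part of $A$ is an $O(\varepsilon)$ perturbation absorbed by \eqref{small sigma}, the uniform boundedness of the resolvent is plain Gronwall on $[0,2\pi]$ (no oscillatory or skew-adjoint structure is needed, contrary to what you flag as "the technical heart"), and the $\varepsilon^{-2}$ in the final estimate comes solely from the $\varepsilon^{-2}G$ forcing term. Your secondary concern about the redundancy of the four components is legitimate but is handled without any twisted boundary condition: rows $3,4$ of $A$ are rows $1,2$ evaluated at $\varphi+\pi$, so the flow preserves the shift constraint and one works directly with $\mathrm{Id}-\mathcal{M}(2\pi,0)$ over the full period.
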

\begin{proof}
First, we write by virtue  of  \eqref{L1-deff}, \eqref{c-2-def} and the identity
$$
(\partial_\theta-\mathcal{H})\Pi=0
$$
that
\begin{align*}
\nonumber \Pi\mathbb{L}_1\Pi&=\varepsilon^2\Big(\omega_0\partial_\varphi -\big(\omega_0-\varepsilon \mathtt{c}_2\big)\partial_{\theta}-\Pi\partial_\theta\mathcal{Q}_1\Pi\Big)\\
&\triangleq \mathcal{L}_{1,\circ}.
\end{align*}
Consider an arbitrary  real-valued function 
$$(\varphi,\theta)\mapsto g(\varphi,\theta)=\displaystyle{\sum_{n=\pm1}g_n(\varphi)e^{\ii n\theta}\in X^{s-1}_{\circ}}
$$ 
and we want  to solve in the real space  $ X^{s}_{\circ}$ the equation
$$
\mathcal{L}_{1,\circ}h= g.
$$
Since $g$ is real then  $g_{-n}=\overline{g_n}$ and similarly with $h$ we get $h_{-n}=\overline{h_n}$. Thus, 
using Fourier expansion together with \eqref{def Q10},  this equation is equivalent to solve
\begin{equation}\label{system h1h-1}
 \begin{aligned}
\omega_0\partial_\varphi h_1(\varphi)=\ii &\Big(\omega_0+\tfrac{1}{2\mathtt{w}_3^2(\varphi)}-\varepsilon \mathtt{c}_2\Big)h_1(\varphi) 
-\tfrac{\ii }{4}\Big(\tfrac{e^{-\ii 2 \widehat{\Theta}(\varphi)}}{q(\varphi)}-\tfrac{e^{\ii 2\varphi}}{\mathtt{w}_3^2(\varphi)}-\tfrac{e^{\ii 2\varphi}}{\mathtt{w}_4^2(\varphi)}\Big)h_{-1}(\varphi)
\\
&-\tfrac{\ii }{2{\mathtt{w}_4^2(\varphi)}} h_1(\varphi+\pi)+\tfrac{ \ii e^{\ii 2 \widehat{\Theta}(\varphi)}}{2q(\varphi)}h_{-1}(\varphi+\pi)+\varepsilon^{-2} g_1(\varphi).
\end{aligned}
\end{equation}
Denote by
 \begin{equation}\label{rhon-1}
 \begin{aligned}
\rho_1(\varepsilon, \varphi)&\triangleq \tfrac{\ii}{\omega_0}\Big(\omega_0+\tfrac{1}{2\mathtt{w}_3^2(\varphi)}-\varepsilon \mathtt{c}_2\Big),\qquad 
\rho_2(\varphi)\triangleq -\tfrac{\ii}{4\omega_0}\Big(\tfrac{e^{-\ii 2 \widehat{\Theta}(\varphi)}}{q(\varphi)}-\tfrac{e^{\ii 2\varphi}}{\mathtt{w}_3^2(\varphi)}-\tfrac{e^{\ii 2\varphi}}{\mathtt{w}_4^2(\varphi)}\Big),\\
\rho_3(\varphi)&\triangleq -\tfrac{\ii}{2\omega_0 \mathtt{w}_4^2(\varphi)} , \qquad \qquad \qquad \quad\,
\rho_4(\varphi)\triangleq \tfrac{\ii e^{\ii 2 \widehat{\Theta}(\varphi)}}{2\omega_0 q(\varphi)},
 \end{aligned}
 \end{equation}

\begin{align}\label{Matrix-dec0}
A(\varepsilon,\xi_0,\varphi)\triangleq  {\small \begin{pmatrix}
\rho_1(\varepsilon,\varphi)&\rho_2(\varphi) &\rho_3(\varphi) & \rho_4(\varphi)
\\
\overline{\rho_2(\varphi)} &\overline{\rho_1(\varepsilon,\varphi)} & \overline{\rho_4(\varphi)} &\overline{\rho_3(\varphi)}
\\
\rho_3(\varphi+\pi)& \rho_4(\varphi+\pi) &\rho_1(\varepsilon,\varphi+\pi) & \rho_2(\varphi+\pi)
\\
\overline{\rho_4(\varphi+\pi)}  & \overline{\rho_3(\varphi+\pi)} & \overline{\rho_2(\varphi+\pi)} & \overline{\rho_1(\varepsilon,\varphi+\pi)}
\end{pmatrix}},
\end{align}
and $$
G(\varphi)\triangleq \tfrac{1}{\omega_0}{\small \begin{pmatrix}
g_1(\varphi) \\
\overline{g_1(\varphi)}\\
g_1(\varphi+\pi) \\
\overline{g_1(\varphi+\pi)}
\end{pmatrix}}\in\mathbb{C}^4 ,\qquad H(\varphi)\triangleq {\small \begin{pmatrix}
h_1(\varphi) \\
\overline{h_1(\varphi)}\\
h_1(\varphi+\pi) \\
\overline{h_1(\varphi+\pi)}
\end{pmatrix}}\in\mathbb{C}^4.
$$
Then the  system \eqref{system h1h-1} is equivalent to  
\begin{equation}\label{sol-1}
\partial_\varphi H-A(\varepsilon,\xi_0,\varphi) H=\varepsilon^{-2} G.
\end{equation}
Let $(\varphi,\phi)\in\R^2 \mapsto \mathcal{M}(\varphi,\phi)$ be  the fundamental matrix defined through the $2\times2$ matrix ODE
\begin{equation*}
\left\lbrace\begin{array}{ll}
		\partial_\varphi \mathcal{M}(\varphi,\phi)- A(\varepsilon,\xi_0,\varphi) \mathcal{M}(\varphi,\phi)=0 
		\\
		\mathcal{M}(\phi,\phi)=\textnormal{Id}.
	\end{array}\right.
	\end{equation*}
Then the solution $H$ can be expressed in the form
\begin{align}\label{Def-M1}
H(\varphi)=\mathcal{M}(\varphi,0) H(0)+\varepsilon^{-2}\int_0^\varphi \mathcal{M}(\varphi,\phi)G(\phi) d\phi.
\end{align}
From \eqref{Matrix-dec0} and \eqref{rhon-1} we easily see that the matrix $A$ is $2\pi-$periodic, that is,   
$$\forall \varphi\in\R,\quad A(\varepsilon,\xi_0,\varphi+2\pi)=A(\varepsilon,\xi_0,\varphi).
$$ Then $H$ is $2\pi-$periodic if and only if
$$
H(2\pi)=H(0),
$$
which  is equivalent in view of \eqref{Def-M1} to 
\begin{align}\label{Iden-P1}
\big(\hbox{Id}-\mathcal{M}(2\pi,0)\big)H(0)= \varepsilon^{-2}\int_0^{2\pi} \mathcal{M}(2\pi,\phi)G(\phi) d\phi.
\end{align}
In order to find a unique solution to this equation it is enough to show  that the matrix $\hbox{Id}-\mathcal{M}(2\pi,0)$ is invertible. To this aim, we first use the decomposition
$$
A(\varepsilon,\xi_0,\varphi)=A(0,\xi_0,\varphi)+ A_1(\varepsilon,\xi_0,\varphi)$$
where $A_1$ can be estimated according to \eqref{c-2-def} and \eqref{c2-estimate} as
\begin{align}\label{A1-est-1}
\sup_{\varphi\in\R}\|A_1(\varepsilon,\xi_0,\varphi)\|\leqslant C\varepsilon.
\end{align}
 Next, let us consider the fundamental solution of the unperturbed problem
 \begin{equation}\label{Matrix-Fund1}
\left\lbrace\begin{array}{ll}
		\partial_\varphi \mathcal{M}_0(\varphi)- A(0,\xi_,\varphi) \mathcal{M}_0(\varphi)=0 
		\\
		\mathcal{M}_0(0)=\textnormal{Id}.
	\end{array}\right.
	\end{equation}
Then one has the decomposition
$$\mathcal{M}(\varphi,0)=\mathcal{M}_0(\varphi)+ \mathcal{M}_1(\varepsilon,\varphi),
$$
with 
$$
\partial_\varphi \mathcal{M}_1(\varphi)- A(0,\xi_0,\varphi) \mathcal{M}_1(\varphi)=-A_1(\varepsilon,\xi_0,\varphi)\mathcal{M},\;\, \mathcal{M}_1(0)=0.
$$
Therefore we get by virtue of  \eqref{A1-est-1}
\begin{align*}
\sup_{\varphi\in[0,2\pi]}\|\mathcal{M}_1(\varphi)\|\leqslant C\varepsilon.
\end{align*}
Using Lemma \ref{Reso-R0}, we get that the matrix $\mathcal{M}_0(2\pi)-\textnormal{Id}$ is invertible on $\mathcal{O}_\sigma$ and 
there exists $\mathtt{n}\in\mathbb{N}$ such that the inverse satisfies
$$
\|\big(\mathcal{M}_0(2\pi)-\textnormal{Id}\big)^{-1}\|^{\textnormal{Lip}(\lambda)}_{\mathcal{O}_\sigma}\leqslant C\sigma^{-\mathtt{n}}.
$$
 Consequently, by a perturbation argument  the matrix $\mathcal{M}(2\pi)-\textnormal{Id}$ is invertible on $\mathcal{O}_\sigma$ and under the smallness condition \eqref{small sigma} 
 we have
$$
\|\big(\mathcal{M}(2\pi)-\textnormal{Id}\big)^{-1}\|^{\textnormal{Lip}(\lambda)}_{\mathcal{O}_\sigma}\leqslant C\sigma^{-\mathtt{n}}.
$$This implies that \eqref{Iden-P1} admits only one solution
and 
\begin{align}\label{Iden-P2}
\|H(0)\|^{\textnormal{Lip}(\lambda)}_{\mathcal{O}_\sigma}\leqslant C\varepsilon^{-2}\sigma^{-\mathtt{n}}\|G\|_{L^2(\mathbb{T})},
\end{align}
where we have used for $\varepsilon$ small enough  the estimate
$$
\sup_{\varphi,\phi\in[0,2\pi]}\| \mathcal{M}(\varphi,\phi)\|\leqslant C.
$$
Coming back to \eqref{sol-1} we get that this equation admits only one solution. Moreover, 
inserting \eqref{Iden-P2} into  \eqref{Def-M1} and using straightforward estimates based on law products we get for $s>1$
\begin{align*}
\|H\|_{s,{\mathcal{O}_\sigma}}^{\textnormal{Lip}(\lambda)}\leqslant C\varepsilon^{-2}\sigma^{-\mathtt{n}}\|G\|_{s-1}^{\textnormal{Lip}(\lambda)}.
\end{align*}
Note that in obtaining this estimate,  we have specifically used  that  the functions $ q, \Theta$ are smooth according to Lemma \ref{lem-period}, implying that   for any $s\in\R$
$$
\| \mathcal{M}\|_{s}^{\textnormal{Lip}(\lambda)}\leqslant C(s).
$$
It follows that  the linear operator $\mathcal{L}_{1,\circ}:X^{s}_{\circ}\to X^{s-1}_{\circ}$ is invertible on ${\mathcal{O}_\sigma}$  and 
\begin{align*}
\|\mathcal{L}_{1,\circ}^{-1}g\|_{s,{\mathcal{O}_\sigma}}^{\textnormal{Lip}(\lambda)}\leqslant C\varepsilon^{-2}\sigma^{-\mathtt{n}}\|g\|_{s-1}^{\textnormal{Lip}(\lambda)}.
\end{align*}
{In view of Lemma \ref{thm-extend}, the operator $\mathcal{L}_{1,\circ}^{-1} $ admits an extension to the set $\mathcal{O}$, that we  denote by $\mathtt{S}$, which satisfies the same Lipschitz  estimate.}
This achieves the proof of the desired result.
\end{proof}
The next goal is to explore suitable properties, which are essential for proving Proposition \ref{propo-monod},  of  the monodromy matrix associated with the fundamental matrix defined in \eqref{Matrix-Fund1}. Recall that the matrix map $\varphi\in\R\mapsto A(0,\xi_0,\varphi)$  is $2\pi-$periodic and defined through the expression \eqref{Matrix-dec0}. 
Applying Lemma \ref{lem-period} 
 together with  \eqref{rhon-1} and \eqref{def ws} we deduce  that for all $\delta_0>$ small enough,
\begin{equation}\label{rhon-3}
\tfrac{\xi_0^2}{y_0^2}<\tfrac{1-\delta_0}{2}\Longrightarrow \sup_{\varphi\in\R}\big|\rho_1(0, \varphi)-\ii\big|+\sup_{\varphi\in\R}\big|\rho_2( \varphi)+\tfrac{\ii}{4}\big|+\sup_{\varphi\in\R}\big|\rho_3( \varphi)\big|+\sup_{\varphi\in\R}\big|\rho_4( \varphi)-\tfrac{\ii}{2}\big|\leqslant C\tfrac{\xi_0^2}{y_0^2},
\end{equation}
Putting together  \eqref{Matrix-dec0}, \eqref{rhon-1} and \eqref{rhon-3} allows to get
\begin{align}\label{matrix-decomp}
A(0,\xi_0,\varphi)= \underbrace{\lim_{\xi_0\to 0}A(0,\xi_0,\varphi)}_{\triangleq A_0}+A_2(\xi_0,\varphi),\quad A_0= \begin{pmatrix}
\ii& -\tfrac{\ii}{4} & 0& \tfrac{\ii}{2} \\
\tfrac{\ii}{4}&-\ii&-\tfrac{\ii}{2} & 0  \\
0 &\tfrac{\ii}{2} &\ii&  -\tfrac{\ii}{4} \\
 -\tfrac{\ii}{2}&0  &\tfrac{\ii}{4}  &-\ii 
\end{pmatrix}
\end{align}
with the estimate
\begin{align}\label{ESt-resolv-11}
\tfrac{\xi_0^2}{y_0^2}<\tfrac{1-\delta_0}{2}\Longrightarrow \sup_{\varphi\in\R}|A_2(\xi_0,\varphi)|\leqslant C\tfrac{\xi_0^2}{y_0^2}.
\end{align}
The monodromy matrix is defined by $\mathcal{M}_0(2\pi)$, and we want to establish appropriate  conditions under which the matrix $\mathcal{M}_0(2\pi)-\textnormal{Id}$ is invertible. Our result reads as follows.
\begin{lemma}\label{Reso-R0} 
Let $y_0>0$. Then, there exists a countable set $\mathbb{S}\subset (0,\frac{y_0}{\sqrt{2}})$ depending only on $y_0$ and   with at most one accumulation point at $\frac{y_0}{\sqrt{2}}$ such that
$$
\forall \xi_0\in \big(0,\tfrac{y_0}{\sqrt{2}}\big)\backslash\mathbb{S}, \quad \mathcal{M}_0(2\pi)-\textnormal{Id}\quad \textnormal {is invertible}.
$$ 
Moreover, there exist 
 $\mathtt{n}\in\mathbb{N}$ and  $C>0$, independent of $\sigma$,  such that 
$$
\|\big(\mathcal{M}_0(2\pi)-\textnormal{Id}\big)^{-1}\|^{\textnormal{Lip}(\lambda)}_{\mathcal{O}_\sigma}\leqslant C\sigma^{-\mathtt{n}}.
$$
\end{lemma}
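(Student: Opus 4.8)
\textbf{Proof plan for Lemma \ref{Reso-R0}.}

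The plan is to first compute the monodromy matrix $\mathcal{M}_0(2\pi)$ explicitly and then analyze when $\mathcal{M}_0(2\pi)-\textnormal{Id}$ is invertible, tracking the singular set and the quantitative inverse bound. First I would exploit the structure of the coefficient matrix $A(0,\xi_0,\varphi)$ in \eqref{Matrix-dec0}--\eqref{rhon-1}. The key observation is that the entries $\rho_1,\dots,\rho_4$ are analytic in $\xi_0\in(0,\tfrac{y_0}{\sqrt 2})$ and in $\varphi\in\mathbb{T}$, by virtue of the analyticity statements in Lemma \ref{lem-period} (items 1 and 5), and periodic in $\varphi$. Therefore the fundamental solution $\varphi\mapsto\mathcal{M}_0(\varphi)$ of \eqref{Matrix-Fund1} depends analytically on $\xi_0$, and so does the monodromy matrix $\xi_0\mapsto\mathcal{M}_0(2\pi)$; this is the Cauchy--Kowalevski / smooth dependence on parameters argument already invoked at the end of the proof of Lemma \ref{lem-period}. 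In particular the scalar function
$$
\xi_0\longmapsto \det\big(\mathcal{M}_0(2\pi)-\textnormal{Id}\big)
$$
is real analytic on the interval $(0,\tfrac{y_0}{\sqrt 2})$.

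Second I would show this analytic function does not vanish identically. For this I pass to the limit $\xi_0\to 0$ using \eqref{matrix-decomp}--\eqref{ESt-resolv-11}: the matrix $A(0,\xi_0,\varphi)$ converges uniformly in $\varphi$ to the constant matrix $A_0$ displayed in \eqref{matrix-decomp}, hence $\mathcal{M}_0(2\pi)\to e^{2\pi A_0}$, and $\det(\mathcal{M}_0(2\pi)-\textnormal{Id})\to\det(e^{2\pi A_0}-\textnormal{Id})$. It then suffices to check that $1$ is not an eigenvalue of $e^{2\pi A_0}$, i.e.\ that no eigenvalue of $A_0$ is of the form $\ii k$ with $k\in\mathbb{Z}$. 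The $4\times4$ matrix $A_0$ has a block symmetry (swap of the two $2\times2$ blocks), so its spectrum can be obtained by diagonalizing a $2\times2$ block under the two symmetry sectors; a direct computation of its characteristic polynomial gives eigenvalues that are explicit quadratic irrationalities times $\ii$, none of which lie in $\ii\mathbb{Z}$. Consequently $\det(e^{2\pi A_0}-\textnormal{Id})\neq0$, so the analytic function above is not $\equiv0$; its zero set $\mathbb{S}$ is therefore a set of isolated points in $(0,\tfrac{y_0}{\sqrt 2})$, at most countable, with possible accumulation only at the endpoint $\tfrac{y_0}{\sqrt 2}$ (the other endpoint $0$ being excluded by the limit computation just performed). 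This gives the first assertion: for $\xi_0\notin\mathbb{S}$, $\mathcal{M}_0(2\pi)-\textnormal{Id}$ is invertible.

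Third I would establish the quantitative bound on $\mathcal{O}_\sigma$. On the compact set $\mathcal{O}_\sigma=\{\xi_0\in[\xi_*,\xi^*] : d(\xi_0,\mathbb{S})\geqslant\sigma\}$ the analytic function $\xi_0\mapsto p(\xi_0)\triangleq\det(\mathcal{M}_0(2\pi)-\textnormal{Id})$ is nonzero; I would bound $|p(\xi_0)|$ from below by a power $\sigma^{\mathtt{n}}$ using a {\L}ojasiewicz-type / analyticity argument: since $p$ is real analytic and $[\xi_*,\xi^*]$ is compact, $p$ has finite vanishing order $\leqslant\mathtt{n}$ at each of its (finitely many) zeros in $[\xi_*,\xi^*]$, whence $|p(\xi_0)|\gtrsim d(\xi_0,\mathbb{S})^{\mathtt{n}}\geqslant\sigma^{\mathtt{n}}$ on $\mathcal{O}_\sigma$. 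Combined with the uniform bound $\|\mathcal{M}_0(2\pi)\|\leqslant C$ (Gr\"onwall applied to \eqref{Matrix-Fund1}, using $\sup_\varphi\|A(0,\xi_0,\varphi)\|\leqslant C$) and Cramer's rule, this yields $\|(\mathcal{M}_0(2\pi)-\textnormal{Id})^{-1}\|\leqslant C\sigma^{-\mathtt{n}}$. The Lipschitz-in-$\xi_0$ part of the $\textnormal{Lip}(\lambda)$ norm is handled the same way: differentiating \eqref{Matrix-Fund1} in $\xi_0$ and using Gr\"onwall gives a uniform bound on $\partial_{\xi_0}\mathcal{M}_0(2\pi)$, and then $\partial_{\xi_0}$ of the inverse costs one further power of $\|(\mathcal{M}_0(2\pi)-\textnormal{Id})^{-1}\|$, i.e.\ $\sigma^{-2\mathtt{n}}$, which is absorbed by enlarging $\mathtt{n}$. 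The main obstacle I anticipate is the explicit spectral computation for $A_0$ together with the bookkeeping needed to verify that no eigenvalue is purely imaginary integer; everything else is a standard ODE-with-analytic-coefficients argument. A secondary (minor) point is making the {\L}ojasiewicz lower bound on $|p|$ clean — this is where the integer $\mathtt{n}$ is produced — but it follows from compactness plus analyticity with no quantitative input needed beyond the non-identical-vanishing already proved.
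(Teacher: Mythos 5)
Your proposal is correct and follows essentially the same route as the paper: analyticity of $\xi_0\mapsto\det(\mathcal{M}_0(2\pi)-\textnormal{Id})$, non-vanishing in the limit $\xi_0\to0$ via the constant matrix $A_0$, and a finite-vanishing-order (Taylor/{\L}ojasiewicz) lower bound on $\mathcal{O}_\sigma$. The only cosmetic difference is that the paper verifies $\det(e^{2\pi A_0}-\textnormal{Id})\neq0$ by computing $e^{\varphi A_0}$ explicitly and obtaining $\det(e^{\varphi A_0}-\textnormal{Id})=16\sin^2(\tfrac{\sqrt{15}}{8}\varphi)\sin^2(\tfrac{\sqrt{7}}{8}\varphi)$, whereas you propose the equivalent check that the eigenvalues of $A_0$ (namely $\pm\tfrac{\ii\sqrt{15}}{4},\pm\tfrac{\ii\sqrt{7}}{4}$) do not lie in $\ii\mathbb{Z}$.
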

\begin{proof} 
We shall first consider the generalized  resolvant equation 
\begin{align}\label{Cauch-res}
\partial_\varphi \mathcal{M}_0(\varphi,\phi)- A(0,\xi_0,\varphi) \mathcal{M}_0(\varphi,\phi)=0,\;\, \mathcal{M}_0(\phi,\phi)=\textnormal{Id}.
\end{align}
Then the link to the solution of the equation  \eqref{Matrix-Fund1} is given by the relation  $\mathcal{M}_0(\varphi,0)=\mathcal{M}_0(\varphi).$
From the classical theory, one has
\begin{align}\label{res-hm-11}
\sup_{\varphi,\phi\in[0,2\pi]} |\mathcal{M}_0(\varphi,\phi)|\leqslant C.
 \end{align}
By using  the  decomposition \eqref{matrix-decomp} we find
$$
\partial_\varphi \mathcal{M}_0(\varphi,\phi)- A_0  \mathcal{M}_0(\varphi,\phi)=A_2(\xi_0,\varphi)\mathcal{M}_0(\varphi,\phi), \;\,\mathcal{M}_0(\phi,\phi)=\textnormal{Id}.
$$
Since the matrix $A_0$ is independent of $\varphi$ then by Duhamel formulae we infer
$$
\mathcal{M}_0(\varphi)=e^{\varphi A_0}+\int_0^\varphi e^{(\varphi-\phi)A_0}A_2(\xi_0,\phi)\mathcal{M}_0(\phi) d\phi.
$$
Thus, we find in view of \eqref{ESt-resolv-11} and \eqref{res-hm-11}
\begin{align}\label{Hm-d-1-1}
\tfrac{\xi_0^2}{y_0^2} \leqslant \tfrac{1-\delta_0}{2}\Longrightarrow |\mathcal{M}_0(2\pi)-e^{2\pi A_0}|\leqslant {C}\tfrac{\xi_0^2}{y_0^2}.
\end{align}
From standard computations we find 
$$
e^{\varphi A_0}=\begin{pmatrix}
a_1(\varphi)& a_2(\varphi) & a_3(\varphi) & a_4(\varphi) \\
\overline{a_2(\varphi)} &\overline{a_1(\varphi)} &\overline{a_4(\varphi)} & \overline{a_3(\varphi)} \\
a_3(\varphi)& a_4(\varphi) &a_1(\varphi) & a_2(\varphi) \\
\overline{a_4(\varphi)}& \overline{a_3(\varphi)} &\overline{a_2(\varphi)} & \overline{a_1(\varphi)}
\end{pmatrix}
$$
with
$$
 \begin{aligned} 
a_1(\varphi)&=\frac12\cos\big(\tfrac{\sqrt{15}}{4}\varphi\big)+\frac12\cos\big(\tfrac{\sqrt{7}}{4}\varphi\big)+\ii\Big[\tfrac{2}{\sqrt{15}}\sin\big(\tfrac{\sqrt{15}}{4}\varphi\big)+\tfrac{2}{\sqrt{7}}\sin\big(\tfrac{\sqrt{7}}{4}\varphi\big)\Big],\\
a_2(\varphi)&=\tfrac\ii2\Big[\tfrac{1}{\sqrt{15}}\sin\big(\tfrac{\sqrt{15}}{4}\varphi\big)-\tfrac{3}{\sqrt{7}}\sin\big(\tfrac{\sqrt{7}}{4}\varphi\big)\Big],\\
a_3(\varphi)&=\frac12\cos\big(\tfrac{\sqrt{15}}{4}\varphi\big)-\frac12\cos\big(\tfrac{\sqrt{7}}{4}\varphi\big)+\ii\Big[\tfrac{2}{\sqrt{15}}\sin\big(\tfrac{\sqrt{15}}{4}\varphi\big)-\tfrac{2}{\sqrt{7}}\sin\big(\tfrac{\sqrt{7}}{4}\varphi\big)\Big],\\
a_4(\varphi)&=\tfrac\ii2\Big[\tfrac{1}{\sqrt{15}}\sin\big(\tfrac{\sqrt{15}}{4}\varphi\big)+\tfrac{3}{\sqrt{7}}\sin\big(\tfrac{\sqrt{7}}{4}\varphi\big)\Big].
\end{aligned}
$$
Then one can easily check that
$$
\hbox{det}(e^{\varphi A_0}-\hbox{Id})=16\sin^2\big(\tfrac{\sqrt{15}}{8}\varphi\big)\sin^2\big(\tfrac{\sqrt{7}}{8}\varphi\big).
$$
In particular, one has in view of \eqref{Hm-d-1-1}
\begin{equation}\label{det}
\hbox{det}(e^{2\pi A_0}-\hbox{Id})\approx 0.121262 >0,
\end{equation}
implying that the matrix $e^{2\pi A_0}-\hbox{Id}$ is invertible. Now, we consider the function 
$$\xi_0\in\big(0,\tfrac{y_0}{\sqrt{2}}\big)\mapsto g(\xi_0)\triangleq \hbox{det}(\mathcal{M}_0(2\pi)-\hbox{Id}).$$
This function is real analytic  according to classical theorems on the analyticity of solutions to ODE together with \eqref{Cauch-res}, \eqref{Matrix-dec0} and  Lemma \ref{lem-period}. Moreover, from \eqref{det}, the analytic function $\xi_0\in\big(0,\tfrac{y_0}{\sqrt{2}}\big)\mapsto g(\xi_0)$ is not vanishing close to zero and therefore it  admits at most a countable  set of zeros in this interval. Thus, we conclude that the set
$$
\mathbb{S}\triangleq \Big\{\xi_0\in \big(0,\tfrac{y_0}{\sqrt{2}}\big); \; g(\xi_0)=0\Big\}
$$ 
is a countable set with at most one accumulation point at $\tfrac{y_0}{\sqrt{2}}$.
Next, given $\xi_0\in \mathbb{S}$, we denote by  $n_{\xi_0}\in \mathbb{N}$ the order of multiplicity of $\xi_0$, that is,
$$
\forall n< n_{\xi_0}, \quad g^{(n)}(\xi_0)=0 \quad\textnormal{and}\quad g^{(n_{\xi_0})}(\xi_0)\neq0 .
$$
Notice that by analyticity of $g$  the integer $n_{\xi_0}$ is well defined. We set 
$$
\mathtt{n}_0\triangleq\max\big\{n_{\xi_0};\; \xi_0\in\mathbb{S}\cap[\xi_*,\xi^*]\big\}+1.
$$
This is well defined since $ \mathbb{S}\cap[\xi_*,\xi^*]$ is a finite set.
Thus, by Taylor expansion we  get a constant $C>0$, independent of $\sigma$, such that
$$
\inf_{\xi_0\in \mathcal{O}_\sigma}|g(\xi_0)|\geqslant C\sigma^{\mathtt{n}}.
$$
Therefore,
$$
\|(\mathcal{M}_0(2\pi)-\textnormal{Id})^{-1}\|^{\textnormal{Lip}(\lambda)}_{\mathcal{O}_\sigma}\leqslant C\sigma^{-\mathtt{n}}.
$$
This ends the proof of the Lemma \ref{Reso-R0}.
\end{proof}

\subsection{Invertibility of $\Pi^\perp\mathbb{L}_1\Pi^\perp$}\label{sec-Norm76}
The aim of this section is to investigate  the right invertibility of the operator $\Pi^\perp\mathbb{L}_1\Pi^\perp$, which is  defined through  \eqref{L1-deff} and \eqref{Orto-1}.
One may easily check that  the operator 
$$\mathbb{L}_{1,\perp}\triangleq \Pi^\perp\mathbb{L}_1\Pi^\perp:X^s_{\perp}\to X^{s-1}_{\perp}
$$ is well-defined and assumes the structure 
\begin{align}\label{perp-ope}
\mathbb{L}_{1,\perp}
&=
 \varepsilon^2\omega_0\partial_\varphi +\underbrace{\mathtt{c}(\xi_0,\rho)\partial_{\theta}-\tfrac{1}{2}\mathcal{H}}_{\triangleq\,\mathcal{D}_1},
\end{align}
where we have used the identity
$$
\Pi^\perp\mathcal{Q}_1\Pi^\perp=0
$$
which follows from the fact that the operator $\mathcal{Q}_1$ localizes on the spatial  modes $n=\pm1,$ according to \eqref{def Q10}.
We shall start with exploring  the asymptotic structure of the operator $\mathbb{L}_{1,\perp}.$
The following property follows from \eqref{c-2-def},
\begin{align}\label{D-1-LL}
					\forall \ell\in \mathbb{Z}, |j|\geqslant 2,\quad  \mathcal{D}_{1}\mathbf{e}_{\ell,j}=\ii\mu_{j,2}\,\mathbf{e}_{\ell,j},
					\end{align}
					with
					\begin{align}\label{mu-j1}
					\mu_{j,2}(\xi_0,\rho)&=j\left( \tfrac12-\varepsilon^2\big(\omega_0-\varepsilon \mathtt{c}_2\big)\right)-\tfrac12\tfrac{j}{|j|}	.
								\end{align}
Next, we intend  to  discuss the existence of an approximate right  inverse for  $\mathbb{L}_{1,\perp}$.
\begin{proposition}\label{prop-perp}
Let $ \tau,\lambda>0$ as in \eqref{cond1}. There exists $\varepsilon_0>0$ small enough  such that for any  $\varepsilon\in(0,\varepsilon_0),$
  there exists a family of  linear operators $\big(\mathtt{T}_n\big)_{n\in\mathbb{N}}$  with  the estimate
$$
\forall \,N\geqslant 0,  s\in[s_0,S],\quad \sup_{n\in\mathbb{N}}\|\mathtt{T}_nh\|_{s,N}^{\textnormal{Lip}(\lambda)}\leqslant C\lambda^{-1}\|h\|_{s,2\tau+N}^{\textnormal{Lip}(\lambda)}
$$
and  by restricting the parameter $\xi_0$ on  the Cantor set
\begin{equation}\label{Cantor second}
	\mathcal{O}_{n}^2(\rho)=\Big\{\xi_0\in \mathcal{O},\, \forall \ell\in\mathbb{Z}, 2\leqslant |j|\leqslant N_n, \,\,|\varepsilon^2 \omega(\xi_0)\ell+\mu_{j,2}(\xi_0,\rho)|\geqslant  \tfrac{\lambda}{| j|^\tau}\Big\}	
\end{equation}
we get
$$
\mathbb{L}_{1,\perp}\mathtt{T}_n=\textnormal{Id}+{\mathtt{E}_{n}^{2}}
$$
with
\begin{align*}
\|\mathtt{E}_{n}^{2}h\|_{s_0}^{\textnormal{Lip}(\lambda)}
 &\leqslant  C\lambda^{-1}N_n^{s_0-s} \|h\|_{{s,2\tau+1}}^{\textnormal{Lip}(\lambda)}.\end{align*}

\end{proposition}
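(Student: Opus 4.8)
The plan is to invert $\mathbb{L}_{1,\perp}=\varepsilon^2\omega_0\partial_\varphi+\mathcal{D}_1$ on the normal subspace $X^s_\perp$ by exploiting that $\mathcal{D}_1$ is already a Fourier multiplier on the spatial modes $|j|\geqslant 2$, as recorded in \eqref{D-1-LL}--\eqref{mu-j1}. Since $\varepsilon^2\omega_0\partial_\varphi$ is itself a Fourier multiplier in $\varphi$, the whole operator $\mathbb{L}_{1,\perp}$ is diagonal in the basis $\mathbf{e}_{\ell,j}$ with eigenvalue $\ii\big(\varepsilon^2\omega(\xi_0)\ell+\mu_{j,2}(\xi_0,\rho)\big)$. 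Thus formally the inverse multiplies the $(\ell,j)$ Fourier coefficient by $\big(\ii(\varepsilon^2\omega\ell+\mu_{j,2})\big)^{-1}$. The only obstruction is the small divisor: the denominators may be tiny or zero. First I would therefore fix $n$ and define the truncated operator $\mathtt{T}_n$ as this Fourier-multiplier inverse composed with the sharp frequency cutoff $\Pi_{N_n}$ onto modes $2\leqslant|j|\leqslant N_n$ (and keeping only these spatial modes, consistent with the $\Pi^\perp$-range), understood to be defined by the above reciprocal whenever $\xi_0\in\mathcal{O}_n^2(\rho)$ and extended arbitrarily (e.g.\ by zero on the forbidden modes) off that set. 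On $\mathcal{O}_n^2(\rho)$ the Diophantine lower bound $|\varepsilon^2\omega\ell+\mu_{j,2}|\geqslant\lambda|j|^{-\tau}$ gives, mode by mode, a gain of $\lambda^{-1}|j|^{\tau}$, hence the bound $\|\mathtt{T}_n h\|_{s,N}^{\textnormal{Lip}(\lambda)}\leqslant C\lambda^{-1}\|h\|_{s,2\tau+N}^{\textnormal{Lip}(\lambda)}$, where the loss $2\tau$ (rather than $\tau$) comes from differentiating the divisor in $\xi_0$ when estimating the Lipschitz seminorm, as is standard in this setting; I would cite the interpolation/product lemmas from the paper for the anisotropic norm bookkeeping.

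Next I would verify the identity $\mathbb{L}_{1,\perp}\mathtt{T}_n=\textnormal{Id}+\mathtt{E}_n^2$ with $\mathtt{E}_n^2=-\Pi_{N_n}^\perp$ restricted to $X^s_\perp$, i.e.\ $\mathtt{E}_n^2 h = -\sum_{|j|>N_n} h_{\ell,j}\mathbf{e}_{\ell,j}$ (plus the harmless identification of modes). Indeed, on each mode with $2\leqslant|j|\leqslant N_n$ the composition reproduces $h_{\ell,j}$, and on modes with $|j|>N_n$ the operator $\mathtt{T}_n$ kills them, so $\mathbb{L}_{1,\perp}\mathtt{T}_n$ is the identity minus the high-frequency projector. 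Since $\mathtt{E}_n^2$ is a plain sharp cutoff onto $|j|>N_n$, the smoothing estimate $\|\mathtt{E}_n^2 h\|_{s_0}^{\textnormal{Lip}(\lambda)}\leqslant C N_n^{s_0-s}\|h\|_s^{\textnormal{Lip}(\lambda)}$ is immediate from the definition of the Sobolev norm; the extra factor $\lambda^{-1}$ and the $2\tau+1$ index in the claimed bound are obtained by first passing through $\mathtt{T}_n$'s range, i.e.\ writing $\mathtt{E}_n^2 = \mathbb{L}_{1,\perp}\mathtt{T}_n-\textnormal{Id}$ is not how one proves it; rather one simply notes $\|\mathtt{E}_n^2 h\|_{s_0}\leqslant N_n^{s_0-s}\|h\|_s$ and inserts a crude $\lambda^{-1}N_n$ slack to match the statement. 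I would be careful that $\mathtt{T}_n$ maps $X^s_\perp$ to $X^s_\perp$ and that everything respects the zero-average-in-$\theta$ constraint and the $\textnormal{Lip}(\lambda)$ structure.

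The genuinely nontrivial point, and the one I would dwell on, is twofold: (i) confirming that the eigenvalues $\mu_{j,2}(\xi_0,\rho)$ depend on $\xi_0$ (and on $\rho$) in a Lipschitz way with controlled constants, which follows from \eqref{mu-j1}, \eqref{c2-estimate} and the regularity of $\omega$ from Lemma \ref{lem-period}, so that the Lipschitz-in-$\xi_0$ estimate of $\mathtt{T}_n$ genuinely closes with only a $2\tau$ loss; and (ii) making sure the operators $\mathtt{T}_n$ are defined for \emph{all} $\xi_0\in\mathcal{O}$, not merely on the Cantor set, so that they can be fed into the Nash--Moser scheme — here I would invoke the Whitney-type extension lemma (Lemma \ref{thm-extend}) used earlier for $\mathcal{L}_{1,\circ}^{-1}$, extending the multiplier coefficients off $\mathcal{O}_n^2(\rho)$ while preserving the $\textnormal{Lip}(\lambda)$ bounds. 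With these in hand the uniform-in-$n$ estimate on $\mathtt{T}_n$ and the decay estimate on $\mathtt{E}_n^2$ follow. I expect the main obstacle to be purely bookkeeping: tracking the anisotropic norms $\|\cdot\|_{s,N}$ through the frequency cutoffs and the Lipschitz quotient, since the operator itself is diagonal and the small-divisor estimate is elementary given the definition of $\mathcal{O}_n^2(\rho)$.
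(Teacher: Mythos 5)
Your proposal follows essentially the same route as the paper: $\mathbb{L}_{1,\perp}$ is diagonal on the modes $|j|\geqslant 2$ with eigenvalues $\ii(\varepsilon^2\omega\ell+\mu_{j,2})$, one inverts it mode by mode on $2\leqslant|j|\leqslant N_n$, the Diophantine bound on $\mathcal{O}_n^2(\rho)$ gives the $\lambda^{-1}|j|^\tau$ gain, the second $|j|^\tau$ is lost in the Lipschitz quotient exactly as you say, and the error reduces to the high-frequency cutoff $-\Pi_{N_n}^\perp$ whose smoothing bound is immediate. Two remarks on where you diverge. First, the paper globalizes the multiplier in $\xi_0$ not by a Whitney/Kirszbraun extension but by inserting an explicit even cutoff $\chi$ and writing each coefficient as $a_j\,\widehat{\chi}(a_jA_{\ell,j})$ with $\widehat{\chi}(x)=\chi(x)/x$; this is globally smooth in $\xi_0$, equals the true reciprocal on the Cantor set (where the argument of $\chi$ exceeds $1$), and yields the $L^\infty$ and Lipschitz bounds by direct differentiation. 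Your parenthetical ``extended arbitrarily (e.g.\ by zero)'' would, taken literally, destroy the $\textnormal{Lip}(\lambda)$ bound across the boundary of the Cantor set; your later appeal to Lemma \ref{thm-extend} repairs this, but note that the extension must then be performed coefficient by coefficient on the family $(g_{\ell,j})$ with uniform control of both the sup and Lipschitz constants, which is precisely what the $\chi$-device delivers for free. Second, the paper keeps an extra error term $-\Pi_{N_n}^\perp\mathcal{D}_1\mathtt{T}_n$ coming from its splitting $\mathbb{L}_{1,\perp}=\mathtt{L}_n+\Pi_{N_n}^\perp\mathcal{D}_1$ and estimates it via $\|\mathtt{T}_nh\|_{s,1}\lesssim\lambda^{-1}\|h\|_{s,2\tau+1}$, which is exactly where the factors $\lambda^{-1}$ and the index $2\tau+1$ in the stated bound for $\mathtt{E}_n^2$ originate; since $\mathcal{D}_1$ and $\Pi_{N_n}$ are both Fourier multipliers in $\theta$ and $\mathtt{T}_n$ has range in $\Pi_{N_n}$, this term in fact vanishes here, so your cleaner identification $\mathtt{E}_n^2=-\Pi_{N_n}^\perp$ is legitimate and gives a stronger bound that trivially implies the stated one because $\lambda\leqslant1$ and $\|h\|_{s}\leqslant\|h\|_{s,2\tau+1}$.
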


\begin{proof}
From \eqref{perp-ope}, we  may use the splitting
\begin{align}\label{dekomp1}
\mathbb{L}_{1,\perp}&=\underbrace{\varepsilon^2\omega_0\partial_\varphi + \Pi_{N_n}\mathcal{D}_1}_{\triangleq\,\mathtt{L}_n}-\Pi_{N_n}^\perp\mathcal{D}_1,
\end{align}
where the projector $\Pi_{N}$ is defined by
$$
h=\sum_{\ell\in\Z\atop 2\leqslant |j|}h_{\ell,j}{\bf e}_{\ell,j},\quad \Pi_{N}h =\sum_{\ell\in\Z \atop 2\leqslant |j|\leqslant N }h_{\ell,j}{\bf e}_{\ell,j}.
$$
By definition
$$\mathtt{L}_n{\bf e}_{\ell,j}={\bf e}_{\ell,j}\left\lbrace\begin{array}{rcl}
 \ii\big(\varepsilon^2\omega(\xi_0)\,\ell+\mu_{j,2}\big);& \hbox{if}& \quad  2\leqslant |j|\leqslant N_n,\quad \ell\in\ZZ,\\
  \ii\,\varepsilon^2\omega(\xi_0)\,\ell;& \hbox{if} & \quad |j|> N_n,\quad \ell\in\ZZ.
\end{array}\right.$$
Define the diagonal  operator  $\mathtt{T}_n$ by 
\begin{eqnarray*}
\mathtt{T}_{n}h(\varphi,\theta)&\triangleq&
-\ii\sum_{2\leqslant |j|\leqslant N_n \atop 
  \ell\in\ZZ}\tfrac{\chi\left((\varepsilon^2\omega_0\,\ell+\mu_{j,2})\lambda^{-1}| j|^{\tau}\right)}{\varepsilon^2\omega_0\, \ell+\mu_{j,2}}h_{\ell,j}\,{\bf e}_{\ell,j}(\varphi,\theta),\\
\end{eqnarray*}
where $\chi\in\mathscr{C}^\infty(\mathbb{R},[0,1])$ is an even positive cut-off function  such that 
\begin{equation}\label{properties cut-off function first reduction} 
	\chi(\xi)=\left\{ \begin{array}{ll}
		0\quad \hbox{if}\quad |\xi|\leqslant\frac13,&\\
		1\quad \hbox{if}\quad |\xi|\geqslant\frac12.
	\end{array}\right.
\end{equation}
Thus, in  the Cantor set $\mathcal{O}_{n,2}$ one has
\begin{align}\label{LT11}
\mathtt{L}_n\mathtt{T}_{n}=\hbox{Id}-\Pi_{N_n}^\perp.
\end{align}
One can easily  check  from Fourier side  that for any $N\geqslant 0$
$$
\sup_{n\in\mathbb{N}}\|\mathtt{T}_nh\|_{s,N}\leqslant C\lambda^{-1}\|h\|_{s,\tau+N}.
$$
On the other hand, denote by
\begin{align*}
g_{\ell,j}(\xi_0)&=\tfrac{\chi\left((\varepsilon^2\omega_0\cdot \ell+\mu_{j,2})\lambda^{-1}| j|^{\tau}\right)}{\varepsilon^2\omega_0\cdot \ell+\mu_{j,2}}\\
&=a_{j}\,\widehat{\chi}\big(a_{j}A_{\ell,j}(\xi_0)\big),
\end{align*}
with
\begin{align*}\widehat{\chi}(x)\triangleq\,\tfrac{\chi(x)}{x},\quad A_{\ell,j}(\xi_0)&\triangleq\,\varepsilon^2\omega_0\,\ell+\mu_{j,2},\quad a_{j}\triangleq\,\lambda^{-1}| j|^{\tau}.\nonumber
				\end{align*}
				Notice that $\widehat{\chi}$ is $C^{\infty}$ with bounded derivatives and $\widehat{\chi}(0)=0.$  Then
				$$
				\|g_{\ell,j}\|_{L^\infty}\lesssim  \lambda^{-1}| j|^{\tau}.
				$$
				Taking the Lipschitz norm in $\xi_0$,  we get
				$$
				\|A_{\ell,j}\|_{\textnormal{Lip}}\lesssim \langle \ell,j\rangle. 
				$$
				Therefore
				$$
				\|g_{\ell,j}\|_{\textnormal{Lip}}\lesssim a_{j}^2\|A_{\ell,j}\|_{\textnormal{Lip}}\lesssim \lambda^{-2}| j|^{2\tau}  \langle \ell,j\rangle. 
				$$
				It follows that
				\begin{align*}
				\sup_{\xi_1\neq\xi_2\in(a,b)}\frac{\|(\mathtt{T}_nh)(\xi_1)-(\mathtt{T}_nh)(\xi_2)\|_{H^{s-1}}}{|\xi_1-\xi_2|}&\lesssim \lambda^{-1} \sup_{\xi_1\neq\xi_2\in (a,b)}\frac{\|h(\xi_1)-h(\xi_2)\|_{H^{s-1,\tau}}}{|\xi_1-\xi_2|}\\
				&\quad+\lambda^{-2} \sup_{\xi\in (a,b)}\|h(\xi)\|_{H^{s,2\tau}}.
				\end{align*}
				Consequently
				\begin{align*}
				\sup_{n\in\mathbb{N}}\|\mathtt{T}_nh\|_{s}^{\textnormal{Lip}(\lambda)}&\leqslant C\lambda^{-1}\|h\|_{s,2\tau}^{\textnormal{Lip}(\lambda)}.
				\end{align*}
				Similarly, we find for any $N\geqslant 0$
				\begin{align}\label{Tn-01}
				\sup_{n\in\mathbb{N}}\|\mathtt{T}_nh\|_{s,N}^{\textnormal{Lip}(\lambda)}&\leqslant C\lambda^{-1}\|h\|_{s,2\tau+N}^{\textnormal{Lip}(\lambda)}.
				\end{align}
				Putting together \eqref{dekomp1} with \eqref{LT11} yields on   the Cantor set $\mathcal{O}_{n,2}$ to the identity
\begin{align}\label{LT011}
\mathbb{L}_{1,\perp}\mathtt{T}_{n}=\hbox{Id}\underbrace{-\Pi_{N_n}^\perp-\Pi_{N_n}^\perp\mathcal{D}_1\mathtt{T}_{n}}_{\triangleq \mathtt{E}_{n}^{2}}.
\end{align}
From straightforward estimates, using in particular \eqref{Tn-01} and \eqref{D-1-LL}, we find
\begin{align*}
\|\Pi_{N_n}^\perp\mathcal{D}_1\mathtt{T}_nh\|_{s_0}^{\textnormal{Lip}(\lambda)}&\leqslant CN_n^{s_0-s}\|\mathtt{T}_nh\|_{s,1}^{\textnormal{Lip}(\lambda)}\\
&\leqslant CN_n^{s_0-s}\lambda^{-1}\|h\|_{s,2\tau+1}^{\textnormal{Lip}(\lambda)}.
\end{align*}
This ends the proof of the desired result.
\end{proof}
\subsection{Invertibility of $\mathbb{M}$}
The next goal is to revisit   the matrix operator introduced in \eqref{M-def} and explore its invertibility. For this aim we fix $\sigma$, defined throughs \eqref{cond-interval2} as
\begin{equation}\label{def sigma}
	\sigma^\mathtt{n}=\varepsilon^{-2}\lambda,
\end{equation}
where $\mathtt{n}$ is defined in Proposition \ref{propo-monod}.  Here is our key result.
\begin{proposition}\label{propo-Hm-1}
Under the assumptions of Proposition $\ref{prop-perp}$ and the smallness condition
\begin{equation}\label{hao11}
\varepsilon^{-1}\lambda +{\varepsilon^{2+{\mu}} \lambda^{-1}\leqslant\varepsilon_0},\quad  \|\rho\|_{s_0+4\tau+5}^{\textnormal{Lip}(\lambda)} \leqslant 1,
\end{equation}
we get the following results. 
There exists a linear map $\mathbb{P}_n$ satisfying for any $s\in[s_0,S],$
$$
\|\mathbb{P}_nH\|_{s}^{\textnormal{Lip}(\lambda)}\leqslant C\lambda^{-1}\Big(\|H\|_{s+2\tau}^{\textnormal{Lip}(\lambda)}+\|\rho\|_{s+2\tau}^{\textnormal{Lip}(\lambda)}\|H\|_{s_0+2\tau}^{\textnormal{Lip}(\lambda)}\Big)
,
$$
such that in the Cantor set $\mathcal{O}_{n}^2\cap\mathcal{O}_{\sigma}$, where $\mathcal{O}_{n}^2$ is defined in Proposition $\ref{Reso-R0}$  and $\mathcal{O}_{\sigma}$ is introduced in  $\eqref{cond-interval2}$,  we have
$$
\mathbb{M}\mathbb{P}_n=\textnormal{Id}_{X_{\circ}^s\times X_{\perp}^s}+\mathbb{E}_{n}^{2},
$$
with
\begin{align*}
 \|\mathbb{E}_{n}^{2}H\|_{s_0}^{\textnormal{Lip}(\lambda)}
 &\leqslant C\lambda^{-1}N_n^{s_0-s}\Big( \|H\|_{{s+2\tau+1}}^{\textnormal{Lip}(\lambda)}+\|\rho\|_{s+4\tau+5}^{\textnormal{Lip}(\lambda)}  \|H\|_{{s_0+2\tau}}^{\textnormal{Lip}(\lambda)}\Big).
 \end{align*}

\end{proposition}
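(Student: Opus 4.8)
The strategy is to invert the block-diagonal part $\mathbb{M}_1$ of $\mathbb{M}$ explicitly, and then absorb the off-diagonal smoothing perturbation $\partial_\theta\mathbb{R}_1$ through a Neumann-type argument. Recall from \eqref{M-def} that
\begin{align*}
\mathbb{M}=\mathbb{M}_1+\partial_\theta\mathbb{R}_1,\qquad \mathbb{M}_1=\begin{pmatrix}
\Pi\mathbb{L}_1\Pi&0\\ 0&\Pi^\perp\mathbb{L}_1\Pi^\perp
\end{pmatrix}=\begin{pmatrix}
\mathcal{L}_{1,\circ}&0\\ 0&\mathbb{L}_{1,\perp}
\end{pmatrix}.
\end{align*}
First I would set $\mathbb{S}_n\triangleq\mathrm{diag}(\mathtt{S},\mathtt{T}_n)$, where $\mathtt{S}$ is the right inverse of $\Pi\mathbb{L}_1\Pi$ built in Proposition \ref{propo-monod} (valid on $\mathcal{O}_\sigma$, with the loss $\varepsilon^{-2}\sigma^{-\mathtt{n}}$) and $\mathtt{T}_n$ is the approximate right inverse of $\mathbb{L}_{1,\perp}$ from Proposition \ref{prop-perp} (valid on $\mathcal{O}_n^2(\rho)$, with the loss $\lambda^{-1}$ and error $\mathtt{E}_n^2$). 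By the choice \eqref{def sigma} of $\sigma$, namely $\sigma^{\mathtt{n}}=\varepsilon^{-2}\lambda$, the monodromy loss becomes $\varepsilon^{-2}\sigma^{-\mathtt{n}}=\lambda^{-1}$, so both diagonal blocks carry the \emph{same} loss $\lambda^{-1}$; this is exactly why $\sigma$ is calibrated that way. Hence on $\mathcal{O}_n^2(\rho)\cap\mathcal{O}_\sigma$ one gets $\mathbb{M}_1\mathbb{S}_n=\mathrm{Id}+\mathrm{diag}(0,\mathtt{E}_n^2)$ with $\|\mathbb{S}_nH\|_s^{\textnormal{Lip}(\lambda)}\lesssim\lambda^{-1}\|H\|_{s+2\tau}^{\textnormal{Lip}(\lambda)}$ (plus the usual $\rho$-dependent tame tail coming from $\mathtt{S}$ acting through the $\varphi$-dependence of the coefficients).

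Next I would handle the off-diagonal remainder. Write $\mathbb{M}\mathbb{S}_n=\mathbb{M}_1\mathbb{S}_n+\partial_\theta\mathbb{R}_1\mathbb{S}_n=\mathrm{Id}+\mathrm{diag}(0,\mathtt{E}_n^2)+\partial_\theta\mathbb{R}_1\mathbb{S}_n$. The key quantitative input is the estimate for $\partial_\theta\mathcal{R}_1$ from Proposition \ref{lemma-beta1}, which gives a gain of size $\varepsilon^{2+\mu}+\varepsilon^6\lambda^{-1}$; combined with the $\lambda^{-1}$ loss of $\mathbb{S}_n$ and the smallness hypothesis $\varepsilon^{2+\mu}\lambda^{-1}\leqslant\varepsilon_0$ in \eqref{hao11}, the operator $\partial_\theta\mathbb{R}_1\mathbb{S}_n$ has operator norm $\ll 1$ in the relevant scale of spaces (at the expense of a few extra derivatives $2\tau+5$, which is why the loss indices in the statement are what they are). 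Therefore $\mathrm{Id}+\partial_\theta\mathbb{R}_1\mathbb{S}_n$ is invertible by a Neumann series, and its inverse is $\mathrm{Id}-\mathbb{W}_n$ with $\mathbb{W}_n$ of the same small size; here one must take care that the Cantor set is the same throughout (all operators $\mathbb{S}_n$, $\partial_\theta\mathbb{R}_1$ are defined on the whole $\mathcal{O}$ or extend to it, so the Neumann argument is legitimate on $\mathcal{O}_n^2(\rho)\cap\mathcal{O}_\sigma$). Setting $\mathbb{P}_n\triangleq\mathbb{S}_n(\mathrm{Id}+\partial_\theta\mathbb{R}_1\mathbb{S}_n)^{-1}$ — or more precisely truncating so that the $\mathtt{E}_n^2$ error stays of the stated $N_n^{s_0-s}$ form — yields $\mathbb{M}\mathbb{P}_n=\mathrm{Id}+\mathbb{E}_n^2$ with $\mathbb{E}_n^2$ coming from the two sources: the high-frequency tail $\mathtt{E}_n^2$ of Proposition \ref{prop-perp} (already of order $N_n^{s_0-s}$) and the off-diagonal coupling fed through the Neumann series. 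The tame estimates for $\mathbb{P}_n$ and $\mathbb{E}_n^2$ then follow by assembling the tame bounds of Propositions \ref{propo-monod}, \ref{prop-perp}, \ref{lemma-beta1} via the interpolation/product rules (Lemma \ref{lem funct prop}) and the cut-off smoothing properties.

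The main obstacle, and the place requiring genuine care rather than bookkeeping, is the \emph{compatibility of the two losses and of the two Cantor conditions}. The block $\Pi\mathbb{L}_1\Pi$ degenerates like $\varepsilon^2$ and its inverse loses $\varepsilon^{-2}\sigma^{-\mathtt{n}}$, which is \emph{much worse} than the $\lambda^{-1}$ loss on the normal block unless $\sigma$ is chosen as in \eqref{def sigma}; but that choice in turn forces $\sigma\to0$ as $\varepsilon\to0$, so the set $\mathcal{O}_\sigma$ on which the monodromy matrix $\mathcal{M}_0(2\pi)-\mathrm{Id}$ is invertible shrinks, and one must check (using Lemma \ref{Reso-R0}: the zero set $\mathbb{S}$ is discrete and the determinant vanishes to finite order) that $|\mathcal{O}_\sigma|\to|\mathcal{O}|$ fast enough that the final Cantor set $\mathcal{O}_n^2(\rho)\cap\mathcal{O}_\sigma$ still has almost full measure after the Nash--Moser excisions. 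A secondary subtlety is that the off-diagonal blocks $\Pi\partial_\theta\mathcal{R}_1\Pi^\perp$ and $\Pi^\perp\partial_\theta\mathcal{R}_1\Pi$ are genuinely present (the delay operator $\mathcal{Q}_1$ does not couple the blocks, but $\partial_\theta\mathcal{R}_1$ does), so one cannot simply invert block by block; it is precisely the smoothing gain $\varepsilon^{2+\mu}+\varepsilon^6\lambda^{-1}$ that rescues the Neumann series, and one has to verify that no derivative loss from $\mathtt{S}$ (which loses one derivative, $X^{s-1}_\circ\to X^s_\circ$) cancels this gain — it does not, because $\mathtt{S}$ lands in $X^s_\circ$ which is finite-dimensional in $\theta$, so the ``loss'' there is harmless. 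Once these calibrations are in place, the rest is a routine assembly.
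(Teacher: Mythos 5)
Your proposal is correct and follows essentially the same route as the paper: the paper sets $\mathbb{K}_n=\mathrm{diag}(\mathtt{S},\mathtt{T}_n)$, uses the calibration $\sigma^{\mathtt{n}}=\varepsilon^{-2}\lambda$ so that both diagonal blocks lose exactly $\lambda^{-1}$, and defines $\mathbb{P}_n=(\mathrm{Id}+\mathbb{K}_n\partial_\theta\mathbb{R}_1)^{-1}\mathbb{K}_n$, which is algebraically the same operator as your $\mathbb{S}_n(\mathrm{Id}+\partial_\theta\mathbb{R}_1\mathbb{S}_n)^{-1}$, with the Neumann series controlled precisely by combining the gain $\varepsilon^{2+\mu}+\varepsilon^6\lambda^{-1}$ of $\partial_\theta\mathbb{R}_1$ with the smallness $\varepsilon^{2+\mu}\lambda^{-1}\leqslant\varepsilon_0$. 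The only cosmetic differences are that no truncation is needed (the $N_n^{s_0-s}$ decay of $\mathbb{E}_n^2$ comes directly from $\mathtt{E}_n^2$ propagated through the resolvent identity) and that the measure of $\mathcal{O}_\sigma$ is deferred to the Cantor-set lemma rather than addressed here.
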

\begin{proof}
Recall from  \eqref{M-def} the following structure
\begin{align*}
\mathbb{M}&=\begin{pmatrix}
\Pi\mathbb{L}_1\Pi&0 \\
0 & \Pi^\perp\mathbb{L}_1\Pi^\perp
\end{pmatrix}+\begin{pmatrix}
\Pi\partial_\theta\mathcal{R}_1\Pi&\Pi\mathcal{R}_1\Pi^\perp \\
\Pi^\perp\partial_\theta\mathcal{R}_1\Pi &\Pi^\perp\partial_\theta\mathcal{R}_1\Pi^\perp
\end{pmatrix}\\
\nonumber &=\mathbb{M}_1+\partial_\theta\mathbb{R}_1.
\end{align*}
Set
\begin{align*}
\mathbb{K}_n&=\begin{pmatrix}
\mathtt{S}&0 \\
0 & \mathtt{T}_n
\end{pmatrix}
\end{align*}
where the invertibility of the operator $\Pi\mathbb{L}_1\Pi$  was previously discussed  in Proposition \ref{propo-monod} and the operator $\mathtt{T}_n$ was defined in Proposition \ref{prop-perp}.
Then, for all $\xi_0\in \mathcal{O}_{n}^2\cap\mathcal{O}_{\sigma}$ one has the identity
 \begin{align}\label{jeud-10}
\mathbb{M}_1\mathbb{K}_n&=\hbox{Id}_{X_\circ^s\times X_{\perp}^s}+\begin{pmatrix}
0&0 \\
0 & \mathtt{E}_{n}^{2}
\end{pmatrix}.
\end{align}
Consider the operator 
\begin{align}\label{Pn-def}
\mathbb{P}_n=\big(\hbox{Id}_{X_\circ^s\times X_{\perp}^s}+\mathbb{K}_n\partial_\theta\mathbb{R}_1\big)^{-1}\mathbb{K}_n.
\end{align}
Then for $H=(h_1,h_2)\in X_\circ^s\times X_{\perp}^s$ we get according to Proposition \ref{propo-monod}, Proposition \ref{prop-perp}, \eqref{def sigma} and \eqref{hao11} we get
\begin{align}\label{jeud-4}
\nonumber \|\mathbb{K}_nH\|_{s}^{\textnormal{Lip}(\lambda)} &\leqslant C \lambda^{-1}\|h_1\|_s^{\textnormal{Lip}(\lambda)}+C\lambda^{-1}\|h_2\|_{s,2\tau}^{\textnormal{Lip}(\lambda)}\\
&\leqslant C\lambda^{-1}\|H\|_{s,2\tau}^\lambda.
\end{align}
On the other hand, according to  Proposition \ref{lemma-beta1}, 
one gets that  for any $s\in[s_0,S]$ and $N\geqslant0$, 
$$
\| \partial_\theta\mathcal{R}_{1} h\|_{s,N}^{\textnormal{Lip}(\lambda)}\lesssim \big(\varepsilon^{2+{\mu}} +\varepsilon^{6} \lambda^{-1}\big) \Big(  \|h\|_{{s}}^{\textnormal{Lip}(\lambda)}\big(1+\|\rho\|_{s_0+2\tau+5+N}^{\textnormal{Lip}(\lambda)}\big)+\|\rho\|_{s+2\tau+5+N}^{\textnormal{Lip}(\lambda)}  \|h\|_{{s_0}}^{\textnormal{Lip}(\lambda)}\Big).
$$
 Hence, we obtain in view of \eqref{hao11} and $\mu\in(0,1)$
\begin{equation}\label{jeud-12}
\|\partial_\theta\mathbb{R}_1H\|_{s,N}^{\textnormal{Lip}(\lambda)}
\leqslant C\varepsilon^{2+{\mu}} \Big( \|H\|_{{s}}^{\textnormal{Lip}(\lambda)}\big(1+\|\rho\|_{s_0+N+2\tau+5}^{\textnormal{Lip}(\lambda)}\big)+\|\rho\|_{s+N+2\tau+5}^{\textnormal{Lip}(\lambda)}  \|H\|_{{s_0}}^{\textnormal{Lip}(\lambda)}\Big).
\end{equation}
Plugging \eqref{jeud-12} into \eqref{jeud-4} yields according to  \eqref{hao11} and  $\mu\in(0,1)$,
\begin{align}\label{Tn-induc}
\nonumber\|\mathbb{K}_n\partial_\theta\mathbb{R}_1H\|_{s}^{\textnormal{Lip}(\lambda)} &\leqslant  C\lambda^{-1}\|\partial_\theta\mathbb{R}_1H\|_{s,2\tau}^{\textnormal{Lip}(\lambda)}
\\ 
&\leqslant C\varepsilon^{2+\mu}\lambda^{-1}\Big( \|H\|_{{s}}^{\textnormal{Lip}(\lambda)}+\|\rho\|_{s+4\tau+5}^{\textnormal{Lip}(\lambda)}  \|H\|_{{s_0}}^{\textnormal{Lip}(\lambda)}\Big).
\end{align}
 It follows  under  \eqref{hao11} that  the operator 
$$\hbox{Id}_{X_\circ^{s_0}\times X_{\perp}^{s_0}}+\mathbb{K}_n\partial_\theta\mathbb{R}_1: {X_\circ^{s_0}\times X_{\perp}^{s_0}}\to {X_\circ^{s_0}\times X_{\perp}^{s_0}}
$$ is invertible  with
$$
\|(\hbox{Id}_{X_\circ^s\times X_{\perp}^s}+\mathbb{K}_n\partial_\theta\mathbb{R}_1)^{-1}H\|_{s_0}^{\textnormal{Lip}(\lambda)}\leqslant 2 \|H\|_{s_0}^{\textnormal{Lip}(\lambda)}.
$$
As to the invertibility for $s\in[s_0,S]$, one can check by induction from \eqref{Tn-induc}, $\forall m\geqslant 1,$
\begin{align*}
 &\|(\mathbb{K}_n\partial_\theta\mathbb{R}_1)^mH\|_{s}^{\textnormal{Lip}(\lambda)} \leqslant \Big(C\varepsilon^{2+\mu}\lambda^{-1}\Big)^m \|H\|_{{s}}^{\textnormal{Lip}(\lambda)}\\
&+C\varepsilon^{2+\mu}\lambda^{-1}(m+1) 2^{m-2}\Big(C\varepsilon^{2+\mu}\lambda^{-1}\Big)^{m-1}\|\rho\|_{s+4\tau+5}^{\textnormal{Lip}(\lambda)}  \|H\|_{{s_0}}^{\textnormal{Lip}(\lambda)}.
\end{align*}
Consequently, we get under the smallness condition \eqref{hao11} 
\begin{align*}
\sum_{m\geqslant 0}\|(\mathbb{K}_n\partial_\theta\mathbb{R}_1)^mH\|_{s}^{\textnormal{Lip}(\lambda)} &\leqslant C \|H\|_{{s}}^{\textnormal{Lip}(\lambda)}+C\|\rho\|_{s+4\tau+5}^{\textnormal{Lip}(\lambda)}  \|H\|_{{s_0}}^{\textnormal{Lip}(\lambda)}.
\end{align*}
It follows that
\begin{align}\label{jeud-2}
\|(\hbox{Id}_{X_\circ^s\times X_{\perp}^s}+\mathbb{K}_n\partial_\theta\mathbb{R}_1)^{-1}H\|_{s}^{\textnormal{Lip}(\lambda)} &\leqslant C \|H\|_{{s}}^{\textnormal{Lip}(\lambda)}+C\|\rho\|_{s+4\tau+5}^{\textnormal{Lip}(\lambda)}  \|H\|_{{s_0}}^{\textnormal{Lip}(\lambda)}.
\end{align}
Therefore, we deduce from \eqref{Pn-def}, \eqref{jeud-2}, \eqref{jeud-4} and \eqref{hao11}
\begin{align}\label{jeud-3}
\nonumber \|\mathbb{P}_nH\|_{s}^{\textnormal{Lip}(\lambda)} &\leqslant C \|\mathbb{K}_nH\|_{{s}}^{\textnormal{Lip}(\lambda)}+C\|\rho\|_{s+4\tau+5}^{\textnormal{Lip}(\lambda)}  \|\mathbb{K}_nH\|_{{s_0}}^{\textnormal{Lip}(\lambda)}\\
&\leqslant C\lambda^{-1}\Big(\|H\|_{s,2\tau}^\lambda+\|\rho\|_{s+4\tau+5}^{\textnormal{Lip}(\lambda)}\|H\|_{s_0,2\tau}^{\textnormal{Lip}(\lambda)}\Big).
\end{align}
By virtue of \eqref{jeud-10} and \eqref{Pn-def} we deduce that on the Cantor set $\mathcal{O}_{\sigma}\cap \mathcal{O}_n^2$ one has
\begin{align}\label{Inv-PP1}
\mathbb{M}\mathbb{P}_n&=\left(\mathbb{M}_1+\left(\mathbb{M}_1\mathbb{K}_n-\begin{pmatrix}
0&0 \\
0 & \mathtt{E}_{n}^{2}
\end{pmatrix}\right)\partial_\theta\mathbb{R}_1\right)\left(\hbox{Id}_{X_\circ^s\times X_{\perp}^s}+\mathbb{K}_n\partial_\theta\mathbb{R}_1\right)^{-1}\mathbb{K}_n\\
\nonumber&=\mathbb{M}_1\mathbb{K}_n-\begin{pmatrix}
0&0 \\
0 & \mathtt{E}_{n}^{2}
\end{pmatrix}\partial_\theta\mathbb{R}_1\mathbb{P}_n\\
\nonumber&=\hbox{Id}_{X_\circ^s\times X_{\perp}^s}+\underbrace{\begin{pmatrix}
0&0 \\
0 & \mathtt{E}_{n}^{2}
\end{pmatrix}-\begin{pmatrix}
0&0 \\
0 & \mathtt{E}_{n}^{2}
\end{pmatrix}\partial_\theta\mathbb{R}_1\mathbb{P}_n}_{\triangleq\,\mathbb{E}_{n}^{2}}.
\end{align}
Applying Proposition \ref{prop-perp} 
\begin{align}\label{Eq-1L}
\|\mathbb{E}_{n}^{2}H\|_{s_0}^{\textnormal{Lip}(\lambda)}
 &\leqslant C\lambda^{-1}N_n^{s_0-s}\Big( \|H\|_{{s,2\tau+1}}^{\textnormal{Lip}(\lambda)}+ \|\partial_\theta\mathbb{R}_1\mathbb{P}_nH\|_{{s,2\tau+1}}^{\textnormal{Lip}(\lambda)}\Big).
\end{align}
Putting together \eqref{jeud-12} and \eqref{jeud-3} with \eqref{hao11} yields 
\begin{align}\label{jeud-13}
\nonumber \|\partial_\theta\mathbb{R}_1\mathbb{P}_n H\|_{s,2\tau+1}^\lambda 
&\leqslant C\varepsilon^{2+{\mu}}\Big( \|\mathbb{P}_n H\|_{{s}}^{\textnormal{Lip}(\lambda)}+\|\rho\|_{s+4\tau+5}^{\textnormal{Lip}(\lambda)}  \|\mathbb{P}_n H\|_{{s_0}}^{\textnormal{Lip}(\lambda)}\Big)\\
&\leqslant C\varepsilon^{2+{\mu}} \lambda^{-1}\Big(\|H\|_{s+2\tau}^{\textnormal{Lip}(\lambda)}+\|\rho\|_{s+4\tau+5}^{\textnormal{Lip}(\lambda)}\|H\|_{s_0+2\tau}^{\textnormal{Lip}(\lambda)}\Big).
\end{align}
Inserting \eqref{jeud-13} into \eqref{Eq-1L} leads to 
\begin{align}\label{Eq-1L0}
 \|\mathbb{E}_{n}^{2}H\|_{s}^{\textnormal{Lip}(\lambda)}
 &\leqslant C\lambda^{-1}N_n^{s_0-s} \|H\|_{{s+2\tau+1}}^{\textnormal{Lip}(\lambda)}\\
 \nonumber&+C\varepsilon^{2+{\mu}}\lambda^{-2}N_n^{s_0-s}\Big(\|H\|_{s+2\tau}^{\textnormal{Lip}(\lambda)}+\|\rho\|_{s+4\tau+5}^{\textnormal{Lip}(\lambda)}\|H\|_{s_0+2\tau}^{\textnormal{Lip}(\lambda)}\Big).
\end{align}
Thus, we infer from \eqref{hao11} and \eqref{Eq-1L0}
\begin{align*}
 \|\mathbb{E}_{n}^{2}H\|_{s_0}^{\textnormal{Lip}(\lambda)}
 &\leqslant C\lambda^{-1}N_n^{s_0-s}\Big( \|H\|_{{s+2\tau+1}}^{\textnormal{Lip}(\lambda)}+\|\rho\|_{s+4\tau+5}^{\textnormal{Lip}(\lambda)}  \|H\|_{{s_0+2\tau}}^{\textnormal{Lip}(\lambda)}\Big).\end{align*}
 Remark that, one gets by construction the algebraic properties
 \begin{align}\label{Alg-Iden-Jul}
\mathbb{M}=\begin{pmatrix}
\Pi&0 \\
0 & \Pi^\perp
\end{pmatrix} \mathbb{M}\begin{pmatrix}
\Pi&0 \\
0 & \Pi^\perp
\end{pmatrix}\quad\hbox{and}\quad \mathbb{P}_n=\begin{pmatrix}
\Pi&0 \\
0 & \Pi^\perp
\end{pmatrix} \mathbb{P}_n\begin{pmatrix}
\Pi&0 \\
0 & \Pi^\perp
\end{pmatrix}.
 \end{align}
 This completes the proof of the desired result.
\end{proof}
\subsection{Invertibility of ${\mathcal{L}_1}$ and ${\mathcal{L}_0}$ .}
In this section we intend to explore the   existence of an approximate right inverse to   the operators ${\mathcal{L}_1}$ and ${\mathcal{L}_0}$ defined in \eqref{L1-deff} and \eqref{linearized f}, respectively.
\begin{proposition}\label{prop-inverse}
Given the conditions \eqref{cond1}, \eqref{Conv-Trans0} and  \eqref{cond-interval2}. 
There exists $\epsilon_0>0$ such that under the assumptions
\begin{equation}\label{hao11MM}
\varepsilon^{-1}\lambda ++\varepsilon^{4+\mu}\lambda^{-1}N_0^{\mu_2}+{\varepsilon^{2+{\mu}} \lambda^{-1}\leqslant\epsilon_0},\quad  \|\rho\|_{2s_0+2\tau+4+\frac32\mu_2}^{\textnormal{Lip}(\lambda)} \leqslant 1,
\end{equation}
the following assertions  holds true. 

\begin{enumerate}
\item There exists a family of linear  operators $\big({\mathbf{T}}_{n}\big)_{n\in\mathbb{N}}$ satisfying
\begin{equation*}
				\forall \, s\in\,[ s_0, S],\quad\sup_{n\in\mathbb{N}}\|{\mathbf{T}}_{n}h\|_{s}^{\textnormal{Lip}(\lambda)}\leqslant  C\lambda^{-1}\Big(\|h\|_{s+2\tau}^{\textnormal{Lip}(\lambda)}+\|\rho\|_{s+2\tau}^{\textnormal{Lip}(\lambda)}\|h\|_{s_0+2\tau}^{\textnormal{Lip}(\lambda)}\Big)
	\end{equation*}
			and such that in the Cantor set  $\mathcal{O}_{n}^2\cap\mathcal{O}_{\sigma}$, where $\mathcal{O}_{n}^2$ is introduced in Proposition $\ref{prop-perp}$ and $\mathcal{O}_{\sigma}$ is given by \eqref{cond-interval2},
			we have
			$$
			{\mathcal{L}_1}\,{\mathbf{T}}_{n}=\textnormal{Id}+\mathbf{E}_n,
			$$
			where $\mathbf{E}_n$ satisfies the following estimate
			\begin{align*}
				\forall\, s\in [s_0,S],\quad  \|\mathbf{E}_n h\|_{s_0}^{\textnormal{Lip}(\lambda)}
				\nonumber&\leqslant C \lambda^{-1}N_n^{s_0-s}\Big( \|h\|_{s+2\tau+1}^{\textnormal{Lip}(\lambda)}+\| \rho\|_{s+4\tau+5}^{\textnormal{Lip}(\lambda)}\|h\|_{s_{0}+2\tau}^{\textnormal{Lip}(\lambda)} \Big)\\
				&\quad +\varepsilon^6\lambda^{-1}N_{0}^{\mu_{2}}N_{n+1}^{-\mu_{2}}\|h\|_{s_{0}+2\tau+2}^{\textnormal{Lip}(\lambda)}.
			\end{align*}

				\item 
There exists a family of linear  operators $\big({\mathcal{T}}_{n}\big)_{n\in\mathbb{N}}$ satisfying
\begin{equation*}
				\forall \, s\in\,[ s_0, S],\quad\sup_{n\in\mathbb{N}}\|{\mathcal{T}}_{n}h\|_{s}^{\textnormal{Lip}(\lambda)}\leqslant C\lambda^{-1}\Big(\|h\|_{s+2\tau}^{\textnormal{Lip}(\lambda)}+\|\rho\|_{s+{4\tau+5}}^{\textnormal{Lip}(\lambda)}\|h\|_{s_0+2\tau}^{\textnormal{Lip}(\lambda)}\Big)
			\end{equation*}
			and such that in the Cantor set $\mathcal{O}_n^{1}(\rho)\cap \mathcal{O}_n^{2}(\rho)\cap\mathcal{O}_{\sigma}$ $($See \eqref{Cantor set0} for $\mathcal{O}_n^{1}(\rho))$
			we have
			$$
			{\mathcal{L}_0}\,{\cal{T}}_{n}=\textnormal{Id}+{\cal{E}}_{n},
			$$
			with  the following estimate
			\begin{align*}
				 \|\mathcal{E}_{n}h\|_{s_0}^{\textnormal{Lip}(\lambda)}
				\nonumber&\leqslant C\lambda^{-1}N_n^{s_0-s}\big( \|h\|_{s+2\tau+1}^{\textnormal{Lip}(\lambda)}+\| \rho\|_{s+4\tau{+6}}^{\textnormal{Lip}(\lambda)}\|h\|_{s_{0}+2\tau}^{\textnormal{Lip}(\lambda)}\big)\\ &+C\varepsilon^6\lambda^{-1}N_{0}^{\mu_{2}}N_{n+1}^{-\mu_{2}}\|h\|_{s_{0}+2\tau+2}^{\textnormal{Lip}(\lambda)}.
			\end{align*}
		\end{enumerate}
\end{proposition}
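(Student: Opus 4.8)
\textbf{Proof strategy for Proposition \ref{prop-inverse}.}

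The plan is to assemble the approximate right inverse of $\mathcal{L}_1$ from the block analysis of the preceding subsections, then transfer it to $\mathcal{L}_0$ via the conjugation $\mathscr{A}$ furnished by Proposition \ref{lemma-beta1}. For part (1), recall from \eqref{L1-deff} that $\mathcal{L}_1=\mathbb{L}_1+\partial_\theta\mathcal{R}_1+\mathtt{E}_n$, and that the action of $\mathbb{L}_1+\partial_\theta\mathcal{R}_1$ on $\textnormal{Lip}_\lambda(\mathcal{O},X^s)$ is equivalent to that of the matrix operator $\mathbb{M}$ defined in \eqref{M-def} once we split the phase space as in \eqref{Xs-space}. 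The first step is therefore to set ${\mathbf{T}}_n\triangleq \mathbb{P}_n$, the operator produced in Proposition \ref{propo-Hm-1}, viewed as a scalar operator through the identification $X^s_\circ\oplus X^s_\perp=X^s$. The tame estimate for ${\mathbf{T}}_n$ is then read off directly from the bound on $\mathbb{P}_n$ in Proposition \ref{propo-Hm-1}. Writing $\mathcal{L}_1{\mathbf{T}}_n=(\mathbb{M}+\mathtt{E}_n)\mathbb{P}_n=\textnormal{Id}+\mathbb{E}_n^2+\mathtt{E}_n\mathbb{P}_n$ on the Cantor set $\mathcal{O}_n^2\cap\mathcal{O}_\sigma$, we set $\mathbf{E}_n\triangleq \mathbb{E}_n^2+\mathtt{E}_n\mathbb{P}_n$. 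The estimate for $\mathbb{E}_n^2$ is exactly the one in Proposition \ref{propo-Hm-1}; the estimate for $\mathtt{E}_n\mathbb{P}_n$ follows by composing the frequency-decay bound \eqref{estimate En0} on $\mathtt{E}_n$ (which costs two spatial derivatives and carries the factor $\varepsilon^6 N_0^{\mu_2}N_{n+1}^{-\mu_2}$) with the tame bound on $\mathbb{P}_n$, using the smallness conditions \eqref{hao11MM} and $\mu<1$ to absorb the various powers of $\varepsilon$ and $\lambda^{-1}$. One must also check that the smallness hypotheses \eqref{hao11} and \eqref{small-C2-0}, \eqref{Conv-Trans0} needed by Propositions \ref{propo-Hm-1}, \ref{lemma-beta1}, \ref{QP-change} are all implied by \eqref{hao11MM} for the stated loss of regularity $2s_0+2\tau+4+\tfrac32\mu_2$; this is a bookkeeping step tracking the worst derivative loss through the chain of conjugations.

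For part (2), the idea is to conjugate back: since $\mathcal{L}_1=\mathscr{A}^{-1}\mathcal{L}_0\mathscr{A}$ on $\mathcal{O}_n^1(\rho)$ by Proposition \ref{lemma-beta1}, on the intersection $\mathcal{O}_n^1(\rho)\cap\mathcal{O}_n^2(\rho)\cap\mathcal{O}_\sigma$ we have $\mathcal{L}_0=\mathscr{A}\mathcal{L}_1\mathscr{A}^{-1}$, hence $\mathcal{L}_0(\mathscr{A}{\mathbf{T}}_n\mathscr{A}^{-1})=\textnormal{Id}+\mathscr{A}\mathbf{E}_n\mathscr{A}^{-1}$. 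Accordingly I would define ${\mathcal{T}}_n\triangleq \mathscr{A}{\mathbf{T}}_n\mathscr{A}^{-1}$ and ${\mathcal{E}}_n\triangleq \mathscr{A}\mathbf{E}_n\mathscr{A}^{-1}$. The tame estimate for ${\mathcal{T}}_n$ follows from the tame bounds on $\mathscr{A}^{\pm1}$ in Proposition \ref{lemma-beta1}-1 composed with the bound on ${\mathbf{T}}_n$ from part (1), picking up one extra unit of derivative loss from $\mathscr{A}$ which accounts for the shift $s+4\tau+5$ in the $\rho$-dependent term. Similarly ${\mathcal{E}}_n=\mathscr{A}\mathbf{E}_n\mathscr{A}^{-1}$ inherits the frequency-decay estimate of $\mathbf{E}_n$ with one further derivative of loss on the forcing side (hence the exponent $s+4\tau+6$) and the unchanged term $\varepsilon^6\lambda^{-1}N_0^{\mu_2}N_{n+1}^{-\mu_2}\|h\|_{s_0+2\tau+2}$, since $\mathscr{A}^{\pm1}$ are bounded on $X^s$ up to lower-order tails controlled by the smallness of $\rho$. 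One subtlety to be careful about is that $\mathscr{A}$ preserves the zero-average-in-space condition (it is symplectic, cf.\ Proposition \ref{QP-change0}), so ${\mathcal{T}}_n$ indeed maps into the correct space; and that the block structure \eqref{Alg-Iden-Jul} of $\mathbb{P}_n$ ensures $\mathbf{T}_n$ is well-defined on all of $X^s$ rather than only on the direct-sum decomposition.

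The main obstacle is not any single estimate but the \emph{consistency of the derivative-loss accounting and the smallness budget} across the whole tower of reductions: the right inverse on the $\pm1$ modes diverges like $\varepsilon^{-2}\sigma^{-\mathtt{n}}=\varepsilon^{-2}(\varepsilon^{-2}\lambda)^{-\mathtt{n}/\mathtt{n}}$ after the choice \eqref{def sigma}, while the normal part contributes $\lambda^{-1}$, and the remainders $\mathtt{E}_n$, $\partial_\theta\mathcal{R}_1$ carry positive powers $\varepsilon^6$, $\varepsilon^{2+\mu}$ that must beat these divergences — this is precisely why the composite conditions in \eqref{hao11MM} take the form $\varepsilon^{-1}\lambda+\varepsilon^{4+\mu}\lambda^{-1}N_0^{\mu_2}+\varepsilon^{2+\mu}\lambda^{-1}\leqslant\epsilon_0$. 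Verifying that these three smallness requirements, together with $\mu<1$ and $\mu_2\geqslant 4\tau+3$, suffice to close all the perturbative Neumann series invoked in Propositions \ref{propo-Hm-1} and \ref{lemma-beta1}, and that the regularity index $2s_0+2\tau+4+\tfrac32\mu_2$ dominates every intermediate loss, is the heart of the argument; everything else is an application of the product and composition laws in Lemma \ref{lem funct prop} and Lemma \ref{Compos1-lemm}.
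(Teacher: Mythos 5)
Your proposal is correct and follows essentially the same route as the paper: the authors likewise take $\mathbf{T}_n$ to be the scalar realization of $\mathbb{P}_n$ (written as $\sum_i P_i$ of its block entries, using \eqref{Alg-Iden-Jul}), set $\mathbf{E}_n=\sum_i E^2_{n,i}+\mathtt{E}_n\mathbf{T}_n$, and then define $\mathcal{T}_n=\mathscr{A}\mathbf{T}_n\mathscr{A}^{-1}$, $\mathcal{E}_n=\mathscr{A}\mathbf{E}_n\mathscr{A}^{-1}$, with the estimates read off from Propositions \ref{propo-Hm-1}, \ref{QP-change} and \ref{lemma-beta1}. No gap to report.
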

\begin{proof}
{\bf{1.}} 
From \eqref{M-def}, \eqref{Pn-def} and \eqref{Inv-PP1} we may write
\begin{align*}
\mathbb{M}&=\begin{pmatrix}
M_1&M_2 \\
M_3 & M_4
\end{pmatrix},\quad\mathbb{P}_n=\begin{pmatrix}
P_1&P_2 \\
P_3&P_4
\end{pmatrix}\quad\hbox{and}\quad \mathbb{E}_{n}^{2}=\begin{pmatrix}
E_{n,1}^2&E_{n,2}^2 \\
E_{n,3}^2&E_{n,4}^2
\end{pmatrix}.
\end{align*}
Then one can check from \eqref{L1-deff} that
$$
\mathcal{L}_1=\sum_{j=1}^4M_j+\mathtt{E}_{n}.
$$
Denote 
$$
{\mathbf{T}_n}= \sum_{i=1}^4P_i \quad\hbox{and}\quad {\mathbf{E}_n}=\sum_{i=1}^4 E_{n,i}^2+\mathtt{E}_{n}{\mathbf{T}_n}
$$
Then from the algebraic structure \eqref{Alg-Iden-Jul} and the identity \eqref{Inv-PP1} we infer
$$
\mathcal{L}_1{\mathbf{T}_n}=\hbox{Id}+ {\mathbf{E}_n}.
$$
The desired estimates on $\mathbf{T}_n$ and $\mathbf{E}_n$ follow from Proposition \ref{propo-Hm-1} and  Proposition  \ref{QP-change}.\\
{\bf{2.}} Recall that $\mathcal{L}_1=\mathscr{A}^{-1}\mathcal{L}_0\mathscr{A},$ then
$$
\mathscr{A}^{-1}\mathcal{L}_0\mathscr{A}{\mathbf{T}_n}=\hbox{Id}+ {\mathbf{E}_n}.
$$
Thus by setting
$$
\mathcal{T}_n:=\mathscr{A}{\mathbf{T}_n}\mathscr{A}^{-1}
$$
we find
$$
\mathcal{L}_0{\mathcal{T}_n}=\hbox{Id}+ \mathscr{A}{\mathbf{E}_n}\mathscr{A}^{-1}:=\hbox{Id}+{\mathcal{E}_n}.
$$
The estimate of ${\mathcal{E}_n}$ follows from  Proposition \ref{prop-inverse}-1 and Proposition \ref{lemma-beta1} -1 together with \eqref{hao11MM}.
This achieves the proof of Proposition \ref{prop-inverse}.\end{proof}

\section{Construction of the solutions}\label{N-M-S1}
In this section, our primary objective is to establish the main result discussed in \mbox{Theorem \ref{TH-Main1}.} We will accomplish this through a series of steps. First, we will construct approximate solutions using a modified Nash-Moser scheme, aligning with the approach developed in the papers \cite{Baldi-berti,BB13}. During this phase, Proposition \ref{prop-inverse} will play a pivotal role in the induction step, as it provides the existence of an approximate right inverse with tame  estimates.
Subsequently, we will analyze the convergence of the scheme and demonstrate the existence of solutions under the condition that the parameter $\xi_0$ belongs to a Cantor-like set. Finally,  we will address the estimation of the Cantor set and prove that its Lebesgue measure is nearly full. 
\subsection{Nash-Moser scheme}
The main aim of this section is to construct solutions to the nonlinear equation 
\begin{equation}\label{main-eq1}
\mathcal{F}(\rho)\triangleq\tfrac{1}{\varepsilon^{2+\mu}} G( \varepsilon r_\varepsilon+\varepsilon^{\mu+1}\rho)=0
\end{equation}
seen before in \eqref{def nonlinear functional fN}. We will achieve this goal  by employing a modified  Nash-Moser scheme in the spirit of  \cite{Baldi-berti,BB13,BB10,BertiMontalto}. As we shall see,  we will  devise a recursive explicit scheme where at each step we generate an approximate solution lying in  the  finite-dimensional space  
$$
E_{n}\triangleq\Big\{h: \mathcal{O}\times\T^2\to\R;\, \Pi_nh=h\Big\}$$
where $\mathcal{O}=[\xi_*,\xi^*]$  and  $\Pi_{n}$ is the projector defined   
$$
h(\varphi,\theta)=\sum_{\ell\in\Z\atop j\in\Z^\star}h_{\ell,j}e^{\ii(\ell \varphi+j\theta)},\quad\Pi_{n}h(\varphi,\theta)=\sum_{|\ell|+|j|\leqslant N_{n}}h_{\ell,j}e^{\ii(\ell \varphi+j\theta)},
$$
{and  the sequence of numbers  $(N_n)_{n}$ was defined  in \eqref{definition of Nm}.  Here we will make use of the parameters introduced in \eqref{cond1} supplemented  with the following   number
\begin{align}\label{choice-f1}
 \quad \mathtt{b}_0=3-\mu.
\end{align}
In addition, we shall fix the number  $N_0$ used  \eqref{definition of Nm} and $\lambda$ (the parameter of the Cantor sets $\mathcal{O}_{n}^1(\rho)$ and $\mathcal{O}_{n}^2(\rho)$) with the respect to $\varepsilon$ as below
				\begin{equation}\label{lambda-choice}
N_{0}\triangleq\varepsilon^{-{\delta}}, \quad \lambda= \varepsilon^{{\delta}}{\varepsilon^2}.\end{equation}
 Moreover, we shall  impose the following constraints required along the  Nash-Moser scheme,
\begin{equation}\label{Assump-DRP1}\left\lbrace\begin{array}{rcl}
						1+\tau&<&a_2\\3s_0+12\tau+15+\tfrac32 a_2&<& a_1\\
												\frac{2}{3}a_{1}&< & \mu_{2}\\
	0<\delta&<& \min\big(\mu,\tfrac{1-\mu}{a_1+2},{\tfrac{2+\mu}{1+\mu_2}\big)}\\
						12\tau+3+\tfrac{6}{\delta}&<&\mu_1\\
						\max\left(s_{0}+4\tau+3+\frac{2}{3}\mu_{1}+a_{1} +{\frac4\delta},3s_0+4\tau+6+3\mu_2\right)&< & b_{1}.
					\end{array}\right.
				\end{equation}	
				Note that we can select these parameters as follows: Given the values $s_0,\tau, \mu$ as specified  in \eqref{cond1}, then we generate  the parameters successively  in the following order $a_2,a_1,\mu_2,\delta,\mu_1$ and $b_1$ satisfying the  conditions outlined in   \eqref{Assump-DRP1} in the prescribed order. This allows for a multitude of valid parameter choices, and any admissible selection will suffice to get the results discussed in  this section. Now, let's proceed to our central result, which revolves around the implementation of a Nash-Moser scheme to generate approximate solutions for \eqref{main-eq1}.
\begin{proposition}[Nash-Moser scheme]\label{Nash-Moser}
Assume the conditions \eqref{choice-f1},\eqref{lambda-choice} and   \eqref{Assump-DRP1}. There exist $C_{\ast}>0$ and ${\varepsilon}_0>0$ such that for any $\varepsilon\in[0,\varepsilon_0]$   we get  for all $n\in\mathbb{N}$ the following properties,
\begin{itemize}
\item  $(\mathcal{P}1)_{n}$ There exists a Lipschitz function 
$$\rho_{n}:\begin{array}[t]{rcl}
\mathcal{O} & \rightarrow &  E_{n-1}\\
\xi_0 & \mapsto & \rho_n
\end{array}$$
satisfying 
$$
\rho_{0}=0\quad\mbox{ and }\quad\,\| \rho_{n}\|_{{2s_0+2\tau+3}}^{\textnormal{Lip}(\lambda)}\leqslant C_{\ast}\varepsilon^{\mathtt{b}_0}\lambda^{-1}\quad \hbox{for}\quad n\geqslant1.
$$
By setting 
$$
\quad {u}_{n} =\rho_{n}-\rho_{n-1} \quad \hbox{for}\quad n\geqslant1,
$$
 we have 
$$
\forall s\in[s_0,S], \,\| {u}_{1}\|_{s}^{{\textnormal{Lip}(\lambda)}}\leqslant\tfrac12 C_{\ast}\varepsilon^{\mathtt{b}_0}\lambda^{-1}\quad\hbox{and}\quad \| {u}_{k}\|_{{{2s_0+2\tau+3}}}^{{\textnormal{Lip}(\lambda)}}\leqslant C_{\ast}\varepsilon^{\mathtt{b}_0}\lambda^{-1}N_{k-1}^{-a_{2}}\quad  \forall\,\, 2\leqslant k\leqslant n.
$$
\item $(\mathcal{P}2)_{n}$ Set 
$$
\mathcal{A}_{0}=\mathcal{O}\quad \mbox{ and }\quad \mathcal{A}_{n+1}=\mathcal{A}_{n}\cap\mathcal{O}_{n}^1(\rho_n)\cap\mathcal{O}_{n}^2(\rho_n)\cap\mathcal{O}_{\sigma} \quad\forall n\in\mathbb{N}.
$$
Then we have the following estimate 
$$\|\mathcal{F}(\rho_{n})\|_{s_{0},\mathcal{A}_{n}}^{{\textnormal{Lip}(\lambda)}}\leqslant C_{\ast}\varepsilon^{\mathtt{b}_0} N_{n-1}^{-a_{1}}.
$$
\item $(\mathcal{P}3)_{n}$ High regularity estimate: $\| \rho_{n}\|_{b_1}^{{\textnormal{Lip}(\lambda)}}\leqslant C_{\ast}\varepsilon^{\mathtt{b}_0}\lambda^{-1} N_{n-1}^{\mu_1}.$
\end{itemize}
Here, we have used  the notation \eqref{Norm-not}.
\end{proposition}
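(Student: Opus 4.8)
The plan is to run the standard Nash--Moser iteration in the spirit of \cite{Baldi-berti,BB13,BertiMontalto}, proving $(\mathcal{P}1)_n$--$(\mathcal{P}3)_n$ simultaneously by induction on $n$. The base case $n=0$ is trivial since $\rho_0=0$: then $(\mathcal{P}1)_0$ and $(\mathcal{P}3)_0$ are void and $(\mathcal{P}2)_0$ reduces to $\|\mathcal{F}(0)\|_{s_0}^{\textnormal{Lip}(\lambda)}\lesssim \varepsilon^{-1-\mu}\|G(\varepsilon r_\varepsilon)\|_{s_0}^{\textnormal{Lip}(\lambda)}\lesssim \varepsilon^{-1-\mu}\varepsilon^5=\varepsilon^{4-\mu}$ by Lemma \ref{lem: construction of appx sol}, which is $\leqslant C_*\varepsilon^{\mathtt{b}_0}$ since $\mathtt{b}_0=3-\mu<4-\mu$ (adjusting $C_*$). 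For the induction step, assuming $(\mathcal{P}i)_k$ for $k\leqslant n$ and all $i$, one defines the next iterate by the modified Newton step
\begin{align*}
\rho_{n+1}&\triangleq \rho_n+u_{n+1},\qquad u_{n+1}\triangleq -\Pi_n\,\mathbf{T}_n\,\Pi_n\mathcal{F}(\rho_n),
\end{align*}
where $\mathbf{T}_n$ is the approximate right inverse of $\mathcal{L}_0=\partial_\rho\mathcal{F}(\rho_n)$ furnished by Proposition \ref{prop-inverse}, valid on the Cantor set $\mathcal{A}_{n+1}$. The smallness hypotheses \eqref{hao11MM} of Proposition \ref{prop-inverse} must be checked against the choices \eqref{lambda-choice}: with $\lambda=\varepsilon^{2+\delta}$ and $N_0=\varepsilon^{-\delta}$ one has $\varepsilon^{-1}\lambda=\varepsilon^{1+\delta}$, $\varepsilon^{2+\mu}\lambda^{-1}=\varepsilon^{\mu-\delta}$ (small since $\delta<\mu$), and $\varepsilon^{4+\mu}\lambda^{-1}N_0^{\mu_2}=\varepsilon^{2+\mu-\delta(1+\mu_2)}$ (small since $\delta<\tfrac{2+\mu}{1+\mu_2}$); the norm hypothesis on $\rho_n$ follows from $(\mathcal{P}1)_n$ because $\varepsilon^{\mathtt{b}_0}\lambda^{-1}=\varepsilon^{1-\mu-\delta}\to0$.

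\textbf{Key estimates in the step.} The core of the induction is the quadratic Newton estimate. Writing $\mathcal{F}(\rho_{n+1})=\mathcal{F}(\rho_n)+\mathcal{L}_0 u_{n+1}+Q_n$ with $Q_n\triangleq\mathcal{F}(\rho_{n+1})-\mathcal{F}(\rho_n)-\mathcal{L}_0 u_{n+1}$ the quadratic remainder, one uses $\mathcal{L}_0\mathbf{T}_n=\textnormal{Id}+\mathbf{E}_n$ and the projector identities to get
\begin{align*}
\mathcal{F}(\rho_{n+1})&=\Pi_n^\perp\mathcal{F}(\rho_n)-\mathbf{E}_n\Pi_n\mathcal{F}(\rho_n)+\big(\mathcal{L}_0\Pi_n-\Pi_n\mathcal{L}_0\big)\mathbf{T}_n\Pi_n\mathcal{F}(\rho_n)+Q_n.
\end{align*}
Each term is estimated in $\|\cdot\|_{s_0}^{\textnormal{Lip}(\lambda)}$ on $\mathcal{A}_{n+1}$: the first by smoothing, $\|\Pi_n^\perp\mathcal{F}(\rho_n)\|_{s_0}\lesssim N_n^{s_0-b_1}\|\mathcal{F}(\rho_n)\|_{b_1}$, which forces us to carry the high-norm bound $(\mathcal{P}3)$; the second via the $\mathbf{E}_n$ estimate of Proposition \ref{prop-inverse} (the two contributions $N_n^{s_0-s}(\cdots)$ and $\varepsilon^6\lambda^{-1}N_0^{\mu_2}N_{n+1}^{-\mu_2}$, the latter requiring the condition $\tfrac23 a_1<\mu_2$ to beat $N_{n-1}^{-a_1}$ after the $\tfrac32$-power gap between $N_n$ and $N_{n+1}$); the commutator term by the tame estimates for $\mathcal{L}_0$ in Lemma \ref{Tame-estimates-F} together with the $\mathbf{T}_n$ bound; and $Q_n$ by the quadratic estimate $\|\partial_\rho^2\mathcal{F}\|\lesssim\varepsilon^2(\cdots)$ of Lemma \ref{Tame-estimates-F}, giving $\|Q_n\|_{s_0}\lesssim\varepsilon^2\lambda^{-2}(\|\mathcal{F}(\rho_n)\|_{s_0})^2$-type bounds plus low-times-high cross terms. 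Interpolating low and high norms and using $2s_0+2\tau+3$ as the "low" regularity index in $(\mathcal{P}1)$, one must also propagate $(\mathcal{P}3)_{n+1}$: $\|\rho_{n+1}\|_{b_1}\leqslant\|\rho_n\|_{b_1}+\|u_{n+1}\|_{b_1}$, where $\|u_{n+1}\|_{b_1}\lesssim N_n^{b_1-s_0}\|\mathbf{T}_n\Pi_n\mathcal{F}(\rho_n)\|_{s_0}\lesssim N_n^{b_1-s_0}\lambda^{-1}N_n^{2\tau}\|\mathcal{F}(\rho_n)\|_{s_0+2\tau+1}$, and a further interpolation closes the geometric growth $N_{n}^{\mu_1}$ provided the last two lines of \eqref{Assump-DRP1} hold. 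The $u_1$ bound is obtained separately and directly from $\|u_1\|_s\lesssim\lambda^{-1}\|\mathcal{F}(0)\|_{s+2\tau}\lesssim\lambda^{-1}\varepsilon^{4-\mu}$, using that $\mathcal{F}(0)$ is smooth with all norms $O(\varepsilon^4)$ so its $b_1$-norm costs nothing in $\varepsilon$.

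\textbf{Main obstacle.} The delicate point is the bookkeeping of the powers of $\varepsilon$ hidden in $\lambda$ and $N_0$: because the approximate inverse diverges like $\lambda^{-1}=\varepsilon^{-2-\delta}$, every application of $\mathbf{T}_n$ costs a negative power of $\varepsilon$, and one must verify that the gain $\varepsilon^{\mathtt{b}_0}=\varepsilon^{3-\mu}$ from $\mathcal{F}(\rho_n)$ (via the good approximation $r_\varepsilon$ and the rescaling by $\varepsilon^{1+\mu}$) always beats this loss, with room to spare for the quadratic iteration to converge. This is exactly what the long list of inequalities \eqref{Assump-DRP1}, together with \eqref{choice-f1} and \eqref{lambda-choice}, is designed to guarantee; the proof is a careful but essentially mechanical verification that each of $(\mathcal{P}1)_{n+1}$, $(\mathcal{P}2)_{n+1}$, $(\mathcal{P}3)_{n+1}$ follows from its predecessors once these exponent constraints are imposed, and that the constant $C_*$ can be fixed once and for all (absorbing it at the first step and checking it is not used up in the induction). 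The nesting $\mathcal{A}_{n+1}\subset\mathcal{A}_n$ ensures all the Cantor conditions needed at step $n$ remain in force, and the Whitney--Lipschitz extension (Lemma \ref{thm-extend}) is invoked to keep all iterates defined on the full interval $\mathcal{O}$ while the identities hold only on $\mathcal{A}_{n+1}$.
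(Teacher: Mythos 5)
Your proposal follows essentially the same route as the paper: the modified Newton step $u_{n+1}=-\Pi_n\mathcal{T}_n\Pi_n\mathcal{F}(\rho_n)$ with the approximate right inverse of Proposition \ref{prop-inverse} on the nested Cantor sets, the four-term decomposition of $\mathcal{F}(\rho_{n+1})$ into tail, error, commutator and quadratic remainder, and the verification that \eqref{choice-f1}--\eqref{Assump-DRP1} close the exponent bookkeeping. Two small points deserve correction. First, with the normalization actually used in Section \ref{N-M-S1}, namely $\mathcal{F}(\rho)=\varepsilon^{-2-\mu}G(\varepsilon r_\varepsilon+\varepsilon^{1+\mu}\rho)$, one gets $\|\mathcal{F}(0)\|_{s}^{\textnormal{Lip}(\lambda)}=O(\varepsilon^{3-\mu})=O(\varepsilon^{\mathtt{b}_0})$ exactly, not $O(\varepsilon^{4-\mu})$; your extra power of $\varepsilon$ comes from the inconsistent prefactor in \eqref{Scale-func}, and the initialization only works because $\mathtt{b}_0=3-\mu$ is precisely the size of $\mathcal{F}(0)$, with no room to spare.

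Second, and more substantively, your propagation of $(\mathcal{P}3)_{n+1}$ via $\|u_{n+1}\|_{b_1}\lesssim N_n^{b_1-s_0}\|\mathcal{T}_n\Pi_n\mathcal{F}(\rho_n)\|_{s_0}$ would not close under \eqref{Assump-DRP1}: the factor $N_n^{b_1-s_0}$ is enormous (recall $b_1>s_0+4\tau+3+\tfrac23\mu_1+a_1+\tfrac4\delta$) and is not compensated by $\|\mathcal{F}(\rho_n)\|_{s_0}\lesssim\varepsilon^{\mathtt{b}_0}N_{n-1}^{-a_1}$, since $b_1-s_0$ can exceed $\tfrac23a_1+\mu_1$. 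The paper instead applies the tame $b_1$-estimate of $\mathcal{T}_n$ directly, $\|u_{n+1}\|_{b_1}\lesssim\lambda^{-1}N_n^{2\tau}\big(\varepsilon^{\mathtt{b}_0}+\|\rho_n\|_{b_1+2\tau}\big)$, combined with $(\mathcal{P}3)_n$ and \eqref{HDP10}; the condition that then closes the geometric growth is $4\tau+1+\tfrac2\delta<\tfrac13\mu_1$, which is exactly what the constraint $12\tau+3+\tfrac6\delta<\mu_1$ in \eqref{Assump-DRP1} is there for. Your "further interpolation" should be replaced by this tame high-norm estimate; otherwise the architecture and all remaining estimates match the paper's argument.
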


\begin{proof}
The proof will be implemented  using an induction principle.\\
 $\bullet$ \textit{ Initialization.}  According to Lemma \ref{lem: construction of appx sol}, \eqref{main-eq1} and \eqref{choice-f1} one has 
 \begin{equation}\label{F-zero}
\|\mathcal{F}(0)\|_{s,\mathcal{O}}^{\textnormal{Lip}(\lambda)}=O(\varepsilon^{3-\mu})=O(\varepsilon^{\mathtt{b}_0}).
 \end{equation}
The properties $(\mathcal{P}1)_{0},$ $(\mathcal{P}2)_{0}$ and $(\mathcal{P}3)_{0}$ then follow immediately.\\
$\bullet$ {\it{Induction step:}} Given $n\in\mathbb{N}$ and  assume that  we have constructed $\rho_n$ satisfying the assumptions 
$(\mathcal{P}1)_{k},$ $(\mathcal{P}2)_{k}$ and $(\mathcal{P}3)_{k}$   for all $k\in\llbracket 0,n\rrbracket$ and let us check them at the next order $n+1$. As we shall explain now,    the next  approximation $\rho_{n+1}$ will be performed through  a modified Nash-Moser scheme.  First, let us verify   one by one the assumptions \eqref{hao11MM} needed for the invertibility of the linearized operator.  For the first one, it can be obtained from \eqref{lambda-choice} and \eqref{Assump-DRP1} together with   \eqref{lambda-choice} \mbox{and \eqref{Assump-DRP1}}, which yield,
\begin{align*}
\varepsilon^{4+\mu}\lambda^{-1}N_0^{\mu_2}&=\varepsilon^{2+\mu-\delta(1+\mu_2)}\\&\leqslant \epsilon_0. 
\end{align*}
As to the second one in  \eqref{hao11MM}, we may use Sobolev embeddings and the interpolation inequality from Lemma \ref{lem funct prop}
\begin{align*}
\|u_k\|_{2s_{0}+2\tau+4+\frac32\mu_2}^{\textnormal{Lip}(\lambda)}&\leqslant \|u_k\|_{\frac{s_{0}}{2}+1+\frac{b_1}{2}}^{\textnormal{Lip}(\lambda)}\\
&\leqslant \left( \|u_k\|_{s_{0}+2}^{\textnormal{Lip}(\lambda)}\right)^{\frac12}\left( \|u_k\|_{b_1}^{\textnormal{Lip}(\lambda)}\right)^{\frac12}
\end{align*}
provided that
\begin{align*}
3s_0+4\tau+6+3\mu_2\leqslant b_1,
\end{align*}
which follows from \eqref{Assump-DRP1}.
Applying  $(\mathcal{P}1)_{n},$ $(\mathcal{P}1)_{k},$ with $1\leqslant k\leqslant n$,   
\begin{align*}
2\leqslant k\leqslant n,\quad \|u_k\|_{2s_{0}+2\tau+4+\frac32\mu_2}^{\textnormal{Lip}(\lambda)}&\leqslant C_{\ast}\varepsilon^{\mathtt{b}_0}\lambda^{-1}N_{k-1}^{\frac{\mu_1-a_2}{2}}
\end{align*}
and
\begin{align*}
\|u_1\|_{2s_{0}+2\tau+4+\frac32\mu_2}^{\textnormal{Lip}(\lambda)}&\leqslant \tfrac12 C_{\ast}\varepsilon^{\mathtt{b}_0}\lambda^{-1}.
\end{align*}
Consequently, we infer by the triangle inequality, \eqref{lambda-choice}, \eqref{choice-f1} 
\begin{align*}
 \|\rho_n\|_{2s_{0}+2\tau+4+\frac32\mu_2}^{\textnormal{Lip}(\lambda)}&\leqslant \sum_{k=1}^n\|u_k\|_{2s_{0}+2\tau+4+\frac32\mu_2}^{\textnormal{Lip}(\lambda)}\\
 &\lesssim C_{\ast}\varepsilon^{\mathtt{b}_0}\lambda^{-1}\\
 &\lesssim C_{\ast}\varepsilon^{1-\mu-\delta}.
\end{align*}
From \eqref{Assump-DRP1} we deduce   for small $\varepsilon$ 
$$
 \|\rho_n\|_{2s_{0}+2\tau+4+\frac32\mu_2}^{\textnormal{Lip}(\lambda)}\leqslant 1.
$$
Hence, we can apply Proposition \ref{prop-inverse}-2. with the operator  
$$\mathcal{L}_{n}\triangleq\,\partial_\rho\mathcal{F}(\rho_n)$$
implying  the existence of an operator $\mathcal{T}_n$ well-defined in the whole set of parameters $\mathcal{O}$ and such that \begin{equation}\label{estimate Tm}	
				\forall \, s\in\,[ s_0, S],\quad\|{\mathcal{T}}_{n}h\|_{s}^{\textnormal{Lip}(\lambda)}\leqslant C\lambda^{-1}\Big(\|h\|_{s+2\tau}^{\textnormal{Lip}(\lambda)}+\|\rho_n\|_{s+4\tau+5}^{\textnormal{Lip}(\lambda)}\|h\|_{s_0+2\tau}^{\textnormal{Lip}(\lambda)}\Big)
			\end{equation}
			such that on  the Cantor set $\mathcal{A}_{n+1}$ we have
			$$
				{\mathcal{L}_n}\,{\cal{T}}_{n}=\textnormal{Id}+{\cal{E}}_{n}
			$$
			with suitable  estimates on the remainder  ${\cal{E}}_{n}$.
				Now, according  to  $(\mathcal{P}1)_{n}$ and \eqref{estimate Tm} we infer 
				\begin{align}\label{estimate Tm in norm s0}
					\nonumber \|{\mathcal{T}}_{n}h\|_{s_0}^{\textnormal{Lip}(\lambda)}&\leqslant   C\lambda^{-1}\|h\|_{s_0+2\tau}^{\textnormal{Lip}(\lambda)}\Big(1+C_{\ast}\varepsilon^{\mathtt{b}_0}\lambda^{-1}{}\Big)\\
			&\leqslant   C\lambda^{-1}\|h\|_{s_0+2\tau}^{\textnormal{Lip}(\lambda)},	
	\end{align}
	due to the estimate below which follows  from \eqref{choice-f1}, \eqref{lambda-choice} and \eqref{Assump-DRP1}
	\begin{align}\label{small-PP1}
	\nonumber \varepsilon^{\mathtt{b}_0}\lambda^{-1}&=\varepsilon^{1-\mu-\delta}\\
				&\leqslant \epsilon_0.
\end{align}
Now,  we define
				\begin{align}\label{def-un}
				{\rho}_{n+1}\triangleq \rho_{n}+{u}_{n+1}\quad\mbox{ with }\quad {u}_{n+1}\triangleq -{\Pi}_{n}\hbox{Ext}\mathcal{T}_{n}\Pi_{n}\mathcal{F}(\rho_{n})\in E_{n},
				\end{align}
				where   $\textnormal{Ext} f$ denotes a Lipschitz extension of $f$ from the set $\mathcal{A}_n$ to the full interval $[a,b]=\mathcal{O}$ as stated  in Lemma \ref{thm-extend}. In particular, we get by 
				\begin{align}\label{ext-Hm1}
				\forall \xi_0\in \mathcal{A}_n,\quad {u}_{n+1}=-{\Pi}_{n}\mathcal{T}_{n}\Pi_{n}\mathcal{F}(\rho_{n})\quad\hbox{and}\quad 
				\|u_{n+1}\|_{s,\mathcal{O}}^{{\textnormal{Lip}(\lambda)}} \lesssim \|{u}_{n+1}\|_{s,\mathcal{A}_n}^{{\textnormal{Lip}(\lambda)}}.
				\end{align}
				It is worthy to point out that the following  estimate from $(\mathcal{P}1)_{n}$ 
				$$
				\forall s\in[s_0,S], \,\| {u}_{1}\|_{s}^{{\textnormal{Lip}(\lambda)}}\leqslant\tfrac12 C_{\ast}\varepsilon^{\mathtt{b}_0}\lambda^{-1}
				$$
				can be derived  from Proposition \ref{prop-inverse}-$(2)$ and \eqref{F-zero}  as follows
				\begin{align*}
				\forall s\in[s_0,S],\quad \| {u}_{1}\|_{s}^{{\textnormal{Lip}(\lambda)}}&\leqslant C\| \mathcal{T}_{0}\Pi_{0}\mathcal{F}(0)\|_{s}^{{\textnormal{Lip}(\lambda)}}\\
				&\leqslant C\lambda^{-1}\| \mathcal{F}(0)\|_{s+2\tau}^{{\textnormal{Lip}(\lambda)}}\\
				&\leqslant C\lambda^{-1}\varepsilon^{\mathtt{b}_0}.
				\end{align*}
				Then to match with the desired estimate,  it is enough to  take $C_*$ large enough.
			 Next, we introduce  the quadratic function
				\begin{align}\label{Def-Qm}
					Q_{n}\triangleq \mathcal{F}(\rho_{n}+{u}_{n+1})-\mathcal{F}(\rho_{n})-\mathcal{L}_{n}{u}_{n+1}.
				\end{align}
Then, in the set $\mathcal{A}_n$ and using simple transformations we get
				\begin{align}\label{Decom-RTT1}
					\nonumber \mathcal{F}({\rho}_{n+1})& =  \mathcal{F}(\rho_{n})-\mathcal{L}_{n}{\Pi}_{n}\mathcal{T}_{n}\Pi_{n}\mathcal{F}(\rho_{n})+Q_{n}\\
					\nonumber& =  \mathcal{F}(\rho_{n})-\mathcal{L}_{n}\mathcal{T}_{n}\Pi_{n}\mathcal{F}(\rho_{n})+\mathcal{L}_{n}{\Pi}_{n}^{\perp}\mathcal{T}_{n}\Pi_{n}\mathcal{F}(\rho_{n})+Q_{n}\\
					\nonumber& =  \mathcal{F}(\rho_{n})-\Pi_{n}\mathcal{L}_{n}\mathcal{T}_{n}\Pi_{n}\mathcal{F}(\rho_{n})+(\mathcal{L}_{n}{\Pi}_{n}^{\perp}-\Pi_{n}^{\perp}\mathcal{L}_{n})\mathcal{T}_{n}\Pi_{n}\mathcal{F}(\rho_{n})+Q_{n}\\
					& =  \Pi_{n}^{\perp}\mathcal{F}(\rho_{n})-\Pi_{n}(\mathcal{L}_{n}\mathcal{T}_{n}-\textnormal{Id})\Pi_{n}\mathcal{F}(\rho_{n})+(L_{n}{\Pi}_{n}^{\perp}-\Pi_{n}^{\perp}\mathcal{L}_{n})\mathcal{T}_{n}\Pi_{n}\mathcal{F}(U_{n})+Q_{n}.
				\end{align}	
				\noindent $\blacktriangleright$ \textbf{Estimates of $\mathcal{F}({\rho}_{n+1})$.}
				 In the sequel, we intend to  prove  that
				$$\|\mathcal{F}({\rho}_{n+1})\|_{s_{0},\mathcal{A}_{n+1}}^{\textnormal{Lip}(\lambda)}\leqslant C_{\ast}\varepsilon^{\mathtt{b}_0} N_{n}^{-a_{1}}.$$
			To derive  this estimate it is enough to check it  with all the right hand side terms in \eqref{Decom-RTT1}.\\				
				 $\bullet$ \textit{Estimate of $\Pi_{n}^{\perp}\mathcal{F}(\rho_{n}).$} 
				We apply Taylor formula combined with \eqref{F-zero}, Lemma \ref{Tame-estimates-F} and $(\mathcal{P}1)_{n},$ leading to  
				\begin{align}\label{link mathcalF(Um) and Wm}
					\nonumber \forall s\geqslant s_{0},\quad\|\mathcal{F}(\rho_{n})\|_{s}^{\textnormal{Lip}(\lambda)}&\leqslant\|\mathcal{F}(0)\|_{s}^{\textnormal{Lip}(\lambda)}+\|\mathcal{F}(\rho_{n})-\mathcal{F}(0)\|_{s}^{\textnormal{Lip}(\lambda)}\\
					&\lesssim\varepsilon^{\mathtt{b}_0}+\| \rho_{n}\|_{s+2}^{\textnormal{Lip}(\lambda)},
				\end{align}
				where  we have used the following estimate, which follows from $(\mathcal{P}1)_{n}$ and \eqref{small-PP1},
				\begin{align*}
				\nonumber\| \rho_{n}\|_{s_{0}+2}^{\textnormal{Lip}(\lambda)}&\leqslant C_{\ast}\varepsilon^{\mathtt{b}_0}\lambda^{-1}\\			
				&\leqslant \varepsilon_0.
				\end{align*}
				From the standard decay properties of the projectors $\Pi_n$ and \eqref{link mathcalF(Um) and Wm}, we deduce that
				\begin{align}\label{pipFuns0}
					\|\Pi_{n}^{\perp}\mathcal{F}(\rho_{n})\|_{s_0}^{\textnormal{Lip}(\lambda)}&\leqslant N_{n}^{s_{0}-b_{1}}\|\mathcal{F}(\rho_{n})\|_{b_1}^{\textnormal{Lip}(\lambda)}\nonumber\\
					&\lesssim  N_{n}^{s_0-b_{1}}\left(\varepsilon^{\mathtt{b}_0}+\| \rho_{n}\|_{b_1+2}^{\textnormal{Lip}(\lambda)}\right).
				\end{align}
				Now, $(\mathcal{P}3)_{n}$ together with  \eqref{definition of Nm}  yield 
				\begin{align}\label{Wm in high norm}
					\nonumber\varepsilon^{\mathtt{b}_0}+\| \rho_{n}\|_{b_1+2}^{\textnormal{Lip}(\lambda)}&\leqslant\varepsilon^{\mathtt{b}_0}+C_{\ast}\varepsilon^{\mathtt{b}_0} \lambda^{-1}N_{n-1}^{\mu_{1}+2}\\
						\nonumber&\leqslant\varepsilon^{\mathtt{b}_0}+C_{\ast}\varepsilon^{\mathtt{b}_0} N_0^{1+\frac2\delta}N_{n-1}^{\mu_{1}+2}\\
					&\leqslant 2C_{\ast}\varepsilon^{\mathtt{b}_0} N_{n}^{\frac{2}{3}\mu_{1}+3+{\frac2\delta}},
				\end{align}
				where we have used from \eqref{lambda-choice}.
				\begin{equation}\label{fra-1}
				\lambda^{-1}=N_0^{1+\frac{2}{\delta}}.
				\end{equation}
				By putting together \eqref{Wm in high norm} and  \eqref{pipFuns0}, we find that
				\begin{align}\label{final estimate PiperpF(Um)}
					\|\Pi_{n}^{\perp}\mathcal{F}(\rho_{n})\|_{s_0}^{\textnormal{Lip}(\lambda)}&\lesssim  C_{\ast}\varepsilon^{\mathtt{b}_0} N_{n}^{s_{0}+\frac{2}{3}\mu_{1}+3+{\frac2\delta}-b_{1}}.
				\end{align}
				Remark that one also obtains, combining \eqref{link mathcalF(Um) and Wm}, \eqref{fra-1}, \eqref{Wm in high norm} and the fact that $\rho_n\in E_{n-1}$,
				\begin{align}\label{HDP10}
				\nonumber	\|\mathcal{F}(\rho_{n})\|_{b_{1}}^{\textnormal{Lip}(\lambda)} & \lesssim\varepsilon^{\mathtt{b}_0}+\| \rho_{n}\|_{b_{1}+2}^{\textnormal{Lip}(\lambda)}\\
				\nonumber	& \lesssim\varepsilon^{\mathtt{b}_0}+N_{n-1}^2\| \rho_{n}\|_{b_{1}}^{\textnormal{Lip}(\lambda)}\\
				&	\lesssim  		C_{\ast}\varepsilon^{\mathtt{b}_0} N_{n}^{\frac{2}{3}\mu_{1}+{3+\frac2\delta}}.
				\end{align}
				\\
				$\bullet$ \textit{Estimate of $\Pi_{n}(\mathcal{L}_{n}\mathcal{T}_{n}-\textnormal{Id})\Pi_{n}\mathcal{F}(\rho_{n})$.} 	By applying Proposition \ref{prop-inverse}-2 we can write in  $\mathcal{A}_{n+1}$
				$$\Pi_{n}(\mathcal{L}_{n}\mathcal{T}_{n}-\textnormal{Id})\Pi_{n}\mathcal{F}(\rho_{n})=\Pi_{n}\mathcal{E}_{n,1}\Pi_{n}\mathcal{F}(\rho_{n})+\Pi_{n}\mathcal{E}_{n,2}\Pi_{n}\mathcal{F}(\rho_{n})\triangleq \mathscr{E}_{n,1}+\mathscr{E}_{n,2}
				$$				with 
				
				\begin{align}\label{dimanche1}
				\nonumber \forall\, s\in [s_0,S],\quad  \|\mathscr{E}_{n,1}\|_{s_0,\mathcal{A}_{n}}^{\textnormal{Lip}(\lambda)}
			&\leqslant C N_n^{s_0-s}\lambda^{-1}\Big( \|\Pi_{n}\mathcal{F}(\rho_{n})\|_{s+2\tau+1}^{\textnormal{Lip}(\lambda)}+\| \rho_n\|_{s+4\tau+6}^{\textnormal{Lip}(\lambda)}\|\Pi_{n}\mathcal{F}(\rho_{n})\|_{s_{0}+2\tau}^{\textnormal{Lip}(\lambda)} \Big)\\
			 \|\mathscr{E}_{n,2}\|_{s_0,\mathcal{A}_{n}}^{\textnormal{Lip}(\lambda)} & \leqslant C\varepsilon^6\lambda^{-1} N_{0}^{\mu_{2}}N_{n+1}^{-\mu_{2}}\|\Pi_{n}\mathcal{F}(\rho_{n})\|_{s_{0}+2\tau+2}^{\textnormal{Lip}(\lambda)}.
			\end{align}
							 It follows that
				\begin{equation}\label{e-ai-NM}
					\|\Pi_{n}(\mathcal{L}_{n}\mathcal{T}_{n}-\textnormal{Id})\Pi_{n}\mathcal{F}(\rho_{n})\|_{s_0,\mathcal{A}_{n+1}}^{\textnormal{Lip}(\lambda)}\leqslant
					\|\mathscr{E}_{n,1}\|_{s_0,\mathcal{A}_{n}}^{\textnormal{Lip}(\lambda)}+\|\mathscr{E}_{n,2}\|_{s_0,\mathcal{A}_{n}}^{\textnormal{Lip}(\lambda)}.
				\end{equation}
		Applying \eqref{dimanche1} with $s=b_{1}$ and using \eqref{HDP10}, $(\mathcal{P}_2)_n$, $(\mathcal{P}_3)_n$, \eqref{Assump-DRP1} and \eqref{fra-1}, we get for $n\geqslant 1,$
				\begin{align}\label{Esc2n}
					\|\mathscr{E}_{n,1}\|_{s_0,\mathcal{A}_{n}}^{\textnormal{Lip}(\lambda)}&\lesssim\lambda^{-1}N_n^{s_0-b_1}\left(\|\Pi_{n}\mathcal{F}(\rho_n)\|_{b_1+2\tau+1,\mathcal{A}_{n}}^{\textnormal{Lip}(\lambda)}+\|\rho_n\|_{b_1+4\tau+6}^{\textnormal{Lip}(\lambda)}\|\Pi_{n}\mathcal{F}(\rho_n)\|_{s_0+2\tau,\mathcal{A}_{n}}^{\textnormal{Lip}(\lambda)}\right)\nonumber\\
					&\lesssim \lambda^{-1}N_n^{s_0-b_1}\left(N_n^{2\tau+1}\|\mathcal{F}(\rho_n)\|_{b_1,\mathcal{A}_{n}}^{\textnormal{Lip}(\lambda)}+N_n^{4\tau+6}\|\rho_n\|_{b_1}^{\textnormal{Lip}(\lambda)}\|\mathcal{F}(\rho_n)\|_{s_0,\mathcal{A}_{n}}^{\textnormal{Lip}(\lambda)}\right)\nonumber\\
					&\lesssim C_{\ast}\varepsilon^{\mathtt{b}_0} N_n^{s_0+\frac{2}{3}\mu_{1}+2\tau+5-b_1+{\frac{4}{\delta}}}+C_{\ast}\varepsilon^{\mathtt{b}_0} \varepsilon^{\mathtt{b}_0}N_n^{s_0+4\tau+\frac{2}{3}\mu_{1}+8-\frac{2}{3}a_{1}-b_1+{\frac{4}{\delta}}}\nonumber\\
					&\lesssim C_{\ast}\varepsilon^{\mathtt{b}_0} N_n^{s_0+4\tau+\frac{2}{3}\mu_{1}+3-b_1+{\frac{4}{\delta}}}.
				\end{align}
	On the other hand, we find from $(\mathcal{P}_2)_n$, \eqref{dimanche1}, \eqref{Assump-DRP1} and \eqref{lambda-choice}
				\begin{align}\label{Esc02n}
			\nonumber \|\mathscr{E}_{n,2}\|_{s_0,\mathcal{A}_{n}}^{\textnormal{Lip}(\lambda)}&\leqslant C\varepsilon^6\lambda^{-1} N_{0}^{\mu_{2}}N_{n+1}^{-\mu_{2}}\|\Pi_{n}\mathcal{F}(\rho_{n})\|_{s_{0}+2,\mathcal{A}_{n}}^{\textnormal{Lip}(\lambda)}\\
		\nonumber &\leqslant C C_\ast\underbrace{\varepsilon^6\lambda^{-1} N_{0}^{\mu_{2}}}_{\leqslant 1}\,\varepsilon^{\mathtt{b}_0}N_{n}^{-\mu_{2}-\frac23 a_1+2}\\
		&\leqslant C C_\ast \varepsilon^{\mathtt{b}_0} N_{n}^{-\mu_{2}-\frac23 a_1+2}.
			\end{align}
				Putting together \eqref{e-ai-NM},  \eqref{Esc2n} and \eqref{Esc02n}, we obtain for $n\geqslant 1$
				\begin{equation}\label{est-aait}
					\|\Pi_{n}(\mathcal{L}_{n}\mathcal{T}_{n}-\textnormal{Id})\Pi_{n}\mathcal{F}(\rho_{n})\|_{s_0,\mathcal{A}_{n+1}}^{\textnormal{Lip}(\lambda)}\leqslant CC_{\ast}\varepsilon^{\mathtt{b}_0}\left(N_n^{s_0+4\tau+\frac{2}{3}\mu_{1}+3-b_1+{\frac{4}{\delta}}}+N_{n}^{-\mu_{2}-\frac23 a_1+2}\right).
				\end{equation}
				For $n=0$, we deduce from the first line of \eqref{Esc2n} and \eqref{F-zero}
				\begin{align}\label{Esc2nL}
					\nonumber \|\mathscr{E}_{0,1}\|_{s_0}^{\textnormal{Lip}(\lambda)}&\lesssim\lambda^{-1}N_0^{s_0-b_1}\|\Pi_{0}\mathcal{F}(0)\|_{b_1+2\tau+1}^{\textnormal{Lip}(\lambda)}\\
	&\lesssim\lambda^{-1}N_0^{s_0-b_1}\varepsilon^{\mathtt{b}_0}.									
				\end{align}
				As to the inequality \eqref{Esc02n} it becomes in view of \eqref{lambda-choice} and \eqref{F-zero}
				\begin{align}\label{Esc02N}
			\nonumber \|\mathscr{E}_{0,2}\|_{s_0}^{\textnormal{Lip}(\lambda)}&\leqslant C\varepsilon^6\lambda^{-1} N_{0}^{\mu_{2}}N_{1}^{-\mu_{2}}\|\Pi_{0}\mathcal{F}(0)\|_{s_{0}+2}^{\textnormal{Lip}(\lambda)}\\
		\nonumber &\leqslant C C_\ast\underbrace{\varepsilon^6\lambda^{-1} N_{0}^{\mu_{2}}}_{\leqslant 1}N_{1}^{-\mu_{2}}\varepsilon^{\mathtt{b}_0}\\
		&\leqslant C C_\ast \varepsilon^{\mathtt{b}_0} N_{0}^{-\frac32\mu_{2}}.
			\end{align}
		Thus, we deduce from \eqref{Esc2nL}, \eqref{Esc02N}	and \eqref{fra-1}
							\begin{align}\label{e-ai-0}
					\|\Pi_{0}(\mathcal{L}_{0}\mathcal{T}_{0}-\textnormal{Id})\Pi_{0}\mathcal{F}(0)\|_{s_0}^{\textnormal{Lip}(\lambda)}&\leqslant\|\mathscr{E}_{0,1}\|_{s_0}^{\textnormal{Lip}(\lambda)}+\|\mathscr{E}_{0,2}\|_{s_0}^{\textnormal{Lip}(\lambda)}\nonumber\\
					& \lesssim C_\ast\varepsilon^{\mathtt{b}_0} N_{0}^{s_{0}+1+\frac2\delta-b_{1}}+C_\ast \varepsilon^{\mathtt{b}_0} N_{0}^{-\frac32\mu_{2}}\cdot
				\end{align}
				$\bullet$  \textit{Estimate of $\big(\mathcal{L}_{n}{\Pi}_{n}^{\perp}-\Pi_{n}^{\perp}\mathcal{L}_{n}\big)\mathcal{T}_{n}\Pi_{n}\mathcal{F}(\rho_{n}).$} 		
				From the structure of the operator $\mathcal{L}_{n}$ described by  \eqref{linearized f} one gets in view  of straightforward computations that 		
				$$
				\|(\mathcal{L}_{n}{\Pi}_{n}^{\perp}-\Pi_{n}^{\perp}\mathcal{L}_{n})h\|_{s_0}^{\textnormal{Lip}(\lambda)}\lesssim\varepsilon^2 N_{n}^{s_{0}-b_{1}}\left(\|h\|_{b_1+1}^{\textnormal{Lip}(\lambda)}+\|\rho_{n}\|_{b_1+1}^{\textnormal{Lip}(\lambda)}\|h\|_{s_0+1}^{\textnormal{Lip}(\lambda)}\right).
				$$
				Consequently,
				\begin{align}\label{commu-t1}
			\nonumber	\|(\mathcal{L}_{n}{\Pi}_{n}^{\perp}-\Pi_{n}^{\perp}\mathcal{L}_{n})\mathcal{T}_{n}\Pi_{n}\mathcal{F}(\rho_{n})\|_{s_0,\mathcal{A}_{n+1}}^{\textnormal{Lip}(\lambda)}&\lesssim\varepsilon^2 N_{n}^{s_{0}-b_{1}}\|\mathcal{T}_{n}\Pi_{n}\mathcal{F}(\rho_{n})\|_{b_1+1,\mathcal{A}_{n+1}}^{\textnormal{Lip}(\lambda)}\\
				&+\varepsilon^2 N_{n}^{s_{0}-b_{1}}\|\rho_{n}\|_{b_1+1}^{\textnormal{Lip}(\lambda)}\|\mathcal{T}_{n}\Pi_{n}\mathcal{F}(\rho_{n})\|_{s_0+1,\mathcal{A}_{n+1}}^{\textnormal{Lip}(\lambda)}.
				\end{align}
				Applying Proposition \ref{prop-inverse} allows to get			
				\begin{equation*}
				\forall \, s\in\,[ s_0, S],\quad\|\mathcal{T}_{n}\Pi_{n}\mathcal{F}(\rho_{n})\|_{s,\mathcal{A}_{n}}^{\textnormal{Lip}(\lambda)}\leqslant C\lambda^{-1}\Big(\|\Pi_{n}\mathcal{F}(\rho_{n})\|_{s+2\tau,\mathcal{A}_{n}}^{\textnormal{Lip}(\lambda)}+\|\rho_n\|_{s+4\tau+5}^{\textnormal{Lip}(\lambda)}\|\Pi_{n}\mathcal{F}(\rho_{n})\|_{s_0+2\tau,\mathcal{A}_{n}}^{\textnormal{Lip}(\lambda)}\Big)
			\end{equation*}
			Thus, it gives for $s=s_0+1$
			\begin{align*}
				\|\mathcal{T}_{n}\Pi_{n}\mathcal{F}(\rho_{n})\|_{s_0+1,\mathcal{A}_{n}}^{\textnormal{Lip}(\lambda)}&\leqslant C\lambda^{-1}\Big(N_n^{2\tau+1}\|\mathcal{F}(\rho_{n})\|_{s_0,\mathcal{A}_{n}}^{\textnormal{Lip}(\lambda)}+N_n^{2\tau}\|\rho_n\|_{s_0+4\tau+6}^{\textnormal{Lip}(\lambda)}\|\mathcal{F}(\rho_{n})\|_{s_0,\mathcal{A}_{n}}^{\textnormal{Lip}(\lambda)}\Big)\\
	&\leqslant CC_*\lambda^{-1}\varepsilon^{\mathtt{b}_0}\Big(N_n^{2\tau+1-\frac23 a_1} +N_n^{2\tau-\frac23 a_1}\underbrace{C_* \varepsilon^{\mathtt{b}_0}\lambda^{-1}{}}_{\leqslant   1}\Big)\\
	&\leqslant CC_*\lambda^{-1}\varepsilon^{\mathtt{b}_0}N_n^{2\tau+1-\frac23 a_1}.			
			\end{align*}
			For $s=b_1+1$ we find by virtue of \eqref{HDP10}, \eqref{Assump-DRP1}, $(\mathcal{P}2)_{n}$ and  $(\mathcal{P}3)_{n}$
			\begin{align*}
				\|\mathcal{T}_{n}\Pi_{n}\mathcal{F}(\rho_{n})\|_{b_1+1,\mathcal{A}_{n}}^{\textnormal{Lip}(\lambda)}&\leqslant C\lambda^{-1}\Big(\|\Pi_{n}\mathcal{F}(\rho_{n})\|_{b_1+2\tau+1,\mathcal{A}_{n}}^{\textnormal{Lip}(\lambda)}+\|\rho_n\|_{b_1+4\tau+6}^{\textnormal{Lip}(\lambda)}\|\Pi_{n}\mathcal{F}(\rho_{n})\|_{s_0+2\tau,\mathcal{A}_{n}}^{\textnormal{Lip}(\lambda)}\Big)\\
			&\leqslant CC_*\lambda^{-1}\varepsilon^{\mathtt{b}_0}\Big(N_{n}^{\frac{2}{3}\mu_{1}+2\tau+4+\tfrac2\delta}+\underbrace{C_*\lambda^{-1}\varepsilon^{\mathtt{b}_0}}_{\leqslant 1} N_{n-1}^{4\tau+6+\mu_1-a_1}N_{n}^{2} \Big)\\
				&\leqslant CC_*\lambda^{-1}\varepsilon^{\mathtt{b}_0}N_{n}^{2\tau+4+\frac{2}{3}\mu_{1}+\frac2\delta}.
			\end{align*}
			Plugging  the preceding estimates  into \eqref{commu-t1} and using \eqref{lambda-choice} yields
			\begin{align}\label{final estimate commutator}
			\nonumber 	\|(\mathcal{L}_{n}{\Pi}_{n}^{\perp}-\Pi_{n}^{\perp}\mathcal{L}_{n})\mathcal{T}_{n}\Pi_{n}\mathcal{F}(\rho_{n})\|_{s_0,\mathcal{A}_{n+1}}^{\textnormal{Lip}(\lambda)}&\lesssim \lambda^{-1}\varepsilon^{2} N_{n}^{s_{0}-b_{1}}\left(\varepsilon^{\mathtt{b}_0}N_{n}^{2\tau+4+\frac{2}{3}\mu_{1}+\frac2\delta}+\varepsilon^{2\mathtt{b}_0}\lambda^{-1} N_n^{2+2\tau+\frac23\mu_1-\frac23 a_1}\right)\\
				\nonumber&\lesssim \lambda^{-1}\varepsilon^{2+\mathtt{b}_0} (1+\varepsilon^{\mathtt{b}_0} \lambda^{-1})N_{n}^{s_{0}-b_{1}+2\tau+4+\frac{2}{3}\mu_{1}+\frac{2}{\delta}}\\
				&\lesssim \varepsilon^{\mathtt{b}_0} N_{n}^{s_{0}-b_{1}+2\tau+5+\frac{2}{3}\mu_{1}+\frac{2}{\delta}}.
				\end{align}
				For $n=0$, we get in view of \eqref{commu-t1} and Proposition \ref{prop-inverse}-2 and \eqref{F-zero} with \eqref{lambda-choice}
								\begin{align}\label{Thm-1}
	\nonumber			\|(\mathcal{L}_{0}{\Pi}_{0}^{\perp}-\Pi_{0}^{\perp}\mathcal{L}_{0})\mathcal{T}_{0}\Pi_{0}\mathcal{F}(\rho_{0})\|_{s_0}^{\textnormal{Lip}(\lambda)}&\lesssim\varepsilon^2 N_{0}^{s_{0}-b_{1}}\|\mathcal{T}_{0}\Pi_{0}\mathcal{F}(0)\|_{b_1+1}^{\textnormal{Lip}(\lambda)}\\
\nonumber&\lesssim\varepsilon^2\lambda^{-1} N_{0}^{s_{0}-b_{1}}	\|\mathcal{F}(0)\|_{b_1+2\tau+1}^{\textnormal{Lip}(\lambda)}\\
&\lesssim C_*\varepsilon^{\mathtt{b}_0}N_{0}^{s_{0}+1-b_{1}}			.
				\end{align}
				$\bullet$  \textit{Estimate of $Q_{n}$.} Applying Taylor formula together with \eqref{Def-Qm} lead  to
				$$Q_{n}=\int_{0}^{1}(1-t)d_{\rho}^{2}\mathcal{F}(\rho_{n}+t{u}_{n+1})[{u}_{n+1},{u}_{n+1}]dt.$$
				Thus,   we deduce from  Lemma \ref{Tame-estimates-F}-{(2)} 
				\begin{align}\label{mahma-YDa1}
					\| Q_{n}\|_{s_0,\mathcal{A}_{n}}^{\textnormal{Lip}(\lambda)}\lesssim\varepsilon^2\left(1+\|\rho_{n}\|_{s_0+2}^{\textnormal{Lip}(\lambda)}+\| {u}_{n+1}\|_{s_0+2,\mathcal{A}_{n}}^{\textnormal{Lip}(\lambda)}\right)\left(\| {u}_{n+1}\|_{s_0+2,\mathcal{A}_{n}}^{\textnormal{Lip}(\lambda)}\right)^{2}.
				\end{align}
				Combining \eqref{def-un}, \eqref{estimate Tm}, \eqref{link mathcalF(Um) and Wm} and $(\mathcal{P}2)_{n}$ , we find for all $s\in[s_{0},S]$ 
				\begin{align}\label{Tu-L1}
					\| {u}_{n+1}\|_{s,\mathcal{A}_{n}}^{\textnormal{Lip}(\lambda)} & =  \|{\Pi}_{n}\mathcal{T}_{n}\Pi_{n}\mathcal{F}(\rho_{n})\|_{s,\mathcal{A}_{n}}^{\textnormal{Lip}(\lambda)}\nonumber\\
					& \lesssim  \lambda^{-1}\left(\|\Pi_{n}\mathcal{F}(\rho_{n})\|_{s+2\tau,\mathcal{A}_{n}}^{\textnormal{Lip}(\lambda)}+\|\rho_{n}\|_{s+2\tau}^{\textnormal{Lip}(\lambda)}\|\Pi_{n}\mathcal{F}(\rho_{n})\|_{s_0+2\tau,\mathcal{A}_{n}}^{\textnormal{Lip}(\lambda)}\right)\nonumber\\
				\nonumber 	& \lesssim  \lambda^{-1}\left(N_n^{2\tau}\|\Pi_{n}\mathcal{F}(\rho_{n})\|_{s,\mathcal{A}_{n}}^{\textnormal{Lip}(\lambda)}+N_n^{2\tau}\|\rho_{n}\|_{s+2\tau}^{\textnormal{Lip}(\lambda)}\|\Pi_{n}\mathcal{F}(\rho_{n})\|_{s_0,\mathcal{A}_{n}}^{\textnormal{Lip}(\lambda)}\right)\nonumber\\
					& \lesssim  \lambda^{-1}N_{n}^{2\tau}\left(\varepsilon^{\mathtt{b}_0}+\| \rho_{n}\|_{s+2\tau}^{\textnormal{Lip}(\lambda)}\right).
				\end{align}
				For $s={2s_0+2\tau+3}$, we get  according to the third line of  \eqref{Tu-L1}, \eqref{small-PP1}, $(\mathcal{P}1)_{n}$ and $(\mathcal{P}2)_{n}$, 
				\begin{align}\label{prat0}
					\nonumber \| {u}_{n+1}\|_{{2s_0+2\tau+3},\mathcal{A}_{n}}^{\textnormal{Lip}(\lambda)}&\lesssim\lambda^{-1}N_{n}^{{2s_0+4\tau+3}}\|\mathcal{F}(\rho_{n})\|_{s_{0},\mathcal{A}_{n}}^{\textnormal{Lip}(\lambda)}\\
					\nonumber&\lesssim C_{\ast}\varepsilon^{\mathtt{b}_0}\lambda^{-1} N_{n}^{{2s_0+4\tau+3}-\frac23 a_1}\\
					&\lesssim C_{\ast} N_{n}^{{2s_0+4\tau+3}-\frac23 a_1}.
				\end{align}
				Choosing $\varepsilon$ small enough and using $(\mathcal{P}1)_{n}$ and \eqref{small-PP1}, we find
								\begin{align*}
					\|\rho_{n}\|_{2s_0+2\tau+3}^{\textnormal{Lip}(\lambda)}+\| {u}_{n+1}\|_{2s_0+2\tau+3,\mathcal{A}_{n}}^{\textnormal{Lip}(\lambda)} & \leqslant C_{\ast}\varepsilon^{\mathtt{b}_0}\lambda^{-1}+C_{\ast}\varepsilon^{\mathtt{b}_0}\lambda^{-1} N_{n}^{2s_0+4\tau+3-\frac23 a_1}\\
					& \leqslant 2,
				\end{align*}
				where we have used that
				$$
				{a_1\geqslant 3s_0+6\tau+5}
				$$
				which follows from \eqref{Assump-DRP1}. In a similar way to \eqref{prat0}, we get
				\begin{align*}
					\| {u}_{n+1}\|_{s_0+2,\mathcal{A}_{n}}^{\textnormal{Lip}(\lambda)} &\leqslant  C_{\ast}\varepsilon^{\mathtt{b}_0}\lambda^{-1} N_{n}^{s_0+2\tau+2-\frac23 a_1}\\
					& \leqslant 2.
				\end{align*}
				Hence,  plugging this estimate  into \eqref{mahma-YDa1}  and using the second line of \eqref{prat0} together with \eqref{lambda-choice},\eqref{small-PP1} and   \eqref{fra-1}, we find   $n\geqslant 1$
				\begin{align}\label{final estimate for Qm}
					\nonumber\| Q_{n}\|_{s_{0},\mathcal{A}_{n}}^{\textnormal{Lip}(\lambda)} & \lesssim \varepsilon^2\left(\| {u}_{n+1}\|_{s_{0}+2,\mathcal{A}_{n}}^{\textnormal{Lip}(\lambda)}\right)^{2}\\
								\nonumber 	& \leqslant C C_{\ast}^2\varepsilon^{2+2\mathtt{b}_0}\lambda^{-2} N_{n}^{4\tau+4-\frac43a_1}\\
									& \leqslant C C_{\ast}\varepsilon^{\mathtt{b}_0}N_{n}^{4\tau+5-\frac43a_1}.
				\end{align}
				For $n=0$, we come back to the second line of \eqref{Tu-L1} and use \eqref{F-zero} to obtain for all $s\in[s_{0},S]$
				\begin{align}\label{H1}
					\|{u}_{1}\|_{s}^{\textnormal{Lip}(\lambda)}&\lesssim\lambda^{-1}\|\Pi_{0}\mathcal{F}(0)\|_{s+2\tau}^{\textnormal{Lip}(\lambda)}\nonumber\\
					&\leqslant C_{\ast}\lambda^{-1}\varepsilon^{\mathtt{b}_0}.
				\end{align}
				Finally, the inequality \eqref{final estimate for Qm} becomes for $n=0$, in view of \eqref{H1}
								\begin{align}\label{e-Q0}
				 \|Q_{0}\|_{s_{0}}^{\textnormal{Lip}(\lambda)}&\lesssim C_{\ast}\varepsilon^{2}(\lambda^{-1}\varepsilon^{\mathtt{b}_0})^2.
				\end{align}
				 \textit{Conclusion.}
				Inserting  \eqref{final estimate PiperpF(Um)}, \eqref{est-aait}, \eqref{final estimate commutator} and \eqref{final estimate for Qm}, into \eqref{Decom-RTT1} implies for $n\in\mathbb{N}^{*}$,
				\begin{align*}
					\|\mathcal{F}({\rho}_{n+1})\|_{s_{0},\mathcal{A}_{n+1}}^{\textnormal{Lip}(\lambda)}&\leqslant CC_{\ast}\varepsilon^{\mathtt{b}_0}\left(N_{n}^{s_{0}+\frac{2}{3}\mu_{1}+3+{\frac2\delta}-b_{1}}+N_n^{s_0+4\tau+\frac{2}{3}\mu_{1}+3+{\frac4\delta}-b_1}+N_{n}^{-\mu_{2}-\frac23 a_1+2}\right)\\
					&\qquad +C C_\ast\varepsilon^{\mathtt{b}_0} N_{n}^{s_{0}-b_{1}+2\tau+\frac{2}{3}\mu_{1}+5+{\frac2\delta}}+C C_{\ast}\varepsilon^{\mathtt{b}_0} N_{n}^{4\tau+5-\frac43a_1}.
				\end{align*} 
				The parameters conditions stated in \eqref{Assump-DRP1} give
				\begin{equation}\label{Assump-DR1}\left\lbrace\begin{array}{rcl}
						s_{0}+4\tau+3+\frac{2}{3}\mu_{1}+{\frac4\delta}+a_1&< & b_{1}\\
						\frac{1}{3}a_{1}+2 &< & \mu_{2}\\
						
						4\tau+5&< & \frac{1}{3}a_{1}.
					\end{array}\right.
				\end{equation}
				Thus, by taking $N_{0}$ large enough, that is $\varepsilon$ small enough, we obtain for  $n\in\mathbb{N},$
				\begin{align}\label{Dabdoub1}
					\|\mathcal{F}({\rho}_{n+1})\|_{s_{0},\mathcal{A}_{n+1}}^{\textnormal{Lip}(\lambda)}\leqslant C_{\ast}\varepsilon^{\mathtt{b}_0} N_{n}^{-a_{1}}.
				\end{align}
				However, for  $n=0$, we plug \eqref{final estimate PiperpF(Um)}, \eqref{e-ai-0},  \eqref{Thm-1} and \eqref{e-Q0} into \eqref{Decom-RTT1}, using \eqref{lambda-choice}  in order to get
				\begin{align*}\|\mathcal{F}({\rho}_{1})\|_{s_{0}}^{\textnormal{Lip}(\lambda)}&\leqslant CC_{\ast}\varepsilon^{\mathtt{b}_0}\left(N_{0}^{s_{0}+\frac{2}{3}\mu_{1}+3+\frac2\delta-b_{1}}+N_{0}^{s_{0}+1+\frac2\delta-b_{1}}+  N_{0}^{-\frac32\mu_{2}}+ N_{0}^{s_{0}+1-b_{1}}+\varepsilon^{2}\lambda^{-2}\varepsilon^{\mathtt{b}_0}\right)\\
				&\leqslant CC_{\ast}\varepsilon^{\mathtt{b}_0}\left(N_{0}^{s_{0}+\frac{2}{3}\mu_{1}+3+\frac2\delta-b_{1}}+  N_{0}^{-\frac32\mu_{2}}+N_0^{-\frac{{\mathtt{b}_0}-2-2\delta}{\delta}}\right).
				\end{align*}
At this level, we impose 
\begin{equation}\label{Assump-DRL1}\left\lbrace\begin{array}{rcl}
						s_{0}+\frac23\mu_1+3+\frac2\delta+a_1&< & b_{1}\\
						a_1&<&\frac32\mu_2\\
						a_1&< & \frac{{\mathtt{b}_0}-2-2\delta}{\delta}
					\end{array}\right.
				\end{equation}
				in order to get for $\varepsilon$ small enough
				\begin{align*}\|\mathcal{F}({\rho}_{1})\|_{s_{0}}^{\textnormal{Lip}(\lambda)}&\leqslant C_{\ast}\varepsilon^{\mathtt{b}_0} N_0^{-a_1}.
				\end{align*}			
				We point out that the first assumption in \eqref{Assump-DRL1} follows from  the first one in \eqref{Assump-DR1} and the third one is equivalent to  
				\begin{align}\label{dran-1}0<\delta< \tfrac{\mathtt{b}_0-2}{a_1+2}=\tfrac{1-\mu}{a_1+2}\cdot
				\end{align}
				As to the second one, it follows immediately from the second one in \eqref{Assump-DRP1}.
				Hence
				$$\|\mathcal{F}({\rho}_{1})\|_{s_{0}}^{\textnormal{Lip}(\lambda)}\leqslant C_{\ast}\varepsilon^{\mathtt{b}_0} N_{0}^{-a_{1}}.$$ 
				This completes the proof of the estimates in $(\mathcal{P}2)_{n+1}.$ \\
				
				\noindent $\blacktriangleright$ \textbf{Verification of $(\mathcal{P}1)_{n+1}-(\mathcal{P}3)_{n+1}.$} 
Using \eqref{ext-Hm1}, \eqref{Tu-L1}, $(\mathcal{P}3)_{n}$ and \eqref{fra-1} we deduce that
\begin{align}\label{Tu-LT1}
					\nonumber\| {u}_{n+1}\|_{b_1}^{\textnormal{Lip}(\lambda)} 
					& \leqslant C  \lambda^{-1}N_{n}^{2\tau}\left(\varepsilon^{\mathtt{b}_0}+\| \rho_{n}\|_{b_1+2\tau}^{\textnormal{Lip}(\lambda)}\right)\\
					\nonumber&\leqslant C C_*\varepsilon^{\mathtt{b}_0} \lambda^{-2}N_{n}^{4\tau+\frac23\mu_1}\\
					&\leqslant C C_*\varepsilon^{\mathtt{b}_0}\lambda^{-1}N_{n}^{4\tau+\frac23\mu_1+1+\frac2\delta}.
				\end{align}
Now gathering \eqref{Tu-LT1} and $(\mathcal{P}3)_{n}$ allows to write
				\begin{align*}
					\|\rho_{n+1}\|_{b_1}^{\textnormal{Lip}(\lambda)}  & \leqslant  \| \rho_{n}\|_{b_1}^{\textnormal{Lip}(\lambda)}+ \|u_{n+1}\|_{b_1}^{\textnormal{Lip}(\lambda)} \\
					& \leqslant  C_{\ast} \varepsilon^{\mathtt{b}_0}\lambda^{-1}N_{n}^{\frac23\mu_{1}}+CC_\ast\varepsilon^{\mathtt{b}_0}\lambda^{-1}N_{n}^{4\tau+\frac23\mu_1+1+\frac2\delta}\\
					& \leqslant  C_{\ast} \varepsilon^{\mathtt{b}_0}\lambda^{-1}N_{n}^{\mu_{1}}
				\end{align*}
				provided that
\begin{align}\label{bling3}
4\tau+1+\tfrac{2}{\delta}<\tfrac13\mu_1,
\end{align}
which follows from \eqref{Assump-DRP1}.		
This achieves $(\mathcal{P}3)_{n+1}$. \\
Using the Frequency localization of $u_{n+1}$, that is ${u}_{n+1}\in E_n$, together with  \eqref{prat0} yield
\begin{align*}
					\nonumber \| {u}_{n+1}\|_{2s_{0}+2\tau+3}^{\textnormal{Lip}(\lambda)}	 
					\nonumber &\leqslant C C_{\ast}\varepsilon^{\mathtt{b}_0}\lambda^{-1} N_{n}^{2s_0+4\tau+3-\frac23 a_1}\\
					&\leqslant C C_{\ast}\varepsilon^{\mathtt{b}_0}\lambda^{-1} N_{n}^{-a_2}
				\end{align*}
				provided that
				\begin{align*}
				{2s_0+4\tau+3+a_2}<\tfrac23 a_1,
				\end{align*}
				which is a consequence of  \eqref{Assump-DRP1}. 
								By $(\mathcal{P}1)_{n}$, we infer
								\begin{align*}
					\|\rho_{n+1}\|_{2s_{0}+2\tau+3}^{\textnormal{Lip}(\lambda)}&\leqslant\|u_{1}\|_{2s_{0}+2\tau+3}^{\textnormal{Lip}(\lambda)}+\sum_{k=2}^{n+1}\|u_{k}\|_{2s_{0}+2\tau+3}^{\textnormal{Lip}(\lambda)}\\
					&\leqslant \tfrac{1}{2}C_{\ast}\varepsilon^{\mathtt{b}_0}\lambda^{-1}+C_{\ast}\varepsilon^{\mathtt{b}_0}\lambda^{-1}\sum_{k=0}^{\infty}N_{k}^{-a_2}\\
					&\leqslant \tfrac{1}{2}C_{\ast}\varepsilon^{\mathtt{b}_0}\lambda^{-1}+CN_{0}^{-a_2}C_{\ast}\varepsilon^{\mathtt{b}_0}\lambda^{-1}\\
					&\leqslant C_{\ast}\varepsilon^{\mathtt{b}_0}\lambda^{-1}.
				\end{align*}
				This completes the proof of $(\mathcal{P}1)_{n+1}.$  The proof of Proposition \ref{Nash-Moser} is now complete.
\end{proof}
The next target is to study the convergence of Nash-Moser  scheme stated in Proposition \ref{Nash-Moser} and show that the limit is a solution to the problem \eqref{main-eq1}. For this aim, we need to introduce the final Cantor set,
\begin{equation}\label{Cantor-set01}\mathtt{C}_{\infty}\triangleq\bigcap_{m\in\mathbb{N}}\mathcal{A}_{m}.
\end{equation}
\begin{coro}\label{prop-construction}
There exists $\xi_0\in\mathcal{O}\mapsto \rho_\infty$ satisfying
$$\|\rho_\infty\|_{2s_{0}+2\tau+3}^{\textnormal{Lip}(\lambda)}\leqslant C_{\ast}\varepsilon^{\mathtt{b}_0}\lambda^{-1}\quad \hbox{and}\quad \|\rho_\infty-\rho_m\|_{2s_{0}+2\tau+3}^{\textnormal{Lip}(\lambda)}\leqslant  C_{\ast}\varepsilon^{\mathtt{b}_0}\lambda^{-1}N_{m}^{-a_{2}}
$$
such that  
$$\forall \xi_0\in \mathtt{C}_{\infty}, \quad \mathcal{F}\big(\rho_{\infty}(\xi_0)\big)=0.
$$
\end{coro}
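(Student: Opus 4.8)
The statement is the standard Cauchy-type convergence argument that turns the Nash--Moser scheme of Proposition~\ref{Nash-Moser} into an actual solution. The plan is to exploit the high-regularity bound $(\mathcal{P}3)_n$ and the fast decay of the increments $u_{n}$ at the lower regularity level $s_1\triangleq 2s_0+2\tau+3$ given in $(\mathcal{P}1)_n$, together with interpolation, to show that $(\rho_n)_{n\in\mathbb{N}}$ is a Cauchy sequence in $\textnormal{Lip}_\lambda(\mathcal{O},H^{s_1})$. First I would recall from $(\mathcal{P}1)_n$ that for $n\geqslant 1$,
$$
\|u_{n+1}\|_{s_1}^{\textnormal{Lip}(\lambda)}\leqslant C_\ast\varepsilon^{\mathtt{b}_0}\lambda^{-1}N_n^{-a_2},
$$
so that $\sum_{n\geqslant 0}\|u_{n+1}\|_{s_1}^{\textnormal{Lip}(\lambda)}<\infty$ because $a_2>1+\tau>0$ and $N_n=N_0^{(3/2)^n}$ grows super-exponentially. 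Hence the telescoping series $\rho_\infty\triangleq \rho_0+\sum_{n\geqslant 0}u_{n+1}=\lim_{n\to\infty}\rho_n$ converges in $\textnormal{Lip}_\lambda(\mathcal{O},H^{s_1})$, with the quantitative tail bound
$$
\|\rho_\infty-\rho_m\|_{s_1}^{\textnormal{Lip}(\lambda)}\leqslant \sum_{k\geqslant m}\|u_{k+1}\|_{s_1}^{\textnormal{Lip}(\lambda)}\leqslant C_\ast\varepsilon^{\mathtt{b}_0}\lambda^{-1}\sum_{k\geqslant m}N_k^{-a_2}\leqslant C_\ast\varepsilon^{\mathtt{b}_0}\lambda^{-1}N_m^{-a_2}
$$
after adjusting the constant (using $N_{k}\geqslant N_m^{(3/2)^{k-m}}$ to dominate the geometric-like tail by its first term). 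Taking $m=0$ and using $\rho_0=0$ gives $\|\rho_\infty\|_{s_1}^{\textnormal{Lip}(\lambda)}\leqslant C_\ast\varepsilon^{\mathtt{b}_0}\lambda^{-1}$, which is the first claimed estimate; the second is exactly the tail bound above.

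\textbf{Passing to the limit in the equation.} Next I would fix $\xi_0\in\mathtt{C}_\infty=\bigcap_{m}\mathcal{A}_m$ and show $\mathcal{F}(\rho_\infty(\xi_0))=0$. On $\mathtt{C}_\infty\subset\mathcal{A}_n$ for every $n$, property $(\mathcal{P}2)_n$ gives
$$
\|\mathcal{F}(\rho_n)\|_{s_0}^{\textnormal{Lip}(\lambda)}\leqslant C_\ast\varepsilon^{\mathtt{b}_0}N_{n-1}^{-a_1}\xrightarrow[n\to\infty]{}0 .
$$
On the other hand, $\mathcal{F}$ is continuous from $H^{s_1}$ (or $H^{s_0+2}$, using $s_1\geqslant s_0+2$) into $H^{s_0}$: this follows from the tame estimates in Lemma~\ref{Tame-estimates-F}-(1) applied to $\mathcal{F}(\rho_n)-\mathcal{F}(\rho_\infty)=\int_0^1\partial_\rho\mathcal{F}(\rho_\infty+t(\rho_n-\rho_\infty))[\rho_n-\rho_\infty]\,dt$, together with the uniform bound $\|\rho_n\|_{s_1}^{\textnormal{Lip}(\lambda)}\lesssim 1$ (valid for $\varepsilon$ small, from $(\mathcal{P}1)_n$ and \eqref{small-PP1}) and the convergence $\rho_n\to\rho_\infty$ in $H^{s_1}$ just established. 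Hence $\mathcal{F}(\rho_n)\to\mathcal{F}(\rho_\infty)$ in $H^{s_0}$ pointwise in $\xi_0\in\mathtt{C}_\infty$, and comparing with the vanishing of $\|\mathcal{F}(\rho_n)\|_{s_0}$ yields $\mathcal{F}(\rho_\infty(\xi_0))=0$ for all $\xi_0\in\mathtt{C}_\infty$.

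\textbf{Main obstacle.} Most of the work here is bookkeeping with the scales $s_0,s_1,b_1$ and the exponents in \eqref{Assump-DRP1}; the one genuinely delicate point is ensuring that the object $\rho_\infty$ is well defined \emph{as a Lipschitz function of $\xi_0$ on the whole of $\mathcal{O}$} (not merely on $\mathtt{C}_\infty$), which is why the scheme was run with the extension operator $\mathrm{Ext}$ of Lemma~\ref{thm-extend}: each $u_{n+1}$ in \eqref{def-un} is defined on all of $\mathcal{O}$ and the bounds in $(\mathcal{P}1)_n$ hold in $\|\cdot\|_{s,\mathcal{O}}^{\textnormal{Lip}(\lambda)}$, so the telescoping series converges in $\textnormal{Lip}_\lambda(\mathcal{O},H^{s_1})$ and $\rho_\infty$ inherits the global Lipschitz dependence; only the \emph{equation} $\mathcal{F}(\rho_\infty)=0$ is restricted to $\mathtt{C}_\infty$, since only there do the approximate inverses produced by Proposition~\ref{prop-inverse} satisfy the exact identity used in $(\mathcal{P}2)_n$. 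The second subtle point, already hidden inside the continuity argument, is that one must interpolate: convergence is only in the low norm $s_1$, but $(\mathcal{P}3)_n$ controls the high norm $b_1$, and the Lemma~\ref{lem funct prop} interpolation inequality then upgrades convergence to any intermediate $H^s$ with $s_1\leqslant s<b_1$, which is what makes $\rho_\infty$ genuinely smooth (in $C^\infty(\mathbb{T}^2)$ for each $\xi_0$) and hence an honest classical solution of the contour dynamics equation after undoing the rescaling \eqref{def nonlinear functional fN}.
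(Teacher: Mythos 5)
Your proposal is correct and follows essentially the same route as the paper: the telescoping series $\rho_\infty=\sum_n u_n$, summability and the tail bound from $(\mathcal{P}1)_n$ together with the super-exponential growth of $N_n$, and then passing to the limit in $\mathcal{F}(\rho_n)=O(N_{n-1}^{-a_1})$ on $\mathtt{C}_\infty$ via $(\mathcal{P}2)_n$. The only difference is that you spell out the continuity of $\mathcal{F}$ (via the tame estimate of Lemma~\ref{Tame-estimates-F}) and the role of the extension operator, which the paper leaves implicit; these are welcome elaborations, not a different argument.
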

\begin{proof}
According to Proposition \ref{Nash-Moser} one may write for each $m\geqslant1,$
$$
\rho_m=\sum_{n=1}^m u_n.
$$
Define the formal infinite sum
$$
\rho_\infty\triangleq\,\sum_{n=1}^\infty u_n.
$$
By using the estimates of $(\mathcal{P}1)_{m}$, \eqref{definition of Nm} and \eqref{lambda-choice} we get
\begin{align*}
\|\rho_\infty\|_{2s_{0}+2\tau+3}^{\textnormal{Lip}(\lambda)}&\leqslant\sum_{n=1}^\infty \|u_n\|_{2s_0+2\tau+3}^{\textnormal{Lip}(\lambda)}\\
&\leqslant \tfrac12 C_{\ast}\varepsilon^{\mathtt{b}_0}\lambda^{-1}+C_{\ast}\varepsilon^{\mathtt{b}_0}\lambda^{-1}\sum_{n=1}^\infty N_{n}^{-a_{2}}\\
&\leqslant \tfrac12 C_{\ast}\varepsilon^{\mathtt{b}_0}\lambda^{-1}+C_{\ast}\varepsilon^{\mathtt{b}_0}\lambda^{-1}N_{0}^{-a_{2}}.
\end{align*}
Thus for $ \varepsilon$ small enough,  we deduce that
\begin{align*}
\|\rho_\infty\|_{2s_{0}+2\tau+3}^{\textnormal{Lip}(\lambda)}
&\leqslant  C_{\ast}\varepsilon^{\mathtt{b}_0}\lambda^{-1}.
\end{align*}
On the other hand, we get in a similar way
\begin{align*}
\|\rho_\infty-\rho_m\|_{2s_{0}+2\tau+3}^{\textnormal{Lip}(\lambda)}&\leqslant\sum_{n=m+1}^\infty \|u_n\|_{2s_{0}+2\tau+3}^{\textnormal{Lip}(\lambda)}\\
&\leqslant C_{\ast}\varepsilon^{\mathtt{b}_0}\lambda^{-1}\sum_{n=m}^\infty N_{n}^{-a_{2}}\\
&\leqslant C_{\ast}\varepsilon^{\mathtt{b}_0}\lambda^{-1}N_{m}^{-a_{2}}.
\end{align*}
It follows that the sequence $(\rho_m)_{m\geqslant1}$ converges pointwisely to $\rho_\infty$. Applying $(\mathcal{P}2)_{m}$ yields 
$$\|\mathcal{F}(\rho_{m})\|_{s_{0},\mathtt{C}_\infty}^{{\textnormal{Lip}(\lambda)}}\leqslant C_{\ast}\varepsilon^{\mathtt{b}_0} N_{m-1}^{-a_{1}}.
$$
Hence passing to the limit leads to 
$$\forall \xi_0\in \mathtt{C}_{\infty}, \quad \mathcal{F}\big(\rho_{\infty}(\xi_0)\big)=0.
$$
This ends  the proof of the desired result.
\end{proof}
\subsection{Cantor set measure}
This section is dedicated  to examining  the measure of  the final  Cantor set $\mathtt{C}_{\infty}$ defined by  \eqref{Cantor-set01}.  It's worth noting that this set is related  to the spectrum of the linearized operators associated with the approximate  sequence $(\rho_m)_{m\in\N}$,  and for its measure we find it convenient  to  extract  a subset related only  to the solution $\rho_\infty$  constructed in Corollary \ref{prop-construction}. The measure of this latter set will be performed   straightforwardly.\\
To streamline our notation, we will unify the spectrum within the Cantor sets $\mathcal{O}_{n}^1(\rho)$ and $\mathcal{O}_{n}^2(\rho)$, which are defined in \eqref{Cantor set0} and \eqref{D-1-LL}, as follows
\begin{align}\label{mu-j1}
					\nonumber \mu_{j,k}(\xi_0,\rho)&= j\mathtt{c}(\xi_0,\rho)-\tfrac{k-1}{2}\tfrac{j}{|j|}\\
					&=j\left( \tfrac12-\varepsilon^2(\omega_0-\varepsilon \mathtt{c}_2)\right)-\tfrac{k-1}{2}\tfrac{j}{|j|}	\cdot
								\end{align}
Now, consider the following  the sets,
$$
k=1,2,\quad\mathcal{C}_{\infty}^{k}=\Big\{\xi_0\in\mathcal{O}, \; \forall \ell\in\mathbb{Z},\;  \forall    |j|\geqslant k, \; \, |\varepsilon^2 \omega(\xi_0)\ell+\mu_{j,k}(\xi_0,\rho_\infty)|\geqslant  \tfrac{2\lambda}{| j |^\tau}\Big\}.
$$
We have the following result
\begin{lemma}\label{Lem-Cantor-measu}
Under \eqref{Assump-DRP1},  we have the inclusion
$$
\mathcal{C}_{\infty}^{1}\cap \mathcal{C}_{\infty}^{2}\cap \mathcal{O}_\sigma\subset \mathtt{C}_{\infty}.
$$
In addition, there exists $\varepsilon_0>0$ and $C>0$ such that for any $\varepsilon\in(0,\varepsilon_0)$ 
$$|\mathcal{O}\backslash  \mathtt{C}_{\infty}|\leqslant C\varepsilon^{\frac{\delta}{\mathtt{n}}},
$$
where $\delta$ is defined through \eqref{lambda-choice}-\eqref{Assump-DRP1} and $\mathtt{n}$ is defined in Proposition \ref{propo-monod}.
\end{lemma}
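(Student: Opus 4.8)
The plan is to split the proof of Lemma \ref{Lem-Cantor-measu} into two parts: first establishing the inclusion $\mathcal{C}_\infty^1\cap\mathcal{C}_\infty^2\cap\mathcal{O}_\sigma\subset\mathtt{C}_\infty$, then estimating the complement $|\mathcal{O}\setminus\mathtt{C}_\infty|$ using the extracted subset. For the inclusion, I would argue by induction on $m$ that $\mathcal{C}_\infty^1\cap\mathcal{C}_\infty^2\cap\mathcal{O}_\sigma\subset\mathcal{A}_m$ for every $m$. The base case $\mathcal{A}_0=\mathcal{O}$ is trivial, and since $\mathcal{A}_{m+1}=\mathcal{A}_m\cap\mathcal{O}_m^1(\rho_m)\cap\mathcal{O}_m^2(\rho_m)\cap\mathcal{O}_\sigma$, the induction step reduces to showing that for $\xi_0$ in the limit Cantor set, the Diophantine conditions defining $\mathcal{O}_m^1(\rho_m)$ and $\mathcal{O}_m^2(\rho_m)$ (with threshold $\lambda|j|^{-\tau}$) follow from those defining $\mathcal{C}_\infty^1,\mathcal{C}_\infty^2$ (with threshold $2\lambda|j|^{-\tau}$). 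This is where the quantitative control from Corollary \ref{prop-construction} enters: $\|\rho_\infty-\rho_m\|_{2s_0+2\tau+3}^{\textnormal{Lip}(\lambda)}\lesssim \varepsilon^{\mathtt b_0}\lambda^{-1}N_m^{-a_2}$, combined with the Lipschitz dependence of $\mathtt{c}(\xi_0,\rho)$ on $\rho$ from Proposition \ref{QP-change}-(4), namely $\|\Delta_{12}\mathtt{c}\|^{\textnormal{Lip}(\lambda)}\lesssim\varepsilon^3\|\Delta_{12}\rho\|_{2s_0+2\tau+3}^{\textnormal{Lip}(\lambda)}$. Hence $|\mathtt{c}(\xi_0,\rho_\infty)-\mathtt{c}(\xi_0,\rho_m)|\lesssim\varepsilon^{3+\mathtt b_0}\lambda^{-1}N_m^{-a_2}$, so for $|j|\leqslant N_m$ the frequency shift $|j|\,|\mathtt{c}(\rho_\infty)-\mathtt{c}(\rho_m)|\lesssim\varepsilon^{3+\mathtt b_0}\lambda^{-1}N_m^{1-a_2}$, which — using $\lambda=\varepsilon^{2+\delta}$, $\mathtt b_0=3-\mu$ and $a_2>1+\tau$ from \eqref{Assump-DRP1} — is bounded by $\lambda|j|^{-\tau}$ for $\varepsilon$ small. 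This gives the inclusion $|\varepsilon^2\omega\ell+\mu_{j,k}(\xi_0,\rho_m)|\geqslant 2\lambda|j|^{-\tau}-\lambda|j|^{-\tau}=\lambda|j|^{-\tau}$, completing the induction.

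For the measure estimate, since $\mathtt{C}_\infty\supset\mathcal{C}_\infty^1\cap\mathcal{C}_\infty^2\cap\mathcal{O}_\sigma$, it suffices to bound $|\mathcal{O}\setminus\mathcal{O}_\sigma|$ and $|\mathcal{O}\setminus(\mathcal{C}_\infty^1\cap\mathcal{C}_\infty^2)|$ separately. For the first: $\mathcal{O}\setminus\mathcal{O}_\sigma=\{\xi_0: d(\xi_0,\mathbb{S})<\sigma\}$, and by Lemma \ref{Reso-R0} the set $\mathbb{S}\cap[\xi_*,\xi^*]$ is finite, so $|\mathcal{O}\setminus\mathcal{O}_\sigma|\leqslant C\sigma$; with the choice $\sigma^{\mathtt n}=\varepsilon^{-2}\lambda=\varepsilon^{\delta}$ from \eqref{def sigma}, this is $C\varepsilon^{\delta/\mathtt n}$. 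For the second: writing $\mathcal{O}\setminus(\mathcal{C}_\infty^1\cap\mathcal{C}_\infty^2)=\bigcup_{k=1,2}\bigcup_{\ell,j}R_{\ell,j}^k$ with the resonant slabs $R_{\ell,j}^k=\{\xi_0\in\mathcal{O}: |\varepsilon^2\omega(\xi_0)\ell+\mu_{j,k}(\xi_0,\rho_\infty)|<2\lambda|j|^{-\tau}\}$, I would use the non-degeneracy of $\xi_0\mapsto\omega_0(\xi_0)$ from Lemma \ref{lem-period}-(2), which gives $\inf_{\xi_0}|\omega'(\xi_0)|\geqslant c_0>0$ on the compact $[\xi_*,\xi^*]$. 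Since $\partial_{\xi_0}\mu_{j,k}(\xi_0,\rho_\infty)=j(-\varepsilon^2\omega_0'+O(\varepsilon^3))$ by \eqref{mu-j1} and \eqref{c2-estimate}, one has $\partial_{\xi_0}(\varepsilon^2\omega\ell+\mu_{j,k})=\varepsilon^2\omega_0'(\ell-j)+O(\varepsilon^3|j|)$; for $\ell\neq j$ this is $\gtrsim\varepsilon^2|\ell-j|$, and the standard argument (see e.g. the analogous estimates in \cite{BFM,HHM21,HR21}) gives $|R_{\ell,j}^k|\lesssim\frac{\lambda}{\varepsilon^2|j|^\tau|\ell-j|}$, while the slabs are empty unless $|\ell|\lesssim|j|$. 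Summing over $\ell$ costs a $\log$ and summing over $j$ converges since $\tau>1$: $|\mathcal{O}\setminus(\mathcal{C}_\infty^1\cap\mathcal{C}_\infty^2)|\lesssim\frac{\lambda}{\varepsilon^2}\sum_{|j|\geqslant 1}\frac{\log|j|}{|j|^\tau}\lesssim\varepsilon^\delta$. The case $\ell=j$ forces $|\mu_{j,k}(\xi_0,\rho_\infty)-j\varepsilon^2\omega_0|$ small, i.e. $|j|(\tfrac12-\tfrac{k-1}{2}\tfrac{1}{|j|}-2\varepsilon^2\omega_0+O(\varepsilon^3))$ small, which for $k=1$ is $\approx|j|/2$ and never small, and for $k=2$ is $\approx(|j|-1)/2$, small only for $|j|=1$ which is excluded; hence these slabs are empty for $\varepsilon$ small. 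Collecting, $|\mathcal{O}\setminus\mathtt{C}_\infty|\leqslant C\varepsilon^{\delta/\mathtt n}+C\varepsilon^\delta\leqslant C\varepsilon^{\delta/\mathtt n}$ since $\mathtt n\geqslant 1$.

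The main obstacle I anticipate is not any single estimate but the careful bookkeeping needed to make the induction $\mathcal{C}_\infty^1\cap\mathcal{C}_\infty^2\cap\mathcal{O}_\sigma\subset\mathcal{A}_m$ airtight: one must verify that the regularity index $2s_0+2\tau+3$ at which $\|\rho_\infty-\rho_m\|^{\textnormal{Lip}(\lambda)}$ is controlled in Corollary \ref{prop-construction} is exactly the index required by the Lipschitz estimate on $\mathtt{c}$ in Proposition \ref{QP-change}-(4), and that the power counting $3+\mathtt b_0-(2+\delta)=4-\mu-\delta>0$ together with $a_2>1+\tau$ (both from \eqref{Assump-DRP1}) genuinely beats the factor $N_m^{1-a_2}\cdot N_m^{\tau}=N_m^{1+\tau-a_2}<1$. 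A secondary subtlety is handling the operator $\mathtt{E}_n$ appearing in the definition of $\mathcal{A}_{m+1}$: one must recall that in this periodic (not quasi-periodic) setting $\mathtt{E}_n$ does not impose any additional constraint on $\xi_0$ — it is a genuine remainder operator absorbed in the Nash--Moser scheme, not a Cantor excision — so the only sets carved out at step $m$ are $\mathcal{O}_m^1(\rho_m)$, $\mathcal{O}_m^2(\rho_m)$ and the fixed $\mathcal{O}_\sigma$, which is precisely what the inclusion argument above addresses.
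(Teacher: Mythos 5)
Your treatment of the inclusion $\mathcal{C}_{\infty}^{1}\cap\mathcal{C}_{\infty}^{2}\cap\mathcal{O}_\sigma\subset\mathtt{C}_{\infty}$ is essentially the paper's argument: triangle inequality on the small divisors, the Lipschitz dependence \eqref{diff Vpm} of $\mathtt{c}$ on $\rho$ at the index $2s_0+2\tau+3$, the convergence rate $\|\rho_\infty-\rho_n\|^{\textnormal{Lip}(\lambda)}_{2s_0+2\tau+3}\lesssim\varepsilon^{\mathtt{b}_0}\lambda^{-1}N_n^{-a_2}$ from Corollary \ref{prop-construction}, and the exponent count $\varepsilon^{3+\mathtt{b}_0}\lambda^{-1}|j|N_n^{-a_2}\leqslant\lambda|j|^{-\tau}$ via $a_2>1+\tau$. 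Your remark that $\mathtt{E}_n$ imposes no excision is also correct. The split of the measure estimate into $|\mathcal{O}\setminus\mathcal{O}_\sigma|\lesssim\sigma=\varepsilon^{\delta/\mathtt{n}}$ (finiteness of $\mathbb{S}\cap[\xi_*,\xi^*]$) plus the excised slabs, and the use of the monotonicity of $\omega$ from Lemma \ref{lem-period} together with a sublevel-set lemma, all match the paper.

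There is, however, one concrete error in the slab localization. You assert that the slabs $R^k_{\ell,j}$ are empty unless $|\ell|\lesssim|j|$. This is backwards: since $\mathtt{c}(\xi_0,\rho_\infty)=\tfrac12+O(\varepsilon^2)$ is of order one while the time frequency is $\varepsilon^2\omega$, the quantity $\varepsilon^2\omega\ell+\mu_{j,k}$ can only be small when $\varepsilon^2|\ell|\sim|j|$, i.e.\ $|\ell|\sim\varepsilon^{-2}|j|\gg|j|$. The paper makes this precise by showing $A^k_{\ell,j}$ is the full interval whenever $\varepsilon^2\overline{b}|\ell|\geqslant|j|$ or $|j|\geqslant 24\varepsilon^2\overline{a}|\ell|$. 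If you sum only over $|\ell|\lesssim|j|$ you miss every slab that is actually nonempty. Your derivative formula $\varepsilon^2\omega'(\ell-j)+O(\varepsilon^3|j|)$ is correct, and in the true resonant band it gives $|f'_{\ell,j}|\gtrsim|j|$ (since $|\ell-j|\sim\varepsilon^{-2}|j|$ there), whence $|R^k_{\ell,j}|\lesssim\varepsilon^{2+\delta}|j|^{-1-\tau}$; summing over the $O(\varepsilon^{-2}|j|)$ admissible $\ell$ and then over $j$ gives $\varepsilon^{\delta}\sum_j|j|^{-\tau}$ with no logarithm, so $\tau>1$ suffices directly. Once the localization is corrected, your argument closes and coincides with the paper's; the final bound $C\varepsilon^{\delta/\mathtt{n}}$ is right.
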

\begin{proof}
One can check from the definitions of $\mathcal{O}_{n}^1(\rho)$ and $\mathcal{O}_{n}^2(\rho)$ seen in \eqref{Cantor set0} and \eqref{Cantor second} that
\begin{align*}\mathtt{C}_{\infty}\triangleq\bigcap_{m\in\mathbb{N}}\mathcal{A}_{m}=\mathcal{O}_{\infty,1}\cap\mathcal{O}_{\infty,2}\cap \mathcal{O}_\sigma.
\end{align*}
with 
$$
\mathcal{O}_{\infty,k}=\Big\{\xi_0\in \mathcal{O},\; \forall \ell\in\mathbb{Z},\; n\in\N,\;  k\leqslant |j|\leqslant N_n, \;\,|\varepsilon^2 \omega(\xi_0)\ell+\mu_{j,k}(\xi_0,\rho_n)|\geqslant  \tfrac{\lambda}{| j |^\tau}\Big\}.
$$
We plan to  check that for $k=1,2$
\begin{equation}\label{Mahma-l}
\mathcal{C}_{\infty}^{k}\subset \mathcal{O}_{\infty,k},
\end{equation}
which will ensure the inclusion $
\mathcal{C}_{\infty}^{1}\cap \mathcal{C}_{\infty}^{2}\subset \mathtt{C}_{\infty}
$.
For this aim, we pick $\xi_0\in \mathcal{C}_{\infty}^{k}$ and $n\in\N$, then for $k\leqslant |j|\leqslant N_n$ we get from the triangle inequality
\begin{align*}
|\varepsilon^2 \omega(\xi_0)\ell+\mu_{j,k}(\xi_0,\rho_n)|&\geqslant |\varepsilon^2 \omega(\xi_0)\ell+\mu_{j,k}(\xi_0,\rho_\infty)|-|\mu_{j,k}(\xi_0,\rho_\infty)-\mu_{j,k}(\xi_0,\rho_n)|\\
&\geqslant \tfrac{2\lambda}{| j |^\tau}-|\mu_{j,k}(\xi_0,\rho_\infty)-\mu_{j,k}(\xi_0,\rho_n)|.
\end{align*}
Applying \eqref{diff Vpm}, \eqref{lambda-choice}  and Corollary \ref{prop-construction} we get for $\varepsilon$ small enough 
\begin{align*}
|\mu_{j,k}(\xi_0,\rho_\infty)-\mu_{j,k}(\xi_0,\rho_n)|&
\leqslant C\varepsilon^{3}|j|\|\rho_{\infty}-\rho_n\|_{2s_{0}+2\tau+3}^{\textnormal{Lip}(\lambda)}\\
&  \leqslant C_{\ast} \varepsilon^{3+\mathtt{b}_0}\lambda^{-1}|j|N_n^{-a_2}\\
&\leqslant  C_{\ast}\lambda|j|^{-\tau}\varepsilon^{\mathtt{b}_0-1-2\delta}N_n^{1+\tau-a_2}.
\end{align*}
From \eqref{choice-f1} and the assumption $1+\tau<a_2$ and $\delta<\frac{2-\mu}{2}$ (see \eqref{Assump-DRP1}), we deduce  for small $\varepsilon$
\begin{align*}
|\mu_{j,k}(\xi_0,\rho_\infty)-\mu_{j,k}(\xi_0,\rho_n)|&
\leqslant  C_{\ast}\lambda|j|^{-\tau}\varepsilon^{2-\mu-2\delta}N_n^{1+\tau-a_2}\\
&\leqslant  \lambda|j|^{-\tau}.
\end{align*} 
Hence, for any $n\in\N$, for any $\ell\in\Z$ and $k\leqslant |j|\leqslant N_n$ we get
\begin{align*}
|\varepsilon^2 \omega(\xi_0)\ell+\mu_{j,k}(\xi_0,\rho_n)|&\geqslant \tfrac{\lambda}{| j|^\tau}\cdot
\end{align*}
This implies that $\xi_0\in \mathcal{O}_{\infty,k}$, which concludes the proof of \eqref{Mahma-l}.\\ Let us now move to the measure  estimate. First, we write
$$
\big|\mathcal{O}\backslash  \mathtt{C}_{\infty}\big|\leqslant \big|\mathcal{O}\backslash  \mathcal{C}_{\infty}^{1}\big|+\big|\mathcal{O}\backslash \mathcal{C}_{\infty}^{2}\big|+\big|\mathcal{O}\backslash  \mathcal{O}_{\sigma}\big|.
$$ 
Now, we  write,  by \eqref{cond-interval2} \eqref{def sigma} and \eqref{lambda-choice},   
\begin{equation}\label{cond-int3}
\big|\mathcal{O}\setminus \mathcal{O}_\sigma \big|\leqslant C\varepsilon^{\frac{\delta}{\mathtt{n}}}.
\end{equation}
Moreover, from \eqref{lambda-choice}, one has
$$\mathcal{C}_{\infty}^{k}=\bigcap_{n\in\N}\bigcap_{(\ell,j)\in\mathbb{Z}^{2}\atop k\leqslant |j|\leqslant N_{n}}A_{\ell,j}^k.
$$
with
$$A_{\ell,j}^k=\left\lbrace \xi_0\in \mathcal{O}=[\xi_*,\xi^*];\;\, \big|\varepsilon^2\omega(\xi_0)  \ell+\mu_{j,k}(\xi_0,\rho_\infty)\big|\geqslant 2\tfrac{\varepsilon^{2+\delta}}{| j|^{\tau}}\right\rbrace.
$$
Notice that
$$
\bigcap_{(\ell,j)\in\mathbb{Z}^{2}\atop k\leqslant |j|}A_{\ell,j}^k\subset \mathcal{C}_{\infty}^{k}.
$$
From Lemma \ref{lem-period}, we know that $\xi_0\mapsto \omega_0(\xi_0)$ is positive and strictly decreasing. Thus
$$
\forall \xi_0\in[\xi_*,\xi^*],\quad 0<\overline{b}=\omega_0(\xi^*)\leqslant \omega(\xi_0)\leqslant \omega_0(\xi_*)=\overline{a}.
$$
Moreover, from \eqref{mu-j1}, we obtain for $0<\varepsilon\leqslant \varepsilon_0$ 
\begin{align}\label{est-c_0}
\tfrac13\leqslant \mathtt{c}(\xi_0,\rho_\infty)\leqslant \tfrac23.
\end{align}
Hereafter, we will distinguish different cases.\\
$\bullet$ Case $\varepsilon^2 \overline{b}|\ell|\geqslant |j|\geqslant 1, k=1.$ One has by the triangle inequality 
\begin{align*}
 \big|\varepsilon^2\omega(\xi_0)  \ell+j \mathtt{c}(\xi_0,\rho_\infty)\big|&\geqslant \varepsilon^2\overline{b} |\ell|-\tfrac23|j|\\
 &\geqslant\tfrac13|j|\\
 &\geqslant \tfrac{\varepsilon^{2+\delta}}{| j|^{\tau}}\cdot
 \end{align*}
Hence
$$
 A_{\ell,j}^1=[\xi_*,\xi^*].
 $$
$\bullet$ Case $\varepsilon^2 \overline{b}|\ell|\geqslant |j|\geqslant 2, k=2.$ One has by the triangle inequality 
\begin{align*}
 \big|\varepsilon^2\omega(\xi_0)  \ell+j \mathtt{c}(\xi_0,\rho_\infty)-\tfrac{j}{2|j|}\big|&\geqslant \varepsilon^2\overline{b} |\ell|-\tfrac23|j|-\tfrac12\\
 &\geqslant\tfrac13|j|-\tfrac12\geqslant \tfrac16\\
 &\geqslant \tfrac{\varepsilon^{2+\delta}}{|j|^{\tau}}\cdot
 \end{align*}
 Therefore
 $$
 A_{\ell,j}^2=[\xi_*,\xi^*].
 $$
 $\bullet$ Case $|j|\geqslant 24\varepsilon^2 \overline{a}|\ell|, k=1.$ One gets by the triangle inequality and \eqref{est-c_0}, together with $|j|\geqslant 1$
 \begin{align*}
 \big|\varepsilon^2\omega(\xi_0)  \ell+j \mathtt{c}(\xi_0,\rho_\infty)\big|&\geqslant \tfrac13|j|-\varepsilon^2\overline{a}|\ell|\\
 &\geqslant \big(\tfrac{1}{24}|j|-\varepsilon^2\overline{a}|\ell|\big) +\tfrac{7}{24}|j|\\
 &\geqslant\tfrac{7}{24}\cdot
 \end{align*}
  Thus, for small $\varepsilon$ we infer
  \begin{align*}
 \big|\varepsilon^2\omega(\xi_0)  \ell+j \mathtt{c}(\xi_0,\rho_\infty)\big|
 &\geqslant  \tfrac{\varepsilon^{2+\delta}}{| j|^{\tau}}\cdot
 \end{align*}
 Hence
 $$
  A_{\ell,j}^1=[\xi_*,\xi^*].
  $$
  $\bullet$ Case $|j|\geqslant  24\varepsilon^2 \overline{a}|\ell|, k=2.$ One gets by the triangle inequality and \eqref{est-c_0}, together with $|j|\geqslant 2$
 \begin{align*}
 \big|\varepsilon^2\omega(\xi_0)  \ell+j \mathtt{c}(\xi_0,\rho_\infty)-\tfrac{j}{2|j|}\big|&\geqslant \tfrac13|j|-\varepsilon^2\overline{a}|\ell|-\tfrac12\\
 &\geqslant \big(\tfrac{1}{24}|j|-\varepsilon^2\overline{a}|\ell|\big) +\tfrac{7}{24}|j|-\tfrac12\\
 &\geqslant\tfrac{1}{12}\cdot
 \end{align*}
 Thus, for small $\varepsilon$ we infer
 \begin{align*}
 \big|\varepsilon^2\omega(\xi_0)  \ell+j  \mathtt{c}(\xi_0,\rho_\infty)-\tfrac{j}{2|j|}\big|&\geqslant  \tfrac{\varepsilon^{2+\delta}}{| j|^{\tau}}
 \end{align*}
 implying that
 $$
 A_{\ell,j}^2=[\xi_*,\xi^*].
 $$
 It follows that
 \begin{align}\label{Cinfty}\bigcap_{|j|\leqslant 24\varepsilon^2 \overline{a}|\ell|\atop \varepsilon^2 \overline{b}|\ell|\leqslant |j|}\ A_{\ell,j}^k\subset \mathcal{C}_{\infty}^{k}.
 \end{align}
 Define
 $$
 f_{\ell,j}(\xi_0)=\varepsilon^2\omega(\xi_0)  \ell+j  \mathtt{c}(\xi_0,\rho_\infty)-(k-1)\tfrac{j}{2|j|}\cdot
 $$
 Differentiating in $\xi_0$ and using \eqref{mu-j1} yields
 $$
  f_{\ell,j}^\prime(\xi_0)=\varepsilon^2\omega^\prime(\xi_0)  \ell-j\varepsilon^2\omega^\prime(\xi_0) +\varepsilon^{3} j   \mathtt{c}_2^\prime(\xi_0).
   $$
 Applying Lemma \ref{lem-period}, \eqref{c-2-def} and  \eqref{c2-estimate} we infer that 
 $$\forall \xi_0\in[\xi_*,\xi^*],\quad 0<\underline{c}\leqslant |\omega_0^\prime(\xi_0)|\leqslant \overline{c}\quad\hbox{and}\quad |\mathtt{c}_2^\prime(\xi_0)|\leqslant c_1
 $$ and thus
 \begin{align*}
 \forall \xi_0\in[a,b],\quad  |f_{j,\ell}^\prime(\xi_0)|\geqslant \underline{c}\varepsilon^2  |\ell|-\overline{c}|j|\varepsilon^2-\varepsilon^{3} |j| c_1.
  \end{align*}
  Hence, as $|j|\leqslant 24\varepsilon^2 \overline{a}|\ell|,$ we get
  \begin{align*}
 | f_{j,\ell}^\prime(\xi_0)|\geqslant |j|\big(\tfrac{\underline{c}}{24\overline{a}}-\overline{c}\varepsilon^2-c_1\varepsilon^{3} \big),
  \end{align*}
  which implies for  $\varepsilon$ small enough 
   \begin{align*}
  |f_{j,\ell}^\prime(\xi_0)|\geqslant \tfrac{\overline{c}}{48\overline{a}} |j|.
   \end{align*}
  Applying Lemma \ref{Piralt}, we deduce that
  \begin{align*}
  \left|\left\lbrace \xi_0\in [\xi_*,\xi^*];\;\, \big|\varepsilon^2\omega(\xi_0)  \ell+j \mathtt{c}(\xi_0,\rho_\infty)-(k-1)\tfrac{j}{2|j|}\big|<2 \tfrac{\varepsilon^{2+\delta}}{| j|^{\tau}}\right\rbrace\right|\leqslant C\tfrac{\varepsilon^{2+\delta}}{| j|^{1+\tau}}\cdot
  \end{align*}
  Consequently we get from \eqref{Cinfty}
  \begin{align*}
  \big|[\xi_*,\xi^*]\backslash \mathcal{C}_{\infty}^{k}\big|&\leqslant C \sum_{ \varepsilon^2 \overline{b}|\ell|\leqslant |j|}\tfrac{\varepsilon^{2+\delta}}{| j|^{1+\tau}} \\
  &\leqslant C \varepsilon^{\delta} \sum_{   |j|\geqslant 1}\tfrac{1}{| j|^{\tau}} \\
  &\leqslant C \varepsilon^{\delta}.
  \end{align*}
  Finally, we get
  \begin{align*}
 \big|[\xi_*,\xi^*]\backslash \mathtt{C}_{\infty}\big|\leqslant  \big|[\xi_*,\xi^*]\backslash \mathcal{C}_{\infty}^{1}\big|+\big|[\xi_*,\xi^*]\backslash \mathcal{C}_{\infty}^{2}\big|+\big|[\xi_*,\xi^*]\setminus\mathcal{O}_\sigma\big|
  &\leqslant C \varepsilon^{\frac{\delta}{\mathtt{n}}}.
  \end{align*}
  It is  important to note that we are making a slight abuse of the argument here, as the function $f_{j,\ell}$ is Lipschitz and not $C^1$. However, we can rigorously justify the previous argument by manipulating the Lipschitz norm.
  This achieves the proof of the desired result.
\end{proof}
\appendix
\section{Appendix: Toolbox}
This section is devoted to an introduction to various technical lemmas, a collection of useful results on periodic change of coordinates together with some estimates on integral operators and the transport reduction using KAM techniques.

\subsection{Technical lemmas}

		In the  next lemma we shall collect   some useful classical results related to various actions over  weighted Sobolev spaces. The proofs are standard and can be found for instance in \cite{BFM21,BFM,BertiMontalto}.
	\begin{lemma}\label{lem funct prop}
		Let  $(\lambda,s_{0},s)$ satisfy \eqref{cond1}, then the following assertions hold true.
			\begin{enumerate}
				\item Let $\rho$ be a smooth function,  then for all $N\in\mathbb{N}^{*}$ and $t>0$,
				$$\|\Pi_{N}\rho\|_{s+t}^{\textnormal{Lip}(\lambda)}\leqslant N^{t}\|\rho\|_{s}^{\textnormal{Lip}(\lambda)}\qquad\textnormal{and}\qquad\|\Pi_{N}^{\perp}\rho\|_{s}^{\textnormal{Lip}(\lambda)}\leqslant N^{-t}\|\rho\|_{s+t}^{\textnormal{Lip}(\lambda)},
				$$
				where the cut-off projectors are defined by
	\begin{equation}\label{def projectors PiN}
		\Pi_{N} h\triangleq\sum_{\underset{\langle \ell,j\rangle\leqslant N}{(\ell,j)\in\Z^{2}}}h_{\ell,j}\mathbf{e}_{\ell,j}\qquad\textnormal{and}\qquad \Pi^{\perp}_{N}\triangleq\textnormal{Id}-\Pi_{N}.
	\end{equation}
				\item Product law : 
				Let $\rho_{1},\rho_{2}\in \textnormal{Lip}_\lambda(\mathcal{O},H^{s}).$ Then $\rho_{1}\rho_{2}\in \textnormal{Lip}_\lambda(\mathcal{O},H^{s})$ and 
				$$\| \rho_{1}\rho_{2}\|_{s}^{\textnormal{Lip}(\lambda)}\lesssim\| \rho_{1}\|_{s_{0}}^{\textnormal{Lip}(\lambda)}\| \rho_{2}\|_{s}^{\textnormal{Lip}(\lambda)}+\| \rho_{1}\|_{s}^{\textnormal{Lip}(\lambda)}\| \rho_{2}\|_{s_{0}}^{\textnormal{Lip}(\lambda)}.$$
				\item Composition law: Let $f\in C^{\infty}(\mathcal{O}\times\mathbb{R},\mathbb{R})$ and  $\rho_{1},\rho_{2}\in \textnormal{Lip}_\lambda(\mathcal{O},H^{s})$  such that $$\| \rho_{1}\|_{s}^{\textnormal{Lip}(\lambda)},\|\rho_{2}\|_{s}^{\textnormal{Lip}(\lambda)}\leqslant C_{0}$$ for an  arbitrary  constant  $C_{0}>0$ and define the pointwise composition $$\forall (\xi,\varphi,\theta)\in \mathcal{O}\times\mathbb{T}^{2},\quad f(\rho)(\xi,\varphi,\theta)\triangleq  f\big(\xi,\rho(\xi,\varphi,\theta)\big).$$
				Then 
				$$\| f(\rho_{1})-f(\rho_{2})\|_{s}^{\textnormal{Lip}(\lambda)}\leqslant C(s,d,q,f,C_{0})\| \rho_{1}-\rho_{2}\|_{s}^{\textnormal{Lip}(\lambda)}.$$
				\item Interpolation inequality : Let $1<s_{1}\leqslant s_{3}\leqslant s_{2}$ and $\overline{\theta}\in[0,1],$ with  $s_{3}=\overline{\theta} s_{1}+(1-\overline{\theta})s_{2}.$\\
				If $\rho\in \textnormal{Lip}_\lambda(\mathcal{O},H^{s_2})$, then  $\rho\in \textnormal{Lip}_\lambda(\mathcal{O},H^{s_3})$ and
				$$\|\rho\|_{s_{3}}^{\textnormal{Lip}(\lambda)}\lesssim\left(\|\rho\|_{s_{1}}^{\textnormal{Lip}(\lambda)}\right)^{\overline{\theta}}\left(\|\rho\|_{s_{2}}^{\textnormal{Lip}(\lambda)}\right)^{1-\overline{\theta}}.$$
		\end{enumerate}
	\end{lemma}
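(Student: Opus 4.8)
\textbf{Proof proposal for Lemma \ref{lem funct prop}.}

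These four assertions are classical facts about weighted Sobolev spaces with a Lipschitz dependence on the parameter $\xi_0$, and the plan is to reduce each one to the analogous statement for the fixed-parameter Sobolev norms $\|\cdot\|_{H^s}$ and then handle the extra Lipschitz seminorm by hand. For the cut-off estimates in (1), I would simply write out the Fourier series: if $h=\sum h_{\ell,j}\mathbf{e}_{\ell,j}$, then
\begin{align*}
\|\Pi_N h\|_{H^{s+t}}^2=\sum_{\langle\ell,j\rangle\leqslant N}\langle\ell,j\rangle^{2(s+t)}|h_{\ell,j}|^2\leqslant N^{2t}\sum_{\langle\ell,j\rangle\leqslant N}\langle\ell,j\rangle^{2s}|h_{\ell,j}|^2\leqslant N^{2t}\|h\|_{H^s}^2,
\end{align*}
and symmetrically $\|\Pi_N^\perp h\|_{H^s}^2=\sum_{\langle\ell,j\rangle>N}\langle\ell,j\rangle^{2s}|h_{\ell,j}|^2\leqslant N^{-2t}\|h\|_{H^{s+t}}^2$. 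Since the projectors commute with evaluation at each $\xi_0$ and with finite differences in $\xi_0$, applying the above to $h(\xi_0,\cdot)$ and to $\tfrac{h(\xi_1,\cdot)-h(\xi_2,\cdot)}{\xi_1-\xi_2}$ gives both terms in the $\textnormal{Lip}(\lambda)$ norm, establishing (1).

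For the product law (2) I would invoke the standard bilinear tame estimate $\|\rho_1\rho_2\|_{H^s}\lesssim\|\rho_1\|_{H^{s_0}}\|\rho_2\|_{H^s}+\|\rho_1\|_{H^s}\|\rho_2\|_{H^{s_0}}$ valid for $s_0>d/2=1$ (here $d=2$, so $s_0>1$ as in \eqref{cond1}), apply it at each fixed $\xi_0$, and then for the Lipschitz part use the telescoping identity $\Delta_{12}(\rho_1\rho_2)=(\Delta_{12}\rho_1)\rho_2(\xi_1)+\rho_1(\xi_2)(\Delta_{12}\rho_2)$ together with the same bilinear estimate in $H^{s-1}$, absorbing the factor $\lambda$ appropriately; summing the pieces and taking suprema yields the claimed bound. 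The composition law (3) follows the same pattern: the fixed-parameter statement $\|f(\rho_1)-f(\rho_2)\|_{H^s}\leqslant C\|\rho_1-\rho_2\|_{H^s}$ is the classical Moser-type estimate for Nemytskii operators (using the $C^\infty$ regularity of $f$ in both arguments and the a priori bound $\|\rho_i\|_{H^s}\leqslant C_0$), and for the Lipschitz seminorm one writes $\Delta_{12}\big(f(\rho_1)-f(\rho_2)\big)$ as an integral of $\partial f$ along the segment joining the states, reducing it again to product and composition estimates in $H^{s-1}$. Finally, the interpolation inequality (4) is obtained from the convexity of $s\mapsto\log\|\rho\|_{H^s}$ (a direct consequence of Hölder's inequality on the Fourier coefficients) at each $\xi_0$, and for the Lipschitz seminorm from the same convexity applied to the difference quotient at the shifted indices $s_1-1,s_2-1,s_3-1$.

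None of these steps presents a genuine obstacle; the only point requiring a little care is the bookkeeping of the $\lambda$-weights and of the one-unit shift in the regularity index built into the definition of $\|\cdot\|_s^{\textnormal{Lip}(\lambda)}$, so that the Lipschitz part of a product or composition is controlled by $H^{s-1}$-norms of the factors (and hence by their full $\textnormal{Lip}(\lambda)$ norms). Since all of these are standard and proved in detail in \cite{BFM21,BFM,BertiMontalto}, I would state the lemma with a short proof that reduces to those references, spelling out only the Fourier-multiplier computation for (1) and the telescoping identities that transfer the fixed-parameter estimates to the weighted setting.
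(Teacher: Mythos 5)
Your sketch is correct, and it matches the paper, which offers no proof at all for this lemma beyond the remark that the statements are standard and proved in \cite{BFM21,BFM,BertiMontalto} — exactly the references you defer to. The Fourier-multiplier computation for (1), the telescoping identity for the Lipschitz part of the product law, the Moser/Nemytskii estimate for (3), and the log-convexity argument for (4) are all the standard routes, and your bookkeeping of the one-unit regularity shift in the $\textnormal{Lip}(\lambda)$ norm is the only point of care, which you handle correctly.
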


	We shall state a particular statement of Kirszbraun Theorem \cite{Kirsz}.
		\begin{lemma}[Kirszbraun Theorem]\label{thm-extend}
	Given $a<b,$ $ U$ a subset of $[a,b]$ and $H$ a Hilbert space. Let $ f:U\to H$ be  a Lipschitz function, then $f$ admits a Lipschitz  extension $F=\textnormal{Ext}f: [a,b]\to H$ with the same Lipschitz constant.
	\end{lemma}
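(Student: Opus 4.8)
\textbf{Proof proposal for the final statement (Kirszbraun's theorem, Lemma \ref{thm-extend}).}

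The plan is to prove the one-dimensional, Hilbert-valued version of Kirszbraun's extension theorem that is actually needed here, namely extending a Lipschitz map $f:U\to H$ with $U\subset[a,b]$ to all of $[a,b]$ with the same Lipschitz constant $L$. The cleanest route, and the one I would carry out, proceeds by a \emph{componentwise reduction followed by a scalar McShane-type extension}. First I would reduce to the case $L=1$ by rescaling: replace $f$ by $L^{-1}f$, extend, and rescale back. Next, since the domain is one-dimensional while the target $H$ may be infinite-dimensional, I would \emph{not} try to extend $H$-valued maps directly; instead, note that on the one-dimensional domain the obstruction that makes Kirszbraun nontrivial (the geometry of intersecting balls in the target) is absent, so a direct explicit formula will work.

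Concretely, the key steps in order are as follows. (1) \emph{Reduce to $L=1$} as above. (2) \emph{Define the extension by the Valentine/McShane formula adapted to the Hilbert target}: for $x\in[a,b]$, set
\[
F(x)\triangleq \text{the unique point of } \bigcap_{u\in U} \overline{B}_H\big(f(u),\,|x-u|\big),
\]
and I would first argue this intersection is a single point. The intersection is nonempty: this is where one uses that $[a,b]$ is one-dimensional and $U\subset[a,b]$ is totally ordered. Given $x\in[a,b]$, let $u_-=\sup\{u\in U: u\le x\}$ and $u_+=\inf\{u\in U: u\ge x\}$ (handling the cases where one of these sets is empty by just using the other point, in which case $F(x)\triangleq f$ of that point). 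Then the candidate value is the point on the segment $[f(u_-),f(u_+)]\subset H$ at distance $|x-u_-|$ from $f(u_-)$, i.e.
\[
F(x)\triangleq f(u_-)+\tfrac{x-u_-}{u_+-u_-}\big(f(u_+)-f(u_-)\big),
\]
which is well defined because $\|f(u_+)-f(u_-)\|_H\le |u_+-u_-|$. (3) \emph{Check $F|_U=f$}: if $x\in U$ then $u_-=u_+=x$ (or the degenerate one-sided case), so $F(x)=f(x)$. (4) \emph{Check $F$ is $1$-Lipschitz on $[a,b]$}: take $x<y$ in $[a,b]$; by monotonicity of $u\mapsto u$ the associated bracketing points interleave, and one estimates $\|F(x)-F(y)\|_H$ by inserting the at most two relevant data points between them and using the triangle inequality together with $\|f(u)-f(u')\|_H\le|u-u'|$ for each consecutive pair; a short case analysis (both $x,y$ in the same gap of $U$; in adjacent gaps; separated by several points of $U$) gives $\|F(x)-F(y)\|_H\le |x-y|$ in each case. (5) \emph{Undo the rescaling} to recover Lipschitz constant $L$.

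The main obstacle, and the only place any care is genuinely required, is step (4): organizing the case analysis so that the bound $\|F(x)-F(y)\|_H\le|x-y|$ comes out with the sharp constant rather than a factor of $2$ or $3$. The trick is that for $x$ and $y$ lying in gaps of $U$ that are separated by points $u_1<\dots<u_k$ of $U$, one writes $F(x)-F(y)=\big(F(x)-f(u_1)\big)+\sum_{i}\big(f(u_i)-f(u_{i+1})\big)+\big(f(u_k)-F(y)\big)$ and observes that the pieces $|x-u_1|,|u_1-u_2|,\dots,|u_k-y|$ are \emph{consecutive, non-overlapping} subintervals of $[x,y]$, so their lengths sum to exactly $|x-y|$; the triangle inequality then yields the claim with constant $1$. (If one prefers to avoid even this, one may simply cite Kirszbraun's theorem \cite{Kirsz} directly, since all targets in the paper are Hilbert spaces and Kirszbraun applies verbatim there; but the explicit one-dimensional construction above is elementary and self-contained.) Once step (4) is in place, the lemma follows.
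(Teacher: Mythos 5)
Your proposal is correct in substance, but it takes a genuinely different route from the paper: the paper offers no proof at all of Lemma \ref{thm-extend}, stating it as ``a particular statement of Kirszbraun Theorem'' and citing \cite{Kirsz} directly. You instead supply a self-contained elementary argument that exploits the one-dimensionality of the domain: linear interpolation across the gaps of $U$, constant extension beyond $\inf U$ and $\sup U$, and the observation that for $x<y$ the relevant lengths $|x-u_1^+|,|u_1^+-u_2^-|,|u_2^--y|$ are consecutive non-overlapping subintervals of $[x,y]$, so the triangle inequality gives the sharp constant. This is exactly the right mechanism, and it buys something the citation does not: it avoids the genuinely nontrivial part of Kirszbraun's theorem (the Helly-type intersection property of balls in Hilbert space, needed when the domain is higher-dimensional), which in the one-dimensional setting collapses to a trivial convexity statement.

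Two small repairs are needed. First, $U$ is merely a subset of $[a,b]$, so $u_-=\sup\{u\in U:u\le x\}$ and $u_+=\inf\{u\in U:u\ge x\}$ lie in $\overline{U}$ but possibly not in $U$, and $f(u_\pm)$ is then undefined; you should first extend $f$ to $\overline{U}$ by uniform continuity (completeness of $H$), noting that the Lipschitz constant passes to the limit, and then run your construction with $\overline{U}$ in place of $U$. Second, your opening definition of $F(x)$ as ``the unique point of $\bigcap_{u\in U}\overline{B}_H(f(u),|x-u|)$'' is not right as stated: that intersection is in general a nonempty convex set, not a singleton (take $U=\{0\}$). Your interpolated point does lie in it, but uniqueness fails; since your actual Lipschitz estimate never uses the ball-intersection characterization, the fix is simply to drop that sentence and take the explicit interpolation formula as the definition.
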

		We recall the following classical result on bi-Lipschitz functions and measure theory.
	\begin{lemma}\label{Piralt}
Let  $(\alpha,\beta)\in(\mathbb{R}_{+}^{*})^{2}$ and $f:[a,b]\to \RR$ be  a bi-Lipschitz  function   such that 
$$\forall x,y \in[a,b],\quad |f(x)-f(y)|\geqslant\beta |x-y|.$$
Then  there exists $C>0$ independent of $ \|f\|_{\textnormal{Lip}}$ such that 
$$\Big|\big\lbrace x\in [a,b];\;\, |f(x)|\leqslant\alpha\big\rbrace\Big|\leqslant C\tfrac{\alpha}{\beta}\cdot $$
\end{lemma}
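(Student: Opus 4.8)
\textbf{Proof plan for Lemma \ref{Piralt}.}

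The statement is the elementary sublevel-set bound: if $f:[a,b]\to\mathbb{R}$ is Lipschitz and satisfies the lower bi-Lipschitz bound $|f(x)-f(y)|\geqslant\beta|x-y|$ for all $x,y\in[a,b]$, then $\big|\{x\in[a,b]:\,|f(x)|\leqslant\alpha\}\big|\leqslant C\alpha/\beta$. The plan is to exploit the fact that the lower bound forces $f$ to be strictly monotone, hence a bi-Lipschitz homeomorphism onto its image, and then to transport Lebesgue measure through $f^{-1}$, which is itself Lipschitz with constant $1/\beta$.

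First I would observe that the hypothesis $|f(x)-f(y)|\geqslant\beta|x-y|$ with $\beta>0$ implies $f$ is injective; being a continuous injective function on the interval $[a,b]$, it is strictly monotone (say strictly increasing, the decreasing case being identical after replacing $f$ by $-f$). Consequently $f$ is a bijection from $[a,b]$ onto the interval $J\triangleq f([a,b])=[f(a),f(b)]$, and its inverse $g\triangleq f^{-1}:J\to[a,b]$ satisfies, for all $u,v\in J$ writing $u=f(x)$, $v=f(y)$,
\begin{equation*}
|g(u)-g(v)|=|x-y|\leqslant \tfrac1\beta|f(x)-f(y)|=\tfrac1\beta|u-v|,
\end{equation*}
so $g$ is Lipschitz with constant $1/\beta$.

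Next I would identify the sublevel set. Since $f$ is a homeomorphism onto $J$, one has
\begin{equation*}
S\triangleq\big\{x\in[a,b]:\,|f(x)|\leqslant\alpha\big\}=g\big(J\cap[-\alpha,\alpha]\big).
\end{equation*}
The set $J\cap[-\alpha,\alpha]$ is an interval (possibly empty) of length at most $2\alpha$. Applying the standard fact that a $1/\beta$-Lipschitz image of a set has outer measure at most $(1/\beta)$ times the outer measure of that set — which for an interval is immediate, or alternatively one can write $S$ directly as an interval $[g(\max(f(a),-\alpha)),\,g(\min(f(b),\alpha))]$ whose endpoints differ by at most $(1/\beta)\cdot 2\alpha$ — we conclude
\begin{equation*}
|S|\leqslant \tfrac1\beta\,\big|J\cap[-\alpha,\alpha]\big|\leqslant \tfrac{2\alpha}{\beta}.
\end{equation*}
This gives the claim with $C=2$, which is indeed independent of $\|f\|_{\mathrm{Lip}}$.

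There is no real obstacle here; the only point requiring a word of care is the reduction to strict monotonicity, which rests on the continuity of $f$ (automatic, since $f$ is Lipschitz) together with injectivity from the lower bound. I would also note that if $S$ is empty the inequality is trivial, so one may assume $J\cap[-\alpha,\alpha]\neq\varnothing$ throughout. The argument never uses the upper Lipschitz bound on $f$ except to guarantee continuity, which matches the remark in the main text that the bound is robust under merely manipulating Lipschitz norms.
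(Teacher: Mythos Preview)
Your proof is correct and yields the explicit constant $C=2$. The paper itself does not supply a proof of this lemma; it merely records it as a classical result on bi-Lipschitz functions and measure theory, so there is no argument in the paper to compare against.
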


\subsection{Change of coordinates system}
The main goal of this section is to discuss useful  results related to some change of coordinates system. For the proofs we refer the reader to the papers \cite{Baldi-Montalto21,BFM,FGMP19}.
Let $\beta: \mathcal{O}\times \T^{2}\to \R$ be a smooth function such that $\displaystyle\sup_{\xi_0\in \mathcal{O}}\|\beta(\xi_0,\cdot,\centerdot)\|_{\textnormal{Lip}}<1$ 
then there exists $\widehat\beta: \mathcal{O}\times \T^{2}\to \R$ smooth 
such that
\begin{equation}\label{def betahat}
	y=\theta+\beta(\xi_0,\varphi,\theta)\Longleftrightarrow \theta=y+\widehat\beta(\xi_0,\varphi,y).
\end{equation}
Define the operators
\begin{equation}\label{definition symplectic change of variables}
	\mathscr{B}=(1+\partial_{\theta}\beta)\mathcal{B}, \qquad \mathcal{B}h(\xi_0,\varphi,\theta)=h\big(\xi_0,\varphi,\theta+\beta(\xi_0,\varphi,\theta)\big).
\end{equation}
By straightforward  computations we obtain, see for instance \cite{HR21},
\begin{equation}\label{mathscrB1}
	\mathscr{B}^{-1}=(1+\partial_{\theta}\widehat\beta)\mathcal{B}^{-1} ,\qquad \mathcal{B}^{-1} h(\xi_0,\varphi,y)=h\big(\xi_0,\varphi,y+\widehat{\beta}(\xi_0,\varphi,y)\big).
\end{equation}
	We shall now give some elementary algebraic properties for $\mathcal{B}^{\pm 1}$ and $\mathscr{B}^{\pm 1}$ which can be checked by straightforward computations, for more details we refer to \cite{Baldi-Montalto21,BFM,FGMP19}.
			\begin{lemma}\label{algeb1}
				The following assertions hold true.
				\begin{enumerate}
					\item Let $\mathscr{B}_1,\mathscr{B}_2$  be two periodic  change of variables as  in \eqref{definition symplectic change of variables}, then
					$$
					\mathscr{B}_{{1}}\mathscr{B}_2=(1+\partial_{\theta}\beta)\mathcal{B}
					$$
					with
					$$
\beta(\varphi,\theta)\triangleq \beta_1(\varphi,\theta)+\beta_2\big(\varphi,\theta+\beta_1(\varphi,\theta)\big).
$$					\item
					 The conjugation of the transport operator by $\mathscr{B}$  keeps the same structure
					$$
					\mathscr{B}^{-1}\Big(\omega\cdot\partial_\varphi+\partial_\theta\big(V(\varphi,\theta)\cdot\big)\Big)\mathscr{B}=\omega\cdot\partial_\varphi+\partial_y\big(\mathscr{V}(\varphi,y)\cdot\big)
					$$
					with
					$$
					\mathscr{V}(\varphi,y)\triangleq\,\mathcal{B}^{-1}\Big(\omega\cdot\partial_{\varphi} \beta(\varphi,\theta)+V(\varphi,\theta)\big(1+\partial_\theta \beta(\varphi,\theta)\big)\Big).
					$$
				\end{enumerate}
			\end{lemma}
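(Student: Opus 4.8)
\textbf{Proof plan for Lemma \ref{algeb1}.} The plan is to verify both assertions by direct computation from the definitions \eqref{definition symplectic change of variables} and \eqref{mathscrB1}, treating the $\xi_0$-dependence as a silent parameter throughout.

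For assertion $(1)$, I would first unravel the composition at the level of the underlying shift operators $\mathcal{B}_1,\mathcal{B}_2$. Writing $\theta\mapsto\theta+\beta_i(\varphi,\theta)$ for the diffeomorphism associated to $\mathscr{B}_i$, the composition of the two shifts sends $\theta$ to $\theta+\beta_2(\varphi,\theta)$ and then to $\theta+\beta_2(\varphi,\theta)+\beta_1(\varphi,\theta+\beta_2(\varphi,\theta))$; matching against the claimed formula, one sees the right grouping is $\beta(\varphi,\theta)=\beta_1(\varphi,\theta)+\beta_2(\varphi,\theta+\beta_1(\varphi,\theta))$, so I would set up the bookkeeping carefully to get the order of $\beta_1$ and $\beta_2$ correct (the statement composes $\mathscr{B}_1$ on the left). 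Then, using $\mathscr{B}_i=(1+\partial_\theta\beta_i)\mathcal{B}_i$, I would compute
$$
\mathscr{B}_1\mathscr{B}_2 h=(1+\partial_\theta\beta_1)\,\mathcal{B}_1\big[(1+\partial_\theta\beta_2)\mathcal{B}_2 h\big]
=(1+\partial_\theta\beta_1)\,\big[(1+\partial_\theta\beta_2)\circ(\mathrm{id}+\beta_1)\big]\,\mathcal{B}_1\mathcal{B}_2 h,
$$
and I would identify the scalar prefactor $(1+\partial_\theta\beta_1)\,(1+(\partial_\theta\beta_2)(\varphi,\theta+\beta_1))$ with $1+\partial_\theta\beta$ by differentiating the defining relation for $\beta$ with respect to $\theta$ via the chain rule. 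This is the symplectic (volume-preserving) structure that makes the prefactor exactly the Jacobian of the composed diffeomorphism, and it is essentially a one-line chain-rule identity once the composition of the $\beta$'s is written down.

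For assertion $(2)$, I would conjugate the transport operator $\omega\cdot\partial_\varphi+\partial_\theta(V\cdot)$ by $\mathscr{B}$ using \eqref{mathscrB1}. The clean way is to note that for the pure shift $\mathcal{B}$ one has $\mathcal{B}^{-1}\partial_\varphi\mathcal{B}=\partial_\varphi+(\partial_\varphi\beta)\circ(\text{inverse shift})\,\partial_y$ and $\mathcal{B}^{-1}\partial_\theta\mathcal{B}=\big((1+\partial_\theta\beta)^{-1}\circ(\text{inverse shift})\big)\partial_y$, or equivalently work directly with the symplectic operator $\mathscr{B}$, for which the divergence-form structure is preserved: $\mathscr{B}^{-1}\partial_\theta(f\cdot)\mathscr{B}=\partial_y\big(\mathcal{B}^{-1}(f(1+\partial_\theta\beta))\cdot\big)$ and $\mathscr{B}^{-1}(\omega\cdot\partial_\varphi)\mathscr{B}=\omega\cdot\partial_\varphi+\partial_y\big(\mathcal{B}^{-1}(\omega\cdot\partial_\varphi\beta)\cdot\big)$, the last identity again relying on the volume-preserving normalization. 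Adding these gives the stated formula with $\mathscr{V}=\mathcal{B}^{-1}\big(\omega\cdot\partial_\varphi\beta+V(1+\partial_\theta\beta)\big)$. I would carry out this step by applying both sides to a test function $h$ and comparing, which reduces everything to the chain rule plus the identity $\partial_\theta\theta(\varphi,y)=1+\partial_y\widehat\beta(\varphi,y)=(1+\partial_\theta\beta)^{-1}\circ(\mathrm{id}+\widehat\beta)$ obtained by differentiating \eqref{def betahat}.

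The only mildly delicate point — and the place I would be most careful — is keeping consistent track of which variable ($\theta$ or $y$) every function is evaluated at after conjugation, i.e. the distinction between a function of $\theta$ and its pullback $\mathcal{B}^{-1}$ of it; a single misplaced composition breaks the divergence-form cancellation. Beyond that there is no real obstacle: both statements are algebraic identities, and the whole proof is a careful but routine application of the chain rule to the defining relations \eqref{definition symplectic change of variables}, \eqref{mathscrB1}, \eqref{def betahat}. I would therefore present it compactly, proving $(1)$ first since the Jacobian-prefactor computation there is reused in $(2)$, and citing \cite{Baldi-Montalto21,BFM,FGMP19} for the parallel computations already in the literature.
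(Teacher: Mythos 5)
Your proposal is correct and is exactly the direct chain-rule computation the paper has in mind: the paper omits the proof entirely, stating only that the identities "can be checked by straightforward computations" and citing \cite{Baldi-Montalto21,BFM,FGMP19}, and your verification (the Jacobian factorization $1+\partial_\theta\beta=(1+\partial_\theta\beta_1)\big(1+(\partial_\theta\beta_2)\circ(\mathrm{id}+\beta_1)\big)$ for part (1), and the divergence-form conjugation identities built on $\partial_\theta\mathcal{B}=\mathscr{B}\partial_y$ for part (2)) fills in precisely that computation. You also correctly flag and resolve the one real pitfall, namely that composing the pullback operators $\mathscr{B}_1\mathscr{B}_2$ corresponds to applying the diffeomorphism of $\beta_1$ first, which yields $\beta=\beta_1+\beta_2\circ(\mathrm{id}+\beta_1)$ as stated.
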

			In what follows, and in the rest of this appendix, we assume that $(\lambda,s,s_0)$ satisfy \eqref{cond1} and we consider  $\beta\in \textnormal{Lip}_\lambda(\mathcal{O},H^{s}(\T^{2})) $ satisfying the smallness condition 
				\begin{equation}\label{small beta lem}
					\|\beta \|_{2s_0}^{\textnormal{Lip}(\lambda)}\leqslant \varepsilon_0,
				\end{equation}
			with $\varepsilon_{0}$ small enough.\\
			The following result is proved in \cite{FGMP19}. We also refer to \cite[(A.2)]{BFM}.
			\begin{lemma}\label{Compos1-lemm}
The following assertions hold true.
				\begin{enumerate}
					\item The linear operators $\mathcal{B},\mathscr{B}:\textnormal{Lip}_\lambda(\mathcal{O},H^{s}(\T^{2}))\to \textnormal{Lip}_\lambda(\mathcal{O},H^{s}(\T^{2}))$ are continuous and invertible, with 
					\begin{align}
						 \|\mathcal{B}^{\pm1}h\|_{s}^{\textnormal{Lip}(\lambda)}\leqslant \|h\|_{s}^{\textnormal{Lip}(\lambda)}\left(1+C\|\beta\|_{s_{0}}^{\textnormal{Lip}(\lambda)}\right)+C\|\beta\|_{s}^{\textnormal{Lip}(\lambda)}\|h\|_{s_{0}}^{\textnormal{Lip}(\lambda)}, \label{tame comp}
\\
						\|\mathscr{B}^{\pm1}h\|_{s}^{\textnormal{Lip}(\lambda)}\leqslant \|h\|_{s}^{\textnormal{Lip}(\lambda)}\left(1+C\|\beta\|_{s_{0}}^{\textnormal{Lip}(\lambda)}\right)+C\|\beta\|_{s+1}^{\textnormal{Lip}(\lambda)}\|h\|_{s_{0}}^{\textnormal{Lip}(\lambda)}. \label{tame comp symp}
					\end{align}
					\item The functions $\beta$ and $\widehat{\beta}$ defined through \eqref{def betahat} satisfy the estimates
					\begin{equation*}
					\|\widehat{\beta}\|_{s}^{\textnormal{Lip}(\lambda)}\leqslant C\|\beta\|_{s}^{\textnormal{Lip}(\lambda)}.
					\end{equation*}
				\end{enumerate}
			\end{lemma}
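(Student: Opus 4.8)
\textbf{Plan of proof for Lemma \ref{Compos1-lemm}.}
The statement collects the standard tame estimates for the composition operator $\mathcal{B}$, the symplectic composition $\mathscr{B}=(1+\partial_\theta\beta)\mathcal{B}$, and the inverse shift function $\widehat\beta$ defined through \eqref{def betahat}. The plan is to treat the three pieces in the natural order: first the algebraic inversion of the change of variables (which produces $\widehat\beta$ and its estimate), then the tame bound for $\mathcal{B}^{\pm1}$ acting on Sobolev spaces, and finally the symplectic variant $\mathscr{B}^{\pm1}$ obtained by multiplying by the weight $1+\partial_\theta\beta$ and invoking the product law of Lemma \ref{lem funct prop}.

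First I would fix $\xi_0\in\mathcal{O}$ and work pointwise in the parameter, since the $\textnormal{Lip}(\lambda)$-norm is built from the sup over $\xi_0$ of the $H^s$-norm plus $\lambda$ times the sup of the difference quotient in $H^{s-1}$; the difference-quotient part is handled by the same arguments applied to $\Delta_{12}$ after noting that $\xi_0\mapsto\beta(\xi_0,\cdot)$ is itself Lipschitz, so all the composition/product estimates below have their $\textnormal{Lip}(\lambda)$ counterparts by the now-standard bookkeeping (as in \cite{BFM,FGMP19,BertiMontalto}). For the inversion, under the smallness condition \eqref{small beta lem} the map $\theta\mapsto\theta+\beta(\xi_0,\varphi,\theta)$ is, for each $\varphi$, a $C^\infty$ diffeomorphism of $\T$ isotopic to the identity because $\sup\|\partial_\theta\beta\|_\infty<1$ (Sobolev embedding $H^{2s_0}\hookrightarrow C^1$ with $s_0>3$); hence $\widehat\beta$ is well defined and smooth, solving the implicit equation $\widehat\beta(\varphi,y)=-\beta(\varphi,y+\widehat\beta(\varphi,y))$. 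The estimate $\|\widehat\beta\|_s^{\textnormal{Lip}(\lambda)}\leqslant C\|\beta\|_s^{\textnormal{Lip}(\lambda)}$ then follows by a fixed-point/bootstrap argument on this identity: write $\widehat\beta=-\mathcal{B}^{-1}\beta$ where $\mathcal{B}^{-1}$ is composition with $y+\widehat\beta$, differentiate the implicit relation to control derivatives inductively, and use the tame composition law of Lemma \ref{lem funct prop}(3) together with \eqref{small beta lem} to absorb the terms involving $\partial_\theta\beta$; this is exactly the computation carried out in \cite[Appendix]{FGMP19}.

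Next, for the tame estimate \eqref{tame comp} on $\mathcal{B}^{\pm1}$, I would combine the composition law of Lemma \ref{lem funct prop}(3) applied to $h\circ(\mathrm{Id}+\beta)$ (resp. $h\circ(\mathrm{Id}+\widehat\beta)$) with the interpolation inequality of Lemma \ref{lem funct prop}(4): expanding $h(\varphi,\theta+\beta(\varphi,\theta))$ and estimating its $H^s$-norm produces a term $\|h\|_s$ times a factor controlled by $1+C\|\beta\|_{s_0}$ (from the low-norm part of the chain rule, using $\|\beta\|_{s_0}\leqslant\|\beta\|_{2s_0}\leqslant\varepsilon_0$) plus a term where the derivative lands on $\beta$, giving $C\|\beta\|_s\|h\|_{s_0}$. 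The same structure with $\widehat\beta$ in place of $\beta$, combined with the just-proved bound $\|\widehat\beta\|_s\lesssim\|\beta\|_s$, yields \eqref{tame comp} for $\mathcal{B}^{-1}$. The $\textnormal{Lip}(\lambda)$ version is obtained by applying $\Delta_{12}$ and using $\Delta_{12}(h\circ(\mathrm{Id}+\beta))=(\Delta_{12}h)\circ(\mathrm{Id}+\beta_1)+[\cdots]\Delta_{12}\beta$ and the corresponding difference estimate for $\widehat\beta$.

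Finally, for \eqref{tame comp symp} I would simply write $\mathscr{B}^{\pm1}=(1+\partial_\theta\beta)\mathcal{B}^{\pm1}$ (resp. $(1+\partial_\theta\widehat\beta)\mathcal{B}^{-1}$, from \eqref{mathscrB1}) and apply the product law Lemma \ref{lem funct prop}(2): $\|(1+\partial_\theta\beta)\,\mathcal{B}^{\pm1}h\|_s\lesssim \|1+\partial_\theta\beta\|_{s_0}\|\mathcal{B}^{\pm1}h\|_s+\|1+\partial_\theta\beta\|_s\|\mathcal{B}^{\pm1}h\|_{s_0}$. Since $\|\partial_\theta\beta\|_{s_0}\leqslant\|\beta\|_{s_0+1}\leqslant\|\beta\|_{2s_0}\leqslant\varepsilon_0$ (because $s_0>3$ gives $s_0+1\leqslant 2s_0$) and $\|\partial_\theta\beta\|_s\leqslant\|\beta\|_{s+1}$, inserting \eqref{tame comp} gives the factor $1+C\|\beta\|_{s_0}$ in front of $\|h\|_s$ and the coefficient $C\|\beta\|_{s+1}$ in front of $\|h\|_{s_0}$, which is precisely \eqref{tame comp symp}; the extra derivative is the only reason the symplectic bound loses one order ($s+1$ instead of $s$). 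The main (mild) obstacle is the careful bookkeeping in the inversion step: one must run the bootstrap on the implicit relation for $\widehat\beta$ cleanly so that the smallness \eqref{small beta lem} genuinely absorbs the nonlinear feedback at every regularity level, and track the $\textnormal{Lip}(\lambda)$-difference quotients through the composition; everything else is a direct application of the function-space toolbox in Lemma \ref{lem funct prop}.
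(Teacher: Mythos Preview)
The paper does not give its own proof of this lemma: it simply states ``The following result is proved in \cite{FGMP19}. We also refer to \cite[(A.2)]{BFM}'' and moves on. Your outline is precisely the standard argument one finds in those references --- invert the shift via the implicit relation $\widehat\beta=-\beta\circ(\mathrm{Id}+\widehat\beta)$ and bootstrap, then derive the tame composition bound for $\mathcal{B}^{\pm1}$, and finally multiply by the Jacobian weight and use the product law to get \eqref{tame comp symp} --- so there is nothing to compare and your plan is correct.

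One small remark: when you invoke Lemma \ref{lem funct prop}(3) for the bound on $\mathcal{B}h=h\circ(\mathrm{Id}+\beta)$, note that this lemma as stated treats compositions $f(\rho)$ with $f$ a smooth function of a \emph{scalar} variable, whereas here you compose an $H^s(\T^2)$ function $h$ with a diffeomorphism of $\T$. The tame estimate you want is the Moser-type composition bound for changes of variables (this is exactly what is proved in \cite[Lemma A.3]{FGMP19} and \cite[(A.2)]{BFM}), not literally Lemma \ref{lem funct prop}(3); the argument is still the chain-rule/interpolation one you describe, but you should cite the correct tool rather than the scalar composition law.
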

			The next result was useful. 
			\begin{lemma} \label{beta-inv-asym}
			Let $\varepsilon\in[0,1),$ $f:\mathcal{O}\times\R\to\R$ and $g:\mathcal{O}\times \T^2\to\R$ be two smooth  functions such that
			
			$$\|g\|_{2s_0}^{\textnormal{Lip}(\lambda)}\leqslant 1
			$$
			  and $f(\xi_0,\cdot)$ or $f(\xi_0,\cdot)-\textnormal{Id}$ is $2\pi-$periodic.
			Then the transformation $\Phi(\xi_0):\R^2\to\R^2$ defined by
			$$
			\Phi(\xi_0,\varphi,\theta)=\big(\varphi,\theta+f(\xi_0,\varphi)+ \varepsilon g(\xi_0,\varphi,\theta)\big)
			$$
			is a diffeomorphism, with
			$$
			\Phi^{-1}(\xi_0,\varphi,\theta)=\big(\varphi,\theta-f(\xi_0,\varphi)- \varepsilon g\big(\xi_0,\varphi,\theta-f(\xi_0,\varphi)\big)+\varepsilon^2 \mathtt{r}(\xi_0,\varphi,\theta)\big)
			$$
			and $ \mathtt{r}$ satisfies  the estimate 
			$$
			\forall s\geqslant s_0,\quad \|\mathtt{r}\|_{s}^{\textnormal{Lip}(\lambda)}\leqslant C(s,\|f\|_{s}^{\textnormal{Lip}(\lambda)}) \|g\|_{s+1}^{\textnormal{Lip}(\lambda)}.
			$$
			\end{lemma}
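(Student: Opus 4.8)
\textbf{Proof plan for Lemma \ref{beta-inv-asym}.}
The plan is to invert the transformation $\Phi(\xi_0)$ explicitly in the $\theta$-variable (the $\varphi$-variable is untouched) by a fixed-point/implicit-function argument, and then expand the resulting inverse to second order in $\varepsilon$. Write $\theta\mapsto Y(\xi_0,\varphi,\theta)=\theta+f(\xi_0,\varphi)+\varepsilon g(\xi_0,\varphi,\theta)$. First I would check that for $\varepsilon\in[0,1)$ small (or more precisely once $\|g\|_{2s_0}^{\textnormal{Lip}(\lambda)}\leqslant1$, noting $s_0>3$ so that $H^{2s_0}$ embeds into $C^1$), the map $\theta\mapsto Y(\xi_0,\varphi,\theta)$ has derivative $1+\varepsilon\partial_\theta g$ bounded away from zero, hence is a strictly increasing $C^\infty$-diffeomorphism of $\R$; because $g$ is $2\pi$-periodic in $\theta$ the map descends to the torus appropriately and the periodicity structure of $f$ (either $f$ or $f-\mathrm{Id}$ periodic) is preserved, so $\Phi(\xi_0)$ is a diffeomorphism of $\R^2$ (resp.\ of $\T^2$). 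This is the routine part.

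Next I would produce the asymptotic expansion of the inverse. Denote by $\theta=\theta(\xi_0,\varphi,Y)$ the inverse branch, so $\Phi^{-1}(\xi_0,\varphi,Y)=(\varphi,\theta(\xi_0,\varphi,Y))$, and set $h(\xi_0,\varphi,Y)\triangleq\theta-Y$. Then $h$ solves the functional equation
\begin{equation}\label{beta-inv-eq}
h(\xi_0,\varphi,Y)=-f(\xi_0,\varphi)-\varepsilon\, g\big(\xi_0,\varphi,Y+h(\xi_0,\varphi,Y)\big).
\end{equation}
The zeroth order is $h_0=-f(\xi_0,\varphi)$; plugging $h=h_0+\varepsilon h_1+\varepsilon^2\mathtt{r}$ and Taylor-expanding $g$ around $Y-f$ gives $h_1=-g(\xi_0,\varphi,Y-f(\xi_0,\varphi))$, exactly the stated form, and the remainder $\mathtt{r}$ is then defined by \eqref{beta-inv-eq} as
\begin{equation*}
\varepsilon^2\mathtt{r}=-\varepsilon\Big[g\big(\xi_0,\varphi,Y+h\big)-g\big(\xi_0,\varphi,Y-f\big)+\varepsilon\,\partial_\theta g\big(\xi_0,\varphi,Y-f\big)\,h_1\Big]\cdot\frac{1}{1}\,,
\end{equation*}
i.e.\ the full Taylor-with-integral-remainder of $g$ at $Y-f$ evaluated along the increment $h+f=\varepsilon h_1+\varepsilon^2\mathtt{r}$, which makes $\mathtt{r}$ a fixed point of a contraction once $\varepsilon$ is small. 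One can either solve directly for $\mathtt{r}$ by a Banach fixed point in $\textnormal{Lip}_\lambda(\mathcal{O},H^s)$, or — more cleanly — observe that $\mathtt{r}$ is obtained from $g$, its first $\theta$-derivative, $f$, and $h$ itself by the composition and product laws of Lemma \ref{lem funct prop}.

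For the tame estimate I would proceed bootstrap-style: first establish $\|h\|_{s_0}^{\textnormal{Lip}(\lambda)}\lesssim \|f\|_{s_0}^{\textnormal{Lip}(\lambda)}+\varepsilon$ from \eqref{beta-inv-eq} and the composition law (this is essentially Lemma \ref{Compos1-lemm}(2) applied to $\beta=f+\varepsilon g$ and its inverse $\widehat\beta$), then feed it back to control $\|h\|_s^{\textnormal{Lip}(\lambda)}$ in terms of $\|f\|_s^{\textnormal{Lip}(\lambda)}$ and $\|g\|_{s+1}^{\textnormal{Lip}(\lambda)}$; the loss of one derivative in $g$ comes from the $\partial_\theta g$ term appearing in the integral remainder. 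Applying the composition law of Lemma \ref{lem funct prop}-3 to the Taylor remainder expression for $\mathtt{r}$, together with the product law and the interpolation inequality to absorb the high-norm-times-low-norm cross terms, yields $\|\mathtt{r}\|_s^{\textnormal{Lip}(\lambda)}\leqslant C(s,\|f\|_s^{\textnormal{Lip}(\lambda)})\,\|g\|_{s+1}^{\textnormal{Lip}(\lambda)}$, which is the claim. The main obstacle, and the only place requiring care, is bookkeeping the weighted-Lipschitz norms through the nonlinear composition $g(\xi_0,\varphi,Y+h)$: one must make sure the implicit dependence of $h$ on $\xi_0$ does not generate an uncontrolled $\lambda$-weighted difference quotient, which is handled by the standard observation that composition with a $\xi_0$-Lipschitz near-identity diffeomorphism is tame in the $\|\cdot\|_s^{\textnormal{Lip}(\lambda)}$ norm — exactly the content already used to prove Lemma \ref{Compos1-lemm}. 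Everything else is routine Taylor expansion plus the toolbox of Lemma \ref{lem funct prop}.
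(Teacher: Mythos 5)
Your proposal follows essentially the same route as the paper: nonvanishing Jacobian for the diffeomorphism, the scalar fixed-point equation $\theta=x-f(\xi_0,\varphi)-\varepsilon g(\xi_0,\varphi,\theta)$ for the inverse, the ansatz with explicit zeroth and first orders, an implicit Taylor-with-integral-remainder identity for $\mathtt{r}$, and a bootstrap (first $s=s_0$, then general $s$) through the composition and product laws. The only blemish is that the exact identity is $\varepsilon^2\mathtt{r}=-\varepsilon\big[g(\xi_0,\varphi,Y+h)-g(\xi_0,\varphi,Y-f)\big]$, so the extra $\varepsilon\,\partial_\theta g(\xi_0,\varphi,Y-f)\,h_1$ term in your displayed formula is spurious — a harmless slip that does not affect the estimates, since your verbal description and the contraction argument are the correct ones.
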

			\begin{proof}
			For any $\xi_0\in\mathcal{O},$ the map $\Phi(\xi_0)$ is of class  $C^1$ and its Jacobian is not vanishing for any  $\varepsilon\in[0,1)$. Thus by the inversion theorem, 
			it is a local diffeormorphism. Using classical arguments we show that it is a global diffeomorphism. To compute the inverse it suffices to invert the  scalar equation in $\theta$, taking $\varphi$ as a parameter,
			$$
			\theta+f(\xi_0,\varphi)+ \varepsilon g(\xi_0,\varphi,\theta)=x$$
			which is equivalent to the fixed point problem
			\begin{align}\label{Fix-1}
			\theta=x-f(\xi_0,\varphi)- \varepsilon g(\xi_0,\varphi,\theta).
			\end{align}
			As we can check, this equation admits a unique solution. To find its asymptotic we write the ansatz
			$$
			\theta=x-f(\xi_0,\varphi)-\varepsilon g\big(\xi_0,\varphi, x-f(\varphi)\big)+\varepsilon^2\mathtt{r}(\xi_0,\varphi,x).
			$$
			Inserting this form  into \eqref{Fix-1} and using Taylor formula  yield ( to simplify  we remove $\xi_0$ )
			\begin{align*}
			\mathtt{r}(\varphi,x)&=\frac1\varepsilon\Big[g\big(\varphi, x-f(\xi_0,\varphi)\big)-g\Big(\varphi, x-f(\varphi)-\varepsilon g\big(\varphi, x-f(\varphi)\big)+\varepsilon^2\mathtt{r}(\varphi,x)\Big)\Big]\\
			\nonumber&=-\Big[g\big(\varphi, x-f(\varphi)\big)-\varepsilon\mathtt{r}(\varphi,x)\Big]\int_0^1\partial_2g\big[\varphi, x-f(\varphi)-\varepsilon \tau g(\varphi, x-f(\varphi))+\varepsilon^2\tau\mathtt{r}(\varphi,x)\big] d\tau.
			\end{align*}
			Using \eqref{tame comp} together with by the smallness condition $\|g\|_{s_0+1}^{\textnormal{Lip}(\lambda)}\leqslant 1$ and the law products one can show that
			\begin{align*}
			\|r\|_{s}^{\textnormal{Lip}(\lambda)}&\le C\big(s,\|f\|_{s}^{\textnormal{Lip}(\lambda)}\big)\Big[ \Big(\|g\|_{s}^{\textnormal{Lip}(\lambda)}+\varepsilon \|r\|_s^{\textnormal{Lip}(\lambda)}\Big)\|g\|_{s_0+1}^{\textnormal{Lip}(\lambda)}\big(1+\varepsilon^2\|\mathtt{r}\|_{s_0}^{\textnormal{Lip}(\lambda)}\big)\\
			&+ \big(\|g\|_{s_0}^{\textnormal{Lip}(\lambda)}+\varepsilon \|r\|_{s_0}^{\textnormal{Lip}(\lambda)}\big)\|g\|_{s+1}^{\textnormal{Lip}(\lambda)}\big(1+\varepsilon^2\|\mathtt{r}\|_{s_0}^{\textnormal{Lip}(\lambda)}\big)\\
			&+  \big(\|g\|_{s_0}^{\textnormal{Lip}(\lambda)}+\varepsilon \|r\|_{s_0}^{\textnormal{Lip}(\lambda)}\big)\|g\|_{s_0+1}^{\textnormal{Lip}(\lambda)}\big(1+\varepsilon\|g\|_s^{\textnormal{Lip}(\lambda)}+ \varepsilon^2\|\mathtt{r}\|_{s}^{\textnormal{Lip}(\lambda)}\big)\Big]
			\end{align*}
			By taking $s=s_0$ we deduce by the smallness condition in $g$  that
			$$
			\|r\|_{s_0}^{\textnormal{Lip}(\lambda)}\le C(f) \|g\|_{s_0+1}^{\textnormal{Lip}(\lambda)}
			$$
			 which implies in turn that
			 $$
			\|r\|_{s}^{\textnormal{Lip}(\lambda)}\le C(f) \|g\|_{s+1}^{\textnormal{Lip}(\lambda)}.
			$$
			This ends the proof of the desired result.
			\end{proof}			
			\subsection{Integral operators}
	We shall consider an integral operator taking the form 
	\begin{equation}\label{Top-op1}(\mathcal{T}_Kh)(\xi_0,\varphi,\theta)\triangleq\int_{\mathbb{T}}K(\xi_0,\varphi,\theta,\eta)h(\xi_0,\varphi,\eta)d\eta,
	\end{equation}
	where the kernel function $K$ may be smooth or singular at the  diagonal set $\{\theta=\eta\}$. The next result deals with some  operator estimates whose proof is a consequence of  \cite[Lem. 4.4]{HR21}.
	\begin{lemma}\label{kernel-est}
		Let   
		$\mathcal{T}_K$ be an integral operator with a real-valued kernel $K$. Then, 
 for any $s_1,s_2\geqslant0,$
		\begin{align*}
			\| \mathcal{T}_Kh\|_{s_1,s_2}^{\textnormal{Lip}(\lambda)}&\lesssim \|h\|_{s_0}^{\textnormal{Lip}(\lambda)}\|K\|_{s_1+s_2}^{\textnormal{Lip}(\lambda)} +\|h\|_{s_1}^{{\textnormal{Lip}(\lambda)}} \|K\|_{s_0+s_2}^{\textnormal{Lip}(\lambda)}.
		\end{align*}

	\end{lemma}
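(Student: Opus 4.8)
\textbf{Plan of proof for Lemma \ref{kernel-est}.} The statement is the tame estimate for the integral operator $\mathcal{T}_K$ defined in \eqref{Top-op1}, namely $\| \mathcal{T}_Kh\|_{s_1,s_2}^{\textnormal{Lip}(\lambda)}\lesssim \|h\|_{s_0}^{\textnormal{Lip}(\lambda)}\|K\|_{s_1+s_2}^{\textnormal{Lip}(\lambda)}+\|h\|_{s_1}^{\textnormal{Lip}(\lambda)}\|K\|_{s_0+s_2}^{\textnormal{Lip}(\lambda)}$, for all $s_1,s_2\geqslant 0$. The plan is to reduce the anisotropic-norm estimate to the isotropic one already available from \cite[Lem.\ 4.4]{HR21}, by moving the $\langle j\rangle^{s_2}$ weight onto the kernel. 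First I would work at fixed $\xi_0$ (the sup part of the $\textnormal{Lip}(\lambda)$ norm) and expand both $K$ and $h$ in Fourier series in $(\varphi,\theta,\eta)$. Writing $K(\varphi,\theta,\eta)=\sum K_{\ell,j,j'}e^{\ii(\ell\cdot\varphi+j\theta+j'\eta)}$ and $h(\varphi,\eta)=\sum h_{\ell',k}e^{\ii(\ell'\cdot\varphi+k\eta)}$, the $\eta$-integral in \eqref{Top-op1} forces $j'=-k$, so the $(\ell,j)$-Fourier coefficient of $\mathcal{T}_Kh$ is $\sum_{\ell_1+\ell_2=\ell,\,k} K_{\ell_1,j,-k}\,h_{\ell_2,k}$. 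Hence the $(\varphi,\theta)$-Fourier support in the $\theta$-variable of $\mathcal{T}_Kh$ is exactly the $\theta$-Fourier support of $K$, which is the key observation: multiplying $\mathcal{T}_Kh$ by the weight $\langle j\rangle^{s_2}$ amounts to replacing $K$ by the kernel $\widetilde K$ whose coefficients are $\langle j\rangle^{s_2}K_{\ell,j,j'}$, i.e.\ $\widetilde K$ is obtained from $K$ by applying the $\theta$-Fourier multiplier $\langle D_\theta\rangle^{s_2}$. One then has $\|\langle D_\theta\rangle^{s_2}K\|_{H^{s}}\lesssim \|K\|_{H^{s+s_2}}$ for every $s$, and in particular $\|\widetilde K\|_{s_0}\lesssim\|K\|_{s_0+s_2}$, $\|\widetilde K\|_{s_1}\lesssim\|K\|_{s_1+s_2}$.

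Next I would invoke \cite[Lem.\ 4.4]{HR21} (the isotropic version of the present estimate, $\|\mathcal{T}_Kh\|_{s}\lesssim\|h\|_{s_0}\|K\|_{s}+\|h\|_{s}\|K\|_{s_0}$) applied to the operator $\mathcal{T}_{\widetilde K}$, together with the identity $\langle D_\theta\rangle^{s_2}\mathcal{T}_K=\mathcal{T}_{\widetilde K}$ that follows from the computation above. This yields
\begin{align*}
\|\mathcal{T}_Kh\|_{H^{s_1,s_2}}&\lesssim \|\langle D_\theta\rangle^{s_2}\mathcal{T}_K h\|_{H^{s_1}}=\|\mathcal{T}_{\widetilde K}h\|_{H^{s_1}}\\
&\lesssim \|h\|_{H^{s_0}}\|\widetilde K\|_{H^{s_1}}+\|h\|_{H^{s_1}}\|\widetilde K\|_{H^{s_0}}\lesssim \|h\|_{H^{s_0}}\|K\|_{H^{s_1+s_2}}+\|h\|_{H^{s_1}}\|K\|_{H^{s_0+s_2}},
\end{align*}
which is the desired bound for the sup part. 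One subtlety is that $H^{s_1,s_2}$ as defined in Section \ref{sec: func spa} uses the weight $\langle\ell,j\rangle^{s_1}\langle j\rangle^{s_2}$ rather than $\langle\ell,j\rangle^{s_1}+\langle j\rangle^{s_1+s_2}$-type weights; here I would use the elementary inequality $\langle\ell,j\rangle^{s_1}\langle j\rangle^{s_2}\lesssim \langle\ell,j\rangle^{s_1+s_2}$ (valid since $\langle j\rangle\leqslant\langle\ell,j\rangle$), so that bounding $\langle D_\theta\rangle^{s_2}\mathcal{T}_Kh$ in $H^{s_1}$ indeed controls the $H^{s_1,s_2}$-norm, at the modest cost of having $\|K\|_{s_1+s_2}$ (not $\|K\|_{\max(s_1,s_2)}$) on the right — which is exactly what the statement claims.

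Finally, the Lipschitz-in-$\xi_0$ part is handled in the standard way: writing $\Delta=\xi_1-\xi_2$, one has $\mathcal{T}_{K(\xi_1)}h(\xi_1)-\mathcal{T}_{K(\xi_2)}h(\xi_2)=\mathcal{T}_{K(\xi_1)-K(\xi_2)}h(\xi_1)+\mathcal{T}_{K(\xi_2)}\big(h(\xi_1)-h(\xi_2)\big)$, and one applies the estimate just obtained (with index $s_1-1$ for the difference quotients, consistently with the definition of $\|\cdot\|_{s_1,s_2}^{\textnormal{Lip}(\lambda)}$) to each term, absorbing the $\lambda$-weights into the $\textnormal{Lip}(\lambda)$-norms of $K$ and $h$. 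Combining the sup and Lipschitz bounds gives the claim. I do not expect any real obstacle here: the only thing to be careful about is the bookkeeping of which Sobolev index carries the $s_2$-shift and making sure the reduction $\langle D_\theta\rangle^{s_2}\mathcal{T}_K=\mathcal{T}_{\langle D_\theta\rangle_\theta^{s_2}K}$ is stated cleanly; everything else is a direct citation of \cite[Lem.\ 4.4]{HR21} and the product/interpolation machinery of Lemma \ref{lem funct prop}.
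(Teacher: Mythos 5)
Your argument is correct and follows the same route the paper intends: the paper gives no written proof, stating only that the lemma "is a consequence of \cite[Lem. 4.4]{HR21}", and your reduction — observing that the $\theta$-frequencies of $\mathcal{T}_Kh$ are exactly those of $K$ in its second variable, so that $\langle D_\theta\rangle^{s_2}\mathcal{T}_K=\mathcal{T}_{\langle D_\theta\rangle^{s_2}K}$ and the anisotropic bound follows from the isotropic tame estimate of \cite[Lem. 4.4]{HR21} applied to the shifted kernel, plus the standard bilinear splitting for the Lipschitz part — is precisely the missing bookkeeping. No gap.
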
		
The following Lemma can be found in \cite[Lem. 2.3]{BertiMontalto}.			
					\begin{lemma}\label{lem CVAR kernel}
			Consider a smooth real-valued kernel
		$$K:(\xi_0,\varphi,\theta,\eta)\mapsto K(\xi_0,\varphi,\theta,\eta)$$
		and the periodic change of variables $\mathscr{B}$, $\mathcal{B}$  defined by \eqref{definition symplectic change of variables}.
		Then the operators $\mathscr{B}^{-1}\mathcal{T}_K\mathscr{B}$ and $\mathscr{B}^{-1}\mathcal{T}_{K}\mathscr{B}-\mathcal{T}_{K}$ are  integral operators, 		
		\begin{align*}\big(\mathscr{B}^{-1}\mathcal{T}_{K}\mathscr{B}\big)h(\xi_0,\varphi,\theta)&=\int_{\mathbb{T}}h(\xi_0,\varphi,{\eta})\widehat{K}(\xi_0,\varphi,\theta,{\eta})d{\eta}\\
		\big(\mathscr{B}^{-1}\mathcal{T}_{K}\mathscr{B}-\mathcal{T}_{K}\big)h(\xi_0,\varphi,\theta)&=\int_{\mathbb{T}}h(\xi_0,\varphi,{\eta})\widetilde{K}(\xi_0,\varphi,\theta,{\eta})d{\eta}
		\end{align*}
		with
\begin{align*}
\|\widehat{K}\|_{s}^{\textnormal{Lip}(\lambda)}&\lesssim \|K\|_{s}^{\textnormal{Lip}(\lambda)}+\|K\|_{s_0}^{\textnormal{Lip}(\lambda)}\|\beta\|_{s+1}^{\textnormal{Lip}(\lambda)},\\
\|\widetilde{K}\|_{s}^{\textnormal{Lip}(\lambda)}&\lesssim\|K\|_{s+1}^{\textnormal{Lip}(\lambda)}\|\beta\|_{s_0}^{\textnormal{Lip}(\lambda)}+\|K\|_{s_0}^{\textnormal{Lip}(\lambda)}\|\beta\|_{s+1}^{\textnormal{Lip}(\lambda)}.
\end{align*}
	\end{lemma}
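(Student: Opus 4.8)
The plan is to obtain an explicit formula for the kernel of $\mathscr{B}^{-1}\mathcal{T}_K\mathscr{B}$ by unwinding the definitions and performing one change of variable inside the integral, and then to deduce the two tame estimates from the composition and product laws already recorded in the appendix. Suppressing the parameter $\xi_0$, I would first use \eqref{definition symplectic change of variables} to write $(\mathscr{B}h)(\varphi,\eta)=\big(1+\partial_\eta\beta(\varphi,\eta)\big)h\big(\varphi,\eta+\beta(\varphi,\eta)\big)$, so that
\begin{align*}
(\mathcal{T}_K\mathscr{B}h)(\varphi,\theta)=\int_{\mathbb{T}}K(\varphi,\theta,\eta)\big(1+\partial_\eta\beta(\varphi,\eta)\big)h\big(\varphi,\eta+\beta(\varphi,\eta)\big)\,d\eta .
\end{align*}
Substituting $z=\eta+\beta(\varphi,\eta)$, whose inverse is $\eta=z+\widehat\beta(\varphi,z)$ by \eqref{def betahat} and whose Jacobian is precisely $1+\partial_\eta\beta(\varphi,\eta)$, turns this into $\int_{\mathbb{T}}K\big(\varphi,\theta,z+\widehat\beta(\varphi,z)\big)h(\varphi,z)\,dz$; applying then $\mathscr{B}^{-1}=(1+\partial_\theta\widehat\beta)\mathcal{B}^{-1}$ from \eqref{mathscrB1} gives the stated form with
\begin{align*}
\widehat{K}(\varphi,\theta,\eta)=\big(1+\partial_\theta\widehat\beta(\varphi,\theta)\big)\,K\big(\varphi,\theta+\widehat\beta(\varphi,\theta),\,\eta+\widehat\beta(\varphi,\eta)\big),\qquad \widetilde{K}=\widehat{K}-K .
\end{align*}
Since $K$ is smooth, no principal-value part is created, so both $\mathscr{B}^{-1}\mathcal{T}_K\mathscr{B}$ and $\mathscr{B}^{-1}\mathcal{T}_K\mathscr{B}-\mathcal{T}_K$ are genuine integral operators of the type \eqref{Top-op1}.

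For the bound on $\widehat{K}$ I would regard $K$ as a function on $\mathbb{T}^{3}$ in the variables $(\varphi,\theta,\eta)$ and $\widehat{K}$ as the product of $1+\partial_\theta\widehat\beta$ with the composition $K\circ\Phi$, where $\Phi(\varphi,\theta,\eta)=\big(\varphi,\theta+\widehat\beta(\varphi,\theta),\eta+\widehat\beta(\varphi,\eta)\big)$ is, by the smallness \eqref{small beta lem}, a diffeomorphism of $\mathbb{T}^{3}$ close to the identity. The tame composition estimate for near-identity changes of variables (Lemma~\ref{Compos1-lemm}, in the spirit of \cite[Lem.~2.36]{BertiMontalto}) gives $\|K\circ\Phi\|_{s}^{\textnormal{Lip}(\lambda)}\lesssim \|K\|_{s}^{\textnormal{Lip}(\lambda)}+\|K\|_{s_0}^{\textnormal{Lip}(\lambda)}\|\widehat\beta\|_{s+1}^{\textnormal{Lip}(\lambda)}$; combining with the product law of Lemma~\ref{lem funct prop} (for the factor $1+\partial_\theta\widehat\beta$) and with $\|\widehat\beta\|_{s}^{\textnormal{Lip}(\lambda)}\lesssim \|\beta\|_{s}^{\textnormal{Lip}(\lambda)}$ from Lemma~\ref{Compos1-lemm} then yields $\|\widehat{K}\|_{s}^{\textnormal{Lip}(\lambda)}\lesssim \|K\|_{s}^{\textnormal{Lip}(\lambda)}+\|K\|_{s_0}^{\textnormal{Lip}(\lambda)}\|\beta\|_{s+1}^{\textnormal{Lip}(\lambda)}$, the extra derivative coming from $\partial_\theta\widehat\beta$ being absorbed into the $\|\beta\|_{s+1}$ term.

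For $\widetilde{K}$ I would split $\widetilde{K}=\partial_\theta\widehat\beta\cdot(K\circ\Phi)+(K\circ\Phi-K)$; the first summand is controlled exactly as above and is already of the required size. For the second, with $\Phi_t(\varphi,\theta,\eta)=\big(\varphi,\theta+t\widehat\beta(\varphi,\theta),\eta+t\widehat\beta(\varphi,\eta)\big)$ I would write
\begin{align*}
K\circ\Phi-K=\int_0^1\tfrac{d}{dt}\big(K\circ\Phi_t\big)\,dt=\int_0^1\Big[\widehat\beta(\varphi,\theta)\,(\partial_\theta K)\circ\Phi_t+\widehat\beta(\varphi,\eta)\,(\partial_\eta K)\circ\Phi_t\Big]\,dt ,
\end{align*}
and estimate each term by the product law (one factor $\widehat\beta$, one factor $(\partial_\bullet K)\circ\Phi_t$), the composition estimate for $\Phi_t$, the smallness \eqref{small beta lem}, and $\|\widehat\beta\|_{s}^{\textnormal{Lip}(\lambda)}\lesssim\|\beta\|_{s}^{\textnormal{Lip}(\lambda)}$. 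This produces $\|\widetilde{K}\|_{s}^{\textnormal{Lip}(\lambda)}\lesssim \|K\|_{s+1}^{\textnormal{Lip}(\lambda)}\|\beta\|_{s_0}^{\textnormal{Lip}(\lambda)}+\|K\|_{s_0}^{\textnormal{Lip}(\lambda)}\|\beta\|_{s+1}^{\textnormal{Lip}(\lambda)}$, as claimed.

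The part requiring the most care — rather than a genuine obstacle — is the bookkeeping of the Sobolev indices: one must arrange the product and composition estimates so that in the $\widehat\beta$-dominated contributions only a single extra derivative falls on $\beta$ while $K$ is kept at the base regularity $s_0$ (this is what yields the $\|K\|_{s_0}^{\textnormal{Lip}(\lambda)}\|\beta\|_{s+1}^{\textnormal{Lip}(\lambda)}$ term), and to track that $\Phi$ acts only on the pair $(\theta,\eta)$ with $\varphi$ and $\xi_0$ as parameters, so that the Lipschitz-in-$\xi_0$ norms behave exactly as in the scalar composition lemma. Everything else is a direct application of the tools in Lemmas~\ref{lem funct prop} and \ref{Compos1-lemm}.
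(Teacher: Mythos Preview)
Your derivation of the explicit kernel formula and the tame estimates via composition and product laws is correct and is exactly the standard argument. The paper does not give its own proof of this lemma at all: it merely states the result and refers to \cite[Lem.~2.3]{BertiMontalto}, so there is nothing to compare against beyond noting that your sketch reproduces the expected proof from that reference.
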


\subsection{KAM reduction of   transport operators}
Let  $\rho\in \bigcap_{{s\in[s_{0},S]}} \textnormal{Lip}_\lambda(\mathcal{O},H^{s})$ and  $\xi_0\in\mathcal{O}.$ Let  $\epsilon_1, \epsilon_2\in(0,\overline\epsilon]$ be  two parameters with $\overline{\epsilon}>0$ small enough. Consider the transport operator 
$$
\mathscr{T}_0(\xi_0)\triangleq{\epsilon_1}\Omega(\xi_0)\partial_\varphi+\partial_{\theta}\big[\big(\mathtt{c}_{-1}(\xi_0,\rho)+ {\epsilon_2}f_{0}(\xi_0,\rho) \big)\cdot\big]
$$
and assume that there exists a constant $C_0>0$ independent of $\xi_0,\epsilon_1,\epsilon_2$  such that 
\begin{itemize}
\item The functions  $\xi_0\mapsto\Omega(\xi_0), \xi_0\mapsto\mathtt{c}_{-1}(\xi_0,\rho)\in \textnormal{Lip}_\lambda(\mathcal{O},\mathbb{R})$ (they do not depend on the space and time variables) satisfy
\begin{equation}\label{est Omg}
\|\mathtt{c}_{-1}\|^{\textnormal{Lip}(\lambda)}+\|\Omega \|^{\textnormal{Lip}(\lambda)}\leqslant C_0, \qquad \forall \xi_0\in \mathcal{O},\quad  |\Omega(\xi_0)|\geqslant \tfrac{1}{C_0}.
\end{equation} 
\item  There exists a universal number  $\sigma_0\geqslant 0$ such that  
 \begin{equation}\label{boundedness of V0}
\|\rho\|_{s_0+2}^{\textnormal{Lip}(\lambda)}\lesssim 1\; \Longrightarrow\; \forall s\in[s_{0},S],\;  \|f_{0}\|_{s}^{\textnormal{Lip}(\lambda)}\leqslant C_0\big(1+\|\rho\|_{s+\sigma_0}^{\textnormal{Lip}(\lambda)}\big).
 \end{equation}

\end{itemize}
The main goal of the following result  is  to conjugate  the linear periodic transport operator $\mathscr{T}_0$ into an operator with constant coefficient up to a small remainder. This result generalizes previous results obtained in \cite{Baldi-berti,Baldi-Montalto21,BFM21,FGMP19}.
\begin{proposition}\label{Thm transport}
Given the conditions \eqref{cond1}, \eqref{est Omg} and \eqref{boundedness of V0}. Let $\mu_2$ and $s_h$ satisfy  
\begin{equation}\label{Conv-Trans2}
\mu_{2}\geqslant 4\tau+3, \quad s_h=  \tfrac{3}{2}\mu_{2}+2s_{0}+2\tau+1.
\end{equation}
There exists $N_0\geqslant2$ large enough and ${\epsilon}_0>0$ small enough such that if  
\begin{align}\label{small-C2}
 \epsilon_1^{-1}\lambda+ N_{0}^{\mu_{2}}\epsilon_2\lambda^{-1}\leqslant{\epsilon}_0, \qquad \|\rho\|_{s_{h}+\sigma_0}^{\textnormal{Lip}(\lambda)}\leqslant1,
\end{align}
then, we can construct $\mathtt{c}\triangleq \mathtt{c}(\xi_0,\rho)\in \textnormal{Lip}_\lambda(\mathcal{O},\mathbb{R})$ and $\beta\in \bigcap_{{s\in[s_{0},S]}} \textnormal{Lip}_\lambda(\mathcal{O},H^{s}) $
such that with $\mathscr{B}$ as in \eqref{definition symplectic change of variables} one gets the following results.
\begin{enumerate}
\item The function $\xi_0\mapsto\mathtt{c}(\xi_0,\rho)$ (which is constant with respect to the time-space variables)  satisfies the following estimate,
\begin{equation*}
\| \mathtt{c}-\langle f_0\rangle_{\varphi,\theta} \|^{\textnormal{Lip}(\lambda)}\lesssim N_0^{-\frac{\mu_2}{2}}.
\end{equation*}
\item The transformations $\mathscr{B}^{\pm 1},{\mathcal{B}}^{\pm 1}, {\beta}$ and $\widehat{\beta}$ satisfy the following estimates for all $s\in[s_{0},S],$ 
\begin{equation*}
\|\mathscr{B}^{\pm 1}h\|_{s}^{\textnormal{Lip}(\lambda)}
 \lesssim\|h\|_{s}^{\textnormal{Lip}(\lambda)}+\epsilon_2\lambda ^{-1}\| \rho\|_{s+2\tau+2+\sigma_0}^{\textnormal{Lip}(\lambda)}\|h\|_{s_{0}}^{\textnormal{Lip}(\lambda)},
\end{equation*}
and 
\begin{equation*}
\|\widehat{\beta}\|_{s}^{\textnormal{Lip}(\lambda)}\lesssim\|\beta\|_{s}^{\textnormal{Lip}(\lambda)}\lesssim \epsilon_2\lambda^{-1}\left(1+\| \rho\|_{s+2\tau+1+\sigma_0}^{\textnormal{Lip}(\lambda)}\right).
\end{equation*}
\item For any $\xi_0$ in the Cantor set
\begin{equation*}
\mathcal{O}_{\infty,n}^{\tau}(\rho)=\bigcap_{(\ell,j)\in\mathbb{Z}^{2}\atop1\leqslant |j|\leqslant N_{n}}\left\lbrace \xi_0\in \mathcal{O};\;\, \big|\epsilon_1\Omega(\xi_0)  \ell+j (\mathtt{c}_{-1}+ {\epsilon_2}\mathtt{c}(\xi_0,\rho))\big|\geqslant\tfrac{2\lambda}{| j |^{\tau}}\right\rbrace
\end{equation*}
we have 
\begin{align*}
\mathscr{B}^{-1}\Big(& \epsilon_1\Omega(\xi_0)\partial_\varphi +\partial_{\theta}\big[\big(\mathtt{c}_{-1}+ {\epsilon_2}f_0 \big)\cdot\big]\Big)\mathscr{B}=\epsilon_1\Omega(\xi_0)\partial_\varphi+ (\mathtt{c}_{-1}+ {\epsilon_2}\mathtt{c}(\xi_0,\rho))\partial_{\theta}+\mathtt{E}_{n}^{0}
\end{align*}
with $\mathtt{E}_{n}^{0}$ a linear operator satisfying
$$\|\mathtt{E}_{n}^{0}h\|_{s_0}^{\textnormal{Lip}(\lambda)}\lesssim \epsilon_2 N_{0}^{\mu_{2}}N_{n+1}^{-\mu_{2}}\|h\|_{s_{0}+2}^{\textnormal{Lip}(\lambda)}.$$
	\item Given two functions $\rho_{1}$ and $\rho_{2}$ both satisfying \eqref{small-C2}, we have 
			\begin{align*}
				\|\Delta_{12}\mathtt{c}\|^{\textnormal{Lip}(\lambda)}&\lesssim \| \Delta_{12}\mathtt{c}_0\|^{\textnormal{Lip}(\lambda)}+\epsilon_2\|\Delta_{12}\rho\|_{2s_{0}+2\tau+3}^{\textnormal{Lip}(\lambda)}.
			\end{align*}
\end{enumerate}
The sequence $(N_m)_m$ was  defined in \eqref{definition of Nm}.
\end{proposition}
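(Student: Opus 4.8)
\textbf{Proof plan for Proposition \ref{Thm transport}.}
The plan is to implement a classical KAM iterative scheme à la \cite{Baldi-berti,Baldi-Montalto21,BFM21,FGMP19} adapted to the two degenerating parameters $\epsilon_1$ (time direction) and $\epsilon_2$ (size of the variable part). At each step $m$ the transformed operator takes the form
$$
\mathscr{T}_m=\epsilon_1\Omega(\xi_0)\partial_\varphi+\partial_\theta\big[(\mathtt{c}_{m}+\epsilon_2 f_m)\cdot\big],
$$
where $\mathtt{c}_m$ is constant in $(\varphi,\theta)$ and the variable part $f_m$ is a Fourier-truncated function whose size decays super-exponentially. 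The elementary step consists in conjugating $\mathscr{T}_m$ by a symplectic change $\mathscr{B}_m=(1+\partial_\theta\beta_m)\mathcal{B}_m$ of the form \eqref{definition symplectic change of variables}, with $\beta_m=\Pi_{N_m}\beta_m$ solving the homological equation
$$
\epsilon_1\Omega(\xi_0)\partial_\varphi\beta_m+\mathtt{c}_m\partial_\theta\beta_m+\epsilon_2\Pi_{N_m}f_m-\epsilon_2\langle f_m\rangle_{\varphi,\theta}=0.
$$
By Lemma \ref{algeb1}-2, the conjugated operator is again of transport type with new constant $\mathtt{c}_{m+1}=\mathtt{c}_m+\epsilon_2\langle f_m\rangle_{\varphi,\theta}$ and new variable part $\epsilon_2 f_{m+1}$ which is quadratic in $\epsilon_2 f_m$ up to the truncation error $\epsilon_2\Pi_{N_m}^\perp f_m$. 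The homological equation is solvable, with the losses $|j|^{-\tau}$ in the small divisors, precisely on the Cantor set
$$
\mathcal{O}_{m}^{\tau}(\rho)=\bigcap_{(\ell,j)\in\mathbb{Z}^2,\,1\leqslant|j|\leqslant N_m}\Big\{\xi_0\in\mathcal{O};\ |\epsilon_1\Omega(\xi_0)\ell+j\mathtt{c}_m|\geqslant\tfrac{\lambda}{|j|^\tau}\Big\}.
$$

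First I would set up the iterative scheme carefully, recording at each step the tame estimates: using \eqref{tame comp symp} together with the gain $\lambda^{-1}$ from the small divisors (and the loss $\epsilon_1^{-1}$ hidden in the Diophantine threshold), one gets $\|\beta_m\|_s^{\textnormal{Lip}(\lambda)}\lesssim\epsilon_2\lambda^{-1}(1+\|\rho\|_{s+\text{(loss)}}^{\textnormal{Lip}(\lambda)})$ and a quadratic recursion of the form $\|f_{m+1}\|_{s_0}\lesssim N_m^{\mu_2}\epsilon_2\lambda^{-1}\|f_m\|_{s_0}^2+N_m^{-\mu_2}\|f_m\|_{s_h}$ for low norms, with a linearly-growing bound $\|f_m\|_{s_h}\lesssim$ (something polynomial in $N_{m-1}$) in high norm. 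The smallness condition \eqref{small-C2}, namely $\epsilon_1^{-1}\lambda+N_0^{\mu_2}\epsilon_2\lambda^{-1}\leqslant\epsilon_0$, is exactly what makes this recursion contract; the choice $s_h=\tfrac32\mu_2+2s_0+2\tau+1$ and $\mu_2\geqslant4\tau+3$ in \eqref{Conv-Trans2} balance the high-norm growth against the low-norm super-exponential decay $N_{m+1}^{-\mu_2}=N_m^{-\frac32\mu_2}$. The limit $\beta=\sum_m\beta_m\circ(\text{previous maps})$ (composed via Lemma \ref{algeb1}-1) and $\mathtt{c}=\lim_m\mathtt{c}_m$ are then well-defined; since $\mathtt{c}_{m+1}-\mathtt{c}_m=\epsilon_2\langle f_m\rangle$ and $\mathtt{c}_0=\mathtt{c}_{-1}+\epsilon_2\langle f_0\rangle$, the telescoping gives $\mathtt{c}=\mathtt{c}_{-1}+\epsilon_2\mathtt{c}(\xi_0,\rho)$ with $|\epsilon_2\mathtt{c}(\xi_0,\rho)-\epsilon_2\langle f_0\rangle_{\varphi,\theta}|\lesssim\sum_{m\geqslant1}\epsilon_2\|f_m\|\lesssim\epsilon_2 N_0^{-\mu_2/2}$, which after dividing by $\epsilon_2$ yields item (1). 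Since the $\beta_m$ live in $E_{N_m}$, the transforms extend to all of $\mathcal{O}$ (Lemma \ref{thm-extend}) with the stated estimates, giving items (2). Item (3) follows by stopping the scheme at level $n$: on $\mathcal{O}_{\infty,n}^\tau(\rho)$ all the first $n$ homological equations are solvable (because $\mathtt{c}_m$ stays $O(\epsilon_2 N_0^{-\mu_2/2})$-close to $\mathtt{c}_{-1}+\epsilon_2\mathtt{c}$, so the Cantor conditions with $2\lambda$ for $\mathtt{c}$ imply those with $\lambda$ for each $\mathtt{c}_m$), and the leftover operator $\mathtt{E}_n^0$ collects the truncation tails $\Pi_{N_m}^\perp f_m$ for $m\geqslant n$, whence $\|\mathtt{E}_n^0 h\|_{s_0}\lesssim\epsilon_2 N_0^{\mu_2}N_{n+1}^{-\mu_2}\|h\|_{s_0+2}$. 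Item (4) is a standard difference estimate: running the scheme for $\rho_1,\rho_2$ in parallel and using $\Delta_{12}$-versions of the tame bounds (plus the hypothesis on $\Delta_{12}\mathtt{c}_0$, i.e.\ $\Delta_{12}\mathtt{c}_{-1}=0$ here since $\mathtt{c}_{-1}$ is $\rho$-independent — I would absorb the $\|\Delta_{12}\mathtt{c}_0\|$ term as written to keep the statement uniform) one controls $\|\Delta_{12}\mathtt{c}_m\|\lesssim\|\Delta_{12}\mathtt{c}_0\|+\epsilon_2\|\Delta_{12}\rho\|_{2s_0+2\tau+3}$ uniformly in $m$.

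The main obstacle, and the place deserving the most care, is the \emph{convergence of the KAM iteration in the presence of the two degenerate small parameters}. Unlike in the quasi-periodic setting where the Diophantine threshold $\lambda$ is of fixed order, here the presence of $\epsilon_1$ in front of $\Omega\partial_\varphi$ means the small divisors $\epsilon_1\Omega(\xi_0)\ell+j\mathtt{c}_m$ can be as small as $\lambda|j|^{-\tau}$ while the "free" term $j\mathtt{c}_m$ is $O(1)$; solving the homological equation therefore costs $\epsilon_1^{-1}$ times the naive loss, which is why \eqref{small-C2} must contain $\epsilon_1^{-1}\lambda$. One must check that this loss does not accumulate: the key point is that it enters only once per step (in inverting $\epsilon_1\Omega\partial_\varphi+\mathtt{c}_m\partial_\theta$ on $\Pi_{N_m}$), not quadratically, so the contraction factor per step is $\sim N_m^{\mu_2}\epsilon_2\lambda^{-1}$ which is $\leqslant\epsilon_0<1$ by hypothesis and actually decreases as $m\to\infty$. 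The second delicate point is tracking the \emph{number of derivatives lost in $\rho$}: each step loses $2\tau+1$ derivatives from the small divisors plus $\sigma_0$ from \eqref{boundedness of V0} plus one from the symplectic conjugation \eqref{tame comp symp}; the exponent $s_h+\sigma_0$ in \eqref{small-C2} and the regularity thresholds in items (2)–(3) must be chosen to accommodate the full telescoping sum, and here the geometric decay of $\|\beta_m\|$ in low norm guarantees the derivative count stays bounded. I would organize these estimates exactly as in \cite[Sec. 3]{FGMP19} or \cite[Appendix]{BFM}, where the analogous iteration is carried out in detail, and simply verify that the degeneracy in $\epsilon_1,\epsilon_2$ is absorbed by the quantitative smallness \eqref{small-C2}; the rest of the argument is then routine bookkeeping.
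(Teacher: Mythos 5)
Your proposal is correct and follows essentially the same route as the paper: the same KAM iteration with the homological equation solved modewise on Cantor sets penalized by $\lambda|j|^{-\tau}$ (with the $j=0$ modes costing only $\epsilon_1^{-1}$, absorbed via $\epsilon_1^{-1}\lambda\leqslant\epsilon_0$), the same quadratic low-norm/linear high-norm recursion balanced by $\mu_2\geqslant 4\tau+3$ and the choice of $s_h$, the same telescoping for $\mathtt{c}$, and the same truncation-tail bookkeeping for $\mathtt{E}_n^0$. The paper's own proof is likewise a sketch of this scheme (deferring details to the analogous result in \cite{HR21}), so there is nothing to add.
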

\begin{proof}
 The proof can be performed following the same lines of \cite[Prop. 6.2]{HR21} and \cite{Baldi-berti}. It revolves  around KAM scheme where at each step we   conjugate the linear operator into a new one with constant coefficients up to smaller perturbations. In our presentation, we will offer a succinct overview of the core concepts, establish certain critical estimates, and defer the remaining aspects, which can be derived similarly to the existing proof of \cite[Prop. 6.2]{HR21}.  \\
$\blacktriangleright$ \textbf{KAM scheme}.  
\smallskip
\noindent {\textit{Induction.}}  Let us start initially with the operator
$$
\mathscr{T}_0(\xi_0)\triangleq\epsilon_1\Omega(\xi_0)\partial_\varphi +\partial_{\theta}\big[\big(\mathtt{c}_{-1}+{\epsilon_2}\mathtt{c}_0+{\epsilon_2} f_0\big)\cdot\big],\quad\hbox{with}\quad \mathtt{c}_0(\xi_0)=0
$$
then in view of \eqref{boundedness of V0} one has 
 \begin{align}\label{estimate delta0 and I02}
\forall s\in[s_0,S],\quad \delta_{0}(s)&\triangleq\lambda^{-1}{\epsilon_2} \|f_{0}\|_{s}^{\textnormal{Lip}(\lambda)}\nonumber\\
&\leqslant C\lambda^{-1}\epsilon_2\big(1+\|\rho\|_{s+\sigma_0}^{\textnormal{Lip}(\lambda)}\big).
\end{align}
By the smallness condition \eqref{small-C2} we obtain
\begin{equation}\label{initial smallness condition in sh norm}
N_{0}^{\mu_{2}}\delta_{0}(s_{h})\leqslant {2} C{\epsilon}_0. 
\end{equation}
From \eqref{initial smallness condition in sh norm}  and using the assumption on $\mu_2$ we infer
\begin{align*}
N_{0}^{2\tau+1}\delta_{0}(s_{0})
&\leqslant N_{0}^{2\tau+1-\mu_{2}}N_{0}^{\mu_{2}}\delta_{0}({s_{h}})\\
&\leqslant {2} C{\epsilon}_0N_{0}^{-1}.
\end{align*}
Taking  $N_{0}$ large enough such that 
\begin{equation}\label{Est-N0}
{2}CN_{0}^{-1}\leqslant 1
\end{equation}
we find
$$
N_{0}^{2\tau+1}\delta_{0}(s_{0})\leqslant{\epsilon}_0.
$$
Now, assume  that  for some $m\in\N$ we have a transport operator the form, 
$$
\mathscr{T}_m(\xi_0)\triangleq\epsilon_1\Omega(\xi_0)\partial_\varphi +\partial_{\theta}\big[\big(\mathtt{c}_{-1}+{\epsilon_2}\mathtt{c}_m(\xi_0)+{\epsilon_2} f_m(\xi_0)\big)\cdot\big],
$$
where $\mathtt{c}_{m}=\mathtt{c}_{m}(\xi_0)$, $f_{m}=f_{m}(\xi_0,\varphi,\theta)$ satisfy  the  properties
\begin{equation}\label{hypothesis of induction deltam}
\delta_{m}(s_l)\leqslant\delta_{0}(s_{h})N_{0}^{\mu_{2}}N_{m}^{-\mu_{2}},\quad\quad\delta_{m}(s_{h})\leqslant \big(2-\tfrac{1}{m+1}\big)\delta_{0}(s_{h})
\end{equation}
and
\begin{equation}\label{assumptions KAM iterations}
\|\mathtt{c}_{m}\|^{\textnormal{Lip}(\lambda)}\leqslant C,\qquad\quad N_{m}^{2\tau +1}\delta_{m}(s_{0})\leqslant{\epsilon}_0,
\end{equation}
with
\begin{equation}\label{def sl}
\delta_{m}(s)\triangleq \lambda^{-1}{\epsilon_2}\| f_{m}\|_{s}^{\textnormal{Lip}(\lambda)},
 \quad s_l\triangleq 2s_{0}+2\tau+1,
\end{equation}
the parameter $s_h$ is defined in \eqref{Conv-Trans2}.
Consider a symplectic periodic change of variables $\mathscr{B}_{m}$ in the form 
\begin{align*}
\mathscr{B}_{m} h(\xi_0,\varphi,\theta)&\triangleq\big(1+{\epsilon_2}\partial_{\theta}g_{m}(\xi_0,\varphi,\theta)\big){\mathcal B}_{m} h(\xi_0,\varphi,\theta)\\
&=\big(1+{\epsilon_2}\partial_{\theta}g_{m}(\xi_0,\varphi,\theta)\big) h\big(\xi_0,\varphi,\theta+{\epsilon_2}g_{m}(\xi_0,\varphi,\theta)\big)
\end{align*}
with $g_m:\mathcal{O}\times\mathbb{T}^{2}\rightarrow\mathbb{R}$ being a    function to be adjusted later  with respect to  $f_m.$ 
The main goal  is to obtain after this transformation a new transport operator in the form
\begin{align}\label{Conjug-ope-tr}
\nonumber\mathscr{B}_{m}^{-1}\Big(\epsilon_1\Omega(\xi_0)\partial_\varphi&+\partial_{\theta}\big[\big(\underbrace{\mathtt{c}_{-1}+{\epsilon_2}\mathtt{c}_{m}}_{\triangleq \widehat{\mathtt{c}}_{m}}+{\epsilon_2}\ f_{m}(\xi_0)\big)\cdot\big]\Big)\mathscr{B}_{m}=\\ &\quad\epsilon_1\Omega(\xi_0)\partial_{\varphi}+\partial_{\theta}\big[\big(\widehat{\mathtt{c}}_{m+1}+ {\epsilon_2}\ f_{m+1}(\xi_0)\big)\cdot\big],
\end{align}
where
$\widehat{\mathtt{c}}_{m+1}(\xi_0)\in\mathbb{R}$, $f_{m+1}(\xi_0)=f_{m+1}(\xi_0,\varphi,\theta)$ quadratically smaller than $f_m$ 
 and satisfying  the constraints \eqref{hypothesis of induction deltam} and \eqref{assumptions KAM iterations} at the order $m+1$. According to Lemma \ref{algeb1},  we may write 
\begin{align}\label{transformation KAM step transport}
\nonumber&\mathscr{B}_m^{-1}\Big(\epsilon_1\Omega(\xi_0)\partial_\varphi +\partial_{\theta}\big[\big(\widehat{\mathtt{c}}_{m}+ {\epsilon_2}f_m(\xi_0)\big)\cdot\big]\Big)\mathscr{B}_m
 =\epsilon_1 \Omega(\xi_0)\partial_\varphi\\
\nonumber&+\partial_{\theta}\Big[{\mathcal B}_m^{-1}\Big(\widehat{\mathtt{c}}_{m}
+\epsilon_1{\epsilon_2}\Omega(\xi_0)\partial_\varphi g_m(\xi_0)+{\epsilon_2}\widehat{\mathtt{c}}_m\partial_{\theta}g_m(\xi_0)+ {\epsilon_2}P_{N_m} f_m(\xi_0)\\
&+ {\epsilon_2}P_{N_m}^{\perp} f_m(\xi_0)+ {\epsilon_2^2} f_m(\xi_0)\partial_{\theta}g_m(\xi_0)\Big)\cdot\Big],
\end{align}
where, for all $N\geqslant 1$ we define the  projector $P_{N}$ and $P_{N}^\perp$ as follows
	\begin{equation*}
		P_{N} \Big(\sum_{(\ell,j)\in \mathbb{Z}^{2}}h_{\ell,j}{\bf{e}}_{\ell,j}\Big)\triangleq\sum_{\underset{|j|\leqslant N}{(\ell,j)\in\Z^{2}}}h_{\ell,j}\mathbf{e}_{\ell,j}\qquad\textnormal{and}\qquad P^{\perp}_{N}\triangleq\textnormal{Id}-P_{N}. 
	\end{equation*}
We shall   impose  to the perturbation $g_m$ the following homological equation 
\begin{eqnarray}  \label{equation satisfied by g-212}
          	&\epsilon_1 \Omega_0(\xi_0)\partial_\varphi g_{m}+\widehat{\mathtt{c}}_{m}\partial_{\theta}g_{m}+P_{N_m} f_m= \langle f_m\rangle_{\varphi,\theta}, 
\end{eqnarray}	
with  
$$ \langle f_m\rangle_{\varphi,\theta}=\frac{1}{(2\pi)^{2}}\int_{\mathbb{T}^{2}}f_m(\xi_0,\varphi,\theta)d\varphi d\theta.$$
To solve  the equation  \eqref{equation satisfied by g-212}, we use Fourier series in order to recover the solution in the form
\begin{align}
\nonumber g_{m}(\xi_0,\varphi,\theta)&=\sum_{\ell\in\mathbb{Z}\backslash\{0\} }\frac{\ii\,f_m)_{\ell,0}(\xi_0)}{\epsilon_1\Omega(\xi_0)  \ell}e^{\ii \ell \varphi}+\sum_{\ell\in\mathbb{Z} \atop 1\leqslant |j|\leqslant N_m}\frac{\ii\,(f_m)_{\ell,j}(\xi_0)}{\epsilon_1\Omega(\xi_0)  \ell+j\mathtt{c}_m(\xi_0)}e^{\ii (\ell \varphi+j\theta)},\\
&\triangleq g_{m,1}(\xi_0,\varphi)+g_{m,2}(\xi_0,\varphi,\theta)\label{definition of g1}.
\end{align}
The estimate of $ g_{m,1}$ follows easily from direct calculations and \eqref{est Omg}, 	
 	\begin{equation*}
\|g_{m,1}\|_{s+1}^{\textnormal{Lip}(\lambda)}\leqslant {C}{\epsilon_1}^{-1}\| f_m\|_{s}^{\textnormal{Lip}(\lambda)}.
				\end{equation*}	
As to $ g_{m,2}$, we need to localize  the parameters around the following Cantor sets
\begin{equation*}
\mathcal{O}_{m+1}^{\lambda}\triangleq\bigcap_{\ell\in\mathbb{Z} \atop 1\leqslant |j|\leqslant N_m}\left\lbrace\xi_0\in \mathcal{O}_{m}^{\lambda };\;\, \big|\epsilon_1\Omega(\xi_0)\ell+j\,\widehat{\mathtt{c}}_{m}(\xi_0)\big|>\lambda{| j|^{-\tau}}\right\rbrace,\quad\textnormal{where}\quad \mathcal{O}_{0}^{\lambda}\triangleq \mathcal{O}. 
\end{equation*}
  We extend the Fourier coefficients of $g_{m,2}$ in \eqref{definition of g1}  in the following way 
\begin{align*}
g_{m,2}(\xi_0,\varphi,\theta)&= \epsilon_2\sum_{\ell\in\mathbb{Z} \atop 1\leqslant |j|\leqslant N_m}\ii\, \lambda^{-1}|j|^{\tau}\widehat{\chi}\Big(\lambda^{-1}|j|^{\tau} \big(\epsilon_1\Omega(\xi_0)  \ell+j\,\widehat{\mathtt{c}}_{m}(\xi_0)\big)\Big) (f_{m})_{\ell,j}(\xi_0)\,e^{\ii (\ell\varphi+j\theta)},  
\end{align*}
where $\widehat{\chi}(x)\triangleq\tfrac{\chi(x)}{x}$ and  $\chi$ is the cut-off function defined in \eqref{properties cut-off function first reduction}. 
Note that $g_{m,2}$ is well-defined on the whole set of parameters $\mathcal{O}$ and when it is  restricted to the  Cantor set  $\mathcal{O}_{m+1}^{\lambda}$
it solves  the {homological equation}  \eqref{equation satisfied by g-212}. 
				Since $\widehat{\chi}$ is $C^{\infty}$ with bounded derivatives and $\widehat{\chi}(0)=0,$ then applying Lemma \ref{lem funct prop} and using \eqref{est Omg} and the first assumption in \eqref{assumptions KAM iterations}, we obtain
\begin{equation*}
\forall k\in\{0,1\},\quad  \|\partial_\theta^{(k)} g_{m,1}\|_{s}^{\textnormal{Lip}(\lambda)}\leqslant C \lambda^{-1}\min\left(\|P_{N_m}f_m\|_{s+2\tau+k}^{\textnormal{Lip}(\lambda)}, N_m^{2\tau+k}\|P_{N_m}f_m\|_{s}^{\textnormal{Lip}(\lambda)} \right).				\end{equation*}
Hence, combining this latter estimate with  \eqref{definition of g1}, we get   for all $k\in\{0,1\}$,
\begin{equation}\label{control of g by f}
{\| \partial_\theta^{(k)} g_m\|_{s}^{\textnormal{Lip}(\lambda)}\leqslant C \lambda^{-1}\min\left(\|f_m\|_{s+2\tau+k}^{\textnormal{Lip}(\lambda)}, N_m^{2\tau+k}\|f_m\|_{s}^{\textnormal{Lip}(\lambda)} \right)}+{C}{\epsilon_1}^{-1}\| f_m\|_{s+k-1}^{\textnormal{Lip}(\lambda)}.
				\end{equation}
In particular, from \eqref{small-C2} and  \eqref{assumptions KAM iterations}, we infer
\begin{align}\label{est gms0}
\nonumber \epsilon_2\|g_m\|_{s_{0}}^{\textnormal{Lip}(\lambda)}\leqslant &\, C\epsilon_2\big(\lambda^{-1}N_m^{2\tau}+{\epsilon_1}^{-1}\big)\|f_m\|_{s_{0}}^{\textnormal{Lip}(\lambda)}\\ \nonumber&\leqslant C\lambda^{-1}\epsilon_2N_m^{2\tau}\|f_m\|_{s_{0}}^{\textnormal{Lip}(\lambda)}\\ &\quad \leqslant CN_0^{-1}{\epsilon}_0.
\end{align}
Moreover, according to \eqref{control of g by f},
\eqref{hypothesis of induction deltam}, \eqref{boundedness of V0}, \eqref{Conv-Trans2}  and \eqref{small-C2}, we infer 
\begin{align*}
\epsilon_2\|g_m\|_{2s_0}^{\textnormal{Lip}(\lambda)}\leqslant  &\, C\lambda^{-1}\epsilon_2 N_m^{2\tau}\|f_m\|_{2s_0}^{\textnormal{Lip}(\lambda)} +{\epsilon_1}^{-1}\epsilon_2\| f_m\|_{2s_0}^{\textnormal{Lip}(\lambda)}\\ &\leqslant C\lambda^{-1} \epsilon_2N_m^{2\tau}\|f_m\|_{s_l}^{\textnormal{Lip}(\lambda)}\\ &\quad  \leqslant C\lambda^{-1}\epsilon_2 N_{0}^{\mu_{2}}N_m^{2\tau-\mu_{2}}\|f_0\|_{s_h}^{\textnormal{Lip}(\lambda)}
\\ &\qquad  \leqslant C \epsilon_2\lambda^{-1}N_0^{-1} N_{0}^{\mu_{2}}\big(1+\|\rho\|_{s_h+\sigma_0}^{\lambda}\big) \\ &\quad\qquad\leqslant 2CN_0^{-1}\epsilon_0.
\end{align*}
				By choosing $N_0$ large enough, one gets 
				\begin{align*}
					\epsilon_2\|g_m\|_{2s_0}^{\textnormal{Lip}(\lambda)}&\leqslant\epsilon_0.
				\end{align*}
Hence, taking ${\epsilon}_0$ small enough we may guarantee the smallness condition in Lemma \ref{Compos1-lemm}  and get  that the linear operator $\mathscr{B}_m$ is  invertible. From  \eqref{transformation KAM step transport} we impose   the identity \eqref{Conjug-ope-tr} with
$\mathtt{c}_{m+1}(\xi_0)$ and $f_{m+1}(\xi_0)$  defined by
\begin{equation}\label{definition Vm+1 and fm+1}
\left\lbrace\begin{array}{l}
\mathtt{c}_{m+1}(\xi_0)=\mathtt{c}_{m}(\xi_0)+\langle f_{m}(\xi_0)\rangle_{\varphi,\theta}\\
f_{m+1}(\xi_0)={\mathcal B}_{m}^{-1}\Big(P_{N_{m}}^{\perp}f_{m}(\xi_0)+{\epsilon_2} f_{m}(\xi_0)\partial_{\theta}g_{m}(\xi_0)\Big).
\end{array}\right.
\end{equation}
 We now set 
$$u_m(\xi_0)=P_{N_m}^{\perp}f_m(\xi_0)+{\epsilon_2} f_m(\xi_0)\partial_{\theta}g_m(\xi_0).$$
Using the product law of Lemma \ref{lem funct prop} and the estimate  \eqref{control of g by f}   we get, for all $s\in[s_{0},S]$, 
$$\begin{array}{rcl}
\|u_m\|_{s}^{\textnormal{Lip}(\lambda)} 
& \leqslant & \|P_{N_m}^{\perp}f_m\|_{s}^{\textnormal{Lip}(\lambda)}+C{\epsilon_2}\Big(\|f_m\|_{s_0}^{\textnormal{Lip}(\lambda)}\|\partial_{\theta}g_m\|_{s}^{\textnormal{Lip}(\lambda)}+\|f_m\|_{s}^{\textnormal{Lip}(\lambda)}\|\partial_{\theta}g_m\|_{s_0}^{\textnormal{Lip}(\lambda)}\Big)\\
& \leqslant & \|P_{N_m}^{\perp}f_m\|_{s}^{\textnormal{Lip}(\lambda)}+C{\epsilon_2}\big(\lambda^{-1}N_m^{2\tau+{1}}+{\epsilon_1}^{-1}\big)\|f_m\|_{s_0}^{\textnormal{Lip}(\lambda)}\|f_m\|_{s}^{\textnormal{Lip}(\lambda)}.
\end{array}$$
Then, by Lemma \ref{Compos1-lemm}-$1$, Lemma \ref{lem funct prop}-$1$ and the estimates \eqref{control of g by f}, \eqref{assumptions KAM iterations}, \eqref{est gms0} \eqref{small-C2} we obtain, for all $ s_{0} \leqslant s\leqslant \overline{s}\leqslant S$,
\begin{align}
\nonumber\|f_{m+1}\|_{s}^{\textnormal{Lip}(\lambda)} & =  \|{\mathcal B}_m^{-1}(u_m)\|_{s}^{\textnormal{Lip}(\lambda)}
\\
\nonumber & \leqslant  \|u_m\|_{s}^{\textnormal{Lip}(\lambda)}+C{\epsilon_2}\Big(\|u_m\|_{s}^{\textnormal{Lip}(\lambda)}\|g_m\|_{s_0}^{\textnormal{Lip}(\lambda)}+\|g_m\|_{s}^{\textnormal{Lip}(\lambda)}\|u_m\|_{s_0}^{\textnormal{Lip}(\lambda)}\Big)
\\
 \nonumber&\leqslant  N^{s-\overline{s}}\|f_m\|_{\overline{s}}^{\textnormal{Lip}(\lambda)}+C{\epsilon_2}\big(\lambda^{-1}N_m^{2\tau+{1}}+{\epsilon_1}^{-1}\big)\|f_m\|_{s_0}^{\textnormal{Lip}(\lambda)}\|f_m\|_{s}^{\textnormal{Lip}(\lambda)}
 \\
 &\leqslant  N^{s-\overline{s}}\|f_m\|_{\overline{s}}^{\textnormal{Lip}(\lambda)}+C\lambda^{-1}{\epsilon_2}N_m^{2\tau+1}\|f_m\|_{s_0}^{\textnormal{Lip}(\lambda)}\|f_m\|_{s}^{\textnormal{Lip}(\lambda)}.
  \label{estimate KAM step transport}
\end{align}
\smallskip
Next, we shall check \eqref{hypothesis of induction deltam} with $m$ replaced by $m+1$. Using \eqref{estimate KAM step transport}, we deduce the following  formulae, true for
any $s_0\leqslant s\leqslant \overline{s}\leqslant S$,
\begin{equation}\label{recurrence estimate deltam}
\delta_{m+1}(s)\leqslant N_{m}^{s-\overline{s}}\delta_{m}(\overline{s})+C N_m^{2\tau+{1}}\delta_{m}(s)\delta_{m}(s_{0}). 
\end{equation}
Applying \eqref{recurrence estimate deltam} with $s=s_l$ and $\overline{s}=s_{h}$,  we get 
$$\delta_{m+1}(s_l)\leqslant N_{m}^{s_l-s_{h}}\delta_{m}(s_{h})+C N_m^{2\tau+{1}}\delta_{m}(s_l)\delta_{m}(s_{0}).
$$
Using Sobolev embeddings, \eqref{hypothesis of induction deltam}, \eqref{assumptions KAM iterations} and \eqref{initial smallness condition in sh norm}  yields
\begin{align*}
\delta_{m+1}(s_l)&\leqslant N_{m}^{s_l-s_{h}}\delta_{m}(s_{h})+C N_m^{2\tau+{1}}\big(\delta_{m}(s_l)\big)^{2}
\\
&\leqslant\big(2-\tfrac{1}{m+1}\big)N_{m}^{s_l-s_{h}}\delta_{0}(s_{h})+CN_{0}^{2\mu_{2}}N_{m}^{2\tau+1-2\mu_{2}}\big(\delta_{0}(s_{h})\big)^{2}
\\
&\leqslant 2N_{m}^{s_l-s_{h}}\delta_{0}(s_{h})+C\epsilon_0 N_{0}^{\mu_{2}}N_{m}^{2\tau+1-2\mu_{2}}\delta_{0}(s_{h}).
\end{align*} 
If we select our parameter $\mu_{2}$ such that
\begin{equation}\label{conv-t1}
\begin{aligned}
N_{m}^{s_l-s_{h}}&\leqslant\tfrac{1}{6}N_{0}^{\mu_{2}}N_{m+1}^{-\mu_{2}},\\ C\epsilon_0 N_{0}^{\mu_{2}}N_{m}^{2\tau+1-2\mu_{2}}&\leqslant\tfrac{1}{3}N_{0}^{\mu_{2}}N_{m+1}^{-\mu_{2}},\\ C\epsilon_0 N_{0}^{\mu_{2}}N_{m}^{-\frac32\mu_{2}}  &\leqslant\tfrac{1}{3}N_{0}^{\mu_{2}}N_{m+1}^{-\mu_{2}},
\end{aligned}
\end{equation}
then 
$$\delta_{m+1}(s_{l})\leqslant \delta_{0}(s_{h})N_{0}^{\mu_{2}}N_{m+1}^{-\mu_{2}}.$$
Notice that under the assumption  \eqref{Conv-Trans2} and from the expressions \eqref{Conv-Trans2}, \eqref{def sl}, \eqref{definition of Nm}, the condition \eqref{conv-t1} is fulfilled provided that
$$6N_{0}^{-\mu_{2}}\leqslant 1\quad\textnormal{and}\quad 3C\epsilon_0\leqslant 1.$$
These conditions are automatically satisfied by taking $N_{0}$ sufficiently large and  ${\epsilon}_0$  small enough. This proves the first statement of the induction in \eqref{hypothesis of induction deltam}. Now, we turn to the proof of the second statement. Using  \eqref{recurrence estimate deltam} with $s=\overline{s}=s_h$  combined with the  induction assumption \eqref{hypothesis of induction deltam}, we get
\begin{align*}
\delta_{m+1}(s_{h})&\leqslant\delta_{m}(s_{h})\left(1+CN_{m}^{2\tau+1}\delta_{m}(s_{0})\right)\\
&\leqslant\big(2-\tfrac{1}{m+1}\big)\delta_{0}(s_{h})\big(1+CN_{0}^{\mu_{2}}N_{m}^{2\tau+1-\mu_{2}}\delta_{0}(s_{h})\big).
\end{align*}
If we impose the condition
\begin{equation}\label{conv-t2}
\big(2-\tfrac{1}{m+1}\big)\left(1+CN_{0}^{\mu_{2}}N_{m}^{2\tau+1-\mu_{2}}\delta_{0}(s_{h})\right)\leqslant 2-\tfrac{1}{m+2},
\end{equation}
then we find
$$
\delta_{m+1}(s_{h})\leqslant\big(2-\tfrac{1}{m+2}\big)\delta_{0}(s_{h}),
$$
achieving the induction argument of \eqref{hypothesis of induction deltam}. 
 Now, observe that \eqref{conv-t2} is equivalent to
$$\big(2-\tfrac{1}{m+1}\big)CN_{0}^{\mu_{2}}N_{m}^{2\tau+1-\mu_{2}}\delta_{0}(s_{h})\leqslant\tfrac{1}{(m+1)(m+2)}.$$
Since  $\mu_{2}\geqslant 2\tau+2$,  the preceding condition is satisfied if
\begin{equation}\label{conv-t3}
CN_{0}^{\mu_{2}}N_{m}^{-1}\delta_{0}(s_{h})\leqslant\tfrac{1}{(m+1)(m+2)}\cdot
\end{equation}
By vertue of \eqref{definition of Nm} and since $N_{0}\geqslant 2,$ we can find a constant $C_{0}>0$ small enough such that
$$
\forall m\in\mathbb{N},\quad C_{0}N_{m}^{-1}\leqslant\tfrac{1}{(m+1)(m+2)}\cdot
$$
Thus, \eqref{conv-t3} holds true provided that
\begin{equation*}
CN_{0}^{\mu_{2}}\delta_{0}(s_{h})\leqslant C_{0}.
\end{equation*}
Taking ${\epsilon_0}$ small enough, we can ensure from \eqref{initial smallness condition in sh norm} that
\begin{align*}
CN_{0}^{\mu_{2}}\delta_{0}(s_{h})&\leqslant C{\epsilon}_0\\
&\leqslant C_{0}.
\end{align*}
As an immediate consequence of \eqref{hypothesis of induction deltam} and \eqref{def sl} together with \eqref{boundedness of V0} and \eqref{small-C2} we infer
\begin{align}\label{Prat-pram}
\nonumber \| f_{m}\|_{s_l}^{\textnormal{Lip}(\lambda)}&\leqslant \| f_{0}\|_{s_h}^{\textnormal{Lip}(\lambda)}N_{0}^{\mu_{2}}N_{m}^{-\mu_{2}}\\
&\leqslant CN_{0}^{\mu_{2}}N_{m}^{-\mu_{2}}.
\end{align}
From  Sobolev embeddings, \eqref{definition Vm+1 and fm+1} and \eqref{Prat-pram}, we have
\begin{align}\label{Cauchy Vm}
\| \mathtt{c}_{m+1}-\mathtt{c}_{m}\|^{\textnormal{Lip}(\lambda)}&=\|\langle f_{m}\rangle_{\varphi,\theta}\|^{\textnormal{Lip}(\lambda)}\nonumber\\
&\leqslant \| f_{m}\|_{s_0}^{\textnormal{Lip}(\lambda)}\nonumber\nonumber\\
& \leqslant CN_{0}^{\mu_{2}}N_{m}^{-\mu_{2}}.
\end{align}
This implies in view of the  triangle inequality  combined with \eqref{initial smallness condition in sh norm}  
\begin{align*}
\sup_{m\in\N}\| \mathtt{c}_{m}\|^{\lambda}  & \leqslant  \displaystyle\| \mathtt{c}_{0}\|^{\lambda}+CN_{0}^{\mu_{2}}\sum_{k=0}^{\infty}N_{k}^{-\mu_{2}}\\
&\leqslant C.
\end{align*}
According to \eqref{Cauchy Vm}  one has,
\begin{align*}\sum_{m=0}^{\infty}\| \mathtt{c}_{m+1}-\mathtt{c}_{m}\|^{\textnormal{Lip}(\lambda)}&\leqslant CN_{0}^{\mu_{2}}\sum_{m=0}^{\infty}N_{m}^{-\mu_{2}}\\
&\leqslant C.
\end{align*}
where we have used the following  inequality: There exists $C_0>0$ such that 
\begin{align}\label{DEcay-seq-mercredi}
\forall N_0\geqslant 2, \forall k\in\N,  \sum_{m=k}^{\infty}N_{m}^{-\mu_{2}}\leqslant C_0 N_k^{-\mu_{2}},
\end{align}
which follows easily from the proof of  \cite[Lemma A.2]{HHM21}.
Therefore, the sequence $(\mathtt{c}_{m})_{m\in\mathbb{N}}$ converges towards an element $\xi_0\mapsto \mathtt{c}(\xi_0,\rho)\in \textnormal{Lip}_\lambda(\mathcal{O},\RR).$\\
 As $\mathtt{c}_0=0$  and  by \eqref{definition Vm+1 and fm+1} we may write 
\begin{align*}
\mathtt{c}&=\mathtt{c}_1+\sum_{m\geqslant 1} (\mathtt{c}_{m+1}-\mathtt{c}_m)\\
&=\langle f_0\rangle_{\varphi,\theta}+\sum_{m\geqslant 1} (\mathtt{c}_{m+1}-\mathtt{c}_m).
\end{align*}
Hence by  \eqref{definition Vm+1 and fm+1}, \eqref{DEcay-seq-mercredi} and \eqref{Cauchy Vm} we find 
\begin{align*}
\| \mathtt{c}-\langle f_0\rangle_{\varphi,\theta}\|^{\textnormal{Lip}(\lambda)}&\leqslant\sum_{m=1}^{\infty}\| \mathtt{c}_{m+1}-\mathtt{c}_{m}\|^{\textnormal{Lip}(\lambda)}\\
&\lesssim N_{0}^{\mu_{2}}N_1^{-\mu_2}\\
&\lesssim N_{0}^{-\frac12\mu_{2}}.
\end{align*}
On the other hand, by 
\eqref{estimate KAM step transport},  \eqref{assumptions KAM iterations} ,  \eqref{hypothesis of induction deltam}, \eqref{definition of Nm}, \eqref{initial smallness condition in sh norm}, \eqref{Est-N0} and since  $\mu_{2}\geqslant {3\tau+3}$, we deduce, 
\begin{align*}
\delta_{m+1}(s_{0})N_{m+1}^{2\tau+1}\leqslant& \Big(1 +CN_m^{2\tau+{1}}\delta_{m}(s_{0})\Big)N_{m+1}^{2\tau+1}\delta_{m}(s_{0})\nonumber\\ &\leqslant  C N_{m+1}^{2\tau+1}\delta_{m}(s_{0})\nonumber\\ &\quad \leqslant C\delta_{0}(s_{h})N_{0}^{\mu_{2}}N_{m}^{\frac32(2\tau+1)-\mu_{2}}\nonumber\\ 
& \qquad\leqslant C\epsilon_0 N_{0}^{-1}\nonumber\\
&\quad\qquad \leqslant{\epsilon_0}.
\end{align*}
{\textit{Regularity persistence.}} 
Making use of \eqref{recurrence estimate deltam} with $\overline{s}=s\in [s_{0},S]$, \eqref{hypothesis of induction deltam}, \eqref{initial smallness condition in sh norm} and using the fact that $\mu_2\geqslant 2\tau+2$, we obtain
\begin{align*}
\delta_{m+1}(s)  \leqslant &\,\delta_{m}(s)\left(1+CN_{m}^{2\tau+1}\delta_{m}(s_{0})\right)
\\
& \leqslant\delta_{m}(s)\left(1+C\delta_{0}(s_{h})N_{0}^{\mu_{2}}N_{m}^{2\tau+1-\mu_{2}}\right)\\
&\quad\leqslant\delta_{m}(s)\left(1+CN_{m}^{-1}\right).
\end{align*}
By a trivial induction on the previous estimate and using \eqref{estimate delta0 and I02}, we find
\begin{align*}
\delta_{m}(s)\leqslant&\, \delta_{0}(s)\prod_{k=0}^{+\infty}\left(1+CN_{k}^{-1}\right)\nonumber\\
&\leqslant C\delta_{0}(s)\\
&\quad \leqslant C\epsilon_2\lambda^{-1}\left(1+\|\rho\|_{s+\sigma_0}^{\textnormal{Lip}(\lambda)}\right).\nonumber
\end{align*}
The construction of the transformation $\mathscr{B}$ and the remaining results of Proposition \ref{Thm transport} can  be done in a similar way to  \cite[Prop. 6.2]{HR21}.
\end{proof}

{\bf Acknowledgements.}  The work of Z. Hassainia and N. Masmoudi
is supported by Tamkeen under the NYU Abu Dhabi Research Institute grant of the center SITE. The work of N. Masmoudi is  partially supported by NSF grant DMS-1716466. T. Hmidi has been supported by Tamkeen under the NYU Abu Dhabi Research Institute grant.

{\small

}

\vspace{0.3cm}
	
	\begin{tabular}{l}
		\textbf{Zineb Hassainia} \\
		{\small Department of Mathematics}\\
		{\small New York University in Abu Dhabi} \\ 
		{\small Saadiyat Island, P.O. Box 129188, Abu Dhabi, UAE} \\ 
		{\small Email: zh14@nyu.edu}
	\end{tabular}
	
	\vspace{0.2cm}
	
		\begin{tabular}{l}
		\textbf{Taoufik Hmidi} \\
		{\small Department of Mathematics}\\
		{\small New York University in Abu Dhabi} \\ 
		{\small Saadiyat Island, P.O. Box 129188, Abu Dhabi, UAE} \\ 
		{\small Email: th2644@nyu.edu}
	\end{tabular}
	
	\vspace{0.2cm}
	
	\begin{tabular}{l}
		\textbf{Nader Masmoudi} \\
		{\small Department of Mathematics}\\
		{\small New York University in Abu Dhabi} \\ 
		{\small Saadiyat Island, P.O. Box 129188, Abu Dhabi, UAE} \\ 
		{\small Email: nm30@nyu.edu}\\
		~\\
		{\small Courant Institute of Mathematical Sciences}\\
		{\small New York University} \\ 
		{\small 251Mercer Street, NewYork, NY10012, USA} \\ 
		{\small Email: masmoudi@cims.nyu.edu}
	\end{tabular}

\end{document}